\documentclass[10pt]{extarticle}
\usepackage{psfrag}
\usepackage[parfill]{parskip}
\usepackage{float, graphicx}
\usepackage[]{epsfig}
\usepackage{amsmath, amsthm, amssymb,}
\usepackage{epsfig}
\usepackage{verbatim}
\usepackage{multicol}
\usepackage{url}
\usepackage{appendix}
\usepackage[colorlinks, bookmarks=true]{hyperref}
\usepackage{enumerate}
\usepackage[normalem]{ulem}
\usepackage{bm}
\usepackage{dsfont}
\usepackage{stmaryrd}
\usepackage{enumitem}
\usepackage{mathrsfs}
\usepackage{cite}
\usepackage{color}
\usepackage{xcolor}
\usepackage{authblk}
\usepackage{geometry}
 \usepackage{graphicx}
\usepackage{tikz}
\usetikzlibrary{positioning}

\usepackage{imakeidx}
\makeindex
\newcommand{\xindex}[2]{\expandafter\index\expandafter{\pageref{#1}#2}}
\makeatletter

\def\@settitle{\begin{center}%
    \bfseries
 \normalfont\LARGE\@title
  \end{center}%
}
\def\@setauthors{\begin{center}%
 \normalsize\@author
  \end{center}%
}

\makeatother

\numberwithin{equation}{section}

\def\cA{{\mathcal A}}
\def\cB{{\mathcal B}}

\def\cH{{\mathcal H}}
\def\cA{{\mathcal A}}

\def\sB{{\sf B}}
\def\sC{{\sf C}}

\renewcommand{\frak}{\mathfrak}
\newcommand{\fa}{{\frak a}}
\newcommand{\fb}{{\frak b}}
\newcommand{\fc}{{\frak c}}
\newcommand{\fd}{{\frak q}}

\newcommand{\fR}{{\frak R}}

\newcommand{\cT}{{\mathcal T}}

\newcommand{\bFS}{{\bf{S}}}

\newcommand{\al}{\alpha}

\newcommand{\be}{\begin{equation}}
\newcommand{\ee}{\end{equation}}

\newcommand{\cX}{{\cal X}}
\newcommand{\cG}{{\cal G}}

\newcommand{\cK}{{\cal K}}
\newcommand{\bL}{{\mathbb L}}

\newcommand{\cY}{{\cal Y}}

\newcommand{\bW}{\mathbb W}
\newcommand{\bC}{{\mathbb C}}

\newcommand{\wM}{\widetilde{M}}

\newcommand{\T}{\mathbb T}

\newcommand{\bA}{ {\mathbb  A}}

\newcommand{\bT}{\T}

\newcommand{\bB}{{\mathbb B}}

\newcommand{\bX}{{\mathbb X}}
\newcommand{\bG}{{\mathbb G}}

\newcommand{\bH}{{\mathbb H}}
\newcommand{\bV}{{\mathbb V}}
\newcommand{\bK}{\mathbb{K}}
\newcommand{\bY}{\mathbb{Y}}

\newcommand{\id}{\mspace{2mu}\mathrm{i}\mspace{-0.6mu}\mathrm{d}} 
\renewcommand{\epsilon}{\varepsilon}
\renewcommand{\leq}{\leqslant}
\renewcommand{\geq}{\geqslant}

\renewcommand{\le}{\leq}
\renewcommand{\ge}{\geq}
\renewcommand{\P}{\mathbb{P}}
\newcommand{\E}{\mathbb{E}}
\newcommand{\R}{\mathbb{R}}
\newcommand{\C}{\mathbb{C}}
\newcommand{\N}{\mathbb{N}}
\newcommand{\Z}{\mathbb{Z}}

\def\bC{{\mathbb C}}

\newcommand{\cM}{{\cal M}}

\newcommand{\bfS}{{\bf S}}

\newcommand{\qq}[1]{[\![{#1}]\!]}

\DeclareMathOperator{\diag}{diag}

\DeclareMathOperator{\supp}{supp}

\DeclareMathOperator{\re}{Re}
\DeclareMathOperator{\im}{Im}

\DeclareMathOperator{\OO}{O}
\DeclareMathOperator{\oo}{o}

\DeclareMathOperator{\diam}{diam}
\DeclareMathOperator{\anc}{\operatorname{anc}}

\newcommand{\del}{\partial}

\newcommand{\dist}{\mathrm{dist}}

\newcommand{\bS}{\mathbb{S}}
\newcommand{\cGT}{\cal G^{(\T)}}

\newcommand{\tcG}{\tilde{\cal G}}

\newcommand{\tcGT}{\widetilde{\cal G}^{(\T)}}

\newcommand{\GT}{G^{(\T)}}
\newcommand{\tG}{\tilde{G}}

\newcommand{\sG}{{\sf G}}
\newcommand{\sS}{{\sf S}}

\newcommand{\GNH}{\sG_{N}(\cH)}

\renewcommand{\cal}{\mathcal}

\newcommand{\rn}[1]{%
       \lowercase\expandafter{\romannumeral#1}%
}

\newcommand{\RN}[1]{%
       \uppercase\expandafter{\romannumeral#1}%
}

\renewcommand{\Im}{{\mathrm{Im}}}
\renewcommand{\Re}{\mathrm{Re}}

\theoremstyle{plain} 

\makeatletter
\def\thm@space@setup{%
  \thm@preskip=4pt plus 2pt minus 2pt
}
\makeatother

\newtheorem{theorem}{Theorem}[section]
\newtheorem*{theorem*}{Theorem}
\newtheorem{lemma}[theorem]{Lemma}
\newtheorem*{lemma*}{Lemma}

\newtheorem*{corollary*}{Corollary}
\newtheorem{proposition}[theorem]{Proposition}
\newtheorem*{proposition*}{Proposition}

\newtheorem*{assumption*}{Assumption}
\newtheorem{claim}[theorem]{Claim}

\newtheorem{definition}[theorem]{Definition}
\newtheorem*{definition*}{Definition}

\newtheorem*{example*}{Example}
\newtheorem{remark}[theorem]{Remark}

\newtheorem*{remark*}{Remark}
\newtheorem*{remarks*}{Remarks}

\theoremstyle{definition}

\newcommand{\TE}{\mathrm{TE}}
\newcommand{\Ext}{\mathrm{Ext}}

\newcommand{\Ba}{{\mathsf B}_1}
\newcommand{\Bb}{{\mathsf B}_2}
\newcommand{\Bc}{{\mathsf B}_3}

\begin{document}
\title{Local Law and Eigenvector Delocalization for Random Lifts}

\author{
Jiaoyang Huang\textsuperscript{\(\dagger\)}\qquad
Ruizhe Xu\textsuperscript{\(\ddagger\)}
}

\date{}

\maketitle
\thispagestyle{empty}

\begingroup
\renewcommand{\thefootnote}{\fnsymbol{footnote}}

\footnotetext[2]{Courant Institute of Mathematical Sciences, New York University. Email:
\texttt{jh4427@nyu.edu}.}

\footnotetext[3]{Department of Mathematics,
The University of Hong Kong. Email:
\texttt{ruizhexu.math@gmail.com}.}

\endgroup

\maketitle                                                                                     

\begin{abstract}
We establish a quantitative local law for random $N$-lifts of a
fixed finite connected simple graph with minimum degree at least
three. Away from the singular energies of the universal-cover
spectrum, we obtain high-probability entrywise Green's function
estimates down to spectral scales $(\log N)^C/N$, uniformly in the
bulk and near regular edges and cusps. As a consequence, we establish complete eigenvector
delocalization: every normalized
eigenvector $u$ whose eigenvalue stays a small but fixed positive
distance away from the singular energies satisfies
$\|u\|_\infty\leq(\log N)^C/\sqrt{N}$ with high probability.
\end{abstract}

\section{Introduction and main results}

\subsection{Motivation}\label{subsec:motivation}

Let $\cH$ be a fixed finite connected
simple base graph. An $N$-lift of $\cH$ is a graph $\cG$ obtained by
replacing each vertex of $\cH$ with a fiber of $N$ vertices and, for each
edge of $\cH$, joining the corresponding fibers by perfect matching.
We denote by $\GNH$ the collection of all such $N$-lifts.
A random $N$-lift $\cG$ of $\cH$ is obtained by sampling
an element uniformly at random from $\GNH$.
The local limit of random $N$-lifts of $\cH$ is described by the universal covering tree $\cX$ of $\cH$. More precisely, $\cX$ is a tree equipped with a covering map $\pi:\cX\to\cH$ that induces a bijection between the neighbors of each vertex $x$ in $\cX$ and those of $\pi(x)$. Intuitively, $\cX$ is obtained by unfolding the cycles of $\cH$, preserving its local adjacency structure while eliminating cycles. The covering map $\pi$ also assigns each vertex $x$ in $\cX$ a type $\pi(x)$ in $\cH$, which determines its degree and the types of its neighbors. 
In particular, random lifts of a $d$-regular base graph are themselves $d$-regular graphs with the homogeneous $d$-regular tree as their universal cover, whereas a general base graph may give rise to a nonregular universal cover with several geometrically distinct local types.

The study of random lifts is motivated by several complementary perspectives, ranging from sparse random matrix theory and quantum ergodicity to spectral geometry and theoretical computer science. As sparse random matrix models, random lifts have prescribed, potentially nonhomogeneous local geometry, complementing the extensively studied Erd\H{o}s--R\'enyi and random regular graph models; For further discussion, see \hyperref[sec:related-works]{\textcolor{darkgray}{Related works}}
below and \cite{HuangSurvey2026}. They also have a natural connection with quantum ergodicity, which studies the spatial distribution of eigenfunctions on large finite systems whose local geometry approaches an infinite limiting space. For random $N$-lifts of a fixed base graph $\cH$, this limiting geometry is precisely the universal cover $\cX$ of $\cH$. Random lifts therefore provide a natural discrete model for investigating how the geometry of the universal cover governs not only the spectrum, but also the spatial behavior of eigenfunctions on large random coverings \cite{Anantharaman2017,AnantharamanLeMasson2015,AnantharamanSabri2019,AnantharamanSabri2019Recent}. From the perspective of spectral geometry, random lifts may be viewed as discrete analogues of random finite-sheeted coverings of a fixed hyperbolic surface. In both settings, one constructs increasingly large random coverings of a fixed base object while preserving its universal cover, which is a tree in the graph setting and the hyperbolic plane in the surface setting. This parallel leads naturally to related spectral questions, including the behavior of new eigenvalues, spectral gaps, and the influence of the universal cover on the spectrum of large finite coverings \cite{MageeNaudPuder2022,MageePuderVanHandel2025}. Finally, random lifts also play an important role in theoretical computer science and coding theory, providing a probabilistic way to construct large sparse graphs from prescribed finite templates, with well-studied connectivity and expansion properties \cite{AmitLinial2002,AmitLinial2006}.

A central question concerns the eigenvalues of the adjacency matrix of random lifts.
The spectrum of a lift naturally separates into two parts. Functions that
are constant on each fiber form an invariant subspace on which the
adjacency operator is identified with that of $\cH$. Its eigenvalues,
counted with multiplicity, are the \emph{old eigenvalues}. The eigenvalues
on the orthogonal complement are the \emph{new eigenvalues}. Since the
base graph is fixed, the old spectrum has bounded dimension, whereas the
new eigenvalues account for almost all of the spectrum as $N\to\infty$.
The principal spectral questions therefore concern the location and
local distribution of these new eigenvalues.  

The question of spectral location is closely related to the \emph{Ramanujan property}, classically studied in the special case of $d$-regular graphs. A $d$-regular graph is Ramanujan if all eigenvalues other than
the trivial eigenvalues $\pm d$ lie in
$[-2\sqrt{d-1},2\sqrt{d-1}]$, the spectrum of the infinite $d$-regular tree
\cite{LubotzkyPhillipsSarnak1988}. Friedman's proof of Alon's conjecture
shows that random $d$-regular graphs are asymptotically Ramanujan, and
Bordenave subsequently gave another proof and an extension to random
lifts \cite{Friedman2008,Bord15}. More recently, Huang, McKenzie, and Yau
proved edge universality for every fixed $d\geq 3$: after centering and
an appropriate degree-dependent $N^{2/3}$ rescaling, the extreme
nontrivial eigenvalues have Tracy--Widom$_1$ limits. The joint behavior of
the two spectral edges implies that asymptotically about $69\%$ of random
$d$-regular graphs are Ramanujan \cite{HuangMcKenzieYau2024}.

For lifts of a general base graph, the corresponding relative question
asks whether all new eigenvalues lie in $[-\rho,\rho]$, where $\rho$ is the spectral radius of the universal cover.
Early bounds and approaches to this problem were developed by Friedman,
Linial and Puder, Lubetzky, Sudakov, and Vu, and Puder
\cite{Friedman2003,LinialPuder2010,LubetzkySudakovVu2011,Puder2015}.
A major breakthrough was achieved by Bordenave and Collins, who proved that, for general fixed base graphs, the new spectrum converges in probability, in Hausdorff distance, to the spectrum of the universal cover \cite{BordenaveCollins2019}.  This also rules out new eigenvalues at a fixed
positive distance inside gaps of the limiting spectrum, not merely
outside $[-\rho,\rho]$.
More recently, a new approach to strong convergence of random matrices based on the polynomial method has been developed in \cite{ChenGarzaVargasTroppVanHandel2026,ChenGarzaVargasVanHandel2026} by Chen, Garza-Vargas, Tropp, and  van Handel, yielding quantitative bounds on the convergence of extreme eigenvalues.

An equally fundamental question concerns \emph{eigenvector delocalization}: to what extent can the mass of an eigenvector concentrate on a small part of the graph? For locally tree-like regular graphs, non-concentration estimates were established by Brooks and Lindenstrauss \cite{BrooksLindenstrauss2013}, while quantum ergodicity results provide averaged forms of spatial delocalization, including for random lifts under suitable assumptions \cite{AnantharamanLeMasson2015,Anantharaman2017,AnantharamanSabri2019,AnantharamanSabri2019Recent}. Stronger results have been obtained for fixed-degree random regular graphs, where complete eigenvector delocalization provides uniform bounds on the coordinates of all normalized eigenvectors \cite{Bauer2019Local,Huang2024Spectrum}. These developments naturally motivate the question of whether similarly strong delocalization bounds hold for random lifts with a prescribed nonhomogeneous local structure.

A local law provides a common framework for studying the local eigenvalue
density and eigenvector delocalization. Its basic object is the Green's
function
$G(z):=(A-z)^{-1}$ with $z=E+\mathrm i\eta\in\mathbb C_+,$
where $A$ denotes the adjacency matrix of the graph. The parameter $\eta$ determines the spectral resolution of the Green's function: as $\eta$ decreases, $G(z)$ probes increasingly fine spectral scales. This allows Green's function estimates to provide information about both eigenvalues and eigenvectors.  
In the special case of random $d$-regular graphs, local laws were first established in growing-degree regimes \cite{BauerschmidtKnowlesYau2017} and subsequently in the fixed-degree setting \cite{Bauer2019Local,Huang2024Spectrum}. These results naturally motivate the extension of local laws to random lifts with potentially nonhomogeneous universal covers.

The purpose of the present paper is to establish a local law and to
prove complete eigenvector delocalization for random lifts throughout
the regular spectral regime. Under the assumptions stated below, we
obtain quantitative Green's function estimates down to spectral scales
of order $(\log N)^C/N$, for a sufficiently large constant $C$,
uniformly in the stable bulk and in neighborhoods of all regular edges
and regular cusps. Thus, apart from the singular energies of the
universal-cover spectrum, our local law covers the full regular
spectrum. As consequences, it identifies the local
eigenvalue density with that of the universal cover in the bulk and
implies complete delocalization of all eigenvectors whose eigenvalues
stay away from the singular energies. To the best of our knowledge,
these are the first local law and complete eigenvector delocalization
results covering the bulk, regular edges, and regular cusps for random
lifts of general fixed base graphs.

\paragraph{Related works.}\label{sec:related-works}
Strong convergence provides a global spectral approach to random
lifts, whose adjacency matrices can be represented as matrix-valued
polynomials in random permutation matrices. Strong convergence and
quantitative norm estimates for such polynomials were established
and further developed in
\cite{BordenaveCollins2019,BordenaveCollins2023Norm}.
More recent approaches based on polynomial and rational trace
expansions yield quantitative strong convergence
\cite{ChenGarzaVargasTroppVanHandel2026,ChenGarzaVargasVanHandel2026}.
Related spectral bounds have been obtained through non-backtracking
methods \cite{BenaychGeorgesBordenaveKnowles2020} and
matrix concentration and free-probability techniques
\cite{BandeiraBoedihardjoVanHandel2023,BrailovskayaVanHandel2024}.
These results provide global spectral control, but do not
directly yield Green's function estimates on nearly microscopic
scales.

For random regular graphs, bulk universality was first established
in growing-degree regimes
\cite{BauerschmidtHuangKnowlesYau2017}, and has recently been
proved for every fixed degree $d\ge 3$
\cite{he2026bulkuniversalityrandomregular}.
Beyond eigenvalue statistics, the eigenvector moment flow
\cite{BourgadeYau2017} has yielded Gaussian eigenvector statistics
for sparse random graphs in growing-degree regimes
\cite{BourgadeHuangYau2017}.
In the fixed-degree setting, Gaussian-wave limits have been
established for almost eigenvectors and edge eigenvectors
\cite{BackhauszSzegedy2019,HeHuangYau2025}.
Related questions of quantum ergodicity and delocalization
are discussed in
\cite{Anantharaman2022,AnantharamanSabri2017Anderson}.

For Erd\H{o}s--R\'enyi graphs, local laws and complete
eigenvector delocalization have been established in various
sparsity regimes, extending to logarithmic expected degrees
\cite{ErdosKnowlesYauYin2013,HeKnowlesMarcozzi2019,AltDucatezKnowles2022}.
Bulk and edge universality were developed in
\cite{ErdosKnowlesYauYin2012,HuangLandonYau2015}.
At greater sparsity, edge statistics require deterministic
and random corrections
\cite{LeeSchnelli2018,HuangLandonYau2020,HeKnowles2021,HuangYauSparse2026},
while degree fluctuations can produce localized or
semilocalized edge eigenvectors
\cite{AltDucatezKnowles2021,AltDucatezKnowles2023}.
Gaussian statistics of extremal eigenvectors have also been
established for sparse random matrices
\cite{HeHuangWang2026}.

\subsection{Main results: Local law and Delocalization}
To state our main results, we first introduce the spectral structure of
the universal cover of the base graph, which describes the limiting
spectral behavior of random lifts. A detailed account of the relevant
results is given in Section~\ref{sec:TE}; here we briefly summarize only
those facts needed for the formulation of our main theorems.

Let $\cX$ be the universal cover of the base graph $\cH$.
Its density-of-states measure is encoded by the diagonal Green
functions of $\cX$.  More precisely, for each vertex type
$i\in\bH$, let $m_i(z)$ denote the diagonal Green function
at a vertex of type $i$, and set
\[
m_{\cH}(z)
:=
\frac{1}{|\bH|}
\sum_{i\in\bH}m_i(z).
\]
By the spectral theorem, there is a probability measure
$\nu_{\cH}$ on $\R$ such that
\[
m_{\cH}(z)
=
\int_{\R}\frac{1}{\tau-z}\,\nu_{\cH}(\mathrm d\tau).
\]
The measure $\nu_{\cH}$ consists of an absolutely continuous part,
with density denoted by $\rho_{\cH}$, together with possibly finitely
many atoms.  Thus, schematically,
\begin{equation}\label{def.sum-measure}
\nu_{\cH}(\mathrm d\tau)
=
\rho_{\cH}(\tau)\,\mathrm d\tau
+
\sum_{\tau_j}
w_{\tau_j}\,\delta_{\tau_j}(\mathrm d\tau),
\end{equation}
where $\{\tau_j\}$ is the finite set of atomic energies and
$w_{\tau_j}>0$ denotes the corresponding atomic weight. See Lemma \ref{lem:NB-Dyson-existence} and Remark \ref{rem:vertex-from-edge} for details.

The spectral measure of a general universal cover need not be regular
at every energy.  Besides the atoms, there may be exceptional energies
at which the absolutely continuous density becomes singular, for
instance by diverging as the energy approaches the exceptional point.
We refer to these exceptional points collectively as the
\emph{singular energies} and exclude them from the spectral regimes
considered below.  Away from the singular energies, the absolutely
continuous spectrum decomposes into the standard regular regimes:
\emph{regular bulk}, where $\rho_{\cH}$ is positive and regular; \emph{regular edges}, which mark the endpoints of spectral bands; and \emph{regular cusps}, where the endpoints of two neighboring spectral bands meet as the gap between them closes.  See Figure \ref{fig:spectral-density-singularities} and Definition \ref{def:NB-bulk-edge-cusp} for details.

\begin{figure}[htbp]
    \centering
    \includegraphics[scale=0.2]{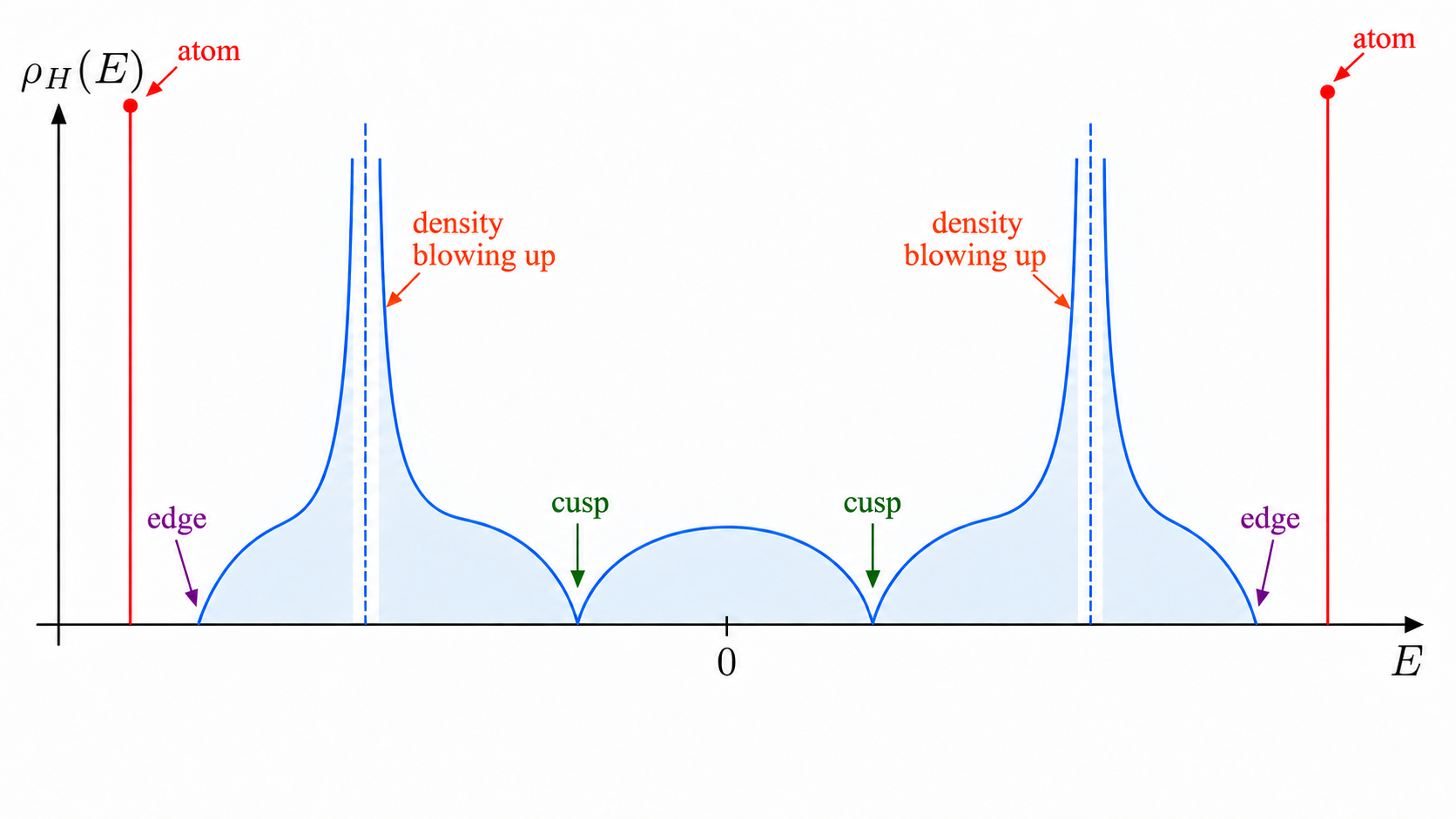}
    \caption{Classification of the local behaviors of the limiting spectral density.}
    \label{fig:spectral-density-singularities}
\end{figure}

To compare the Green's function of a finite lift with that of the
universal cover, we need the local geometry of the lift to remain
tree-like.  We therefore introduce the following
deterministic class $\bar\Omega$ of lifts with this typical local
geometry. In Proposition~\ref{prop:structure} below, we show
that a uniformly random $N$-lift belongs to $\bar\Omega$ with high
probability.
Throughout this paper, we fix a simple base graph $\cH$ whose minimum
degree is at least $3$, and denote its maximum degree by $d_{\rm max}$.
Fix a constant $\fc\in(0,1)$ and set
$
\fR:=(\fc/4)\log_{d_{\rm max}-1}N.
$
For notational simplicity, throughout the paper we write $\log N$ for
$\log_{d_{\rm max}-1}N$.
\begin{definition}\label{def:barOmega}
Given a base graph $\cH$, let $\bar\Omega\subset \GNH$ denote the set of graphs that are tree-like up to radius $\fR$ in the following sense:
\begin{itemize}
    \item the radius-$\fR$ neighborhood of every vertex contains at most one cycle;
    \item the number of vertices whose radius-$\fR$ neighborhoods contain a cycle is at most $N^{\fc}$.
\end{itemize}
\end{definition}

\medskip

Our main result is a quantitative local law for random lifts, stated
precisely in Theorem~\ref{thm.main}.  The quantitative estimates hold
uniformly in the regular bulk and near all regular edges and regular
cusps.  Since the full statement requires several additional notion
and parameters, we postpone its precise statement and first
present the following simpler consequence in the regular bulk.

\begin{theorem}[Bulk eigenvalue counting]
\label{thm.main.intro}
Fix any sufficiently large $\fd\in\N_+$ and let $\cG$ be sampled uniformly from $\bar\Omega$. For every sufficiently large
$N\in\N_+$, with probability
$1-\OO(N^{-\fd})$, the following holds:

Uniformly for all intervals $I$ in the regular bulk that stay a fixed
positive distance away from all edges, cusps, and singular energies and satisfy
$
|I|\geq (\log N)^C/N
$
for some sufficiently large constant $C>0$, we have
\[
\#\{k:\lambda_k(\cG)\in I\}
=
N|\bH|\int_I \rho_{\cH}(E)\,\mathrm dE
+
o(N|I|),
\]
where $\{\lambda_k(\cG)\}$ are the eigenvalues of the adjacency matrix of
$\cG$, counted with multiplicity.
\end{theorem}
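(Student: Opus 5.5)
This eigenvalue count is derived from the local law, Theorem~\ref{thm.main}, in its bulk form; we assume it here. It should give, on the event $\cG\in\bar\Omega$ and with probability $1-\OO(N^{-\fd})$, uniformly in $z=E+\mathrm i\eta$ with $E$ in a compact subset $\mathcal K$ of the regular bulk (a fixed distance from edges, cusps and singular energies) and $\eta\ge(\log N)^{C_0}/N$, an entrywise bound $|G_{xx}(z)-m_{\pi(x)}(z)|\le \epsilon_N$ for all but $\OO(N^{\fc})$ exceptional vertices. Here $\epsilon_N\to0$, and at the exceptional vertices (those near a cycle) we only assume the bounded estimate $|G_{xx}(z)|\le C$. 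Averaging over the $N|\bH|$ vertices then gives
\[
\Bigl|\tfrac{1}{N|\bH|}\operatorname{Tr}G(z)-m_{\cH}(z)\Bigr|\le \epsilon_N+\OO(N^{\fc-1}) =: \delta_N\to0 .
\]
The strategy is to convert this Stieltjes-transform closeness on scales down to $(\log N)^{C_0}/N$ into eigenvalue counts on intervals of length $|I|\ge(\log N)^{C}/N$ with $C>C_0$, using the standard Helffer--Sjöstrand argument.

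First fix the interval. Let $I=[a,b]\subset\mathcal K$ and let $\mu_N=\frac{1}{N|\bH|}\sum_k\delta_{\lambda_k}$. Set $\eta_0=(\log N)^{C_0}/N$ and choose a mesoscopic scale $\ell$ with $\eta_0\ll\ell\ll |I|$; for instance $\ell=|I|(\log N)^{-(C-C_0)/2}$. Build smooth cutoffs $f_\pm$ with $f_-\le \mathbf 1_I\le f_+$, each equal to $1$ or $0$ outside transition layers of width $\ell$ at the endpoints, and with $|f^{(k)}|\lesssim \ell^{-k}$. Then
\[
\int f_-\,\mathrm d\mu_N\le \mu_N(I)\le\int f_+\,\mathrm d\mu_N .
\]
Since $\rho_{\cH}$ is bounded near $\mathcal K$ and the atoms lie a fixed distance away, $\int f_\pm\,\mathrm d\nu_{\cH}=\int_I\rho_{\cH}+\OO(\ell)$, and $\ell=o(|I|)$.

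Next, bound $\int f\,\mathrm d(\mu_N-\nu_{\cH})$ by Helffer--Sjöstrand. Write $f(\lambda)=\frac1\pi\int_{\C}\frac{\partial_{\bar z}\tilde f(z)}{\lambda-z}\,\mathrm d^2z$ with the almost-analytic extension $\tilde f(x+\mathrm iy)=(f(x)+\mathrm iyf'(x))\chi(y)$. The difference is then controlled by integrals of $\Delta(z):=\frac{1}{N|\bH|}\operatorname{Tr}G-m_{\cH}$, which we split at height $\eta_0$.
\begin{itemize}
\item For $y\ge\eta_0$ we use $|\Delta|\le\delta_N$. The term involving $f'$ contributes $\OO(\delta_N|I|)$. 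For the term $y f''$ we integrate by parts in $x$ in the standard way, using $y\,|\partial_x\Delta|\le C\,\Im(\cdots)/y$ bounds. This yields $\OO(\delta_N|I|\log N)$ after choosing $\delta_N$ so that $\delta_N\log N\to0$, which the local law's error $(\log N)^{-c}$-type bound permits.
\item For $y\le\eta_0$ we use the monotonicity of $y\mapsto y\,\Im\operatorname{Tr}G(x+\mathrm iy)$. This gives $y\,|\Im\Delta(x+\mathrm iy)|\lesssim\eta_0\,\Im(\ldots)(x+\mathrm i\eta_0)=\OO(\eta_0)$, since $\Im m_{\cH}$ is bounded in the bulk. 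The contribution is therefore $\OO(\eta_0\ell^{-2}\cdot\ell\cdot\eta_0)+\OO(\eta_0)=o(|I|)$.
\end{itemize}
Multiplying by $N|\bH|$ gives the claim. Uniformity over all such $I$ is automatic, because the local law is uniform in $z$ on the stated domain, which is a single high-probability event.

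The main obstacle is quantitative bookkeeping rather than a conceptual gap. The error term in Theorem~\ref{thm.main} must be $o(1)$ with a sufficient logarithmic margin, and the $\OO(N^{\fc})$ vertices near cycles must be handled. The latter requires only an a priori bound on $G_{xx}$ at those vertices, or more crudely the trivial fact that these vertices carry at most $\OO(N^{\fc})$ total spectral weight in the trace average. If only the bound $|G_{xx}|\le\eta^{-1}$ were available there, I would instead rank-perturb: removing one edge per cycle changes the eigenvalue counting function by at most $\OO(N^{\fc})=o(N|I|)$ for $|I|\ge(\log N)^C/N$ only if $\fc$ is small. This is why I would rely on the local law including those vertices, as it presumably does via the single-cycle structure in $\bar\Omega$.
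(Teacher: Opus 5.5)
Your reduction is the right one. The paper also obtains Theorem~\ref{thm.main.intro} from Theorem~\ref{thm.main}, by the standard Stieltjes inversion argument. The averaged estimate $|Q^{\rm ver}-m^{\rm ver}|_\infty\lesssim\varepsilon_*$ in Theorem~\ref{thm.main} directly controls $\Delta=\frac{1}{|\bH|}\sum_i(Q_i-m_i)$, so the vertices near cycles need no separate discussion. Your rank-perturbation fallback would fail anyway, since $N^{\fc}\gg(\log N)^C$.

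\textbf{The gap: the region $y\ge\eta_0$.} We have $\partial_{\bar z}\tilde f=\frac12(\mathrm iyf''\chi+\mathrm if\chi'-yf'\chi')$. With $\chi$ a cutoff at height of order one and only the uniform bound $|\Delta|\le\delta_N$:
\begin{itemize}
\item The $yf'\chi'$ term is bounded by $\int|f'|\,\mathrm dx\cdot\delta_N\sim\delta_N$, not $\delta_N|I|$.
\item The usual treatment of the $yf''\chi$ term (integrate by parts in $x$, then in $y$ via Cauchy--Riemann) also gives $\int|f'|\,\mathrm dx\int_{\eta_0}^{2}|\Delta|\,\mathrm dy\sim\delta_N$.
\item The bound on $y|\partial_x\Delta|$ by imaginary parts that you invoke is of order one in the bulk and carries no factor $\delta_N$.
\end{itemize}
The textbook bookkeeping with an $\OO(1)$ cutoff reaches errors of order $(\log N)/N$ only because there $|\Delta(x+\mathrm iy)|\lesssim U/(Ny)$. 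Theorem~\ref{thm.main} gives nothing of that type: in the bulk $\varepsilon_*=\varepsilon\gtrsim(\log N)^{6\fa}(\beta^r+(Ny)^{-1/2})$. As written, your argument therefore leaves an additive error in the count that the available bounds only control at order $N^{1/2}$. This swamps $N|I|$ when $N|I|$ is polylogarithmic.

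\textbf{How to repair it.} The bounds $\OO(\delta_N|I|)$ and $\OO(\delta_N|I|\log N)$ you state are true, but they need an ingredient you did not supply. There are two options.
\begin{itemize}
\item Move all $x$-derivatives from $f$ onto $\Delta$ and use the Cauchy estimates $|\partial_x^k\Delta(x+\mathrm iy)|\lesssim\delta_N y^{-k}$. These hold for $y\ge2\eta_0$ and $x$ near $I$, because the local law holds on the whole disc of radius $y/2$ and $\varepsilon$ is non-increasing in $\eta$. Take $\chi$ supported below a small constant height so these discs stay in the bulk. Then the $yf''\chi$ term is at most $\int|f|\,\mathrm dx\int_{2\eta_0}^{1}\delta_Ny^{-1}\,\mathrm dy\lesssim\delta_N|I|\log N$, and both $\chi'$ terms are $\OO(\delta_N|I|)$.
\item Cut $\chi$ off at height $|I|$. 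This makes every large-$y$ term $\OO(\delta_N|I|)$ directly.
\end{itemize}

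The route the paper indicates is simpler still. Only a relative error $o(N|I|)$ is claimed, so take $\eta=|I|(\log N)^{-2}$ and compare $\frac1\pi\int_I\Im\frac{1}{N|\bH|}\operatorname{Tr}G(E+\mathrm i\eta)\,\mathrm dE$ with two quantities. Against the normalized count, the error is $\OO(\eta\log N)$ from the Poisson-kernel tails, since the local law gives $\Im\frac{1}{N|\bH|}\operatorname{Tr}G=\OO(1)$ at all heights $\ge\eta$ near $I$. Against $\int_I\rho_{\cH}$, the error is $\OO(|I|\varepsilon_*+\eta\log N)$.
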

\medskip

The quantitative local law Theorem \ref{thm.main} further implies delocalization of
eigenvectors throughout the entire regular spectral regime.  More
precisely, away from the singular energies of the universal-cover
spectrum, all normalized eigenvectors satisfy the following uniform
$\ell_\infty$ bound with high probability.

\begin{theorem}[Eigenvector delocalization]
\label{thm.delocalization.intro}
Fix any sufficiently large $\fd\in\N_+$ and let $\cG$ be sampled uniformly from $\bar\Omega$. For every sufficiently large
$N\in\N_+$, with probability
$1-\OO(N^{-\fd})$, the following holds:

For every normalized eigenvector $u_k$ of the adjacency matrix of $\cG$
whose eigenvalue $\lambda_k(\cG)$ stays a fixed positive distance away from the
singular energies of the universal-cover spectrum, we have
\[
|u_k|_\infty
\leq
\frac{(\log N)^C}{\sqrt{N}},
\]
where $C>0$ is a sufficiently large constant and $|\cdot|_\infty$ denotes the $\ell_\infty$ norm.

\end{theorem}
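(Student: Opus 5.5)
We would derive Theorem~\ref{thm.delocalization.intro} from the entrywise local law, Theorem~\ref{thm.main}, using the standard spectral-decomposition argument. We work on the high-probability event of Theorem~\ref{thm.main}, intersected over a fine grid of spectral parameters, and argue deterministically on that event. Fix an eigenpair $(\lambda_k,u_k)$ with $\lambda_k$ at distance at least some fixed $\kappa>0$ from the singular energies. We set $z=\lambda_k+\mathrm i\eta$ with $\eta:=(\log N)^{C_0}/N$, where $C_0$ is the exponent appearing in Theorem~\ref{thm.main}.

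The spectral decomposition $G(z)=\sum_j u_ju_j^{*}/(\lambda_j-z)$ gives, for every vertex $x$,
\[
\Im G_{xx}(z)=\sum_j\frac{\eta\,|u_j(x)|^2}{(\lambda_j-\lambda_k)^2+\eta^2}\ \geq\ \frac{|u_k(x)|^2}{\eta}.
\]
Hence $|u_k(x)|^2\le \eta\,\Im G_{xx}(z)$, and it suffices to show $\max_x|G_{xx}(z)|\le (\log N)^{C_1}$ uniformly over all such $z$.

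The local law should give exactly this bound. It controls $G_{xx}(z)$ by a local tree-like Green's function: either the universal-cover diagonal entry $m_{\pi(x)}(z)$, or its analogue on a neighborhood containing at most one cycle, as in Definition~\ref{def:barOmega}. The error is small on all scales $\eta\ge(\log N)^{C_0}/N$, uniformly in the regular bulk and near regular edges and cusps. Away from the singular energies, the limiting quantities $m_i(z)$ are bounded uniformly in $z$:
\begin{itemize}
\item at regular edges and cusps $\rho_{\cH}$ vanishes like a square root or cube root, so $m_i$ stays bounded there;
\item atoms of $\nu_{\cH}$ and density blow-ups count as singular energies, so they are excluded.
\end{itemize}
This should follow from the classification in Definition~\ref{def:NB-bulk-edge-cusp} and the structure established in Section~\ref{sec:TE}. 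For vertices whose neighborhood contains a cycle, the local law's comparison object is the Green's function of a unicyclic neighborhood; we would check it is likewise $\OO(1)$ at these energies, or at worst polylogarithmic.

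To make the argument uniform over all eigenvectors, we apply Theorem~\ref{thm.main} on an $N^{-10}$-net of the admissible $E$ with fixed $\eta$ and take a union bound, which costs only a polynomial factor in the failure probability $\OO(N^{-\fd})$. Lipschitz continuity of $G$ in $z$, with constant $\eta^{-2}\le N^2$, then extends the bound to all $E$. Combining these gives $|u_k(x)|^2\le (\log N)^{C_0+C_1}/N$, which is the claim.

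The main obstacle is the regime near regular edges and cusps. There Theorem~\ref{thm.main} may only hold with a degraded spectral scale or a degraded error, and the relevant $\lambda_k$ could lie slightly outside the limiting support, for instance inside a gap at distance smaller than $\kappa$ from an edge. We would first try to use the version of the local law outside the spectrum, where $\Im m$ is small, since $|G_{xx}|\le (\log N)^{C}$ still follows from the bound on $|m_i|$ alone. If the error there is only controlled for $\eta$ above an edge-dependent scale, we would instead choose $\eta$ as the smallest admissible scale and verify that it is still $(\log N)^{C}/N$, which Theorem~\ref{thm.main} asserts uniformly near regular edges and cusps.
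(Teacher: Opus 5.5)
Your argument is correct and is essentially the paper's. The paper derives this theorem from Theorem~\ref{thm.main} by the standard Green-function argument $|u_k(x)|^2\le\eta\,\Im G_{xx}(\lambda_k+\mathrm i\eta)$ with $\eta=(\log N)^{\fb}/N$, where $|G_{xx}|\lesssim 1$ comes from \eqref{eq.main} together with Proposition~\ref{p:tree-extension-bound}, which also covers the unicyclic neighborhoods you flagged; your net/Lipschitz step and the edge/cusp worry are unnecessary, since Theorem~\ref{thm.main} already holds simultaneously for all $z\in\bC_+(\mathcal I_\delta)$ with $\eta\ge(\log N)^{\fb}/N$, including off-support points, regular edges and regular cusps.
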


\subsection{Key ideas, challenges, and new contributions}
\label{subsec:key-ideas}
In Section \ref{sec.proof-strategy}, we present the overall architecture and main flow of the proof, which follows the classical bootstrap and continuation scheme for Green's function local laws; at its core are the two key propositions stated in Section \ref{sec.two-key-proposition}, which are proved in Section \ref{sec:stability} and Section \ref{sec:concentration}, respectively.

Our proof exploits the fact that random lifts are locally tree-like.
Since Green's functions on trees admit exact recursive descriptions, see Proposition \ref{greentree},
our basic strategy is to reduce the global resolvent problem to
calculations on local tree-like neighborhoods.  More precisely, we
approximate the Green's function near a given vertex, or pair of
vertices, by that of a suitable local neighborhood, while replacing
the exterior of the graph by effective boundary weights.  In this way,
the local geometry is retained and the distant environment enters only
through the boundary conditions.  The essential question is
therefore how these boundary weights should be chosen.

\paragraph{Boundary self-energies and empirical approximation.}
Localizing the graph removes the connections between the chosen
neighborhood and its exterior. These missing connections must be
accounted for, since the Green's function of the isolated neighborhood
does not capture the influence of the surrounding graph.
A natural first choice is to complete each missing boundary edge by the
corresponding truncated universal-cover branch.  The motivation is that,
for large $N$, a random lift is locally modeled by the universal cover of
the base graph.  On the universal cover itself, if one first localizes to
a finite neighborhood and then attaches back the missing branches of the
appropriate types, one exactly reconstructs the original universal-cover
tree.  Thus the corresponding universal-cover branches provide the most
natural boundary completion for the localized finite graph. See Figure \ref{fig:boundary-tree-completion}.

We denote the set of oriented edges of the base graph $\cH$ by $\vec E_\cH$. 
Hence, the universal cover contains $|\vec E_{\cH}|$ distinct types of branches, indexed by the directed edges of $\cH$.
For $e=(i,j)\in\vec E_\cH$, this branch, denoted by $\cY_e$, is rooted at a vertex of
type $j$, with the branch leading back to type $i$ removed.
Its root Green's function is denoted by $m_e(z)$, and attaching it along a missing
edge of type $e$ contributes the boundary self-energy $m_e(z)$.
For convenience, we will also write $m_{ij}$ in place of
$m_e$ when $e=(i,j)$, especially when the quantity appears as a
subscript.
The recursive structure of these branches gives the directed-edge
Dyson equation
\begin{equation}\label{eq.DE}
-\frac{1}{m_{ij}(z)}
=
z+\sum_{k\sim j, k\neq i}m_{jk}(z).
\vspace{-2mm}
\end{equation}

Although these deterministic weights give the correct limiting
boundary condition, replacing the exterior directly by them is not
sufficient for the improvement argument near regular edges and cusps.
In these regions, the Dyson equation becomes unstable, so a small
error in the equation can produce a substantially larger deviation
of its solution. A direct comparison with the deterministic boundary
condition does not separate this collective deviation from the local
fluctuations that can be reduced by averaging.
Following the principle introduced by Huang and Yau in
\cite{Huang2024Spectrum}, we therefore first approximate the exterior
using empirical boundary weights computed from the same graph. See Figure \ref{fig:boundary-self-energy}.
The resulting comparison retains the collective fluctuation in the
boundary condition and allows us to derive a self-consistent equation
for it. Only after controlling this equation do we replace the
empirical weights by the deterministic solution $m(z)$.

\begin{figure}[htbp]
    \centering
    \begin{minipage}{0.48\textwidth}
        \centering
        \includegraphics[width=\linewidth]{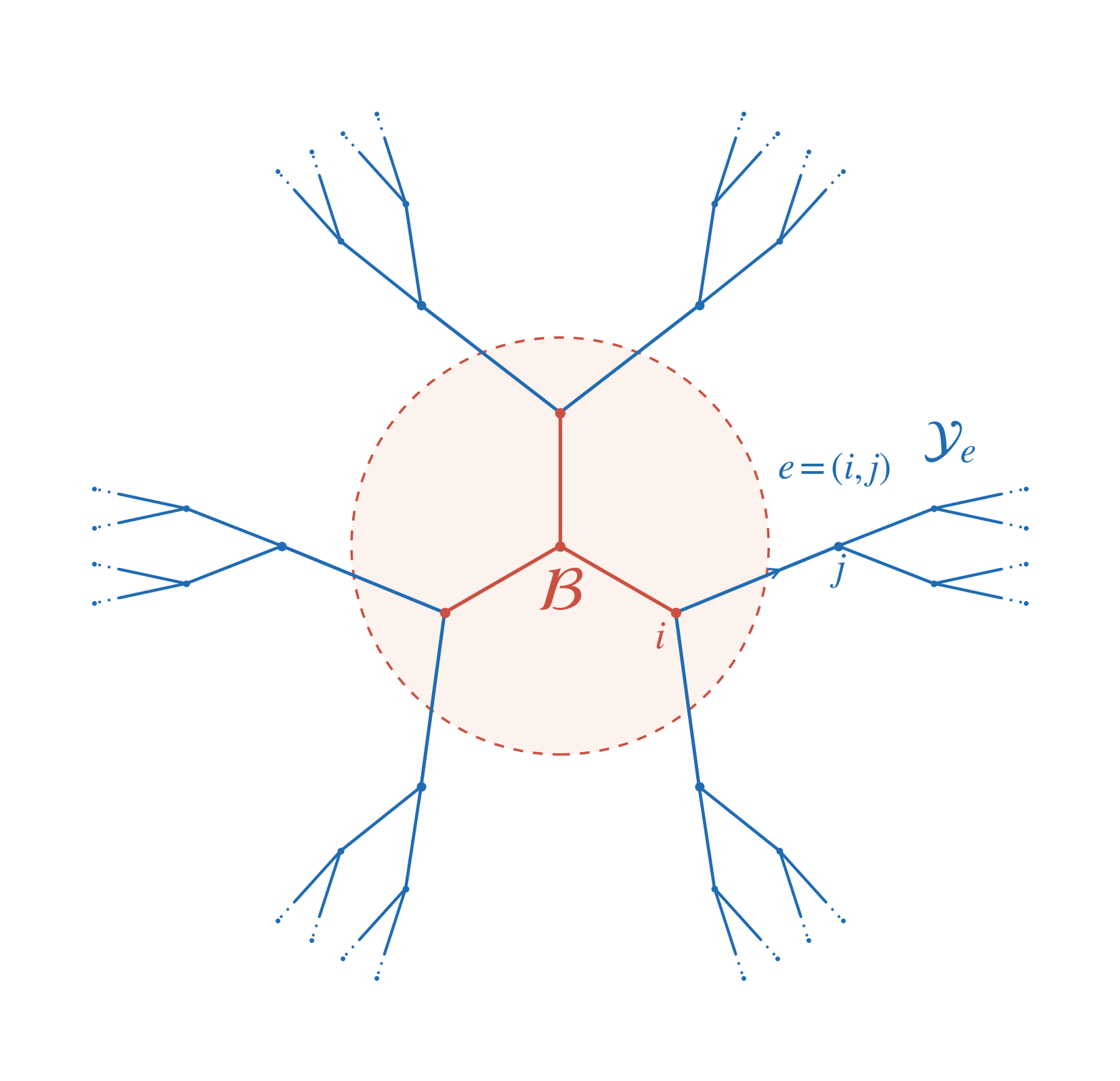}
        \caption{Boundary completion using universal-cover branches.}
        \label{fig:boundary-tree-completion}
    \end{minipage}
    \hfill
    \begin{minipage}{0.48\textwidth}
        \centering
        \includegraphics[width=\linewidth]{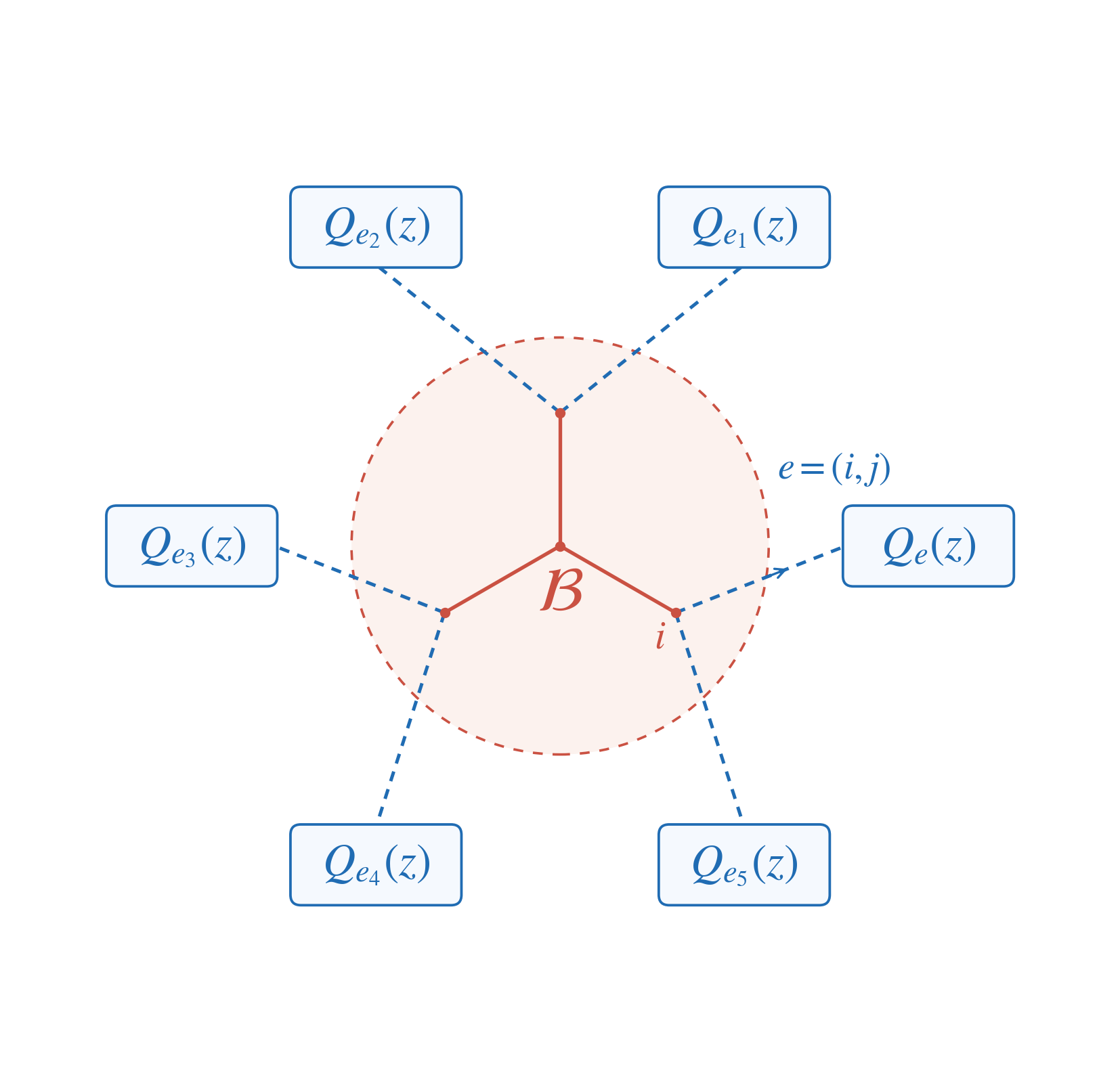}
        \caption{Boundary completion using empirical self-energies.}
        \label{fig:boundary-self-energy}
    \end{minipage}
\end{figure}

\paragraph{Choosing the empirical boundary weights.}
The first difficulty is to identify empirical quantities that are
compatible with both the tree recursion and the resolvent estimates.
In the regular setting, Huang and Yau use an average of
vertex-deleted Green's function entries
\cite{Huang2024Spectrum}. A direct use of such quantities in the
present setting would require control of additional operators obtained
by deleting vertices. This control cannot be inferred merely from
boundedness of the original Green's function.
Indeed, deleting a vertex changes the boundary conditions and the
spectrum of the auxiliary operator. Its Green's function may consequently
become large near an eigenvalue of the modified operator, even when the
original Green's function remains bounded at the same energy.
Thus the use of vertex-deleted observables introduces spectral
regularity questions that are not part of the original problem.
This consideration also guides the graph operations throughout our analysis: we carefully avoid deleting vertices, since doing so could introduce artificial singularities that are difficult to control. In particular, the localization and switching procedures developed later are designed to preserve all vertices while modifying their connections, thereby avoiding the need for separate spectral estimates on vertex-deleted operators.

We instead construct the empirical weights entirely from Green's
function entries of the unmodified graph.

\begin{definition}[Empirical vertex and directed-edge quantities]
\label{def:empirical-Q-p}
For each vertex type $i\in\bH$, define
\begin{equation}
\label{e:def-Q-ver}
Q^{\rm ver}(z):=(Q_i(z))_{i\in\bH},
\qquad
Q_i(z):=
\frac1N
\sum_{x\in\bG, \pi(x)=i}G_{xx}(z).
\vspace{-2mm}
\end{equation}
For a directed edge $e=(i,j)\in\vec E_{\cH}$ and a vertex $x$ with
$\pi(x)=i$, let $x_e$ denote the unique neighbor of $x$ corresponding
to the lift of $e$. Define
\begin{equation}
\label{e:def-p-edge}
p(z):=(p_e(z))_{e\in\vec E_{\cH}},
\qquad
p_e(z):=
\frac1N
\sum_{x\in\bG, \pi(x)=i}G_{x x_e}(z).
\vspace{-2mm}
\end{equation}
The empirical directed-edge weights are
\vspace{-3mm}
\begin{equation}
\label{e:def-Q-edge-new}
Q(z):=(Q_e(z))_{e\in\vec E_{\cH}},
\qquad
Q_e(z):=-\frac{p_e(z)}{Q_{t(e)}(z)},
\vspace{-4mm}
\end{equation}
where $t(e)$ denotes the initial vertex of $e$.
\end{definition}

This choice is motivated by the corresponding identity on the
universal cover. Let $G$ be the Green's function of the universal cover tree. For adjacent vertices $x,y$ with directed-edge
type $e$, we have
$
G_{xy}(z)=-m_e(z)G_{xx}(z);
$
see \eqref{e:Gtreemkm}. Thus $Q_e=-p_e/Q_{t(e)}$ is the empirical
analogue of the deterministic branch Green's function $m_e$.

An important advantage of this definition is that the empirical
quantities satisfy an exact vertex--edge relation.
Averaging the diagonal entries of $(A-z)G=I$ over vertices of type $i$
gives
\begin{equation}
\label{e:averaged-row-identity}
1+zQ_i
=
\sum_{j\sim i}p_{ij}.
\vspace{-5mm}
\end{equation}
Substituting $p_e=-Q_{t(e)}Q_e$, we obtain
\vspace{-3mm}
\begin{equation}
\label{e:new-Q-vertex-reconstruction}
-\frac1{Q_i}
=
z+\sum_{j\sim i}Q_{ij}.
\vspace{-2mm}
\end{equation}
Hence the empirical vertex quantities are reconstructed from the
empirical edge weights in exactly the same way as the corresponding
Green's functions on the universal cover. No additional error is introduced at this stage.

However, the directed-edge Dyson equation is only approximately
satisfied. For a vector of nonzero boundary weights $q=(q_e)_{e\in\vec E_\cH}$, we measure its defect by the Dyson residual
\begin{equation}\label{eq.Dyson-residual}
R(q,z)=(R_e(q,z))_{e\in\vec E_\cH},\qquad
R_{ij}(q,z)
:=
-\frac{1}{q_{ij}}
-z
-\sum_{k\sim j, k\neq i}q_{jk}.
\vspace{-2mm}
\end{equation}
Since $R(m,z)=0$ where $m$ is the solution in \eqref{eq.DE}, quantitative control of this residual, together
with stability and the appropriate a priori control, provides
estimates for the difference between $Q$ and $m$.  In this sense,
the Dyson residual is the natural quantity that measures the
deviation of $Q$ from the deterministic solution $m$ throughout the
entire regular spectral regime: in the bulk, near regular edges,
and near regular cusps.  The only difference between these regimes
lies in the stability with which a bound on $R(Q,z)$ can be converted
into a bound on $Q-m$.

\paragraph{Deterministic stability of the Dyson equation.}
The spectral properties of the universal cover tree are governed by
the directed-edge Dyson equation \eqref{eq.DE}.
Unlike the scalar equation in the regular case, this equation is
vector-valued and involves the generally non-symmetric
non-backtracking matrix $B_{\cH}$ of the base graph.
Our analysis therefore requires a detailed understanding of this equation,
including the spectral decomposition of its solution $m(z)$,
its boundedness, regularity, and singular behavior,
as well as the quantitative stability of the equation,
which allows us to convert bounds on the Dyson residual $R(Q,z)$
into estimates for $Q-m$.
The necessary theory was developed in our preceding work
\cite{huang2026nonsymmetricvectordysonequations};
see Section~\ref{sec.non-backtracking} for details.

\paragraph{Anisotropic propagation and elliptic neighborhoods.}
Another main difficulty comes from the loss of homogeneity of the
universal cover.
On a regular tree, Green's function propagation along a path is
governed by the same factor at every step. Consequently, vertices
at a common distance from the root have comparable influence on
its Green's function.
For a general base graph, the propagation factor depends on the
directed-edge type. Products along paths of the same length can
therefore differ exponentially in that length.
An ordinary graph-distance sphere need not provide a balanced
boundary for the averaging argument: a small collection of directions
may carry a disproportionately large part of the contribution.

To address this problem, we replace ordinary balls by
\emph{elliptic neighborhoods} adapted to Green's function propagation.
Their construction uses Perron--Frobenius normalized path weights
associated with the weighted non-backtracking operator.
Unlike the unnormalized products of the $m_e$, these weights decrease
along each path, and their squares satisfy a conservation relation
under branching.
We stop a path when its normalized weight falls below a prescribed
threshold, of order $2^{-\ell}$, rather than when it reaches a
prescribed graph distance.
The resulting boundary is balanced in terms of propagation weights.
This gives uniform control of root-to-boundary contributions and of
the boundary sums appearing in the resolvent comparison. See Definition \ref{def:elliptic-neighborhood} and Proposition \ref{prop:elliptic-neighborhood-estimates} for details.

\paragraph{Woodbury-based resolvent comparison.}
A further simplification in our argument is that we organize the
resolvent computations systematically through the Woodbury formula; see section \ref{sec:woodbury-switching} for details.
Since the switchings and local modifications that appear in the proof
are finite-rank perturbations, the Woodbury formula gives direct and
transparent expressions for the corresponding changes of the Green's
function.  This allows many resolvent comparisons to be treated within
a single framework and avoids repeated Schur-complement expansions.
As a result, the main estimates and cancellations become considerably
more explicit, and the overall proof is substantially shorter and
cleaner.

\subsection{Preliminaries}
In this subsection, we collect the notation and conventions used throughout the paper.

\paragraph{Notation} 
We denote by $\R^d$ and $\C^d$ the spaces of real- and complex-valued
vectors of dimension $d$, and by $\R^{d\times d}$ and $\C^{d\times d}$
the corresponding spaces of real and complex matrices.  We write
$|\cdot|_p$ for the $\ell_p$ norm on $\R^d$ or $\C^d$, and
$\|\cdot\|_p$ for the induced matrix $p\to p$ operator norm.
We use the notation
\[
a\vee b:=\max\{a,b\},
\qquad
a\wedge b:=\min\{a,b\},
\qquad
\qq{a,b}:=[a,b]\cap\Z.
\]
The complex upper half-plane is denoted by $\bC_+$, and the spectral
radius of a matrix by $\rho(\cdot)$.  For $z\in\bC_+$, we write
$
\eta:=\im z.
$

For positive quantities $f$ and $g$, we write $f\lesssim g$ if
$f\leq Cg$ for some constant $C>0$ depending only on fixed model
parameters and absolute constants. We write $f\gtrsim g$ if
$g\lesssim f$, and $f\sim g$ if both $f\lesssim g$ and
$f\gtrsim g$. In particular, $f\sim 1$ means that $f$ is bounded
above and below by positive constants depending only on fixed model
parameters and absolute constants. We write $f=\OO(g)$ if
$|f|\leq Cg$, and $f=\oo(g)$, or equivalently $f\ll g$, if
$|f|/g\to0$ as $N\to\infty$.

Throughout this paper, we always focus on simple graphs $\cG$, which means that they have no self-loops or multiple edges. Let $\cG=\left(\bG,E_\cG\right)$ be a connected undirected simple graph, where $\bG$ denotes the set of vertices and $E_\cG$ denotes the set of edges. The set of directed edges of graph $\cG$ is denoted by $\vec E_\cG$.
The degree of a vertex in the graph $\cG$ is denoted by $\deg_{\cG}(\cdot)$. The distance between two vertices, defined as the length
of a shortest path between them, is denoted by  $\dist_{\cG}(\cdot, \cdot)$. 
The diameter $\diam(\cG)$ of the graph $\cG$ is the largest distance between any pair of vertices. If two vertices $i,j\in \bG$ are adjacent, we write $i\sim j$.
For a finite graph $\cG$, we define its excess by $\operatorname{excess}(\cG) := |E_\cG|-|\bG|+\operatorname{comp}(\cG), $ where $\operatorname{comp}(\cG)$ denotes the number of connected components of $\cG$. Equivalently, $\operatorname{excess}(\cG)$ is
the number of independent cycles in $\cG$.

For any subset $\bT\subset\bG$, let $\cG^{(\bT)}$ denote the graph obtained by removing the vertices in $\bT$ together with all incident edges, and let $G^{(\bT)}$ denote its Green's function.
For any subgraph $\cal K \subset \cG$, we denote by $\del \cal K=\{v\in \bG: \dist_{\cG}(v,\cal K)=1\}$
the vertex boundary of $\cal K$ in $\cG$.
Given a subset $\bT$ of the vertex set of a graph $\cG$ and an integer $r>0$,
we denote by $\cB_r(\bT,\cG):=\{j \in \bG: \dist_{\cal G}(\bT,j) \leq r\}$  the radius-$r$ neighborhood of $\bT$ in $\cG$. In particular, $\cB_r(i,\cG)$ is the radius-$r$ neighborhood of the vertex $i$.

For a matrix $A$, we write $A|_{\bG}$ for the restriction of both row
and column indices to $\bG$, $A|_{\cdot,\bG}$ for the restriction of
the column indices to $\bG$, and $A|_{\bG,\cdot}$ for the restriction
of the row indices to $\bG$.

\paragraph{Conventions}
We reserve letters in mathfrak mode, e.g. $\fa, \fb, \fc$, to represent universal constants, and use $C,C_1, C_2,\cdots$, to represent large universal constants, which may vary from line to line. We use letters in mathcal mode, e.g. $\cB, \cG, \cT$, to represent graphs, or subgraphs, and letters in mathbb mode, e.g. $\bB, \bG, \bT$, to represent sets of vertices. We use letters in mathsf mode, e.g. $ \mathsf A, \mathsf B, \mathsf U, \mathsf W$ to represent index sets.

Products, quotients, and powers of vectors are understood componentwise. Thus, for vectors $u,v\in\mathbb C^d$ with nonzero entries of $v$,
$
uv=(u_1v_1,\dots, u_dv_d)$
and
$
u/v=\left(u_1/v_1,\dots,u_d/v_d\right).
$
Similarly, scalar functions applied to vectors are understood componentwise. Thus,
for \(v\in\mathbb C^d\), quantities such as
$
  \re v,\im v
$
and $|v|$
are defined by applying the corresponding scalar operation to each
component. For example,
$
  \im v=(\im v_1,\dots,\im v_d).
$
Inequalities between vectors or matrices are also understood componentwise. 

When a directed edge is used as a subscript, we write \(ij\) for the directed edge \((i,j)\); for instance, \(m_{ij}\) denotes \(m_{(i,j)}\).

\paragraph{Auxiliary results for Green's functions}
In Appendix~\ref{app:Green}, several well-known properties of Green's function are summarized;
they will be used throughout the paper.

\paragraph{AI Statement}
This project began in September 2025, during Ruizhe Xu's visit to Jiaoyang Huang at the University of Pennsylvania. 
In the course of addressing the technical challenges of this problem, we also developed a separate paper, Non-symmetric Dyson equations, which provides part of the mathematical foundation for the present work. This research history provides a traceable account of the development of our results.

The use of AI tools in preparing this manuscript was limited to improving the presentation and language and generating figures (all figures in this paper were created using AI). All mathematical ideas, proof strategies, and scientific contributions originated from the authors.

\paragraph{Acknowledgements}
We thank Charles Bordenave and Wenbo Li for helpful notes. Ruizhe Xu is grateful to Zhonggen Su for his support and to the University of Pennsylvania for its hospitality and support during his visit.
The research of J.H. was supported in part by NSF Grant DMS-2246664, DMS-2331096 and a Sloan Research Fellowship.

\section{Tree-like Graphs}\label{sec.tree-like}
Recall the constants $0<\fc<1$ and $\fR=(\fc/4)\log N$.
The following proposition is the basic geometric input for our
spectral analysis. It shows that a uniformly random $N$-lift belongs,
with high probability, to the class $\bar\Omega$ of radius-$\fR$
tree-like graphs introduced in Definition~\ref{def:barOmega}.
Thus, the local law established in this paper on $\bar\Omega$ describes the
spectral behavior of a typical random lift.
\begin{proposition} \label{prop:structure}
Let $\cH$ be a base graph with minimum degree at least three. For all sufficiently large $N$, a uniformly random $N$-lift $\cG\in\GNH$ satisfies
\begin{equation}\label{eq.tree-like}
  \mathbb{P}\bigl(\cG\notin\bar\Omega\bigr)\lesssim N^{-(1-\fc)}.
\end{equation}
\end{proposition}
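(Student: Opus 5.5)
We bound separately the probability that each of the two conditions in Definition~\ref{def:barOmega} fails, using a first-moment count of small subgraphs in the random lift. The basic input is the standard embedding estimate for random lifts. Fix a finite connected graph $\cK$ together with a graph homomorphism $\phi:\cK\to\cH$ that is a local injection, so that it could arise as a subgraph of a lift. Then the expected number of copies of $\cK$ in $\cG$ projecting via $\phi$ is at most
\[
N^{|V(\cK)|}\prod_{e\in E_\cH}\frac{(N-k_e)!}{N!}\le N^{|V(\cK)|}\,(N-|E(\cK)|)^{-|E(\cK)|}\lesssim N^{-\operatorname{excess}(\cK)+1},
\]
as long as $|E(\cK)|=\oo(\sqrt N)$. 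Here $k_e$ is the number of edges of $\cK$ over $e$. The estimate holds because the matchings over distinct base edges are independent uniform perfect matchings, and a prescribed set of $k$ pairs appears in a uniform matching of $N$ pairs with probability $(N-k)!/N!$. For the cases we need, $|E(\cK)|\lesssim \fR=\OO(\log N)$, so $(1-|E(\cK)|/N)^{-|E(\cK)|}=1+\oo(1)$.

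For the first condition, suppose some ball $\cB_\fR(x,\cG)$ contains two independent cycles. Then $\cG$ contains a connected subgraph $\cK$ with excess $2$ and at most $4\fR+2$ edges. Concretely, take the union of two cycles, each of length at most $2\fR+1$, joined to $x$ by geodesic paths, and prune it to a minimal excess-$2$ core: a theta, dumbbell, or figure-eight shape with total length at most $\OO(\fR)$. We enumerate such shapes by choosing at most three path lengths, each $\OO(\fR)$, and a lift of each path into $\cH$, of which there are at most $d_{\rm max}(d_{\rm max}-1)^{\ell}$ for length $\ell$. This gives at most $\OO(\fR^3)(d_{\rm max}-1)^{4\fR+\OO(1)}\lesssim(\log N)^3N^{\fc}$ candidate shapes. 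Each shape has expected count $\lesssim N^{-1}$, so the probability of the first failure is $\lesssim (\log N)^3N^{\fc-1}$. To obtain the clean bound $N^{-(1-\fc)}$ without the log factor, we use the slack in $\fR=(\fc/4)\log N$: the excess-$2$ core actually has total length at most about $4\fR$ only in the worst case, while the counting exponent is $(d_{\rm max}-1)^{\text{length}}\le N^{\fc}$. The polylogarithmic factor can then be absorbed by shrinking the constant slightly, or accepted with $\lesssim$ up to redefining $\fc$. I expect this bookkeeping to be the main nuisance.

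For the second condition, we bound the expected number of vertices whose $\fR$-ball contains a cycle. Such a vertex $x$ lies on a "lollipop": a cycle of length $\ell\le 2\fR+1$ together with a path of length at most $\fR$ from $x$ to the cycle, which is a subgraph of excess $1$. The count of lifted shapes is $\lesssim \fR^2(d_{\rm max}-1)^{3\fR+1}$, and each has expected count $\lesssim N^{0}$. Hence the expected number of such vertices is $\lesssim (\log N)^2N^{3\fc/4}$. Markov's inequality then gives $\P(\#>N^{\fc})\lesssim (\log N)^2N^{-\fc/4}$, which is not good enough.

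To fix this, we use the first condition. On the complement of the first failure event, every bad vertex lies within distance $\fR$ of a cycle of length at most $2\fR+1$. Instead of Markov's inequality we apply a higher-moment bound. The $k$-th factorial moment of the number of short cycles is dominated by disjoint configurations with excess $k$ in total, and this yields $\P(\#\text{short cycles}\ge k)\lesssim (N^{\fc/2})^{k}/k!$. More simply, we union bound on pairs: two distinct short cycles at distance $\le 2\fR$ form an excess-$2$ structure (already excluded), while two far-apart cycles contribute expected count $\lesssim(\log N)^2N^{\fc}\cdot N^{0}$. Combining these, on the good event the number of short cycles is $\OO(1)$ with probability $1-\OO(N^{-(1-\fc)})$, and each such cycle has at most $(\log N)\,(d_{\rm max}-1)^{\fR}\le N^{\fc}$ vertices within distance $\fR$, for $N$ large. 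I expect this second-moment control of the number of short cycles to be the main obstacle, and I would try the factorial-moment computation for cycle counts first.
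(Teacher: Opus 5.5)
Your argument for the second condition of Definition~\ref{def:barOmega} has a genuine gap. You are right that Markov's inequality on the first moment is too weak. But your replacement claim, that with probability $1-\OO(N^{-(1-\fc)})$ there are only $\OO(1)$ cycles of length at most $2\fR+1$, is false. Every directed edge of $\cH$ has at least two non-backtracking successors, so $\rho(B_\cH)\ge 2$. The number $K$ of such cycles then has mean of order $\sum_{\ell\le 2\fR+1}\operatorname{tr}(B_\cH^{\ell})/(2\ell)$, which is a positive power of $N$ (about $N^{\fc/2}/\log N$ for a regular base graph), and $K$ is typically of that size. Your own bound $\P(K\ge k)\lesssim (N^{\fc/2})^k/k!$ is vacuous for $k\lesssim N^{\fc/2}$, so it cannot give boundedness. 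Your estimate of order $N^{\fc}$ for pairs of far-apart cycles is likewise consistent with $K\approx N^{\fc/2}$, not with $K=\OO(1)$. The pair step is also wrong on its own terms: two short cycles at distance up to $2\fR$ need not lie in a common radius-$\fR$ ball, so the first condition does not exclude them. The concluding step, $\OO(1)$ cycles times $(\log N)(d_{\max}-1)^{\fR}$ vertices each, is therefore unsupported.

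What is needed is a moment bound of fixed order at the right threshold. Every vertex whose radius-$\fR$ ball contains a cycle lies within distance $\fR$ of a cycle of length at most $2\fR+1$. Hence the number of such vertices is at most $CK\fR(d_{\max}-1)^{\fR}=CK\fR N^{\fc/4}$, and it suffices to show $\P(K\ge t)\lesssim N^{-(1-\fc)}$ for $t\asymp N^{3\fc/4}/\log N$. For fixed $k$, your embedding estimate bounds the falling factorial moment by $\E[(K)_k]\le C_kN^{k\fc/2}$. Vertex-disjoint $k$-tuples give $(CN^{\fc/2})^k$. Any cluster of two or more overlapping distinct cycles has excess at least $2$, so its contribution carries an extra factor $(\log N)^{C}N^{-1}$. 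Markov's inequality for $(K)_k$ then gives $\P(K\ge t)\le C_k(N^{-\fc/4}\log N)^k\le N^{-(1-\fc)}$ once $k>4(1-\fc)/\fc$. (The paper instead adapts the sequential-exposure argument of \cite{Bauer2019Local} to the permutations over the edges of $\cH$.)

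A smaller point concerns the first condition. Pruning to cores loses a polylogarithmic factor, and you cannot absorb it by adjusting $\fc$, which is fixed both in $\fR$ and in the bound. Count the witness rooted at the center $x$ instead: at most four geodesics of length at most $\fR$ from $x$, plus two extra edges. The depth constraint makes each length summation geometric and dominated by its top term, giving $\lesssim(d_{\max}-1)^{4\fR}/N=N^{-(1-\fc)}$. This is the bound the paper imports from \cite[Lemma~27]{Bord15}.
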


\begin{proof}
By \cite[Lemma 27]{Bord15}, the probability that every radius-$\fR$
neighborhood in $\cG$ contains at most one cycle is at least
$
1-\OO((d_{\rm max}-1)^{4\fR}/N)
=
1-\OO(N^{-(1-\fc)}).$
Moreover, a slight adaptation of the argument proving
\cite[Proposition 4.1, (4.3)]{Bauer2019Local} to the permutation model of random lifts gives
\[
\mathbb{P}\left(
 \left|
 \left\{
 x\in\bG:
 \cB_{\fR}(x,\cG)\text{ contains a cycle}
 \right\}
 \right|
 >
 (d_{\rm max}-1)^{4\fR}
\right)
\lesssim N^{-(1-\fc)}.
\]
Here, the configuration-model exploration in
\cite{Bauer2019Local} is replaced by the corresponding sequential
exposure of the permutations associated with the edges of the fixed
base graph $\cH$.
Combining the two estimates with a union bound yields \eqref{eq.tree-like}.
\end{proof}

\subsection{Quadratic vector equation and universal cover trees}\label{sec:TE}
In this section, we introduce the quadratic vector equation corresponding to the non-backtracking matrix of the base graph $\cH=(\bH,E_\cH)$ and list some properties of its solution. Then, we calculate the Green's functions of the universal cover of the base graph $\cH$.

\subsubsection{The base graph and the quadratic vector equation}\label{sec.non-backtracking}

The \emph{adjacency matrix} of \( \cH \) is denoted by \( H = (H_{ij})_{i,j \in \bH} \), where
$
H_{ij} = 1 $ if $\{i,j\}\in E_\cH$; $H_{ij}=0$ otherwise.
In addition to the adjacency matrix, it is often convenient to encode the local structure of \( \cH \) 
using the \emph{non-backtracking matrix} (also called the Hashimoto matrix). 
Recall that $\vec E_{\cH}$ denotes the set of directed edges of $\cH$.
For a directed edge \(e=(i,j)\), we write
$
t(e)=i$ and $h(e)=j
$
for its tail and head, respectively, and denote the reversed edge by
$
\bar e=(j,i).
$
For oriented edges \(e,f\in \vec E_{\cH}\), we write
$
e\rightsquigarrow f
$
if \(f\) follows \(e\) without backtracking, that is,
$
h(e)=t(f)
$
and
$
f\neq \bar e.
$
For vertices \(i,j\in\bG\) where $\cG$ is a lift of $\cH$, we use
$
i\leadsto j
$
to denote a non-backtracking path in \(\cG\) from \(i\) to \(j\).
The non-backtracking matrix of \(\cH\) is the
$
|\vec E_{\cH}|\times |\vec E_{\cH}|
$
matrix \(B_{\cH}=(B_{ef})_{e,f\in\vec E_\cH}\), indexed by oriented edges, defined by
$
B_{ef}
=\mathbf{1}_{e\rightsquigarrow f}.
$
Thus, \(B_{\cH}\) records the allowed one-step transitions of non-backtracking walks on \(\cH\). We shall also use the shorthand \(B:=B_{\cH}\).

\medskip

For \( z \in \mathbb{C}_+\), we consider a vector \( m(z): = (m_e)_{e \in \vec{E}_\cH} \in \mathbb{C}_+^{\vec{E}_\cH} \) that satisfies the 
\emph{Dyson equation} \eqref{eq.DE}.
The vector \( m(z) \) provides an analytic description of the resolvent of the adjacency operator on the forward universal-cover branches, 
and plays a central role in the study of local spectral statistics and the limiting spectral distribution 
of random lifts of \( \cH \). Besides $m_{ij}(z)$, we also define
\begin{equation}\label{eq.mii}
  m^{\rm ver}=(m_i)_{i\in\bH},\qquad\mbox{where~}m_{i}(z):=\frac{-1}{z+\sum_{j\sim i}m_{ij}(z)}, \qquad i\in \bH.
\end{equation}

Some properties of the solution to the Dyson equation \eqref{eq.DE} are listed below. Before stating the results, we introduce the following notation.
For a set $I$ contained in $\R$, set $ \mathbb C_+(I) := \{z\in\mathbb C_+:\Re z\in I\}$.
The first lemma shows the basic analytic properties of the solution to the Dyson equation, together with the structure of its singular set and regular spectral support.

\begin{lemma}[Theorem 2.1 and Theorem 2.3 of \cite{huang2026nonsymmetricvectordysonequations}]\label{lem:NB-Dyson-existence}
For every $z\in\mathbb C_+$, equation \eqref{eq.DE} has a unique
solution
$
m(z)\in\mathbb C_+^{\vec E_{\mathcal H}},
$
and the map $z\mapsto m(z)$ is analytic. 
There exists a finite set 
\begin{equation}
  \mathcal A_\cH:=\left\{\tau\in\R: \lim_{\eta\downarrow 0}|m_e(\tau+i\eta)|=\infty~\text{for some $e\in \vec E_\cH$}\right\}
\end{equation}
such that the boundary value
$
m(\tau)
:=
\lim_{\eta\downarrow0}m(\tau+\mathrm i\eta)
$
exists and is finite for all $\tau\in\mathbb R\setminus\mathcal A_\cH$. 
For every
$e\in\vec E_{\mathcal H}$, there exists a unique probability measure
$\nu_e$ such that
\[
m_e(z)
=
\int_{\mathbb R}\frac{\nu_e(\mathrm d\tau)}{\tau-z}.
\]
The measure $\nu_e$ admits the decomposition
\[
\nu_e(\mathrm d\tau)
=
\rho_e(\tau)\,\mathrm d\tau
+
\sum_{\tau_j}
w_{e,\tau_j}\,\delta_{\tau_j}(\mathrm d\tau),
\vspace{-3mm}
\]
where $\tau_j\in\mathcal A_\cH$ is an atomic energy,
$w_{e,\tau_j}>0$ are the corresponding atomic weights, and
$\rho_e$ denotes the density of the absolutely continuous part. 
All these measures are supported in the compact interval $[-\Sigma,\Sigma]$ where $\Sigma:=2\|B_\cH\|_\infty^{1/2}\le 2\sqrt{d_{\rm max}-1}$.

For a sufficiently small parameter $\delta>0$, we define the regular support away from the singular points by 
\[ \mathcal S_\delta := \left\{ \tau\in\R: \dist(\tau,\mathcal A_\cH)>\delta~\mbox{and}~\sum_{e\in\vec E_\cH}\rho_e(\tau)>0 \right\}.\]
Then $\mathcal S_\delta$ is a finite union of bounded open intervals. 
\end{lemma}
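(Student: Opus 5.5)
The plan is to derive everything from the operator-theoretic meaning of $m_e$ as a root Green's function of the branch $\cY_e$, and to use the algebraic nature of \eqref{eq.DE} for finiteness.

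\emph{Step 1 (existence and uniqueness).} For existence, take $m_e(z):=G^{\cY_e}_{oo}(z)$, the root diagonal resolvent entry of the adjacency operator on the branch $\cY_e$. Since $\cY_e$ has bounded degree, this operator is bounded and self-adjoint. Removing the root splits $\cY_e$ into the branches $\cY_f$ with $e\rightsquigarrow f$, so the Schur complement formula gives \eqref{eq.DE}, and clearly $m_e\in\bC_+$. The spectral theorem then yields the probability measure $\nu_e$ (the spectral measure of $\delta_o$) and the Stieltjes representation; hence $z\mapsto m(z)$ is analytic.

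For uniqueness, I would show that the map $\Phi(q)_{ij}:=-1/(z+\sum_{k\sim j,k\ne i}q_{jk})$ is a strict contraction on $\bC_+^{\vec E_\cH}$ in the Carathéodory/hyperbolic metric (the Earle–Hamilton argument). First restrict to $\im z$ large, where $\Phi$ maps a bounded region strictly inside itself; then extend uniqueness to all of $\bC_+$ by analytic continuation. Any $\bC_+$-valued solution is a Nevanlinna function once we know analyticity, and comparing two solutions via $\im q_e\ge \im z\,|q_e|^2+\sum_f B_{ef}|q_e|^2\im q_f$ forces equality.

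\emph{Step 2 (support bound).} The support bound follows from the norm of the branch adjacency operator. Use a weighted Schur test with Perron–Frobenius weights for $B_\cH$; equivalently, the adjacency operator on a tree is bounded by $2\sqrt{\max\#\text{children}}$, so $\Sigma\le 2\|B_\cH\|_\infty^{1/2}$.

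\emph{Step 3 (boundary values, the set $\cA_\cH$, and the measure decomposition).} This is the main obstacle. Clearing denominators, \eqref{eq.DE} becomes a polynomial system in $(m,z)$, so each $m_e$ is an algebraic function of $z$; I would justify this by elimination, using that the solution set is zero-dimensional by uniqueness. An algebraic function has finitely many branch points and poles, so $\lim_{\eta\downarrow0}m(\tau+\mathrm i\eta)$ exists in $\bC\cup\{\infty\}$ for every $\tau$. It is infinite only at finitely many $\tau$, which gives that $\cA_\cH$ is finite. The Stieltjes inversion formula then gives absolute continuity off $\cA_\cH$ with density $\rho_e=\pi^{-1}\im m_e(\tau)$. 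At points of $\cA_\cH$ the Puiseux expansion shows that the only possible singular parts are atoms, with weight $\lim \eta\,\im m_e$, so there is no singular continuous part. Finally, $\mathcal S_\delta$ is the set where the real-analytic (off branch points) nonnegative function $\sum_e\im m_e$ is positive, intersected with a set bounded away from $\cA_\cH$. Its boundary is contained in the finitely many branch points together with $\cA_\cH\pm\delta$, so $\mathcal S_\delta$ is a finite union of bounded open intervals.

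The delicate point is rigor in the algebraicity and elimination step. An alternative, which I would rely on as the cited result, is the stability analysis of \cite{huang2026nonsymmetricvectordysonequations} through the Perron–Frobenius theory of the weighted matrix $|m|^2B$.
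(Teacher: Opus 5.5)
Your route is sound in outline but genuinely different from the paper's. The paper gives no argument for this lemma and imports it from Theorems 2.1 and 2.3 of the companion paper. There it sits inside a quantitative analysis of the non-symmetric vector Dyson equation: boundedness, regularity, singular behaviour, and stability of $I-m^2B$. You instead get existence and the Stieltjes representation from the branch resolvent (as in Proposition~\ref{greentree}), and then every qualitative spectral claim from algebraicity of $m$.

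The algebraicity step is fine once you note that the irreducible component of the polynomial system through the graph of $m$ is a curve. The cleanest justification is that the Jacobian of $q\mapsto R(q,z)$ at $m$ is $\operatorname{diag}(m^{-2})(I-m^2B)$. This is invertible because $\rho(m^2B)\le\rho(|m|^2B)<1$, by the imaginary part of \eqref{eq.DE} and Perron--Frobenius.

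Puiseux expansions then give:
\begin{itemize}
\item boundary values in $\bC\cup\{\infty\}$;
\item finiteness of $\mathcal A_\cH$;
\item absolute continuity off $\mathcal A_\cH$;
\item a purely atomic singular part, since it lives on the finite set $\mathcal A_\cH$;
\item finitely many bands.
\end{itemize}
This is self-contained and cheap for the qualitative statements, but it gives no quantitative control. The companion paper's analysis is what also supplies the uniform bound on $\bC_+(\mathcal I_\delta)$, the edge/cusp exponents, and the stability estimates of Lemma~\ref{lem.stability2} used later.

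One step fails as written. Uniqueness cannot be extended from large $\Im z$ by analytic continuation. That only identifies analytic families of solutions with $m$, and says nothing about a second solution at a fixed $z$; your remark that a competitor is Nevanlinna ``once we know analyticity'' presupposes exactly this. Neither device is needed.
\begin{itemize}
\item \emph{Earle--Hamilton at each fixed $z$.} For each $z\in\bC_+$, every solution in $\bC_+^{\vec E_\cH}$ has $|q_e|\le 1/\eta$. Moreover, $\Phi$ maps $\{q\in\bC_+^{\vec E_\cH}:|q_e|<2/\eta\}$ into a compact subset of itself. So Earle--Hamilton applies at that $z$ directly.
\item \emph{Direct comparison.} Alternatively, $u=q-q'$ satisfies $u=\operatorname{diag}(qq')Bu$. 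With $y=\Im q/|q|$, the imaginary part of \eqref{eq.DE} reads $y=\eta|q|+\operatorname{diag}(|q|)B\operatorname{diag}(|q|)y$, and likewise for $q'$. Cauchy--Schwarz then gives $Sw<w$ for $S=\operatorname{diag}(|qq'|^{1/2})B\operatorname{diag}(|qq'|^{1/2})$ and $w=(yy')^{1/2}>0$. Hence $\rho(S)<1$, and $v=|u|/|qq'|^{1/2}\le Sv$ forces $u=0$, with no analyticity of $q'$ needed.
\end{itemize}

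Two smaller corrections.
\begin{itemize}
\item The boundary of $\{\sum_e\rho_e>0\}$ can a priori also contain isolated zeros of $\sum_e\Im m_e$ that are not branch points. Finiteness survives: the function is real-analytic on each of the finitely many branch-point-free intervals, and real-analytic in $|\tau-\tau_0|^{1/p}$ on either side of each branch point $\tau_0$. So zeros cannot accumulate.
\item In Step 2, Perron--Frobenius weights for $B_\cH$ are not what gives $\Sigma$. For biregular covers the operator norm $\sqrt{a-1}+\sqrt{b-1}$ exceeds $2\rho(B_\cH)^{1/2}$. Your max-children bound gives exactly $2\|B_\cH\|_\infty^{1/2}$, since $\|B_\cH\|_\infty=d_{\rm max}-1$.
\end{itemize}
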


\begin{remark}\label{rem:vertex-from-edge}
The preceding theorem concerns the directed-edge
quantities. By \eqref{eq.mii}, the vertex Green
functions $m^{\rm ver}=(m_i)_{i\in\bH}$, and hence
their associated spectral measures $\nu_i$ and
densities $\rho_i$, are determined by the
directed-edge Green functions
$(m_e)_{e\in\vec E_{\cH}}$.

In particular, the density-of-states measure
$\nu_{\cH}$ defined in \eqref{def.sum-measure}
satisfies
\[
\nu_{\cH}
=\frac{1}{|\bH|}\sum_{i\in\bH}\nu_i,
\qquad
\rho_{\cH}
=\frac{1}{|\bH|}\sum_{i\in\bH}\rho_i.
\]
Moreover, it follows from Lemma \ref{lem:NB-Dyson-regularity} that
\[
\rho_{\cH}(\tau)
\sim
\sum_{e\in\vec E_{\cH}}\rho_e(\tau)
\]
throughout the regular spectral region.
Consequently, the vertex and directed-edge
densities have the same positivity set, and
the corresponding regularity and boundary
properties of the vertex quantities follow
from those of the directed-edge quantities.
\end{remark}

\begin{remark}
The purpose of introducing $\mathcal S_\delta$ is to exclude the singular
points at which the solution blows up, since such blow-up destroys most of
the regularity and stability properties required in the subsequent analysis.
As $\delta$ decreases to zero, the set $\mathcal S_\delta$ approaches
the positive-density region of the absolutely continuous spectrum away from the singular points. Thus,ww a smaller choice of $\delta$ retains a
larger portion of the real line and, in this sense, yields a more precise
description of the solution. On the other hand, every regular edge or cusp
that does not coincide with an atomic singularity admits a neighborhood on
which the solution remains uniformly bounded. Hence, by choosing
$\delta$ sufficiently small, removing the $\delta$-neighborhoods of
the singular points does not exclude such regular edges or cusps.
\end{remark}

Since the spectrum of an \(N\)-lift need not be contained in \(\mathcal S_\delta\), while it is always contained in \([-d_{\max},d_{\max}]\), we introduce the larger spectral domain
$$
\mathcal I_\delta
:=
\left\{
\tau\in[-d_{\max},d_{\max}]
:
\dist(\tau,\mathcal A_{\mathcal H})\ge \delta
\right\}.
$$ 
Clearly, it follows that \(\mathcal S_\delta\subset \mathcal I_\delta\).
All subsequent analysis in this paper will be carried out on the domain $\bC_+(\mathcal I_\delta)$.
Moreover, by Theorem 2.1 and Theorem 2.3 of \cite{huang2026nonsymmetricvectordysonequations}, $m(z)$ is uniformly bounded on $\bC_+(\mathcal I_\delta)$:
$
\sup_{z\in\mathbb C_+(\mathcal I_\delta)}|m(z)|_2\le\Phi_\delta<\infty,
$
where $\Phi_\delta$ depends on the model parameters and $\delta$.

We next classify the regular spectral points according to the local structure of the support.

\begin{definition}[Spectral regimes]
\label{def:NB-bulk-edge-cusp}
Fix the parameter $\delta>0$ as in Lemma
\ref{lem:NB-Dyson-existence}. A point $\tau\in\mathcal I_\delta$ is called
\begin{itemize}
    \item an \emph{off-support point} if
    $
    \tau\in \mathcal I_\delta\setminus \overline{\mathcal S_\delta}.
    $
    \item a \emph{regular bulk point} if $\tau\in\mathcal S_\delta$;
    \item a \emph{regular edge} if $\tau\in\partial\mathcal S_\delta$, $\rho_\cH(\tau)=0$, and $\tau$ is the
    endpoint of exactly one connected component of $\mathcal S_\delta$. We denote by $\mathcal S_{\rm edge}$ the set of all regular edges.
    \item a \emph{regular cusp} if $\tau\in\partial\mathcal S_\delta$, $\rho_\cH(\tau)=0$, and $\tau$ is the
    common endpoint of two connected components of $\mathcal S_\delta$. We denote by $\mathcal S_{\rm cusp}$ the set of all regular cusps.
\end{itemize}
\end{definition}

We next record the uniform bounds and regularity of the Dyson solution, together with the characteristic square-root and cubic-root behavior at regular edges and cusps.
The proof of the following lemma is in the Appendix \ref{a:ext}.

\begin{lemma}[Boundedness and regularity] \label{lem:NB-Dyson-regularity} 
  For every $e\in\vec E_{\mathcal H}$ and $i\in \bH$ and $z=E+i\eta\in\mathbb C_+(\mathcal I_\delta)$, we have
  \begin{equation}\label{eq.mbound-comparable}
    |m_e(z)|\sim|m_{i}(z)| \sim \frac{1}{1+|z|}, \quad \Im m_e(z)\sim\Im m_{i}(z)\sim\begin{cases}
|z-\tau|^{1/2},
&
\begin{array}{l}
z\text{ is close to a regular edge }\tau,\\[-1mm]
E\text{ lies on the spectral side of }\tau,
\end{array}
\\[4mm]
\dfrac{\eta}{|z-\tau|^{1/2}},
&
\begin{array}{l}
z\text{ is close to a regular edge }\tau,\\[-1mm]
E\text{ lies on the off-support side of }\tau,
\end{array}
\\[5mm]
|z-\tau|^{1/3},
&
z\text{ is close to a regular cusp }\tau,
\\[2mm]
(1+|z|)^{-1},
&
z\text{ is in the stable bulk},
\\[2mm]
\dfrac{\eta}{(1+|z|)^2},
&
z\text{ is in the stable off-support regime}.
\end{cases}
  \end{equation}
Moreover, the solution $m_e(z)$ and density $\rho_e(\tau)$ are uniformly $\alpha$-H\"older continuous on $\overline{\mathbb C_+(\mathcal I_\delta)}$ and $\mathcal S_\delta$ respectively, where $\alpha$ is a finite positive exponent depending only on the model parameters.  
\end{lemma}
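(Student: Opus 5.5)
The plan is to derive everything for the directed-edge quantities $m_e$ from the structure theory of the non-symmetric vector Dyson equation in \cite{huang2026nonsymmetricvectordysonequations}, and then transfer it to the vertex quantities $m_i$ through \eqref{eq.mii}.

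\emph{Upper bounds on $|m_e|$.} On $\bC_+(\mathcal I_\delta)$ with $|z|\lesssim 1$, the bound $\sup|m(z)|_2\le\Phi_\delta$ quoted after Lemma~\ref{lem:NB-Dyson-existence} already gives $|m_e|\lesssim 1$. For $|z|\ge 2\Sigma$, the Stieltjes representation $m_e(z)=\int\nu_e(\mathrm d\tau)/(\tau-z)$ with $\supp\nu_e\subset[-\Sigma,\Sigma]$ gives $|m_e(z)|\le 1/\dist(z,[-\Sigma,\Sigma])\lesssim 1/(1+|z|)$.

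\emph{Lower bounds on $|m_e|$.} From \eqref{eq.DE},
\[
|m_{ij}|^{-1}=\Big|z+\sum_{k\sim j,\,k\ne i}m_{jk}\Big|\le |z|+C,
\]
so $|m_{ij}|\gtrsim 1/(1+|z|)$. The same two arguments applied to \eqref{eq.mii} give $|m_i|\sim 1/(1+|z|)$, since $m_i$ is also a Stieltjes transform of a probability measure $\nu_i$ supported in $[-\Sigma,\Sigma]$.

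\emph{Imaginary parts.} I would first reduce everything to a single scalar profile. Taking imaginary parts in \eqref{eq.DE} gives
\[
\frac{\Im m_{ij}}{|m_{ij}|^2}=\eta+\sum_{k\sim j,\,k\ne i}\Im m_{jk},
\qquad\text{that is}\qquad
\Im m = |m|^2\eta + D_{|m|^2}B\,\Im m .
\]
Here $D_{|m|^2}B$ is a nonnegative matrix with the same irreducibility pattern as $B_\cH$. Since minimum degree at least three makes $B_\cH$ irreducible, and $|m|^2\sim 1$ entrywise, $\Im m$ is comparable entrywise to a Perron--Frobenius-type vector. The plan is:
\begin{itemize}
\item When $\eta$ is small and $1-\rho(D_{|m|^2}B)$ is small, $\Im m$ is close to a multiple of the Perron eigenvector, so all components $\Im m_e$ are comparable to one another.
\item When $\rho(D_{|m|^2}B)$ is bounded away from $1$, the identity gives $\Im m\sim \eta$ entrywise. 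This is the stable off-support case $\Im m_e\sim \eta/(1+|z|)^2$, with the $|z|$ dependence coming from $|m|^2\sim(1+|z|)^{-2}$ together with the large-$|z|$ Stieltjes expansion.
\end{itemize}
Comparability $\Im m_i\sim\Im m_e$ follows by taking imaginary parts of \eqref{eq.mii}.

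The bulk, edge and cusp profiles themselves ($|z-\tau|^{1/2}$, $\eta/|z-\tau|^{1/2}$, $|z-\tau|^{1/3}$, and $\sim 1$ in the bulk) I would take from the local normal form near a regular edge or cusp established in \cite[Theorem~2.3]{huang2026nonsymmetricvectordysonequations}. That normal form projects the Dyson equation onto the Perron direction of the stability operator $1-D_{m^2}B$, producing a scalar cubic equation $\theta^3+\alpha\theta^2+\beta\theta=\xi(z)$ for the component $\theta$ of $\Im m$ along that direction. Its root behaviour then gives:
\begin{itemize}
\item square-root behaviour when the quadratic coefficient is nonzero (a regular edge);
\item cube-root behaviour when it vanishes (a regular cusp);
\item $\Im m\sim \eta/\sqrt{\kappa+\eta}$ on the gap side of an edge, where $\kappa$ is the distance from $E$ to the edge.
\end{itemize}
In the bulk, positivity of $\rho$ plus continuity gives $\Im m_e\sim 1$ on compact subsets of $\mathcal S_\delta$. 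Extending from $\eta\ll 1$ to all $\eta$ uses monotonicity of the Stieltjes transform, which gives $\Im m\gtrsim \eta/(1+|z|)^2$ for $\eta\gtrsim1$.

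\emph{H\"older continuity.} Differentiating \eqref{eq.DE} gives
\[
(1-D_{m^2}B)\,\partial_z m = m^2 .
\]
By the stability bound from the companion paper, $\|(1-D_{m^2}B)^{-1}\|\lesssim (\Im m)^{-2}$ in the worst (cusp) regime. Hence $|\partial_z m|\lesssim |z-\tau|^{-2/3}$, which integrates to $1/3$-H\"older continuity. Away from edges and cusps, $\partial_z m$ is simply bounded. Density continuity follows from $\rho_e=\pi^{-1}\Im m_e(\cdot+\mathrm i0)$.

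The main obstacle is the non-self-adjointness of $D_{m^2}B$: its near-unstable direction must be identified through a left/right Perron pair rather than a symmetric spectral decomposition. I would rely fully on the companion paper's stability theorem for exactly this point, and only verify that the uniform constants depend solely on $\delta$ and the model parameters.
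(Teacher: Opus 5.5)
\textbf{There is a gap in the transfer from edge to vertex quantities.} For the directed-edge quantities you follow the paper, which also takes from \cite{huang2026nonsymmetricvectordysonequations} the bound $|m_e|\sim(1+|z|)^{-1}$, the comparability of the $\Im m_e$, the edge/cusp/bulk profiles and the H\"older continuity. The only thing the paper proves itself is the transfer to the vertex quantities, and that is where your argument fails.

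You assert that ``the same two arguments applied to \eqref{eq.mii}'' give $|m_i|\sim(1+|z|)^{-1}$. Only the lower bound and the large-$|z|$ upper bound follow that way. For bounded $|z|$ and small $\eta$, the a priori bound $\Phi_\delta$ controls only the edge vector $m$, not $m^{\rm ver}$. The Stieltjes representation of $\nu_i$ only gives $|m_i|\le 1/\eta$ inside $[-\Sigma,\Sigma]$, which is useless there. Boundedness of every $m_{ij}$ does not stop $h_i:=z+\sum_{j\sim i}m_{ij}$ from vanishing. Moreover, $\mathcal I_\delta$ only removes neighbourhoods of blow-up points of the $m_e$, not of the $m_i$.

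The implication genuinely fails without the minimum-degree hypothesis, which your vertex argument never uses. Take $\cH$ a cycle, so the universal cover is $\Z$. Then $m_e=(-z+\sqrt{z^2-4})/2$ is bounded on $\overline{\bC_+}$, yet $|m_i(z)|=|z+2m_e|^{-1}=|z^2-4|^{-1/2}$ blows up at $z=\pm2$.

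\textbf{What is missing} is the bound $|h_i|\gtrsim1$ for $|z|\le K$ (for $|z|\ge K$ one has $|m_{uv}m_{vu}|\le1/2$ trivially). Since $1-m_{xi}m_{ix}=-m_{xi}h_i$, this is equivalent to $|1-m_{uv}m_{vu}|\gtrsim1$ on every edge. The paper proves it in three steps.
\begin{enumerate}
\item From $|1-m_{xy}m_{yx}|=|m_{xy}||h_y|=|m_{yx}||h_x|$ and connectivity, all $|h_x|$ and all $|1-m_{xy}m_{yx}|$ are mutually comparable.
\item If they were small, the products $q_\pm$ of the $m_e$ around a fixed cycle $\mathscr C$, taken in the two orientations, would satisfy $|1-q_+q_-|\ll1$.
\item This contradicts $|q_\pm|\le\beta^{|\mathscr C|}<1$. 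That bound comes from applying Proposition~\ref{pro.path-weight-decay}, whose proof uses only edge quantities, to the $n$-fold traversal of $\mathscr C$ and letting $n\to\infty$.
\end{enumerate}
This is exactly where minimum degree $\ge3$ enters: it guarantees cycles exist and that $\beta<1$.

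Only after this does $m_i=m_{xi}/(1-m_{xi}m_{ix})$ give $|m_i|\sim|m_{xi}|$. Your step ``take imaginary parts of \eqref{eq.mii}'' depends on it as well. Comparing $\Im m_i/|m_i|^2=\eta+\sum_{j\sim i}\Im m_{ij}$ with $\Im m_{xi}/|m_{xi}|^2=\eta+\sum_{j\sim i,\,j\ne x}\Im m_{ij}$ needs $|m_i|\sim|m_{xi}|$. It also needs the comparability of the $\Im m_e$, with at least one summand surviving after removing $j=x$.
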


We shall repeatedly encounter approximate solutions of the Dyson
equation. For a vector of boundary weights $q=(q_e)_{e\in\vec E_\cH}$ with nonzero
entries, recall the Dyson residual $R(q,z)$ defined in
\eqref{eq.Dyson-residual}.

Since $R(m(z),z)=0$, writing
$
q(z)=m(z)+u(z)
$
and subtracting the Dyson equation \eqref{eq.DE} for $m$ yields the exact identity
\begin{equation}
\label{eq:Dyson-residual-expansion}
(I-m^2B)u
=
m^2R(q,z)+\frac{u^2}{q}.
\end{equation}
Thus $I-m^2B$ describes the linear stability of the Dyson
equation, while the second term on the right-hand side of
\eqref{eq:Dyson-residual-expansion} becomes relevant precisely
when the linear stability degenerates.

Away from the spectral singularities, the linear term is stable and
a small Dyson residual directly implies that $q$ is close to $m$.
Near a regular edge or cusp, however, the inverse of
$I-m^2B$ becomes singular, and the quadratic and cubic structure of
the Dyson equation provides the additional stability required to
control the unstable direction. 
The following lemma collects these stability estimates.

\begin{lemma}[Linear and nonlinear stability, Theorem 2.9 of \cite{huang2026nonsymmetricvectordysonequations}]
\label{lem.stability2}
Fix $z\in\bC_+(\mathcal I_\delta)$ and a height $\eta_0>0$. 
Let $q(z)=(q_e(z))_{e\in\vec E_\cH}$ have nonzero entries, and its
Dyson residual $R(q,z)$ is defined as \eqref{eq.Dyson-residual}. 
Define
the vertex quantities $q^{\rm ver}(z)=(q_i(z))_{i\in\bH}$ associated with $q(z)$ by
\begin{equation}\label{eq:q-vertex-reconstruction}
q_i(z)
:=
-\frac1{z+\sum_{j:j\sim i}q_{ij}(z)}.
\end{equation}
There exists a sufficiently small constant $c_0>0$ and $\eta_0$, depending only on the model parameters, such that, whenever  $|q(z)-m(z)|_\infty\le c_0$ and $|R(q,z)|_\infty\le c_0$, the following estimates hold.
\begin{itemize}

\item
\emph{Stable bulk and off-support.}
If $\cH$ is non-bipartite, or if $\cH$ is bipartite and $z$ stays
away from $0$, then, uniformly for $z$ in the stable bulk and stable
off-support, 
\begin{equation}
\label{eq:nonlinear-bulk-stability}
|q(z)-m(z)|_\infty
\lesssim
|R(q,z)|_\infty.
\end{equation}

If $\cH$ is bipartite and $0$ is a regular bulk point, let
$\sigma_i=\pm1$ on the two bipartition classes of $\bH$.
Then, for $z$ sufficiently close to $0$, provided that
$
\left|
\sum_{i\in\bH}\sigma_i
\bigl(q_i(z)-m_i(z)\bigr)
\right|
\le c_0,
$
we have
\begin{equation}
\label{eq.bulk-0-stability}
|q(z)-m(z)|_\infty
\lesssim
|R(q,z)|_\infty
+
\left|
\sum_{i\in\bH}\sigma_i
\bigl(q_i(z)-m_i(z)\bigr)
\right|.
\end{equation}

\item
\emph{Regular edges and cusps.}
Let $\tau$ be a regular edge or cusp, and let
$z=E+\mathrm i\eta$ be sufficiently close
to $\tau$, with $\eta<\eta_0$.
Assume further that there exists a continuous family $(q_t)_{\eta\le t\le\eta_0}$ satisfying
$q_\eta=q(z)$ and
\begin{equation}\label{eq.continuity}
\sup_{\eta\le t\le\eta_0}
|q_t-m(E+\mathrm i t)|_\infty\le c_0,
\qquad
\sup_{\eta\le t\le\eta_0}
|R(q_t,E+\mathrm i t)|_\infty\le\varepsilon.
\end{equation}
Then, for $|z-\tau|$ sufficiently small,
\begin{equation}\label{eq:nonlinear-boundary-stability}
|q(z)-m(z)|_\infty
\lesssim
\begin{cases}
\displaystyle
\frac{\varepsilon}
     {(\varepsilon+|z-\tau|)^{1/2}},
&
\tau\in\mathcal S_{\rm edge},
\\[3mm]
\displaystyle
\frac{\varepsilon}
     {(\varepsilon+|z-\tau|)^{2/3}},
&
\tau\in\mathcal S_{\rm cusp}.
\end{cases}
\end{equation}

\end{itemize}
\end{lemma}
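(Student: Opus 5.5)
The plan is to work entirely from the exact identity \eqref{eq:Dyson-residual-expansion},
\[
(I-m^2B)u=m^2R(q,z)+\frac{u^2}{q},\qquad u=q-m,
\]
and to treat it as a perturbed fixed-point problem for $u$. Throughout, $|m|\sim 1$ on $\bC_+(\mathcal I_\delta)$ by Lemma~\ref{lem:NB-Dyson-regularity}. With $c_0$ small this also gives $|q|\sim1$, so the quadratic term satisfies $|u^2/q|_\infty\lesssim |u|_\infty^2$. The whole argument then reduces to quantitative control of $(I-m^2B)^{-1}$, and the three regimes differ only in how this inverse degenerates.

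\emph{Stable bulk and off-support.} Here I would first show that $\|(I-m^2B)^{-1}\|_\infty\lesssim1$ uniformly. On the real axis this uses $|m_e|^2B$ together with the Ward-type identity $\Im m=|m|^2(\eta+B\Im m)$, which follows from taking imaginary parts of \eqref{eq.DE}. In the bulk, $\Im m\sim1$ is a positive vector with $|m|^2B\Im m\le \Im m$. The phases of $m^2$ are non-real, and this prevents $1$ from being an eigenvalue of $m^2B$ (Perron--Frobenius with a strict phase rotation). Compactness and continuity then give a uniform bound; off-support, $\|\,|m|^2B\|<1$ in a suitable weighted norm. With the inverse bounded, $|u|_\infty\lesssim|R|_\infty+|u|_\infty^2$, and since $|u|_\infty\le c_0$ is small the quadratic term can be absorbed, giving \eqref{eq:nonlinear-bulk-stability}. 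In the bipartite case near $0$ there is an exact kernel direction: the symmetry $m(-\bar z)=-\overline{m(z)}$ makes $m^2B$ have eigenvalue $1$ at $z=0$, with an eigenvector alternating according to $\sigma$. I would decompose $u$ along this direction and its spectral complement. The complement is controlled as before. The coefficient along the kernel is, via \eqref{eq:q-vertex-reconstruction} linearized, comparable to $\sum_i\sigma_i(q_i-m_i)$, which is exactly the extra term in \eqref{eq.bulk-0-stability}.

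\emph{Edges and cusps.} Near a regular edge or cusp $\tau$, I would show that $m^2B$ has a single isolated eigenvalue $1-\beta(z)$ with $\beta\to0$, and that the rest of its spectrum stays at distance $\gtrsim1$ from $1$. Let $r,\ell$ be the associated right and left eigenvectors. Since $B$ is non-symmetric, these are different, but both have entries $\sim1$ by Perron--Frobenius for $|m|^2B$. Write $u=\Theta r+w$ with $\ell^{T}w=0$. The $w$-component satisfies $|w|\lesssim\varepsilon+|\Theta|^2$ by the bounded reduced resolvent. Projecting with $\ell$ then gives a scalar equation
\[
\beta\Theta-a\Theta^2-b\Theta^3=\OO\bigl(\varepsilon+|\Theta|\varepsilon+|\Theta|^4\bigr),\qquad a=\ell^{T}\frac{r^2}{m},
\]
where $b$ comes from the cubic term generated by substituting $w$ back in. At an edge, $a\sim1$ and $\beta\sim|z-\tau|^{1/2}$; this should follow from differentiating \eqref{eq.DE} and matching the square-root behavior in Lemma~\ref{lem:NB-Dyson-regularity}. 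At a cusp, $a\sim|z-\tau|^{1/3}$, $\beta\sim|z-\tau|^{2/3}$ and $b\sim1$. Solving this scalar polynomial inequality yields two candidate regimes for $|\Theta|$: a small root of size $\varepsilon/(\varepsilon+|z-\tau|)^{1/2}$ (respectively $\varepsilon/(\varepsilon+|z-\tau|)^{2/3}$), and a large spurious root separated from it.

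\emph{Selecting the correct root.} Here the continuity hypothesis \eqref{eq.continuity} enters. At $t=\eta_0$ the point is in a stable regime, so $\Theta$ lies on the small branch. As $t$ decreases to $\eta$, the map $t\mapsto\Theta_t$ is continuous, and the two admissible regions stay disjoint as long as the gap between them exceeds the error. A standard continuity (bootstrap) argument then keeps $\Theta_t$ on the small branch down to $t=\eta$. This gives \eqref{eq:nonlinear-boundary-stability} after adding back $|w|\lesssim\varepsilon+|\Theta|^2$.

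\emph{Main obstacle.} I expect the hardest step to be the non-symmetric spectral analysis of $m^2B$ near $\tau$. One must establish the isolated-eigenvalue structure, the exact scalings of $\beta$, $a$ and $b$, and the nondegeneracy $b\ne0$ at cusps, all without self-adjointness. I would attack it by conjugating $|m|^2B$ via its Perron vectors and using analytic perturbation theory in $z$ for that simple eigenvalue.
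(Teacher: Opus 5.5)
The paper gives no proof of this lemma; it imports it from the companion paper, so your outline has to stand on its own. Its architecture is the standard one: the identity \eqref{eq:Dyson-residual-expansion}, a bounded inverse in the stable regime, a rank-one reduction to a scalar quadratic or cubic near $\tau$, and continuity in $t$ to select the branch.

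\textbf{The bulk step fails as written.} The step in question is the invertibility of $I-m^2B$ on the real axis in the bulk. Your reason, that the phases of $m^2$ are non-real, fails at $E=0$ and is insufficient elsewhere.

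Each $\cY_e$ is a tree, hence bipartite. So every $\nu_e$ is symmetric and $m(-\bar z)=-\overline{m(z)}$ holds for every base graph; the symmetry you invoke in the bipartite case is not special to it. Whenever $0$ is a bulk point, $m_e(0)\in\mathrm i\R$ and $m^2B=-|m|^2B$ there. For $\cH=K_4$, for instance, $m_e(0)^2=-1/2$, and $I-m^2B$ is invertible only because $B$ has no eigenvalue $-2$, i.e.\ by aperiodicity, not by phases. For $E\neq0$, non-real phases do not exclude the Wielandt equality case $m^2B=D\,|m|^2B\,D^{-1}$ with $|D|=I$, since that condition constrains only phase differences.

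A correct argument runs as follows.
\begin{itemize}
\item On the real bulk, $|m|^2B\,\Im m=\Im m$ with $\Im m>0$. Hence $\rho(|m|^2B)=1$, and $1$ is an eigenvalue of $m^2B$ iff $m_e^2/|m_e|^2=d_e/d_f$ for all $e\rightsquigarrow f$, for some unimodular $d$.
\item All degrees are at least $3$, so the successor sets $\{(j,k):k\neq i\}$, $i\sim j$, pairwise intersect. Thus $d_{(i,j)}=D_i$ and $m_{ij}^2/|m_{ij}|^2=D_i/D_j$.
\item Combining with the reversed edge gives $(m_{ij}m_{ji})^2=|m_{ij}m_{ji}|^2$. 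Since both factors lie in $\bC_+$, this means $m_{ij}m_{ji}<0$.
\item Then $m_i=m_{ji}/(1-m_{ji}m_{ij})$ yields $m_im_{ij}<0$ for all $j\sim i$. The identity $-1=Em_i+\sum_{j\sim i}m_im_{ij}$ then forces $E\,\Im m_i=0$, i.e.\ $E=0$.
\item At $E=0$ the condition reads $D_i/D_j=-1$ on every edge, i.e.\ $\cH$ is bipartite.
\end{itemize}
Together with $\rho(|m|^2B)<1$ for $\eta>0$, H\"older continuity and compactness, this gives the uniform bound. It also pins down exactly the exceptional mode in \eqref{eq.bulk-0-stability}.

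\textbf{The edge and cusp coefficients are only asserted.} You need $a(\tau)\neq0$ at an edge, and $a\sim|z-\tau|^{1/3}$, $b\sim1$, $|\beta|\sim|z-\tau|^{2/3}$ at a cusp. These are the actual content of the cited theorem, and they do not follow from positivity. At real $\tau$ one has $m(\tau)^2B=|m(\tau)|^2B$, so $r,\ell>0$ and $1$ is a simple isolated eigenvalue. But $a(\tau)=\sum_e\ell_er_e^2/m_e(\tau)$ involves real numbers $m_e(\tau)$ whose signs need not agree; at a cusp they must be mixed. Analytic perturbation theory in $z$ is also unavailable, since $m$ is only H\"older at $\tau$.

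What does work is your matching idea, carried out in the $q$-variable. A Lyapunov--Schmidt reduction of the analytic map $(q,z)\mapsto-1/q-z-Bq$ at $(m(\tau),\tau)$ gives a scalar equation $c\,(z-\tau)+a_0\Theta^2+b_0\Theta^3+\OO(|z-\tau|^2+|z-\tau||\Theta|+|\Theta|^4)=0$. Here $c=\ell^{\mathsf T}m(\tau)^2\mathbf 1>0$ and there is no linear term in $\Theta$. Compare the Puiseux exponent of the physical branch with the $1/2$ and $1/3$ laws for $\Im m$ in Lemma~\ref{lem:NB-Dyson-regularity}. This forces $a_0\neq0$ at edges and $a_0=0\neq b_0$ at cusps, and the scalings of $\beta,a,b$ at nearby $z$ follow.

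With these two repairs, the remaining steps go through as you describe: the absorption of $w$, the root separation when $\varepsilon\ll|z-\tau|$, and the continuity selection. The separation persists along the vertical segment because $|E+\mathrm it-\tau|$ increases in $t$.
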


\begin{remark}[Selection of the branch]
\label{rem:physical-branch}
The continuity assumption \eqref{eq.continuity} in the edge and cusp regimes is used to
select the branch of the Dyson equation.
Indeed, near a regular edge or cusp the linearized Dyson operator
becomes singular, and the corresponding nonlinear equation may have
several local solution branches which coalesce at edges and cusps.
Consequently, the pointwise assumptions
$
|q-m(z)|_\infty\le c_0$
and
$
|R(q,z)|_\infty\le\varepsilon,
$
do not by themselves guarantee that $q$ lies on the branch issued
from the true solution $m(z)$: as $z$ approaches the edge or cusp,
a different branch may enter any fixed $c_0$-neighborhood of $m(z)$.

The family $(q_t)_{\eta\le t\le\eta_0}$ rules out this ambiguity.
The upper endpoint $E+\mathrm i\eta_0$ is chosen in a uniformly stable
region, where the solution close to $m(E+\mathrm i\eta_0)$ is locally
unique. The continuity of $q_t$, together with the uniform bound on
its Dyson residual, then ensures that $q_t$ follows the branch
continuously connected to the true solution as $t$ decreases from
$\eta_0$ to $\eta$. In the stable bulk no such condition is needed,
since the linearized Dyson operator is uniformly invertible and hence
already gives local uniqueness in a fixed neighborhood of $m$.
\end{remark}

\subsubsection{Green's function on universal cover trees}
In this subsection, since the universal cover of the base graph $\cH$ is an infinite tree, we can explicitly compute its Green's function. 

In the remainder of the paper, we extend the quantities $m_{ij}(z)$ to coverings of the base graph by interpreting the indices in terms of vertex types. 
Specifically, for adjacent vertices $i,j$ in the covering graph, we define
\begin{equation} \label{eq:mij-extension}
    m_{ij}(z) := m_{\pi(i)\,\pi(j)}(z)
\end{equation}
where $\pi$ is the covering map. 
In other words, the value of $m_{ij}(z)$ on the universal cover depends only on the types of $i$ and $j$ in the base graph.

For a given path
$
\gamma=(i=i_0,i_1,\ldots,i_\ell=j),
$
we define its path weight by
\begin{equation}\label{e:path-weight}
    \mathcal M(\gamma;z)
    :=
    \prod_{k=0}^{\ell-1}
    \bigl(-m_{i_k i_{k+1}}(z)\bigr),
\end{equation}
where $m_{i_k i_{k+1}}(z)$ denotes the Green function associated
with the directed edge type of $i_k\to i_{k+1}$ in the base graph
$\cH$. For a path of length zero, we set $\mathcal M(\gamma;z)=1$.

In particular, if $\cG$ is a tree, then any two vertices $i,j\in\bG$
are connected by a unique simple path, denoted by
$
[i,j]=(i=i_0,i_1,\ldots,i_\ell=j).
$
We therefore define
\begin{equation}\label{e:tree-path-weight}
    \mathcal M_{\cG}(i,j;z)
    :=
    \mathcal M([i,j];z)
    =
    \prod_{k=0}^{\ell-1}
    \bigl(-m_{i_k i_{k+1}}(z)\bigr).
\end{equation}
When the underlying tree is clear from the context, we simply write
$\mathcal M(i,j;z)$.

\begin{proposition}\label{greentree}
Let $\cX$ be the universal cover of the base graph $\cH$. For all $z\in\mathbb{C}^+$ and $i,j\in\bX$, the Green's function of $\cX$ is
\begin{equation}\label{e:Gtreemkm}
    G_{ij}(z)
    =
    m_i(z)\mathcal M_\cX(i,j;z).
\end{equation}

Fix a directed edge $e=(k,l)\in\bH$. Let $\cY_{e}$ be the truncated universal cover rooted at vertex $o$ of type $l$, obtained by deleting the edge adjacent to $o$ of type $\{l,k\}$.
For every $i\in\bY_e$, we have
\begin{equation}\label{e:Gtreemsc2}
    G_{oi}(z)
    =
    m_{e}(z)\mathcal M_{\cY_{e}}(o,i;z).
\end{equation}
In particular, $G_{oo}(z)=m_e(z)$.
\end{proposition}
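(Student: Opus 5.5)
The plan is to prove the branch formula \eqref{e:Gtreemsc2} first and then deduce \eqref{e:Gtreemkm} from it. Both rest on two exact identities valid on any tree $\cT$ with bounded degrees and $z\in\bC_+$. The first is the Schur complement formula for the root: $G_{oo}=-(z+\sum_{y\sim o}G^{(o)}_{yy})^{-1}$. Here $G^{(o)}_{yy}$ is the root Green's function of the subtree hanging from $y$, because deleting $o$ splits $\cT$ into disjoint components. The second is the off-diagonal factorization along the unique path. For $i\neq o$ with first step $o\to y$, the Schur complement gives $G_{oi}=-G_{oo}\sum_{w\sim o}G^{(o)}_{wi}=-G_{oo}G^{(o)}_{yi}$, since only the component containing $y$ reaches $i$. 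Iterating this gives $G_{oi}=G_{oo}\prod_k\bigl(-G^{(\dots)}_{i_ki_k}\bigr)$, where each factor is the root Green's function of the subtree beyond the corresponding path step.

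For the branch $\cY_e$ with $e=(k,l)$, first identify the root Green's functions. Write $\tilde m_e(z):=G^{\cY_e}_{oo}(z)$. By covering-map invariance this depends only on the directed-edge type. Deleting the root of $\cY_e$ leaves the disjoint branches $\cY_{(l,j)}$ for $j\sim l$, $j\neq k$. The Schur formula therefore shows that $\tilde m$ solves \eqref{eq.DE}. Each $\tilde m_e$ is a Stieltjes transform of the spectral measure at $o$, so $\tilde m_e\in\bC_+$ and is analytic. By the uniqueness in Lemma~\ref{lem:NB-Dyson-existence}, $\tilde m=m$.

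The main subtlety is that the resolvent of the infinite operator must satisfy the Schur identities. This holds because the adjacency operator is bounded, with norm at most $d_{\max}$, so the block inversion formulas hold verbatim for bounded operators on $\ell^2$. Alternatively, one can approximate by finite truncations $\cY_e\cap\cB_r(o)$ and pass to the limit $r\to\infty$ using strong resolvent convergence.

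With this identification, the off-diagonal factorization on $\cY_e$ is immediate. At each step $i_k\to i_{k+1}$, the subtree beyond $i_{k+1}$ after removing $i_k$ is a copy of $\cY_{(\pi(i_k),\pi(i_{k+1}))}$, whose root Green's function is $m_{i_ki_{k+1}}$. This yields $G_{oi}=m_e\mathcal M_{\cY_e}(o,i;z)$.

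For the full cover $\cX$, apply the Schur formula at $i$. Removing $i$ leaves the branches $\cY_{(\pi(i),j)}$ for all $j\sim\pi(i)$, so $G_{ii}=-(z+\sum_{j\sim i}m_{ij})^{-1}=m_i$ by \eqref{eq.mii}. The same path factorization starting from $i$ then gives \eqref{e:Gtreemkm}.
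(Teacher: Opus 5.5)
Your proof is correct and follows essentially the same route as the paper. Both use the Schur complement at a tree vertex, identify the branch root Green's functions with the Dyson solution $m$, and iterate $G_{xy}=-m_{xy}G_{xx}$ along the unique path. The only difference is that you make explicit two points the paper delegates to the cited Bauerschmidt--Huang--Yau argument: that the branch root functions lie in $\bC_+$ and therefore coincide with $m$ by the uniqueness in Lemma~\ref{lem:NB-Dyson-existence}, and that the block-inversion formulas hold for the bounded adjacency operator of the infinite tree.
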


\begin{proof}
For an oriented edge $(x,y)$ of
$\cX$, by the same argument in \cite[Proof of Proposition 5.1]{Bauer2019Local}, the Schur complement formula \eqref{e:Schur} together with the Dyson equation \eqref{eq.DE}
gives
$
G_{xx}=m_{x}
$
and
$
G_{xy}=-m_{xy}G_{xx}.
$
Iterating the same identity along the unique path
$
[i,j]=(i=i_0,i_1,\ldots,i_\ell=j)
$
in $\cX$, we obtain
\[
G_{ij}
=
m_i\prod_{r=0}^{\ell-1}(-m_{i_ri_{r+1}})
=
m_i\cM_{\cX}(i,j),
\]
which proves \eqref{e:Gtreemkm}.

For $\cY_e$, the same argument applies verbatim. By the Schur
complement formula and the directed-edge Dyson equation, the removal
of the branch of type $\{l,k\}$ gives
$
G_{oo}=m_e.
$
Iterating the identity
$
G_{xy}=-m_{xy}G_{xx}
$
along the unique path from $o$ to $i\in\bY_e$ therefore yields
$
G_{oi}
=
m_e\mathcal M_{\cY_e}(o,i),
$
which proves \eqref{e:Gtreemsc2}.
\end{proof}

Next we introduce two types of weighted non-backtracking matrices.
For $z\in\mathbb C^+$, define
\[
M(z):=B_\cH\operatorname{diag}\bigl(m_e(z)^2\bigr)_{e\in\vec E_\cH},
\qquad
\wM(z)
:=
B_\cH\operatorname{diag}\bigl(|m_e(z)|^2\bigr)_{e\in\vec E_\cH}.
\]
More precisely, 
\[
M_{ef}(z)=\mathbf 1_{\{e\rightsquigarrow f\}}m_f(z)^2,
\qquad
\wM_{ef}(z)
=\mathbf 1_{\{e\rightsquigarrow f\}}|m_f(z)|^2.
\]
The matrices $M(z)$ and $\wM(z)$ provide a finite-dimensional way to
encode weighted non-backtracking propagation over long distances in the universal cover or a large lift of the base graph $\cH$. Indeed, for every
$\ell\geq 1$,
\[
\bigl(M^\ell(z)\bigr)_{ef}
=
\sum_{\substack{
e=e_0\rightsquigarrow e_1\rightsquigarrow\cdots
\rightsquigarrow e_\ell=f}}~
\prod_{r=1}^{\ell}m_{e_r}(z)^2.
\]
Consequently, $\bigl(M^\ell(z)\bigr)_{ef}$ is the sum of the weights of
all $\ell$-step non-backtracking walks in the universal cover that
start from a fixed directed edge of type $e$ and end at a directed edge
of type $f$. The weight of such a walk is
$
\prod_{r=1}^{\ell} m_{e_r}(z)^2;
$
that is, every directed edge entered after the initial edge contributes
the factor $m_{e_r}(z)^2$. The same interpretation applies to
non-backtracking walks in a large lift of $\cH$.

Likewise,
\begin{equation}\label{eq.wM-ell}
\bigl(\wM^\ell(z)\bigr)_{ef}
=
\sum_{\substack{
e=e_0\rightsquigarrow e_1\rightsquigarrow\cdots
\rightsquigarrow e_\ell=f}}~
\prod_{r=1}^{\ell}|m_{e_r}(z)|^2,
\end{equation}
and therefore
\[
\left|\bigl(M^\ell(z)\bigr)_{ef}\right|
\leq
\bigl(\wM^\ell(z)\bigr)_{ef}.
\]
Thus, although $M$ and $\wM$ are indexed only by the directed edges of
the base graph, their powers encode the contributions of
non-backtracking walks over arbitrarily long distances in the
universal cover or in a large lift. The matrix $M^\ell$ retains the
complex phases and possible cancellations among the walk weights,
whereas $\wM^\ell$ provides a nonnegative majorant suitable for
long-range decay and summability estimates.

\begin{proposition}\label{pro.largest-eigenvalue}
The largest eigenvalue $\lambda_1(z)$ of $\wM(z)$ is positive and real. Moreover, we have
\[
1-\lambda_1(z)\gtrsim \frac{\eta}{1+\eta}, \qquad z\in\bC_+(\mathcal I_\delta).
\]
Furthermore, \(\widetilde M(z)\) admits a positive eigenvector
\(\boldsymbol v(z)=(v_e(z))_{e\in\vec E_{\mathcal H}}\) satisfying
$
\widetilde M(z)\boldsymbol v(z)=\lambda_1(z)\boldsymbol v(z),
$
and for every \(e,f\in\vec E_{\mathcal H}\) and \(z\in\bC_+(\mathcal I_\delta)\),
\[
v_e(z)\sim v_f(z).
\]
\end{proposition}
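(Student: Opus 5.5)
Since $\widetilde M(z)=B_\cH\operatorname{diag}(|m_e|^2)$ has nonnegative entries, I would start from Perron--Frobenius theory. For $\cH$ connected with minimum degree at least three, $B_\cH$ is irreducible; this is a standard fact for non-backtracking matrices of connected graphs with minimum degree $\ge 2$ that are not cycles. By Lemma~\ref{lem:NB-Dyson-regularity}, $|m_e(z)|^2\sim(1+|z|)^{-2}>0$ on $\bC_+(\mathcal I_\delta)$, so $\widetilde M(z)$ has the same zero pattern as $B_\cH$ and is irreducible. Perron--Frobenius then gives a simple real positive eigenvalue $\lambda_1(z)=\rho(\widetilde M(z))$ with a positive eigenvector $\boldsymbol v(z)$.

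For comparability of the entries, I would use irreducibility quantitatively. There is $L\le|\vec E_\cH|$ such that every $f$ is reachable from every $e$ in at most $L$ non-backtracking steps, and each nonzero entry of $\widetilde M$ is $\sim(1+|z|)^{-2}$. Normalizing and iterating $v_e=\lambda_1^{-k}(\widetilde M^k\boldsymbol v)_e\ge\lambda_1^{-k}(\widetilde M^k)_{ef}v_f$ gives $v_e\gtrsim v_f$, provided $\lambda_1$ is bounded above and below. The upper bound $\lambda_1<1$ comes from the next step. For the lower bound, use $\lambda_1\ge\min_e\sum_f\widetilde M_{ef}\gtrsim(1+|z|)^{-2}$. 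Since $z$ ranges over an unbounded set in $\eta$, I would normalize by $(1+|z|)^{-2}$: both $\lambda_1$ and the entries scale the same way, so the ratio argument yields constants independent of $z$.

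The main step is the gap $1-\lambda_1\gtrsim\eta/(1+\eta)$. I would take imaginary parts of the Dyson equation \eqref{eq.DE}. Writing $-1/m_e=z+\sum_{f:e\rightsquigarrow f}m_f$ and using $\Im(-1/m_e)=\Im m_e/|m_e|^2$, we get
\[
\frac{\Im m_e}{|m_e|^2}=\eta+\sum_{f:e\rightsquigarrow f}\Im m_f,
\]
that is,
\[
\Im m_e=\eta|m_e|^2+|m_e|^2\sum_{e\rightsquigarrow f}\Im m_f .
\]
This is an identity of the form $w=\eta a+D B w$ with $D=\operatorname{diag}(|m|^2)$. The matrix $DB$ is similar to $BD=\widetilde M$, so they share the Perron eigenvalue. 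Let $\boldsymbol\ell$ be the positive left Perron eigenvector of $DB$ and pair it with this identity. This gives $(1-\lambda_1)\langle\boldsymbol\ell,\Im m\rangle=\eta\langle\boldsymbol\ell,|m|^2\rangle$, so
\[
1-\lambda_1=\eta\,\frac{\langle\boldsymbol\ell,|m|^2\rangle}{\langle\boldsymbol\ell,\Im m\rangle}\gtrsim\eta\,\frac{\min_e|m_e|^2}{\max_e\Im m_e}.
\]
Here I use that the entries of $\boldsymbol\ell$ are comparable, by the same argument as above. By Lemma~\ref{lem:NB-Dyson-regularity}, $|m_e|^2\sim(1+|z|)^{-2}$ and $\Im m_e\le|m_e|\lesssim(1+|z|)^{-1}$. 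Hence $1-\lambda_1\gtrsim\eta/(1+|z|)\gtrsim\eta/(1+\eta)$, since $|E|\le d_{\max}$ on $\mathcal I_\delta$.

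The main obstacle is the apparent circularity between using $\lambda_1<1$ for the eigenvector bounds and using comparability of $\boldsymbol\ell$ for the gap. I would resolve it by first proving comparability of the Perron vectors of the normalized matrix $(1+|z|)^2\widetilde M$. Its nonzero entries are $\sim1$ and its Perron root is bounded above by its maximal row sum, which is $\lesssim d_{\max}$, and below by a constant. This makes the comparability argument independent of the gap, and only then would I derive the gap.
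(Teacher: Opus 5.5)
Your proof is correct, and for the spectral gap it takes a genuinely different route from the paper.

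\textbf{The paper's route.} It interprets $(\wM^\ell\mathbf 1)_e$ as $|m_e|^{-2}\sum_{\dist(o,x)=\ell}|G_{ox}|^2$ on the truncated cover $\cY_e$, using Proposition~\ref{greentree}. Summing over $\ell$ with the Ward identity gives $\sum_{\ell\ge0}(\wM^\ell\mathbf 1)_e=\eta^{-1}\Im m_e/|m_e|^2\lesssim1+\eta^{-1}$. Pairing the partial sums with the left Perron vector then yields $\sum_{\ell\le L}\lambda_1^\ell\lesssim1+\eta^{-1}$ uniformly in $L$.

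\textbf{Your route.} You use the imaginary part of the Dyson equation directly as the finite-dimensional identity $\Im m=\eta|m|^2+\operatorname{diag}(|m|^2)B\,\Im m$. Pairing it with a positive left Perron vector of the similar matrix $\operatorname{diag}(|m|^2)B$ gives an exact formula for $1-\lambda_1$. It also gives $\lambda_1<1$ immediately from $\Im m>0$.

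\textbf{Comparison.} Underneath, the two arguments rest on the same identity. The paper's Ward computation amounts to $(I-\wM)^{-1}\mathbf 1=\eta^{-1}u$ with $u=\Im m/|m|^2$, which is equivalent to $u=\eta\mathbf 1+\wM u$, and that is your identity after rescaling. Your version avoids the infinite tree, Proposition~\ref{greentree}, and the Ward identity altogether. The paper's version has the advantage of exhibiting the path-sum and Green-function meaning of $\wM^\ell$, which is reused later for the PF path masses and elliptic neighborhoods.

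\textbf{Two small remarks.} First, you do not need comparability of the entries of $\boldsymbol\ell$. For any nonzero $\boldsymbol\ell\ge0$ one has $\langle\boldsymbol\ell,|m|^2\rangle/\langle\boldsymbol\ell,\Im m\rangle\ge\min_e|m_e|^2/\Im m_e$, so the circularity you worry about disappears entirely. Second, your quantitative-irreducibility argument for $v_e\sim v_f$ is a correct, more careful version of the paper's one-line justification. It normalizes by $(1+|z|)^2$, bounds the Perron root above and below by row sums, and uses paths of length at most $|\vec E_\cH|$.
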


\begin{proof}
Since $m_e(z)\neq 0$ for every $e\in\vec E_\cH$ and $z\in\mathbb C^+(\mathcal I_\delta)$,
the matrix
\[
\wM(z)
=
B_\cH\operatorname{diag}\bigl(|m_e(z)|^2\bigr)_{e\in\vec E_\cH}
\]
has the same zero pattern as $B_\cH$. Hence, by the irreducibility of
$B_\cH$, the matrix $\wM(z)$ is irreducible and has nonnegative
entries. The Perron--Frobenius theorem therefore implies that
$\lambda_1(z)=\rho(\wM(z))$ is a positive real eigenvalue and $\boldsymbol v$ is the corresponding positive eigenvector. 
Moreover, $v_e\sim v_f$ follows directly from $|m_e|\sim |m_f|$ and the equation $
\widetilde M(z)\boldsymbol v(z)=\lambda_1(z)\boldsymbol v(z).
$

Fix $e=(k,l)\in\vec E_\cH$, and consider the Green function $G(z)$ of the
truncated universal cover $\cY_{e}$ rooted at $o$. By the Green
function calculation \eqref{e:Gtreemsc2} and the path interpretation \eqref{eq.wM-ell} of
$\wM^\ell$, for every $\ell\geq 0$,
\[
\sum_{\substack{x\in\bY_{e}\\ \dist(o,x)=\ell}}
|G_{ox}(z)|^2
=
|m_e(z)|^2\bigl(\wM^\ell(z)\mathbf 1\bigr)_e.
\]
Indeed, after fixing the initial directed edge type $e$, each vertex
\smallskip
at distance $\ell$ corresponds to a unique $\ell$-step
non-backtracking walk $e=e_0\rightsquigarrow e_1\rightsquigarrow\cdots
\rightsquigarrow e_\ell$, and its squared Green function weight is
$
|m_e(z)|^2\prod_{r=1}^{\ell}|m_{e_r}(z)|^2.
$

Summing over $\ell$ and applying the Ward identity \eqref{e:Ward}, we obtain
\[
|m_e(z)|^2
\sum_{\ell=0}^{\infty}
\bigl(\wM^\ell(z)\mathbf 1\bigr)_e
=
\sum_{x\in\cY_{e}}|G_{ox}(z)|^2
=
\frac{\Im G_{oo}(z)}{\eta}.
\]
Since $G_{oo}(z)=m_e(z)$, it follows that
\[
\sum_{\ell=0}^{\infty}
\bigl(\wM^\ell(z)\mathbf 1\bigr)_e
=
\frac{1}{\eta}
\frac{\Im m_e(z)}{|m_e(z)|^2}.
\]
Moreover, taking the imaginary parts in the Dyson equation \eqref{eq.DE} gives
\[
\frac{\Im m_e(z)}{|m_e(z)|^2}
=
\eta+\sum_{f:\,e\rightsquigarrow f}\Im m_f(z).
\]
Since the base graph is fixed and $\Im m_f(z)\le|m(z)|_2\lesssim 1$ by \eqref{eq.mbound-comparable}, we conclude that
\[
\sup_{e\in\vec E_\cH}
\sum_{\ell=0}^{\infty}
\bigl(\wM^\ell(z)\mathbf 1\bigr)_e
\lesssim 1+\frac{1}{\eta}.
\]

Let $\mathbf v=\mathbf v(z)>0$ be a left Perron eigenvector of
$\wM(z)$, normalized so that
$
\mathbf v^{\mathsf T}\mathbf 1=1.
$
Thus, it follows that
$
\mathbf v^{\mathsf T}\wM(z)
=
\lambda_1(z)\mathbf v^{\mathsf T}.
$
Then, for every $L\geq 0$, we have
\[
\begin{aligned}
\sum_{\ell=0}^{L}\lambda_1(z)^\ell=
\mathbf v^{\mathsf T}
\left(\sum_{\ell=0}^{L}\wM^\ell(z)\right)\mathbf 1
\leq
\max_{e\in\vec E_\cH}
\sum_{\ell=0}^{\infty}
\bigl(\wM^\ell(z)\mathbf 1\bigr)_e
\lesssim 1+\frac{1}{\eta}.
\end{aligned}
\]
The above bound is uniform in $L$, and therefore $\lambda_1(z)<1$. Letting $L\to\infty$ yields
$
(1-\lambda_1(z))^{-1} \lesssim 1+\eta^{-1}.
$
Consequently,
$
1-\lambda_1(z)\gtrsim\eta/(1+\eta),
$
as claimed.
\end{proof}

We next introduce a normalized version of the path weight which is
adapted to the Perron--Frobenius structure of the weighted
non-backtracking matrix $\wM(z)$.
Recall from Proposition \ref{pro.largest-eigenvalue} that
$$
\wM(z)\boldsymbol v(z)=\lambda_1(z)\boldsymbol v(z),\qquad\mbox{where~}\boldsymbol v(z)=(v_e(z))_{e\in\vec E_{\cH}}>0~\mbox{and}~v_e(z)\sim v_f(z).
$$

\begin{definition}[PF-normalized path weight]
\label{def:PF-path-weight}
For $k\geq1$,
let
$
\gamma=(x_0,x_1,\ldots,x_k)
$
be a simple path in a lift of $\cH$, and let
$
e_s=
\bigl(\pi(x_{s-1}),\pi(x_s)\bigr)\in\vec E_\cH.
$
Define
\begin{equation}
\label{eq:PF-path-weight}
\mathcal M^{\rm PF}(\gamma;z)
:=
\lambda_1(z)^{-|\gamma|/2}\mathcal M(\gamma;z)
\sqrt{\frac{v_{e_k}(z)}{v_{e_1}(z)}}.
\end{equation}
For the trivial path consisting of a single vertex, we set
$
\mathcal M^{\rm PF}(\gamma;z)=1.
$
\end{definition}

Then, it follows from $v_e\sim v_f$ and $\lambda_1\le 1$ that
\begin{equation}
\label{eq:PF-raw-comparison}
\left|\mathcal M(\gamma;z)\right|\lesssim\left|\mathcal M^{\rm PF}(\gamma;z)\right|.
\end{equation}

The normalization in \eqref{eq:PF-path-weight} is useful for the following reasons. 
The factors $m_e$ appearing in the original path weight are
not necessarily bounded in modulus by one; in particular, it may happen
that $|m_e|>1$.
Thus the weight along a fixed path may fluctuate locally, and an endpoint condition does not by itself
give a monotone stopping rule.
The PF-normalized weight gives a stronger local structure. Indeed, if
$\gamma$ ends with a directed edge of type $e$ and $\gamma f$ denotes
an admissible non-backtracking extension of $\gamma$ by an edge of type
$f$, then
\begin{equation}
\label{eq:PF-one-step-ratio}
\frac{|\mathcal M^{\rm PF}(\gamma f;z)|}
     {|\mathcal M^{\rm PF}(\gamma;z)|}
=
\lambda_1(z)^{-1/2}|m_f(z)|
\sqrt{\frac{v_f(z)}{v_e(z)}}.
\end{equation}
By the Perron--Frobenius equation, we have
\begin{equation}
\label{eq:PF-submarkov}
\sum_{f:\,e\rightsquigarrow f}
|m_f(z)|^2\frac{v_f(z)}{v_e(z)}
=
\lambda_1(z)\Longrightarrow \sum_{f:e\rightsquigarrow  f}
\frac{|\mathcal M^{\rm PF}(\gamma f;z)|^2}
     {|\mathcal M^{\rm PF}(\gamma;z)|^2}
=1.
\end{equation}
Hence $|\mathcal M^{\rm PF}|$ is non-increasing along every fixed
non-backtracking path.  Moreover,
\eqref{eq:PF-submarkov} shows that the squared PF-normalized weights define a conserved mass on the unfolded non-backtracking path tree.

\begin{proposition}\label{pro.path-weight-decay}
There exists a constant $\beta\in(0,1)$, depending only on the model parameters, such that for every simple path
$
\gamma
$ from $i$ to $j$
in a lift $\cG$ of the base graph $\cH$,
\[
\left|\mathcal M^{\rm PF}(\gamma;z)\right|\lesssim \beta^{|\gamma|}\le\beta^{\dist_\cG(i,j)},\qquad z\in\bC_+(\mathcal I_\delta).
\]
Consequently, we have $|\cM(\gamma;z)|\lesssim \beta^{|\gamma|}\le\beta^{\dist_\cG(i,j)}$ for all $z\in\bC_+(\mathcal I_\delta)$.
\end{proposition}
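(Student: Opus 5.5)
The plan is to turn the conservation identity \eqref{eq:PF-submarkov} into a strict per-step contraction. This uses the comparability $|m_e|\sim|m_f|$ from Lemma~\ref{lem:NB-Dyson-regularity}, the comparability $v_e\sim v_f$ from Proposition~\ref{pro.largest-eigenvalue}, and the minimum degree assumption. A simple path $\gamma=(x_0,\dots,x_k)$ in a lift is in particular non-backtracking, so consecutive edge types satisfy $e_s\rightsquigarrow e_{s+1}$. By \eqref{eq:PF-one-step-ratio}, $|\mathcal M^{\rm PF}(\gamma;z)|$ therefore factors as the length-one weight $\lambda_1^{-1/2}|m_{e_1}|$ times the product over $s=1,\dots,k-1$ of the one-step ratios
\[
r(e_s,e_{s+1}):=\lambda_1(z)^{-1/2}|m_{e_{s+1}}(z)|\sqrt{v_{e_{s+1}}(z)/v_{e_s}(z)} .
\]

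\emph{Step 1: uniform contraction of each ratio.} Fix $e\in\vec E_\cH$. By \eqref{eq:PF-submarkov},
\[
r(e,f)^2=\frac{|m_f|^2v_f}{\sum_{f':e\rightsquigarrow f'}|m_{f'}|^2v_{f'}} .
\]
Since $|m_f|\sim|m_{f'}|$ and $v_f\sim v_{f'}$ uniformly on $\bC_+(\mathcal I_\delta)$, each term of this fraction is comparable to every other one. This holds even as $\eta\to\infty$, because only ratios of the $m$'s enter, and the comparability constants in \eqref{eq.mbound-comparable} are uniform. Hence $r(e,f)^2\ge c$ for some $c>0$ depending only on the model parameters. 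Because $\cH$ has minimum degree at least $3$, every $e$ has at least two successors $f$. The squares $r(e,f)^2$ of these successors sum to $1$ and each is at least $c$, so every single one satisfies $r(e,f)^2\le 1-c$. Setting $\beta:=\sqrt{1-c}\in(0,1)$ gives $r(e_s,e_{s+1})\le\beta$ for every step.

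\emph{Step 2: the first factor.} We must also bound $\lambda_1^{-1/2}|m_{e_1}|$ uniformly. Pick any $e'$ with $e'\rightsquigarrow e_1$; such an $e'$ exists since degrees are at least $2$. The Perron--Frobenius equation at $e'$ gives $|m_{e_1}|^2v_{e_1}\le\lambda_1 v_{e'}$, so $\lambda_1^{-1/2}|m_{e_1}|\le\sqrt{v_{e'}/v_{e_1}}\lesssim1$.

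\emph{Step 3: conclusion.} Combining the two steps gives $|\mathcal M^{\rm PF}(\gamma;z)|\lesssim\beta^{k-1}\lesssim\beta^{|\gamma|}$. The trivial path has weight $1$ by definition. Since $|\gamma|\ge\dist_\cG(i,j)$ and $\beta<1$, we get $\beta^{|\gamma|}\le\beta^{\dist_\cG(i,j)}$. The bound for the unnormalized weight $\mathcal M$ then follows from \eqref{eq:PF-raw-comparison}.

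The only delicate point is the uniformity of the lower bound $c$ in Step 1 over the whole unbounded domain $\bC_+(\mathcal I_\delta)$. It is resolved by expressing $r(e,f)^2$ as a normalized ratio, so that only the relative comparabilities $|m_e|\sim|m_f|$ and $v_e\sim v_f$ are needed, not absolute bounds on $|m|$ or on $\lambda_1$.
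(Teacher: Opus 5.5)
Your proposal is correct and follows essentially the same route as the paper. Both write the one-step ratios as $p_{ef}=|m_f|^2v_f/(\lambda_1 v_e)$, use the comparabilities together with the minimum-degree assumption to get $c\le p_{ef}\le 1-c$, and separately bound the first factor $\lambda_1^{-1/2}|m_{e_1}|$. Your Step~2 derives $|m_{e_1}|^2\le \lambda_1 v_{e'}/v_{e_1}$ from the Perron--Frobenius equation at a predecessor edge $e'\rightsquigarrow e_1$, which gives an explicit justification of the bound $|m_{e_1}|^2/\lambda_1\lesssim 1$ that the paper only asserts.
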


\begin{proof}
Let
$
\gamma=(e_1\rightsquigarrow e_2\rightsquigarrow\cdots
\rightsquigarrow e_k)
$
be the directed edge-type sequence associated with $\gamma$.  For
every admissible transition $e\rightsquigarrow f$, define
\[
p_{ef}(z)
:=
\frac{|m_f(z)|^2v_f(z)}
{\lambda_1(z)v_e(z)}.
\]
The Perron--Frobenius equation implies that
$
\sum_{f:e\rightsquigarrow f}p_{ef}(z)=1.
$
Moreover, using $\wM\boldsymbol{v}=\lambda_1\boldsymbol{v}$ yields
\[
p_{ef}(z)
=
\frac{|m_f(z)|^2v_f(z)}
{\sum_{g:e\rightsquigarrow g}|m_g(z)|^2v_g(z)}.
\]
Since Lemma \ref{lem:NB-Dyson-regularity} and Proposition \ref{pro.largest-eigenvalue} give
$
|m_f(z)|\sim |m_g(z)|
$
and 
$
v_f(z)\sim v_g(z)
$
uniformly in $z$, there exists a constant $c_*>0$ such that
\begin{equation}\label{pef}
p_{ef}(z)\geq c_*
\end{equation}
for every admissible transition $e\rightsquigarrow f$.

Since the minimum degree of the base graph $\cH$ is at least 3, every directed edge type $e$ has at least two admissible successors,
then $p_{ef}\ge c_*$ and
$\sum_f p_{ef}=1$ imply
\begin{equation}
\label{eq:PF-transition-contraction}
p_{ef}(z)\leq 1-c_*
\end{equation}
for every admissible successor $f$ of $e$.

On the other hand, by Definition~\ref{def:PF-path-weight}, we have
\[
|\mathcal M^{\rm PF}(\gamma;z)|^2
=
\lambda_1(z)^{-k}
\prod_{s=1}^k |m_{e_s}(z)|^2
\frac{v_{e_k}(z)}{v_{e_1}(z)}
=
\lambda_1(z)^{-1}|m_{e_1}(z)|^2
\prod_{s=2}^k
p_{e_{s-1}e_s}(z).
\]
It follows from $|m_{e_1}(z)|^2/\lambda_1(z)\lesssim1$ that
$
|\mathcal M^{\rm PF}(\gamma;z)|
\lesssim 
\beta^{|\gamma|}\le \beta^{\dist_\cG(i,j)},
$
where $\beta=\sqrt{1-c_*}$.

Finally, since $v_e\sim v_f$ and $\lambda_1(z)\leq1$,
Definition~\ref{def:PF-path-weight} gives
$
|\mathcal M(\gamma;z)|
\lesssim
|\mathcal M^{\rm PF}(\gamma;z)|\lesssim 
\beta^{|\gamma|}\le \beta^{\dist_\cG(i,j)}.
$
\end{proof}

\subsection{Tree extension and Elliptic neighborhood}\label{sc:ext}
\subsubsection{Tree extension}
\begin{definition}[Missing type]\label{def.deficit}
Let $\widehat{\cG}$ be a lift of the base graph $\cH$ with covering
map $\pi:\widehat{\cG}\to\cH$, and let $\cG\subset\widehat{\cG}$ be a
subgraph. We use the restriction of $\pi$ to $\cG$ as its type map.
For \(v\in\bG\), denote the set of directed edge types emanating from \(\pi(v)\) by
\[
\vec E_{\cH}(\pi(v))
:=
\bigl\{
e\in \vec E_{\cH}:
\operatorname{tail}(e)=\pi(v)
\bigr\},
\]
and the set of directed edge types present at \(v\) in \(\cG\) by
\[
\vec E_{\cG}(v)
:=
\bigl\{
\pi(e):
e\in \vec E_{\cG},\
\operatorname{tail}(e)=v
\bigr\}
\subset
\vec E_{\cH}(\pi(v)).
\]
The set of missing types at $v$ is
\[
\mathscr M_{\cG}(v)
:=
\vec E_{\cH}(\pi(v))\setminus \vec E_\cG(v).
\]
\end{definition}

\begin{definition}[Tree extension]\label{def.tree-extension}
Let $\cG$ be a subgraph of a lift of the base graph $\cH$, and let $v\in\bG$.
If
$
\mathscr M_\cG(v)\neq\varnothing,
$
then, for each directed missing edge type
$
e\in\mathscr M_\cG(v),
$
we attach to $v$ a fresh copy of the truncated universal cover branch
$\cY_e$ by joining $v$ to the root of $\cY_e$ with an edge of directed
type $e$.
All the attached copies of $\cY_e$ are taken to be mutually disjoint
apart from their attachment to $v$.
By performing the above operation simultaneously at every
vertex $v\in\bG$, 
we obtain the \emph{tree extension} of $\cG$, denoted by
$
\TE(\cG).
$

\end{definition}

The next proposition shows that the Green's function of the tree extension of a graph with at most one cycle can be bounded by that of the universal cover.
\begin{proposition}[Green's function estimates for tree extensions]\label{p:tree-extension-bound}
Let $\cG$ be a connected subgraph of a lift of $\cH$, containing at most one cycle. 
Then for all $z\in\bC^+(\mathcal I_\delta)$ and all $i,j\in\bG$,
\begin{equation}\label{e:TE-offdiagonal-bound}
\begin{aligned}
\left|
G_{ij}\bigl(\TE(\cG),z\bigr)
\right|
\lesssim\max_{\gamma: i\leadsto j}|\cM(\gamma;z)|\lesssim
\beta^{\dist_{\cG}(i,j)},\qquad
\left|
\Im G_{ij}\bigl(\TE(\cG),z\bigr)
\right|
\lesssim |\im m^{\rm ver}|_\infty,
\end{aligned}
\end{equation}
where $\beta\in(0,1)$ is the parameter defined in Proposition \ref{pro.path-weight-decay}. 
\end{proposition}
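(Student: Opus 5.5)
We split into two cases according to whether $\cG$ is a tree or contains exactly one cycle.

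\emph{Case 1: $\cG$ is a tree.} Then $\TE(\cG)$ is a tree in which every vertex has the full neighborhood of its type. In other words, $\TE(\cG)$ is isomorphic, as a typed graph, to the universal cover $\cX$. Proposition~\ref{greentree} therefore gives
\[
G_{ij}(\TE(\cG))=m_i\,\cM(i,j).
\]
Proposition~\ref{pro.path-weight-decay} then yields the bound $|G_{ij}|\lesssim\beta^{\dist(i,j)}$. For the imaginary part we use the Ward identity \eqref{e:Ward} and Cauchy--Schwarz:
\[
|\Im G_{ij}|\le \tfrac12\bigl(\eta\sum_k|G_{ik}|^2+\eta\sum_k|G_{jk}|^2\bigr)^{1/2\cdot 2}\le \sqrt{\Im G_{ii}\,\Im G_{jj}}=\sqrt{\Im m_i\,\Im m_j}\le|\Im m^{\rm ver}|_\infty.
\]
More precisely, we write $\Im G_{ij}=\eta\sum_k G_{ik}\bar G_{jk}$, valid because $A$ is real symmetric, and apply Cauchy--Schwarz to this sum.

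\emph{Case 2: $\cG$ has one cycle $\mathcal C$.} We pick an edge $\{a,b\}$ on $\mathcal C$ and write $\TE(\cG)=T+V$. Here $T$ is obtained by deleting the edge $\{a,b\}$ and tree-extending $a$ and $b$ along their now-missing types by fresh branches $\cY_{ab}$ and $\cY_{ba}$. Then $T$ is a tree extension of a tree, hence a copy of $\cX$, and $V$ has rank at most $4$: it removes the two attached roots and restores the edge $ab$. We prefer a cleaner route via the Woodbury/Schur formula. Removing the edge between $a$ and $b$ and replacing each side by the branch root Green functions $m_{ab}$ and $m_{ba}$ amounts to a bounded-rank correction on $\{a,b,a',b'\}$. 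The Woodbury formula then expresses
\[
G_{ij}(\TE(\cG))=G^T_{ij}+\sum_{x,y\in\{a,b,a',b'\}}G^T_{ix}\,K_{xy}\,G^T_{yj},
\]
where $K=(V^{-1}+G^T|_{\{a,b,a',b'\}})^{-1}$. By Case 1, each $G^T_{ix}$ is $m\cdot\cM$ along a path in $\cX$. In the lift these paths correspond to non-backtracking walks $i\leadsto x$ that may wind around $\mathcal C$. Hence every term is bounded by $|K|\max_\gamma|\cM(\gamma)|$, and concatenating paths through $x,y$ gives a non-backtracking path $i\leadsto j$ up to bounded factors. The claim $\max_{\gamma:i\leadsto j}|\cM(\gamma)|\lesssim\beta^{\dist(i,j)}$ again follows from Proposition~\ref{pro.path-weight-decay}, applied to non-backtracking rather than simple paths. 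Its proof used only the edge-type sequence, so it carries over unchanged.

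\emph{Main obstacle.} The key difficulty is to bound $K$, i.e.\ to invert the small matrix uniformly for $z\in\bC_+(\mathcal I_\delta)$ with $\eta\to0$. A cleaner alternative that avoids this is to compute exactly. Unrolling the cycle, $\TE(\cG)$ is the quotient of $\cX$ by a cyclic group $\langle g\rangle$ generated by the cycle's deck transformation. Since $\TE(\cG)$ is a $\Z$-quotient of $\cX$, we then have
\[
G_{ij}(\TE(\cG))=\sum_{n\in\Z}G^{\cX}_{\tilde i,g^n\tilde j}=m_i\sum_n\cM(\tilde i,g^n\tilde j).
\]
This lifting identity holds for the resolvent of a covering quotient as long as the sum converges absolutely, which is guaranteed because $|\cM|$ along the lifted paths decays like $\beta^{|n|\,|\mathcal C|}$ geometrically by Proposition~\ref{pro.path-weight-decay}. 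This yields the first bound directly, with the sum dominated by its largest term up to a constant. For the imaginary part we again use the Ward identity together with Cauchy--Schwarz on the finite-rank-equivalent graph, obtaining $|\Im G_{ij}|\le\sqrt{\Im G_{ii}\Im G_{jj}}$. It then remains to show $\Im G_{ii}\lesssim|\Im m^{\rm ver}|_\infty$. For this we use the lifting sum and the fact that $\Im$ of each lifted term is bounded via the tree identity $\Im G^\cX_{\tilde i y}\le\sqrt{\Im m\,\Im m}$, combined with summability. We expect the bound on the sum of the imaginary parts, rather than of their moduli, to be the delicate step. There the geometric decay must be combined with a Ward-type estimate $\sum_n|\Im G^\cX_{\tilde i,g^n\tilde i}|\lesssim\Im m$. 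We would prove this via $\Im G^\cX_{xy}=\eta\sum_k G_{xk}\bar G_{yk}$ and Cauchy--Schwarz with the weights $\beta^{|n|}$.
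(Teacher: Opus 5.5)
Once you drop the Woodbury detour, your bound on $|G_{ij}(\TE(\cG))|$ is the paper's argument: lift to $\cX$ with the covering identity and sum a geometric series in the winding number. One precision is needed. Decay of the lifted terms like $\beta^{|n||\mathscr C|}$ only bounds the sum by a constant. To dominate it by its largest term, and hence by $\max_{\gamma:i\leadsto j}|\cM(\gamma)|$, you need multiplicativity of $\cM$: in each of the two winding families, an extra lap multiplies the weight by $q_\pm$, with $|q_\pm|\le\beta^{|\mathscr C|}$ (apply path decay to $t$ laps and let $t\to\infty$).

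For the imaginary part you take a different route from the paper. The paper bounds each lifted term by the product rule $|\Im(z_1\cdots z_n)|\le\sum_k|\Im z_k|\prod_{l\neq k}|z_l|$ together with $\Im m_e\sim\Im m_i$. This gives $|\Im(m_i\cM(\gamma))|\lesssim(|\gamma|+1)\beta^{|\gamma|}|\Im m^{\rm ver}|_\infty$, which is summable over the fiber. You instead use $\Im G_{ij}=\eta\sum_k G_{ik}\overline{G_{jk}}$ and Cauchy--Schwarz to get $|\Im G_{ij}|\le(\Im G_{ii}\Im G_{jj})^{1/2}$. In the tree case this is correct and slicker than the paper's. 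Your first displayed chain is garbled: its middle term is the arithmetic mean, which dominates the geometric mean, so the second inequality there is backwards. The ``more precisely'' version is right, and either bound suffices.

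The genuine gap is the one-cycle case. There everything rests on $\Im G_{ii}(\TE(\cG))\lesssim|\Im m^{\rm ver}|_\infty$, which you only announce.
\begin{itemize}
\item The first justification you offer fails. The per-term bound $|\Im G^{\cX}_{\tilde i y}|\le\Im m_{\pi(i)}$ has no decay in $y$, so ``combined with summability'' sums a constant over an infinite fiber.
\item Plain Cauchy--Schwarz in $k$ on $\eta\sum_k G_{xk}\overline{G_{yk}}$ returns $\Im m$ for every $n$. You do not explain where the weights $\beta^{|n|}$ would enter.
\end{itemize}

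The missing ingredient, within your own framework, is the uniform domination of fiber sums by their largest term. This is the multiplicativity fact above. It holds uniformly over all $k\in\TE(\cG)$, including vertices of the attached branches, because the winding only changes the portion of the walk lying on the cycle. Writing $p$ for the covering map $\cX\to\TE(\cG)$, it gives $|G_{ik}(\TE(\cG))|\le\sum_{k'\in p^{-1}(k)}|G^{\cX}_{\tilde ik'}|\lesssim\max_{k'\in p^{-1}(k)}|G^{\cX}_{\tilde ik'}|$. Squaring, summing over $k$, and applying the Ward identity on both graphs yields
\[
\Im G_{ii}(\TE(\cG))=\eta\sum_{k}|G_{ik}(\TE(\cG))|^2\lesssim\eta\sum_{k'\in\bX}|G^{\cX}_{\tilde ik'}|^2=\Im m_{\pi(i)}.
\]
Combined with your Cauchy--Schwarz reduction, this completes the proof. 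Alternatively, the paper's termwise product-rule bound closes the step with no Ward identity at all.
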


\subsubsection{Elliptic neighborhoods}
\label{sec:elliptic-neighborhood}
We now define the elliptic neighborhood.  Let
$\Gamma_r(o,x)$ denote the set of simple paths from $o$ to $x$
contained in $\cB_r(o,\cG)$ with $r=\fR/50$. Hence $\cB_r$ is a tree-like neighborhood by Definition \ref{def:barOmega}, which contains at most one cycle.

\begin{definition}[Elliptic neighborhood]
\label{def:elliptic-neighborhood}
For $z\in\bC_+(\mathcal I_\delta)$ and $\ell\sim\log_4\log N$, define
\begin{equation}
\label{eq:elliptic-neighborhood}
\cB_\ell^{\rm ell}(o,\cG;z)
:=
\left\{
x\in\cB_r(o,\cG):
\text{ there exists }\gamma\in\Gamma_r(o,x)
\text{ such that }
|\mathcal M^{\rm PF}(\gamma;z)|\geq2^{-\ell}
\right\}.
\end{equation}
\end{definition}
By $|\cM^{\rm PF}|$ is non-increasing, if a path
$\gamma$ is admissible in
\eqref{eq:elliptic-neighborhood}, then every prefix path of $\gamma$ is
admissible as well.  In particular,
$\cB_\ell^{\rm ell}(o,\cG;z)$ is connected.

\begin{remark}[Locality of the elliptic neighborhood]\label{rem:elliptic-locality}
The definition \eqref{eq:elliptic-neighborhood} is insensitive to
changes strictly outside the elliptic neighborhood.
More precisely, let
$
\cT:=\cB_\ell^{\rm ell}(o,\cG;z).
$
By Definition~\ref{def:elliptic-neighborhood}, $\cT$ depends only on
the edge types along admissible paths. Therefore, changing edges
entirely outside $\cT$, or rewiring edges between $\cT$ and its
exterior while preserving their edge types, does not affect the
elliptic neighborhood.
\end{remark}

We next collect the main estimates for the elliptic neighborhood. 
To formulate them conveniently, we first introduce the elliptic level of a vertex. For $x\in\cB_\ell^{\rm ell}(o,\cG;z)$, define its elliptic level by
\begin{equation}
\label{eq:elliptic-level}
\ell_x
:=
\min\left\{
k\geq0:
x\in\cB_k^{\rm ell}(o,\cG;z)
\right\}.
\end{equation}

\begin{proposition}[Estimates for the elliptic neighborhood]
\label{prop:elliptic-neighborhood-estimates}
Let $z\in\bC_+(\mathcal I_\delta)$ and $\ell=\OO(\log\log N)$. Then the following estimates hold.

\begin{itemize}
\item[(i)] The outer vertex boundary of the elliptic neighborhood
satisfies
\begin{equation}
\label{eq:elliptic-boundary-cardinality}
\left|
\del\cB_\ell^{\rm ell}(o,\cG;z)
\right|
\sim
4^\ell.
\end{equation}
\item[(ii)] For every
$j\in\cB_\ell^{\rm ell}(o,\cG;z)$,
\begin{equation}
\label{eq:elliptic-boundary-L2}
\sum_{y\in\del\cB_\ell^{\rm ell}(o,\cG;z)}
|G_{jy}(\TE(\cB_r(o,\cG)),z)|^2
\lesssim1.
\end{equation}

\item[(iii)] For every
$j\in\cB_\ell^{\rm ell}(o,\cG;z)$,
\begin{equation}
\label{eq:elliptic-interior-L2}
\sum_{y\in\cB_\ell^{\rm ell}(o,\cG;z)}
|G_{jy}(\TE(\cB_r(o,\cG)),z)|^2
\lesssim\ell.
\end{equation}

\item[(iv)] If
$j\in\cB_\ell^{\rm ell}(o,\cG;z)$ has elliptic level $\ell_j$, then
\begin{equation}
\label{eq:elliptic-boundary-L1}
\sum_{y\in\del\cB_\ell^{\rm ell}(o,\cG;z)}
|G_{jy}(\TE(\cB_r(o,\cG)),z)|
\lesssim
(1+\ell_j) 2^{\ell-\ell_j}.
\end{equation}

\item[(v)] If
$i,j\in\cB_\ell^{\rm ell}(o,\cG;z)$ have elliptic levels
$\ell_i$ and $\ell_j$, respectively, then
\begin{equation}
\label{eq:elliptic-boundary-pair}
\sum_{y\in\del\cB_\ell^{\rm ell}(o,\cG;z)}
|G_{iy}(\TE(\cB_r(o,\cG)),z)|\,|G_{yj}(\TE(\cB_r(o,\cG)),z)|
\lesssim
\bigl(1+|\ell_i-\ell_j|\bigr)
2^{-|\ell_i-\ell_j|}.
\end{equation}
\end{itemize}
\end{proposition}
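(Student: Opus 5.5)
We first treat the case where $\cB_r(o,\cG)$ is a tree. Then $\TE(\cB_r(o,\cG))$ is a copy of the universal cover $\cX$, and Proposition \ref{greentree} gives the exact formula
\[
G_{jy}=m_j\,\mathcal M(j,y;z).
\]
By \eqref{eq:PF-raw-comparison} and $v_e\sim v_f$, $\lambda_1\le1$, this yields $|G_{jy}|\lesssim|\mathcal M^{\rm PF}([j,y])|$. Hence all estimates reduce to combinatorics of PF-normalized weights on the unfolded non-backtracking tree rooted at $o$.

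\textbf{Structure of the elliptic neighborhood.} Write $w(x):=|\mathcal M^{\rm PF}([o,x])|^2$. Along any ray from $o$, $w$ is non-increasing, and by \eqref{eq:PF-one-step-ratio} each step changes it by a factor in $[c,1-c_*]$. The conservation law \eqref{eq:PF-submarkov} says that the children of $x$ carry total weight $w(x)$. The boundary $\del\cB^{\rm ell}_\ell$ consists of the first vertices on each ray with $w<4^{-\ell}$, so $4^{-\ell}c\le w(y)<4^{-\ell}$ for every boundary vertex $y$.

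\textbf{Item (i).} The boundary is a stopping set (cutset), so conservation gives $\sum_{y\in\del}w(y)=w(o)\sim1$. Together with $w(y)\sim 4^{-\ell}$ this yields $|\del|\sim4^\ell$. We must also check that the cutset lies inside $\cB_r$. By Proposition \ref{pro.path-weight-decay}, $w\lesssim\beta^{2|\gamma|}$, so the depth is $\OO(\ell)=\OO(\log\log N)\ll r$.

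\textbf{Items (ii)--(v).} Fix $j$ of level $\ell_j$, so $w(j)\approx4^{-\ell_j}$. For a boundary vertex $y$, let $a=j\wedge y$ be the branching point of $j$ and $y$ relative to $o$. The PF weights satisfy an almost multiplicative decomposition, with constant factors from the $v$-ratios:
\[
|\mathcal M^{\rm PF}([j,y])|^2\approx\frac{w(j)}{w(a)}\cdot\frac{w(y)}{w(a)}\cdot w(a)\frac{1}{w(a)}.
\]
More precisely, the backward segment $j\to a$ has squared weight $\sim w(j)/w(a)$, and the forward segment $a\to y$ has squared weight $\sim w(y)/w(a)$; the backward step uses $|m_{\bar e}|\sim|m_e|$ and $v\sim$ constants. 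The two segments are non-increasing separately, and backward and forward weights are comparable up to a constant per step; this geometric factor must be absorbed by the $\beta^{|\gamma|}$ decay.

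We then group the boundary vertices $y$ by the ancestor $a$ of $j$ at which they branch off, at level $k\le\ell_j$:
\begin{itemize}
\item Conservation gives $\sum_{y\text{ below }a}w(y)\sim w(a)$.
\item The $\ell^2$ sum for a fixed $a$ is $\sim\bigl(w(j)/w(a)\bigr)\cdot\bigl(w(a)/w(a)\bigr)\sim4^{k-\ell_j}$. Summing the geometric series over $k$ gives $\OO(1)$, which is (ii).
\item For (iii), sum over interior $y$ level by level. Each level contributes $\OO(1)$ by the same argument, and there are $\ell$ levels, giving $\lesssim\ell$.
\item For (iv), the $\ell^1$ sum for a fixed $a$ is $\sqrt{w(j)/w(a)}\cdot|\del_a|\cdot 2^{-\ell}/\sqrt{w(a)}$. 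Using $|\del_a|\sim w(a)4^\ell$, this equals $2^{\ell-\ell_j}$ for each $a$. There are $\ell_j+1$ choices of $a$, giving $(1+\ell_j)2^{\ell-\ell_j}$.
\item For (v), the same bookkeeping with two branching points gives the bound. The product is maximal when $y$ descends from the deeper of $i,j$, and each step of level mismatch costs a factor $1/2$. Counting the branch points along the common path gives the prefactor $1+|\ell_i-\ell_j|$.
\end{itemize}

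\textbf{The one-cycle case.} If $\cB_r$ contains a cycle, $\TE(\cB_r)$ is a unicyclic graph with trees attached. Proposition \ref{p:tree-extension-bound} gives
\[
|G_{jy}|\lesssim\max_{\gamma:j\leadsto y}|\mathcal M(\gamma)|.
\]
There are at most two simple non-backtracking paths per pair inside the admissible region, so we may bound by a sum over these. The elliptic neighborhood then unfolds into the universal-cover picture at most twice, and the tree bounds apply with a constant loss. Vertices reached only through the cycle have weight at most that of the alternative path, by the definition via existence of some admissible $\gamma$.

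\textbf{Main obstacle.} The hardest step is the precise comparison between the weight of the non-monotone path $j\to a\to y$ and the factorized PF weights. This matters most for (iv) and (v), where the logarithmic factors have to come out exactly. Our first attempt would use the transition probabilities $p_{ef}$ and the reversed-edge bound $|m_{\bar e}|\sim|m_e|$, accepting $\OO(1)^{\text{steps}}$ losses. These losses are controlled by $p_{ef}\le1-c_*$, provided the constant-per-step backward loss is dominated by the geometric decay. If it is not, we would redefine the sums using ancestry in the tree rooted at $j$ instead.
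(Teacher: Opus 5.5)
Your overall route (PF mass conservation on the unfolded non-backtracking tree, factorization at the branch point, and summation over dyadic levels) is essentially the paper's. However, the step you flag as the main obstacle is a genuine gap, and your proposed fix does not close it.

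Your branch-point bookkeeping for (ii)--(v) needs $|\mathcal M^{\rm PF}(j\leadsto a)|^2\sim w(j)/w(a)$ for the root-directed segment from $j$ to its ancestor $a$. If reversing a path only came with a constant $K>1$ per step, this would become $K^{\dist(j,a)}w(j)/w(a)$. Since $\dist(j,a)$ is comparable to $\ell_j-\ell_a$, the loss grows exponentially in the level gap. That changes the rate, so the sharp factors $2^{\ell-\ell_j}$ in (iv) and $2^{-|\ell_i-\ell_j|}$ in (v) are lost. The loss also cannot be ``absorbed'' into the $\beta$-decay, because that decay is exactly what the level bookkeeping already spends. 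If $K(1-c_*)\ge1$, nothing is left, and the only bound you get for (iv) is $2^\ell$, coming from branch points near $o$. Your fallback of rooting at $j$ does rescue (ii): the paths $j\leadsto y$, $y\in\partial\mathcal B^{\rm ell}_\ell$, form a prefix-free family as seen from $j$. Item (iii) can be done by length layers as in the paper. But neither device yields the $\ell_j$-dependent gains in (iv) and (v), since elliptic levels are measured from $o$.

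The missing ingredient is an exact reversal identity: for adjacent $x,y$ one has $m_xm_{xy}=m_ym_{yx}$. This is just $G_{xy}=G_{yx}$ in Proposition \ref{greentree}; equivalently $m_x=m_{yx}/(1-m_{xy}m_{yx})$, which is derived in the proof of Lemma \ref{lem:NB-Dyson-regularity}. Telescoping gives $m_j\mathcal M(j,a;z)=m_a\mathcal M(a,j;z)$, so reversing a path of any length costs only the factor $|m_a/m_j|\sim1$, not a constant per step. Combined with $v_e\sim v_f$ and multiplicativity of $\mathcal M^{\rm PF}$ up to constants, this gives $|\mathcal M^{\rm PF}(j\leadsto a)|^2\sim w(j)/w(a)$, and your computations for (iii)--(v) then go through. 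The paper relies on the same fact, implicitly, in its level comparison $|\mathcal M^{\rm PF}(x\leadsto y)|\lesssim 2^{-|\ell_x-\ell_y|}$. It groups boundary vertices by the median $b(i,j,y)\in i\leadsto j$ and obtains (ii) and (iv) from (v) with $i=j$ and $i=o$; this is equivalent to your grouping.

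Separately, your one-cycle paragraph only supports the upper bound in (i). Once cycle-closing successors are removed, conservation becomes an inequality, so $|\partial\mathcal B^{\rm ell}_\ell|\gtrsim4^\ell$ needs an extra argument. One option is the paper's: choose an edge at $o$ whose forward component avoids the cycle (it exists by the minimum-degree assumption), and run the tree stopping argument on that branch unchanged.
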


\begin{proof}
For simplicity, write
$
G_{ij}:=G_{ij}(\TE(\cB_r(o,\cG)),z),
$
and suppress $(o,\cG;z)$ in the notation $\cB_\ell^{\rm ell}$. By $\OO(\log_4\log N)=\ell\ll r=\OO(\log N)$, all
relevant stopping paths and boundary vertices lie inside
$\cB_r(o,\cG)$.

Since $\cB_r(o,\cG)$ contains at most one cycle, there are
at most two simple paths between any two vertices.
Thus Proposition~\ref{p:tree-extension-bound} and
\eqref{eq:PF-raw-comparison} give
\begin{equation}
\label{eq.ell-G-MPF}
|G_{ij}|
\lesssim
\max_{\gamma:i\to j}
|\mathcal M^{\rm PF}(\gamma;z)|,
\end{equation}
where the maximum is over simple paths in $\cB_r(o,\cG)$.
Throughout the proof, $x\leadsto y$ denotes a path attaining
this maximum.

We shall use the multiplicative comparison
\[
|\mathcal M^{\rm PF}(\gamma_1\gamma_2;z)|
\sim
|\mathcal M^{\rm PF}(\gamma_1;z)
  \mathcal M^{\rm PF}(\gamma_2;z)|
\]
whenever $\gamma_1$ and $\gamma_2$ can be joined to form a simple path.
Furthermore, for $x,y\in\cB_\ell^{\rm ell}$, we have
\begin{equation}\label{eq:PF-level-comparison}
|\mathcal M^{\rm PF}(x\leadsto y;z)|
\lesssim
2^{-|\ell_x-\ell_y|}.
\end{equation}
\medskip
\noindent
\textit{The tree case.}
We first assume that $\cB_r(o,\cG)$ is a tree.
All path-mass calculations are performed on its full tree
extension $\TE(\cB_r(o,\cG))$.

Let $\mathscr A_1(o)$ denote the one-step paths starting from
$o$. For every nonempty path $\gamma$ starting from $o$, define
\begin{equation}
\label{eq.defmu}
\mu(\gamma)
:=
Z_o(z)^{-1}|\mathcal M^{\rm PF}(\gamma;z)|^2,
\qquad
Z_o(z)
:=
\sum_{\gamma\in\mathscr A_1(o)}
|\mathcal M^{\rm PF}(\gamma;z)|^2.
\end{equation}
Set $\mu((o)):=1$ for the trivial path.
For any other starting vertex $j$, define $Z_j$ and $\mu_j$
analogously.

The PF equation and the uniform comparison of the cavity
moduli and PF eigenvector entries give
$\lambda_1(z)\sim|m_e(z)|^2$.
Consequently, the one-step PF weights have modulus comparable
to one, and $Z_j\sim1$ uniformly in $j$.
For a nonempty path $\gamma$ ending with edge type $e$,
\[
\frac{|\mathcal M^{\rm PF}(\gamma f;z)|^2}
     {|\mathcal M^{\rm PF}(\gamma;z)|^2}
=
p_{ef}(z),
\qquad
c\leq p_{ef}(z)\leq1-c.
\]
Here the lower bound follows from \eqref{pef}, while the
upper bound uses the minimum-degree assumption and
$\sum_f p_{ef}=1$.

Thus the PF identity \eqref{eq:PF-submarkov} gives
\[
\sum_{\gamma'\in\operatorname{Ch}(\gamma)}
\mu_j(\gamma')=\mu_j(\gamma),
\]
where $\operatorname{Ch}(\gamma)$ denotes the set of all admissible
non-backtracking one-step extensions of $\gamma$.
A family is called \emph{prefix-free} if no member is a proper
prefix of another member.
For every such family $\mathscr P$ starting from $j$,
mass conservation implies
\begin{equation}
\label{eq:PF-prefix-free-bound}
\sum_{\gamma\in\mathscr P}\mu_j(\gamma)\leq1,
\qquad
\sum_{\gamma\in\mathscr P}
|\mathcal M^{\rm PF}(\gamma;z)|^2\lesssim1.
\end{equation}
The first inequality is an equality when $\mathscr P$ is a
finite cutset meeting every branch.

We first prove \eqref{eq:elliptic-boundary-cardinality}.
Let $\Gamma_\ell$ be the family of first-stopping paths:
\[
\Gamma_\ell
:=
\left\{
\gamma:
|\mathcal M^{\rm PF}(\gamma^-;z)|\geq2^{-\ell},
\quad
|\mathcal M^{\rm PF}(\gamma;z)|<2^{-\ell}
\right\}.
\]
Here $\gamma^-$ is the parent path.
The one-step lower bounds imply
\[
|\mathcal M^{\rm PF}(\gamma;z)|\sim2^{-\ell},
\qquad
\mu(\gamma)\sim4^{-\ell},
\qquad \gamma\in\Gamma_\ell.
\]
By Proposition~\ref{pro.path-weight-decay}, every branch
reaches this stopping level within $O(1+\ell)$ steps.
Thus $\Gamma_\ell$ is a finite cutset.
Since root paths are unique, its endpoints are in bijection
with $\del\cB_\ell^{\rm ell}$, and therefore
\[
1
=
\sum_{\gamma\in\Gamma_\ell}\mu(\gamma)
\sim
4^{-\ell}|\Gamma_\ell|
=
4^{-\ell}|\del\cB_\ell^{\rm ell}|.
\]
This proves \eqref{eq:elliptic-boundary-cardinality}.
We also obtain
\begin{equation}
\label{eq:PF-boundary-root-weight}
|\mathcal M^{\rm PF}(o\leadsto y;z)|
\sim2^{-\ell},
\qquad y\in\del\cB_\ell^{\rm ell}.
\end{equation}

We next prove the two-point estimate at the level of path
weights.
Fix $i,j\in\cB_\ell^{\rm ell}$.
For $y\in\del\cB_\ell^{\rm ell}$, let $b(i,j,y)$ denote the
common vertex of the three pairwise paths joining $i,j,y$.
For a fixed $b\in i\leadsto j$, the paths from $b$ to boundary
vertices satisfying $b(i,j,y)=b$ form a prefix-free family,
since $b\in i\leadsto j\subset\cB_\ell^{\rm ell}$.
Applying \eqref{eq:PF-prefix-free-bound} with root $b$ gives
\[
\sum_{\substack{
y\in\del\cB_\ell^{\rm ell}:\\ b(i,j,y)=b}}
|\mathcal M^{\rm PF}(b\leadsto y;z)|^2
\lesssim1.
\]
By the multiplicative comparison,
\begin{equation}
\label{eq:PF-pair-fixed-branch}
\sum_{\substack{
y\in\del\cB_\ell^{\rm ell}:\\ b(i,j,y)=b}}
|\mathcal M^{\rm PF}(i\leadsto y;z)
  \mathcal M^{\rm PF}(j\leadsto y;z)|
\lesssim
|\mathcal M^{\rm PF}(i\leadsto b;z)
  \mathcal M^{\rm PF}(j\leadsto b;z)|.
\end{equation}

There are only $O(1)$ vertices of each dyadic elliptic level
on $i\leadsto j$.
Indeed, this path splits at $\anc(i,j)$ into two root-directed
segments, and the bound $p_{ef}\leq1-c$ gives geometric
contraction along each segment away from $o$, apart from the
harmless first step.
Summing \eqref{eq:PF-pair-fixed-branch} and using
\eqref{eq:PF-level-comparison}, we obtain
\begin{equation}
\label{eq:PF-pair-main-levels}
\begin{aligned}
\sum_{y\in\del\cB_\ell^{\rm ell}}
|\mathcal M^{\rm PF}(i\leadsto y;z)
  \mathcal M^{\rm PF}(j\leadsto y;z)|
\lesssim
\sum_{b\in i\leadsto j}
2^{-|\ell_i-\ell_b|-|\ell_j-\ell_b|}
\lesssim
\sum_{t\geq0}2^{-|\ell_i-t|-|\ell_j-t|}
\lesssim
(1+|\ell_i-\ell_j|)2^{-|\ell_i-\ell_j|}.
\end{aligned}
\end{equation}
The last step separates the levels between $\ell_i$ and
$\ell_j$ from the geometric tails.
Together with \eqref{eq.ell-G-MPF}, this proves
\eqref{eq:elliptic-boundary-pair}.

Taking $i=j$ in \eqref{eq:PF-pair-main-levels} gives
\eqref{eq:elliptic-boundary-L2}.
Taking $i=o$ and using \eqref{eq:PF-boundary-root-weight}
gives
\[
\begin{aligned}
\sum_{y\in\del\cB_\ell^{\rm ell}}|G_{jy}|
\lesssim
2^\ell
\sum_{y\in\del\cB_\ell^{\rm ell}}
|\mathcal M^{\rm PF}(o\leadsto y;z)
  \mathcal M^{\rm PF}(j\leadsto y;z)|\lesssim
(1+\ell_j)2^{\ell-\ell_j},
\end{aligned}
\]
which proves \eqref{eq:elliptic-boundary-L1}.

Finally, Proposition~\ref{pro.path-weight-decay} implies
$\cB_\ell^{\rm ell}\subset\cB_{C(1+\ell)}(o,\cG)$.
Increasing $C$ if necessary, every path from
$j\in\cB_\ell^{\rm ell}$ to another vertex of this subtree
has length at most $C(1+\ell)$.
For each fixed length, the paths starting from $j$ form a
prefix-free family.
Hence \eqref{eq:PF-prefix-free-bound} and $Z_j\sim1$ give
\[
\begin{aligned}
\sum_{y\in\cB_\ell^{\rm ell}}|G_{jy}|^2
\lesssim
\sum_{y\in\cB_\ell^{\rm ell}}
|\mathcal M^{\rm PF}(j\leadsto y;z)|^2\lesssim
\sum_{k=0}^{C(1+\ell)}
\sum_{\substack{|\gamma|=k:\\\gamma\text{ starts at }j}}
\mu_j(\gamma)
\lesssim1+\ell
\lesssim\ell.
\end{aligned}
\]
This proves \eqref{eq:elliptic-interior-L2}.

\medskip
\noindent
\textit{The one-cycle case.}
Removing cycle-closing successors
changes mass conservation into an inequality, but preserves
\eqref{eq:PF-prefix-free-bound}.
Thus the first-stopping family still satisfies
$|\Gamma_\ell|\lesssim4^\ell$.
Consequently,
$|\del\cB_\ell^{\rm ell}|\lesssim4^\ell$.
The one-step lower bound also proves
\eqref{eq:PF-boundary-root-weight} for the actual boundary.

For the reverse cardinality bound, choose an edge out of $o$
whose forward component does not meet the cycle.
Such an edge exists by the minimum-degree assumption.
This branch carries a uniformly positive initial PF mass,
and its vertices have unique simple paths from $o$.
The tree stopping argument therefore produces
$\gtrsim4^\ell$ distinct genuine outer-boundary vertices.
This proves \eqref{eq:elliptic-boundary-cardinality}.

To transfer the summation estimates, fix
$i,j\in\cB_\ell^{\rm ell}$ and let $\mathscr C$ consist of the
unique cycle together with the tree paths connecting $o,i,j$
to it.
Every component outside $\mathscr C$ is a tree attached to
$\mathscr C$ at a unique vertex.
For $y\notin\mathscr C$, let $b(y)$ denote this attachment
point, and set $b(y)=y$ for $y\in\mathscr C$.
For a fixed attachment point $b$, every path from $o,i,j$ to
a vertex $y$ with $b(y)=b$ has the same terminal tree segment
$b\leadsto y$. Hence, by the multiplicative comparison of the
PF weights,
\[
|\mathcal M^{\rm PF}(i\leadsto y;z)
  \mathcal M^{\rm PF}(j\leadsto y;z)|
\lesssim
|\mathcal M^{\rm PF}(i\leadsto b;z)
  \mathcal M^{\rm PF}(j\leadsto b;z)|
|\mathcal M^{\rm PF}(b\leadsto y;z)|^2 .
\]
Moreover, the elliptic neighborhood intersects each attached
tree in an initial subtree. Therefore the paths from $b$ to
its outer boundary form a prefix-free family, and the tree
estimate gives
\[
\sum_{\substack{
y\in\del\cB_\ell^{\rm ell}:\\ b(y)=b}}
|\mathcal M^{\rm PF}(b\leadsto y;z)|^2
\lesssim1.
\vspace{-3mm}
\]
Thus the fixed-$b$ estimate
\[
\sum_{\substack{
y\in\del\cB_\ell^{\rm ell}:\\ b(y)=b}}
|\mathcal M^{\rm PF}(i\leadsto y;z)
  \mathcal M^{\rm PF}(j\leadsto y;z)|
\lesssim
|\mathcal M^{\rm PF}(i\leadsto b;z)
  \mathcal M^{\rm PF}(j\leadsto b;z)|
\vspace{-2mm}
\]
is exactly the same as in the tree case. The corresponding
interior estimate follows from the same length-layer argument.

It remains only to sum over the possible attachment points.
We claim that there are $\OO(1)$ vertices of $\mathscr C$ in each dyadic PF
level. Indeed, the tree part of $\mathscr C$ consists of a
bounded number of paths along which the PF weights vary
geometrically. On the cycle, there are two simple paths from
the attachment point of $o$ to a given vertex, and
\[
|\mathcal M^{\rm PF}(o\leadsto b;z)|
\sim
\max\{
|\mathcal M^{\rm PF}(\gamma_+(b);z)|,\,
|\mathcal M^{\rm PF}(\gamma_-(b);z)|
\},
\]
where
$\gamma_+(b)$ and
$\gamma_-(b)$ denote the two simple paths from $o$ to $b$,
obtained by following the two different directions around
the cycle.
As $b$ moves around the cycle, the two directional path weights
vary geometrically in opposite directions, since each additional
step multiplies the squared PF weight by $p_{ef}$ with
$c\le p_{ef}\le1-c$. Hence, for every $t$,
\[
\#\left\{
b\in\mathscr C:
|\mathcal M^{\rm PF}(o\leadsto b;z)|\sim 2^{-t}
\right\}
\lesssim1.
\]
Therefore the summation over $b$ reduces to exactly the same
dyadic-level summation as in the tree case.

Using \eqref{eq:PF-level-comparison}, we may therefore repeat
the summation in \eqref{eq:PF-pair-main-levels}, with
$b\in\mathscr C$ in place of $b\in i\leadsto j$.
This proves the same two-point path-weight bound, from which
\eqref{eq:elliptic-boundary-pair},
\eqref{eq:elliptic-boundary-L2}, and
\eqref{eq:elliptic-boundary-L1} follow exactly as above.
For the interior bound, each relevant terminal tree segment
has length $O(\ell)$, since it belongs to a root path of
weight at least $2^{-\ell}$.
The same length-layer estimate consequently gives
\[
\begin{aligned}
\sum_{y\in\cB_\ell^{\rm ell}}|G_{jy}|^2
\lesssim
\ell\sum_{b\in\mathscr C}
|\mathcal M^{\rm PF}(j\leadsto b;z)|^2\lesssim
\ell\sum_{t\geq0}4^{-|\ell_j-t|}
\lesssim\ell.
\end{aligned}
\]
This proves \eqref{eq:elliptic-interior-L2} and completes
the proof.
\end{proof}

\subsection{Extension with weights}\label{sec.extension weight}

\begin{definition}[Extension with weight $q(z)$]
\label{def:ext}
Let
$
q(z)
=
\bigl(q_e(z)\bigr)_{e\in\vec E_{\cH}}
$
be a collection of weights indexed by the directed edge types of
$\cH$.
The extension of $\cG$ with weight $q(z)$,
denoted by
$
\Ext(\cG,q(z)),
$
is obtained by assigning every vertex $v\in\bG$
the weight
$
-\sum_{e\in\mathscr M_\cG(v)}q_{ e}(z)
$
.
We regard $\Ext(\cG,q(z))$ as a virtual graph and write
$
G(\Ext(\cG,q(z)),z)
$
for its Green's function.
\end{definition}

Equivalently, if $A$ denotes the adjacency matrix of $\cG$, then the
matrix associated with the extension is
\[
A-
\sum_{v\in\bG}
\sum_{e\in\mathscr M_\cG(v)}
q_{e}(z)e_{vv},
\]
where
$
(e_{vv})_{ij}
=
\delta_{vi}\delta_{vj}.
$
Hence its Green's function is
\begin{align}
G\bigl(\Ext(\cG,q(z)),z\bigr)
=
\left(
A-z
-
\sum_{v\in\bG}
\sum_{e\in\mathscr M_\cG(v)}
q_{ e}(z)e_{vv}
\right)^{-1}.
\label{e:formula}
\end{align}

The above extension with weight can be viewed as a general tree extension. Indeed, recall that
$
m(z)
=
\bigl(m_e(z)\bigr)_{e\in\vec E_{\cH}}
$
is the solution to the Dyson equation \eqref{eq.DE}. Proposition \ref{greentree} implies that, for $e=(i,j)$, $m_e$ is the diagonal Green function at a root of type $j$ after deleting its edge toward type $i$. Combining this observation with \eqref{e:formula}, one can see that for every $i,j\in\bG$,
\begin{equation*}
  G_{ij}(\Ext(\cG,m(z)),z)=G_{ij}(\TE(\cG),z).
\end{equation*}
Thus, the tree extension is a special case of the extension with weight, corresponding to the choice
$q(z)=m(z).
$
This naturally suggests that if $q(z)$ is sufficiently close to $m(z)$, then
$
G_{ij}\bigl(\Ext(\cG,q(z)),z\bigr)
$
should be close to the Green function of the tree extension. We therefore turn to the regime where $q(z)$ is a small perturbation of $m(z)$, i.e., $|q(z)-m(z)|_\infty\ll 1$.

\begin{lemma}\label{p:subgraph}
Let $\cG$ be a finite connected subgraph of a lift of $\cH$, containing at most one cycle.
If the weight $q(z)$ satisfies $(1+\diam(\cG))|q(z)-m(z)|_\infty\ll1$, then for every $i,j\in \bG$ 
\begin{align}\begin{split} \label{e:smalldiff}
\left|G_{ij}(\Ext(\cG, q(z)), z)-G_{ij}(\Ext(\cG, m),z)\right|
\lesssim (1+\dist_{\cG}(i,j))\max_{\gamma: i\leadsto j}|\cM(\gamma;z)||q(z)-m(z)|_\infty.
\end{split}\end{align}
As a consequence, the first inequality in Proposition \ref{p:tree-extension-bound} and Proposition \ref{prop:elliptic-neighborhood-estimates} also hold for $G(\Ext(\cG,q(z)),z)$.
\end{lemma}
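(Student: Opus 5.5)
The plan is a resolvent expansion in the diagonal perturbation, with the tree-extension bounds of Proposition~\ref{p:tree-extension-bound} as input. Write $G:=G(\Ext(\cG,m),z)$ and $\widehat G:=G(\Ext(\cG,q),z)$. By \eqref{e:formula}, the two defining matrices differ only by the diagonal matrix
\[
D:=-\sum_{v\in\bG}\sum_{e\in\mathscr M_\cG(v)}\bigl(q_e-m_e\bigr)e_{vv},
\]
whose entries satisfy $|D_{vv}|\le d_{\rm max}|q-m|_\infty$. The Neumann series gives
\[
\widehat G=\sum_{n\ge0}(-1)^n G(DG)^n ,
\]
or, equivalently, the first-order identity $\widehat G-G=-GD\widehat G$, which can be closed by a bootstrap. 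Since $\cG$ is finite, everything is a finite matrix computation, and the only issue is to bound the series entrywise in terms of path weights.

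\textbf{First order.} The term $(GDG)_{ij}=\sum_v G_{iv}D_{vv}G_{vj}$ is controlled by Proposition~\ref{p:tree-extension-bound}, which gives $|G_{iv}|\lesssim\max_{\gamma:i\leadsto v}|\cM(\gamma)|$. On a graph with at most one cycle, the product of the best path $i\leadsto v$ and the best path $v\leadsto j$ should be bounded by $|\cM|$ of a path $i\leadsto j$ times $|\cM|$ of the excursion from that path to $v$ and back. This uses the multiplicative comparison of path weights, as in the proof of Proposition~\ref{prop:elliptic-neighborhood-estimates}. I therefore split the sum over $v$ according to the attachment point $b$ of $v$ on the (at most two) simple paths from $i$ to $j$, and for fixed $b$ sum the doubled excursion weights $|\cM(b\leadsto v)|^2$. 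The prefix-free mass bound \eqref{eq:PF-prefix-free-bound}, together with \eqref{eq:PF-raw-comparison}, shows that each such sum is $\OO(1)$, provided the sum is organized by PF-normalized weights, whose squares are sub-conserved. The number of attachment points is at most $1+\dist_\cG(i,j)$, up to the bounded cycle contribution. This yields the bound $(1+\dist_\cG(i,j))\max_\gamma|\cM(\gamma)|\,|q-m|_\infty$.

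\textbf{Higher orders.} I expect this to be the main obstacle. The $n$-th term is a sum over chains $i\to v_1\to\cdots\to v_n\to j$, and the concatenated path weight controls it only up to excursion factors. Iterating the attachment-point argument, each additional insertion of $D$ costs a factor $|q-m|_\infty$ times a sum over excursions from some vertex of the current skeleton path. That sum is $\OO(1)$ per attachment point and hence at most $C(1+\diam\cG)$ in total. The $n$-th term is therefore bounded by
\[
(1+\dist_\cG(i,j))\max_\gamma|\cM(\gamma)|\,\bigl(C(1+\diam\cG)|q-m|_\infty\bigr)^{n-1}|q-m|_\infty .
\]
Under the hypothesis $(1+\diam\cG)|q-m|_\infty\ll1$ the geometric series converges, which gives \eqref{e:smalldiff}. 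A cleaner alternative for the same step is to bound $\|G\|_{\ell^\infty\to\ell^\infty}$ restricted to $\cG$ by $\OO(1+\diam\cG)$, using $\sum_v|G_{iv}|\lesssim\sum_k\beta^k\cdot(\text{count})$. This fails, however, because the number of vertices at distance $k$ grows exponentially, so it should not be used. The honest route is the PF-mass bookkeeping above, which needs the weighted estimate
\[
\sum_v|G_{iv}||G_{vj}|\lesssim(1+\dist(i,j))\max_\gamma|\cM|
\]
to hold uniformly along the chain, with the weight in the chain being the one attached to the current skeleton path.

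\textbf{Consequence.} Adding \eqref{e:smalldiff} to the bound on $G$ from Proposition~\ref{p:tree-extension-bound} gives the first inequality of that proposition for $\widehat G$. Here the factor $(1+\dist)|q-m|_\infty$ is at most $(1+\diam)|q-m|_\infty\ll1$, so it is absorbed. Since the proof of Proposition~\ref{prop:elliptic-neighborhood-estimates} used only the path-weight bound \eqref{eq.ell-G-MPF}, its conclusions transfer to $\widehat G$ verbatim.
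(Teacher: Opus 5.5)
Your overall route is the paper's. You expand $G(\Ext(\cG,q),z)$ in the diagonal boundary perturbation, control the first-order term by splitting over attachment points on $i\leadsto j$ and summing excursion masses with the PF prefix-free bound, iterate for higher orders (the paper defers these to \cite[Proposition~2.12]{Huang2024Spectrum}), and read off the consequences. But there is a genuine gap in how you justify the excursion sums.

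The estimate you say must hold along the chain, $\sum_v|G_{iv}||G_{vj}|\lesssim(1+\dist_\cG(i,j))\max_\gamma|\cM(\gamma;z)|$, is false when $v$ runs over all of $\bG$. Your first-order paragraph uses only the uniform bound $|D_{vv}|\le d_{\rm max}|q-m|_\infty$, so it does not explain why the excursion sums are $\OO(1)$. Take $i=j=o$ and $\cG$ a tree ball of radius $R$ around $o$. The layer sums satisfy $\sum_{\dist(o,v)=k}|G_{ov}|^2\sim\lambda_1(z)^k$. In the bulk, $1-\lambda_1\sim\eta$; compare $\sum_\ell(\wM^\ell\mathbf 1)_e=\Im m_e/(\eta|m_e|^2)$ in the proof of Proposition~\ref{pro.largest-eigenvalue}. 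So for $\eta\ll R^{-1}$, which is the regime where the lemma is used, the full sum is of order $R\sim\diam(\cG)$. Summing over all $v$ therefore costs an extra factor $1+\diam(\cG)$ at first order. The higher-order series would then need $(1+\diam\cG)^2|q-m|_\infty\ll1$, which is not assumed.

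The missing idea is that $D$ is supported on vertices with missing edge types. The quantity to bound is therefore $\sum_v|\mathscr M_\cG(v)||G_{iv}||G_{vj}|$. This is exactly how the paper writes it, and it is why the paper invokes the boundary-pair argument of \eqref{eq:elliptic-boundary-pair}.

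Even for this restricted sum, the family $\{b\leadsto v:v\in\supp D\}$ need not be prefix-free when $\cG$ is not a ball: a vertex with a missing type may have a descendant in $\cG$ that also has one. Apply \eqref{eq:PF-prefix-free-bound} instead to the paths $\gamma_{v,e}$ obtained by extending $b\leadsto v$ by one edge of a missing type $e\in\mathscr M_\cG(v)$ into the attached branch $\cY_e$ of $\TE(\cG)$. These extended paths have three properties:
\begin{itemize}
\item They are prefix-free.
\item The extra step is non-backtracking, because the reverse of the last edge of $b\leadsto v$ is present at $v$ and so is not a missing type.
\item The one-step lower bound $p_{fe}\ge c_*$, with $f$ the type of that last edge, gives $|\cM^{\rm PF}(b\leadsto v;z)|^2\lesssim|\cM^{\rm PF}(\gamma_{v,e};z)|^2$.
\end{itemize}
Together these yield the $\OO(1)$ bound per attachment point, with multiplicity $|\mathscr M_\cG(v)|$. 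With the sums restricted this way, your induction closes at cost $C(1+\diam\cG)|q-m|_\infty$ per insertion of $D$, and the rest of your argument, including the consequences, goes through as written.
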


\begin{remark}\label{r:subgraph}
Under the same assumptions as in Lemma \ref{p:subgraph}, if two weights $q_1(z)$ and $q_2(z)$ satisfy
$(1+\diam(\cG))|q_1(z)-m(z)|_\infty, (1+\diam(\cG))|q_2(z)-m(z)|_\infty\ll1$, then by the same argument as in Lemma \ref{p:subgraph} we have 
\begin{align*}
\left|G_{ij}(\Ext(\cG, q_1(z)),z)-G_{ij}(\Ext(\cG,q_2(z)),z)\right|
\lesssim (1+\dist_{\cG}(i,j))\max_{\gamma: i\leadsto j}|\cM(\gamma;z)||q_1(z)-q_2(z)|_\infty.
\end{align*}
\end{remark}

If $\cG$ is a finite rooted tree, then the tree extension of $\cG$ is the universal cover tree with root vertex $o$. Therefore, $m(z)$ is a fixed point: $G_{oo}(\Ext(\cG, m(z)),z)=m_{o}(z)=m_{io}/(1-m_{oi}m_{io})$ for any $i\sim o$. The following lemma gives its stability estimates, i.e. if $q(z)$ is sufficiently close to $m(z)$ then $G_{oo}(\Ext(\cG,q(z)), z)$ is sufficiently close to $q_{io}(z)/(1-q_{oi}(z)q_{io}(z))$.

\begin{lemma}[Fixed-point estimate]
\label{p:fixpoint}
Fix $z\in\bC_+(\mathcal I_\delta)$ with $|z|\le 2 d_{\rm max}$.
Let $\cG$ be a finite connected subgraph of a lift of $\cH$,
and assume that $\cG$ is a tree with root $o$.
If the weight satisfies
$
(1+\diam(\cG))|q-m|_\infty\ll1,
$
then for every directed base edge $e$ with $h(e)=\pi(o)$,
\[
\left|
G_{oo}(\Ext(\cG,q),z)
-
\frac{q_e}{1-q_eq_{\bar e}}
\right|
\lesssim
(1+\diam(\cG))|R(q,z)|_\infty,
\]
where the Dyson-residual $R$ is defined in \eqref{eq.Dyson-residual}.
\end{lemma}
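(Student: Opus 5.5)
Since $\cG$ is a finite tree rooted at $o$ with boundary weights $q$ attached at every vertex with missing types, I will compute $G_{oo}(\Ext(\cG,q),z)$ through the tree recursion for cavity quantities. For each directed edge $(x,y)$ of $\cG$, let $g_{xy}$ denote the diagonal Green's function at $y$ in the branch of $\Ext(\cG,q)$ obtained by deleting the edge $\{x,y\}$ and keeping the component of $y$. By the Schur complement formula \eqref{e:Schur}, these quantities satisfy
\[
-\frac1{g_{xy}}=z+\sum_{w\sim y,\,w\neq x,\,w\in\bG}g_{yw}+\sum_{f\in\mathscr M_\cG(y)}q_f .
\]
This recursion can be compared with the Dyson residual identity $-1/q_{xy}=z+\sum_{k\sim y,k\ne x}q_{yk}+R_{xy}(q,z)$, where the edges are interpreted through their types. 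The missing-type terms coincide exactly, so setting $\Delta_{xy}:=g_{xy}-q_{xy}$ gives
\[
\Delta_{xy}=g_{xy}q_{xy}\Bigl(\sum_{w}\Delta_{yw}-R_{xy}(q,z)\Bigr).
\]
This is a linear-in-$\Delta$ recursion with a source term $R$, propagating from the leaves toward the root. At a leaf $y$ all other directions are missing types, so $\Delta_{xy}=-g_{xy}q_{xy}R_{xy}$.

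Unrolling the recursion along the subtree below $y$, I will write $\Delta_{xy}$ as a sum over descendants $w$ of $y$ of products of factors $g q$ along the path, times $R$ at $w$. I control these products by comparing $g q$ with $m^2$: a priori $|g-m|\lesssim(1+\diam)|q-m|_\infty\ll1$ from Lemma \ref{p:subgraph} (or bootstrapped inductively from the leaves), so each factor is $m_e^2(1+o(1))$. The sum of path products over the subtree is then bounded by $\sum_{k\le \diam}|(M^k)\cdot|$, with entries at most $(\wM^k\mathbf 1)_e$ up to a factor $(1+o(1))^k$. This is the main obstacle: $\lambda_1(\wM)<1$ only by order $\eta$, so the geometric series alone does not give a bound uniform in $\eta$. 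I will instead use the crude estimate $(\wM^k\mathbf 1)_e\lesssim\lambda_1^k\le 1$, which follows from the Perron eigenvector comparability $v_e\sim v_f$ in Proposition \ref{pro.largest-eigenvalue}. Each depth then contributes $\OO(|R|_\infty)$, and summing over at most $\diam(\cG)$ depths gives $|\Delta_{xy}|\lesssim(1+\diam(\cG))|R|_\infty$. The hypothesis $(1+\diam)|q-m|_\infty\ll1$ keeps the factors $(1+o(1))^k\lesssim 1$ for $k\le\diam$.

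Finally I handle the root. Fix $e$ with $h(e)=\pi(o)$ and let $\bar e=(\pi(o),i)$. If $o$ has a neighbor $x$ in $\cG$ of type $i$, then by Schur complement $G_{oo}=g_{xo}/(1-g_{xo}g_{ox})$, where $g_{xo}$ is the cavity value at $o$ without $x$ and $g_{ox}$ is the cavity value at $x$ without $o$. Both of these are cavity quantities covered by the bound above, with types $e$ and $\bar e$ respectively. If instead the type $\bar e$ is missing at $o$, then $G_{oo}=g/(1-g q_{\bar e})$, where $g$ is the cavity value at $o$ with the $\bar e$-direction removed, and this case is handled identically. The map $(a,b)\mapsto a/(1-ab)$ is Lipschitz near $(m_e,m_{\bar e})$, since $1-m_em_{\bar e}=m_e/m_{\pi(o)}$ is bounded away from zero by Lemma \ref{lem:NB-Dyson-regularity}. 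Substituting $g=q+\Delta$ therefore yields the claimed bound $(1+\diam(\cG))|R(q,z)|_\infty$.
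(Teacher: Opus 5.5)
\textbf{There is a quantitative gap in how you control the accumulated multiplicative errors; it is repairable.} The rest of your argument is correct: the cavity recursion $\Delta_{xy}=g_{xy}q_{xy}\bigl(\sum_w\Delta_{yw}-R_{xy}\bigr)$, the per-depth bound $|m_e|^2(\wM^k\mathbf 1)_e\lesssim\lambda_1^k\le 1$ from the Perron vector, and the root step via the Lipschitz map $(a,b)\mapsto a/(1-ab)$ (in both the present-neighbor and missing-type cases).

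The problem is this. After unrolling, each term carries $\prod g_{u'w'}q_{u'w'}$ over up to $1+\diam(\cG)$ edges. To compare it with $\prod m_{u'w'}^2$ you need $\prod|g_{u'w'}q_{u'w'}/m_{u'w'}^2|\lesssim 1$, which requires a per-factor relative error of order $1/\diam(\cG)$. The bound you quote, $|g-m|\lesssim(1+\diam(\cG))|q-m|_\infty$, gives only a per-factor error $o(1)$. The product is then bounded merely by $\exp\bigl(C(1+\diam(\cG))^2|q-m|_\infty\bigr)$, which the hypothesis $(1+\diam(\cG))|q-m|_\infty\ll1$ does not control. For example, take $|q-m|_\infty\asymp 1/(\diam(\cG)\log\diam(\cG))$. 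The leaf-to-root bootstrap has the same defect: it gives $|g-q|\lesssim(1+\diam(\cG))|R|_\infty$ with only $|R|_\infty\lesssim|q-m|_\infty$, so again a per-factor error of order $(1+\diam(\cG))|q-m|_\infty$.

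\textbf{Fix.} Apply Lemma~\ref{p:subgraph} on the diagonal, where the factor $1+\dist(i,j)$ equals $1$.
\begin{itemize}
\item For the subtree $\cK_{xy}$ (the component of $y$ after deleting $\{x,y\}$, with diameter at most $\diam(\cG)$), it gives $|G_{yy}(\Ext(\cK_{xy},q),z)-m_{\pi(y)}|\lesssim|q-m|_\infty$.
\item Your cavity entry is obtained from this by removing the single filling $q_{yx}$: $g_{xy}^{-1}=G_{yy}(\Ext(\cK_{xy},q),z)^{-1}+q_{yx}$.
\item This map is Lipschitz near $(m_{\pi(y)},m_{yx})$, since its denominator there is $m_{\pi(y)}/m_{xy}$, which has modulus comparable to $1$. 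It sends that point to $m_{xy}$.
\end{itemize}
Hence $|g_{xy}-m_{xy}|\lesssim|q-m|_\infty$ uniformly over all cavities. Every path product is then at most $e^{C(1+\diam(\cG))|q-m|_\infty}\lesssim 1$ times $\prod|m|^2$, and your argument closes.

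\textbf{Comparison with the paper.} This is how the paper's route avoids the issue. It builds $v_o=q_e/(1-q_eq_{\bar e})$ and $v_x=-q_{e_x}v_{p(x)}$, checks the exact identity $(P^{-1}v)_x=\mathbf 1_{\{x=o\}}+R_{e_x}v_x$, and obtains $P_{oo}-v_o=-\sum_x P_{ox}R_{e_x}v_x$. The Green's function factor $P_{ox}$ is bounded as a whole by Lemma~\ref{p:subgraph}. Only the ratios $q/m$ accumulate in $v_x$, each with error $|q-m|_\infty$, so the stated hypothesis suffices. Your unrolled recursion is the same sum in cavity form: the product of cavity factors along a path is, up to sign, an off-diagonal Green's function entry of the cavity tree. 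So an alternative fix is to bound that product as a single entry rather than factor by factor.
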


The localization principle shows that, up to a controlled error, the Green function \(G_{ij}\) depends only on a local neighborhood of \(i\) and \(j\), so that the ambient graph \(\cG\) may be replaced by any subgraph \(\cK\) containing \(\cB_r(i,j,\cG)\). 
In particular, it allows us to enlarge \(\cB_r(i,j,\cG)\) by including additional vertices that are relevant to the argument and then work on the resulting localized subgraph; this procedure will be used repeatedly in Section~\ref{sec:stability}.

\begin{lemma}[Localization principle]\label{p:localization}
Fix $z\in\bC_+(\mathcal I_\delta)$ with $|z|\le 2 d_{\rm max}$.
Let $\cG$ be a finite connected subgraph of a lift of $\cH$,
containing at most one cycle.
If
$
(1+\diam(\cG))|q-m|_\infty\ll1,
$
then, for every $i,j\in\bG$ and every subgraph $\cK$ of $\cG$
containing $\cB_r(\{i,j\},\cG)$,
\[
\left|
G_{ij}(\Ext(\cG,q),z)
-
G_{ij}(\Ext(\cK,q),z)
\right|
\lesssim
(1+\diam(\cG))|R(q,z)|_\infty+\beta^{2r}.
\]
\end{lemma}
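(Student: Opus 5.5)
The natural tool here is a Schur-complement/Woodbury comparison between $\Ext(\cG,q)$ and $\Ext(\cK,q)$. Write $\bK$ for the vertex set of $\cK$ and $\bV:=\bG\setminus\bK$; we may enlarge $\cK$ to the induced subgraph on $\bK$, since this only changes the problem by the same argument. Deleting the edges between $\bK$ and $\bV$ in $\Ext(\cG,q)$ does not by itself give $\Ext(\cK,q)$. Indeed, in $\Ext(\cK,q)$ every vertex $v\in\bK$ adjacent to $\bV$ carries the additional diagonal weight $-\sum_{e}q_e$, summed over the edge types $e$ that now point into $\bV$.

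By the Schur complement formula, $G(\Ext(\cG,q))|_{\bK}$ equals the Green's function of $\Ext(\cK,\cdot)$ in which each boundary vertex $v\in\partial_{\rm in}\bK$ carries the exact self-energy $\Sigma_v:=\sum_{w,w'\in\bV,\,w,w'\sim v}G_{ww'}(\Ext(\cG|_{\bV},q))$ instead of $\sum_e q_e$. Because $\cG$ has at most one cycle, for all but $O(1)$ boundary vertices the branch behind each $w\sim v$ is a tree. In that case the cross terms vanish and $\Sigma_v=\sum_{w}G_{ww}$, where $G_{ww}$ is the root Green's function of a tree extension with weight $q$ rooted at $w$ with the edge to $v$ removed.

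\textbf{Step 1 (boundary self-energies are close to $q$).} I apply the fixed-point estimate (Lemma~\ref{p:fixpoint}) on the branch behind $w$. For a rooted tree whose root has its parent edge of type $e$ removed, the analogue of that lemma (same proof, with the root's missing type included) gives
\[
|G_{ww}-q_e|\lesssim(1+\diam(\cG))|R(q,z)|_\infty .
\]
For the $O(1)$ boundary vertices whose branches meet the cycle, I either place them inside $\cK$ (they are at distance $\ge r$ from $i,j$ anyway), or bound their contribution directly using Proposition~\ref{p:tree-extension-bound} together with Lemma~\ref{p:subgraph}. In the latter case the correction is simply $O(1)$ per vertex, and that vertex is weighted by the $\beta^{2r}$ factor below.

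\textbf{Step 2 (resolvent comparison).} Write $D:=\diag(\Sigma_v-\sum_e q_e)$, supported on $\partial_{\rm in}\bK$. The second resolvent identity gives
\[
G_{ij}(\Ext(\cG,q))-G_{ij}(\Ext(\cK,q))=\sum_{v}G_{iv}(\Ext(\cK,q))\,D_{vv}\,G_{vj}(\Ext(\cG,q)).
\]
The plan is to split $D=D_1+D_2$. Here $D_1$ is the tree part, which by Step~1 satisfies $|D_1|\lesssim(1+\diam)|R|_\infty$, and $D_2$ is the $O(1)$ exceptional vertices near the cycle. To control the sum over $v$, I use Lemma~\ref{p:subgraph} to transfer the path bounds of Proposition~\ref{p:tree-extension-bound}, giving $|G_{iv}|,|G_{vj}|\lesssim\max_\gamma|\cM(\gamma)|$. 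The remaining sums $\sum_v|G_{iv}G_{vj}|$ are then bounded by the PF prefix-free mass argument from the proof of Proposition~\ref{prop:elliptic-neighborhood-estimates}, since Cauchy--Schwarz together with \eqref{eq:PF-prefix-free-bound} gives $O(1)$. This yields the $D_1$ contribution $\lesssim(1+\diam)|R|_\infty$.

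For $D_2$, the boundary vertices lie at distance $\ge r$ from both $i$ and $j$, so Proposition~\ref{pro.path-weight-decay} gives $|G_{iv}G_{vj}|\lesssim\beta^{2r}$. Since $D_2$ has bounded norm and only $O(1)$ vertices, its contribution is $\lesssim\beta^{2r}$.

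\textbf{Main obstacle.} The delicate point is that the bound on $G_{vj}(\Ext(\cG,q))$ used in Step~2 is exactly of the type being proved. I would resolve this by treating $\cG$ itself, which is a finite graph with at most one cycle and $q\approx m$, directly through Lemma~\ref{p:subgraph}: that lemma already gives path-weight bounds for $G(\Ext(\cG,q))$ for any such graph, so no bootstrap is needed.

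A second subtlety is that the prefix-free mass bound must be summed over boundary vertices at arbitrary PF level rather than at a fixed elliptic level. This is handled by the conservation identity \eqref{eq:PF-submarkov}, since $\partial_{\rm in}\bK$ is a cutset separating $\{i,j\}$ from $\bV$.
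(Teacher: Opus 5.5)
Your overall route is the paper's: Schur complement onto $\bK$, exterior cavity self-energies within $(1+\diam(\cG))|R(q,z)|_\infty$ of $q$, a first-order resolvent identity, and a PF boundary-mass sum. The gap is that you never reduce to the connected component of $\cK$ containing $i$ and $j$, and your structural claim is false without that reduction.

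The lemma allows any $\cK\supset\cB_r(\{i,j\},\cG)$, and it is used for localizations that need not be connected. Your claims that the cross terms of $\Sigma$ vanish and that $\Sigma_{vv}\approx\sum_e q_e$ both need each exterior component to be a tree attached to $\bK$ by a \emph{single} edge; being a tree is not enough. For disconnected $\cK$ this fails at arbitrarily many boundary vertices, even when $\cG$ is a tree. Take $\cK=\cB_r(i,\cG)\cup\{x:\dist_\cG(i,x)=r+2\}$:
\begin{itemize}
\item each exterior vertex at distance $r+1$ is an isolated exterior component attached to $\bK$ by all of its edges;
\item $\Sigma$ therefore acquires off-diagonal entries, and $\Sigma_{vv}-\sum_e q_e=O(1)$ at every boundary vertex of the ball;
\item bounding these crudely, as in your $D_2$ step, gives only an $O(1)$ error, since $\sum_v|G_{iv}||G_{vj}|$ over the whole sphere is of order one rather than $\beta^{2r}$.
\end{itemize}
The missing step is the paper's first one. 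Since $\Ext(\cK,q)$ is block diagonal over the components of $\cK$, there are two cases. If $i$ and $j$ lie in different components, then $G_{ij}(\Ext(\cK,q))=0$ and $\dist_\cG(i,j)>2r$, so the claim follows from Lemma~\ref{p:subgraph} and Proposition~\ref{pro.path-weight-decay}. Otherwise replace $\cK$ by the component containing $i,j$; this leaves $G_{ij}(\Ext(\cK,q))$ unchanged and still contains $\cB_r(\{i,j\},\cG)$.

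Once $\cK$ is connected and $\cG$ has at most one cycle, an excess count shows two things: at most one exterior component fails to be a single-edge-attached tree, and at most one edge of $\cG|_{\bK}$ is missing from $\cK$. Both enter your $D$ as $O(1)$ entries at vertices at distance $\ge r$ from $i,j$, so they contribute $O(\beta^{2r})$ and your $D_2$ bookkeeping works. This replaces the paper's preliminary deletion of at most two edges.

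You should drop two steps. The ``enlarge to the induced subgraph'' step and the option of moving exceptional vertices into $\cK$ both change the left-hand side, so each would itself require the lemma.

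Your direct fixed-point computation on the unfilled branch is a valid substitute for the paper's fill-then-unfill rank-one step: in the proof of Lemma~\ref{p:fixpoint}, the root row simply acquires the defect $R_eq_e$.

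One small repair remains in the boundary sum. The paths from $i$ to inner boundary vertices need not form a prefix-free family. Instead use one outer neighbour $w(v)$ for each $v$. Under single-edge attachment these paths are prefix-free and have PF weights comparable to those of the paths to $v$, so your Cauchy--Schwarz bound $\sum_v|G_{iv}||G_{vj}|\lesssim1$ goes through.
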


\begin{remark}
The localization principle also applies to the
mixed extensions arising in the switching arguments
in Section \ref{sec:stability} and Section \ref{sec:concentration}. Exact universal-cover branches attached to
the retained subgraph $\cK$ are kept intact, while
those attached outside $\cK$ are separated from
the vertices under consideration. By the resolvent
identity and the $\beta$-decay of the corresponding
Green functions, their total contribution is
absorbed into the existing localization error.
The same convention is preserved before and after
switching, with exact branches reattached intact.
\end{remark}

Since the proofs of all the above lemmas consist of direct computations and closely follow the corresponding arguments in \cite[Section 2.3]{Huang2024Spectrum}, we defer them to Appendix \ref{a:ext}.

\section{Local Resampling and Geometric Control}\label{sec:local-resampling}
In this section, we introduce a local resampling procedure adapted to
the elliptic neighborhood introduced in
Definition~\ref{def:elliptic-neighborhood}.
The construction is the random-lift analogue of the local resampling
for random $d$-regular graphs in \cite{Huang2024Spectrum}, with the metric neighborhood replaced by
the elliptic neighborhood and with all switchings required to preserve
the oriented edge types of the base graph.

\subsection{Elliptic Local Resampling}
\begin{definition}[Type-preserving switching]
\label{def:type-preserving-switching}
Let $ e_1=(v_1,v_2)$ and $ e_2=(v_3,v_4)$ be two disjoint oriented lift edges with the same type.
The switching encoded by
$
    \vec S=\{e_1, e_2\}
$
replaces the unoriented edges
$\{v_1,v_2\}$ and $\{v_3,v_4\}$
by $\{v_1,v_4\}$ and $\{v_3,v_2\}$.
We denote the resulting lift by
$
    T_{\vec S}(\cG),
$
and define the switched data by
$
    T(\vec S)
    :=
    \{(v_1,v_4),(v_3,v_2)\}.
$
\end{definition}
Since the two original edges have the same oriented base-edge type,
the two new edges have the same type as well. Thus the switching
preserves the covering map and \(T_{\vec S}(\cG)\in\GNH\).

Our local resampling procedure is specified by a fixed center vertex $o$, a parameter
$z\in\bC_+(\mathcal I_\delta)$ and a radius $\ell\ge 0$.
Given a lift \(\cG\in\GNH\), we write
$
    \cT
    :=
    \cB_\ell^{\rm ell}(o,\cG;z)
$, which may not be a tree.
Following our previous convention, we denote its vertex set by $\bT$ and define its oriented edge boundary set by 
$
    \partial_{\vec E}\cT
    :=
    \left\{
    (x,y)\in\vec E_\cG:
    x\in\bT,\ y\notin\bT
    \right\}.
$
We then enumerate
$
    \partial_{\vec E}\cT
    =
    \{e_1,\dots, e_\mu\},
$
where
$
     e_\alpha=(l_\alpha,a_\alpha)
$
with $l_\alpha\in\bT$ and $
    a_\alpha\notin\bT.
$
For an oriented base-edge type
\( e\in\vec E_\cH\), define
\[
    \vec E^{ e}
    _{\cG^{(\bT)}}
    :=
    \left\{
    (b,c)\in\vec E_\cG:
    b,c\notin\bT,\ 
    (\pi(b),\pi(c))=e
    \right\}.
\]
Conditionally on \(\cG\), for each
\(\alpha\in\llbracket1,\mu\rrbracket\), choose
$
    (b_\alpha,c_\alpha)
$
uniformly from
$
    \vec E^{e_\alpha}_{\cG^{(\bT)}},
$
independently for different \(\alpha\)'s.
Repetitions are allowed.
Then, we define
\begin{equation}
\label{eq:def-elliptic-Salpha}
    \vec S_\alpha
    :=
    \left\{
    (l_\alpha,a_\alpha),
    (b_\alpha,c_\alpha)
    \right\},
    \qquad
    {\bf S}
    :=
    (\vec S_1,\dots,\vec S_\mu).
\end{equation}
The collection \({\bf S}\) will be called the
\emph{resampling data} for \(\cG\).

For
\(\alpha\in\llbracket1,\mu\rrbracket\), define the indicator function 
$
    I_\alpha(\cG,{\bf S})=1
$
if 
the local neighborhood $\cB_{\fR/4}(\{a_\alpha,b_\alpha,c_\alpha\},\cG^{(\bT)})$ after adding
the edge \(\{a_\alpha,b_\alpha\}\) is a tree.
Otherwise we set \(I_\alpha=0\).
We further define
$
    J_\alpha(\cG,{\bf S})=1
$
if
$
\dist_{\cG^{(\bT)}}
\left(
\{a_\alpha,b_\alpha,c_\alpha\},
\{a_\beta,b_\beta,c_\beta\}
\right)
>
\fR/4
$
for every
$
    \beta\in
    \llbracket1,\mu\rrbracket
    \setminus\{\alpha\},
$
and \(J_\alpha=0\) otherwise.

The two indicator functions encode two different geometric requirements.
First, the condition \(I_\alpha=1\) imposes both a local tree condition
and a separation condition. Indeed, it implies that
$
    \cB_{\fR/4}(a_\alpha,\cG^{(\bT)})
$
and
$
    \cB_{\fR/4}(\{b_\alpha,c_\alpha\},\cG^{(\bT)})
$
are both trees. Moreover, \(a_\alpha\) has to be well separated from
the sampled edge \(\{b_\alpha,c_\alpha\}\). In particular, we have
$
    \dist_{\cG^{(\bT)}}
    \bigl(a_\alpha,\{b_\alpha,c_\alpha\}\bigr)
    >
    \fR/2.
$
Indeed, if the
distance were at most \(\fR/2\), a shortest path from \(a_\alpha\)
to either \(b_\alpha\) or \(c_\alpha\) would be entirely contained in
$
    \cB_{\fR/4}
    (\{a_\alpha,b_\alpha,c_\alpha\},\cG^{(\bT)}).
$
Together with the edge \(\{b_\alpha,c_\alpha\}\), 
adding the new edge \(\{a_\alpha,b_\alpha\}\) would then create a
cycle, contradicting \(I_\alpha=1\).

On the other hand, \(J_\alpha\) is an isolation condition between
different switching pairs.  If \(J_\alpha=1\), then for every
\(\beta\ne\alpha\), the support
\(\{a_\beta,b_\beta,c_\beta\}\) of the \(\beta\)-th switching lies
outside the radius-\(\fR/4\) neighborhood used to define
\(I_\alpha\).  Hence, performing the $\beta$-th switching does not alter the radius-$\fR/4$ neighborhood associated with any other index $\alpha$, and in particular preserves the corresponding tree condition $I_\alpha$.  This separation
prevents interactions between distinct simple switchings and, in
particular, guarantees that the admissibility condition is stable
under the local resampling. See Figure \ref{fig:switching} for details.

\begin{figure}[ht]
    \centering
    \begin{minipage}[c]{0.49\textwidth}
        \centering
        \includegraphics[width=\linewidth]{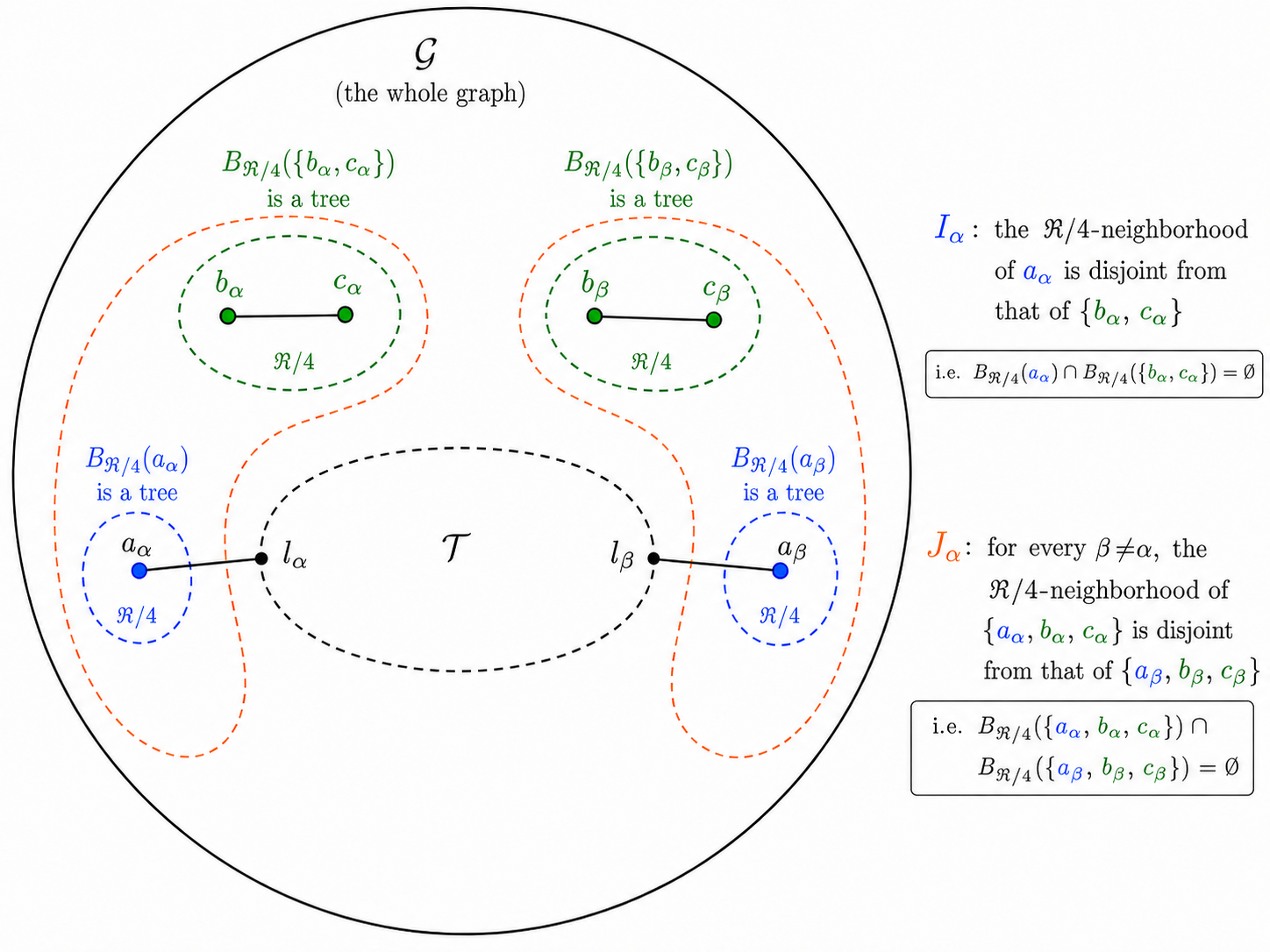}
    \end{minipage}
    \hfill
    \begin{minipage}[c]{0.49\textwidth}
        \centering
        \includegraphics[width=\linewidth]{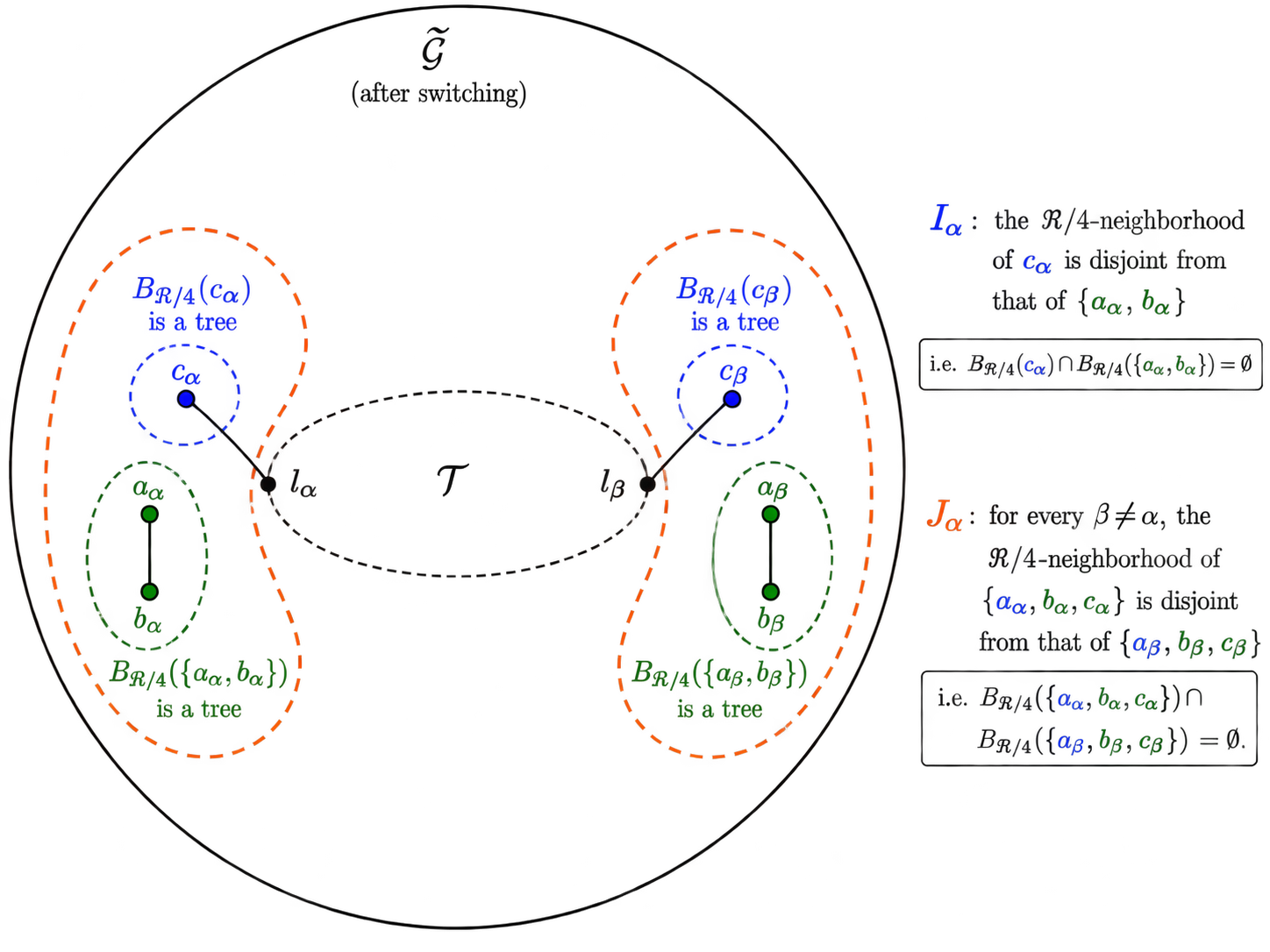}
    \end{minipage}
    \caption{As is clear from the figure, if $a_\alpha,b_\alpha,c_\alpha$ satisfy the conditions $I_\alpha$ and $J_\alpha$ before the switching, then they satisfy the corresponding conditions in the switched graph as well, since the support of every other switching lies outside the radius-$\fR/4$ neighborhood associated with the $\alpha$-th switching and hence does not affect the relevant local configuration.}
    \label{fig:switching}
\end{figure}

Then the admissible set is defined by
\begin{equation}
\label{eq:elliptic-Wdef}
    \mathsf W_{\bf S}
    :=
    \left\{
    \alpha\in\llbracket1,\mu\rrbracket:
    I_\alpha(\cG,{\bf S})
    J_\alpha(\cG,{\bf S})
    =1
    \right\}.
\end{equation}
We say that \(\alpha\) is \emph{switchable} if
$
    \alpha\in\mathsf W_{\bf S}.
$
Let
$
    \nu:=|\mathsf W_{\bf S}|,
$
and let
$
    \alpha_1,\dots,\alpha_\nu
$
be any enumeration of \(\mathsf W_{\bf S}\).
We define the switched graph by
\begin{equation}
\label{eq:elliptic-Tdef1}
T_{\bf S}(\cG)
:=
\left(
T_{\vec S_{\alpha_1}}
\circ\cdots\circ
T_{\vec S_{\alpha_\nu}}
\right)(\cG).
\end{equation}
and define the switching data by
\begin{equation}
\label{eq:elliptic-Tdef2}
T({\bf S})
:=
\bigl(
T_1(\vec S_1),\dots,T_\mu(\vec S_\mu)
\bigr),\qquad\mbox{where}~T_\alpha(\vec S_\alpha)
:=
\begin{cases}
T(\vec S_\alpha),
&
\alpha\in\mathsf W_{\bf S},
\\[1mm]
\vec S_\alpha,
&
\alpha\notin\mathsf W_{\bf S}.
\end{cases}
\end{equation}

To make the structure more clear, we introduce an enlarged probability space.
For
\(\alpha\in\llbracket1,\mu\rrbracket\), define
\begin{equation*}
\label{eq:elliptic-Salpha-space}
\mathsf S_\alpha(\cG)
:=
\left\{
\left\{
e_\alpha,e
\right\}:
 e\in
\vec E^{e_\alpha}_{\cG^{(\bT)}}
\right\}.
\end{equation*}
Then
$
    \vec S_\alpha
$
is uniformly distributed on
\(\mathsf S_\alpha(\cG)\), and
$
    {\bf S}
$
is uniformly distributed on
\begin{equation}\label{def.S}
  \mathsf S(\cG)
    :=
    \mathsf S_1(\cG)
    \times\cdots\times
    \mathsf S_\mu(\cG).
\end{equation}

\begin{definition}\label{def:elliptic-enlarged-space}
For any lift \(\cG\in\GNH\), define the fibre
$
    \iota(\cG)
    :=
    \{\cG\}\times\mathsf S(\cG)
$
(with respect to root $o$, parameter $z$ and elliptic radius $\ell$).
The enlarged probability space is
$$
    \mathsf G_N(\cH)'
    :=
    \bigsqcup_{\cG\in\GNH}
    \iota(\cG).
$$
For
$
    (\cG,{\bf S})\in\mathsf G_N(\cH)',
$
define
$
    \P'(\cG,{\bf S})
    :=
    \P(\cG)\P_\cG({\bf S})
$
where \(\P_\cG\) denotes the uniform probability measure on
\(\mathsf S(\cG)\).
\end{definition}

Therefore, on the enlarged probability space, we introduce the following maps to encode the local resampling procedure:
\begin{align}
\label{eq:elliptic-Ttilde}
&\widetilde T
:
\mathsf G_N(\cH)'
\longrightarrow
\mathsf G_N(\cH)',
\qquad
\widetilde T(\cG,{\bf S})
:=
\bigl(
T_{\bf S}(\cG),
T({\bf S})
\bigr),
\\
\label{eq:elliptic-T-map}
&T
:
\mathsf G_N(\cH)'
\longrightarrow
\GNH,
\qquad
T(\cG,{\bf S})
:=
T_{\bf S}(\cG).
\end{align}

We  record the basic structural properties of the local resampling map.
The proposition below shows that the switching preserves the geometry relevant
to the elliptic neighborhood and the separation of the switching cells, while
also preserving the admissible graph class. Moreover, the reverse switching is
well defined and the corresponding transformations preserve the underlying
sampling measures. These facts will be used repeatedly when transferring
estimates between the original and resampled graphs.

\begin{proposition}\label{prop:elliptic-reverse}
  For any switching data $\bfS\in \sS(\cG)$, we have
\begin{enumerate}
  \item The elliptic neighborhood is invariant under the local resampling:
$
    \cB_\ell^{\rm ell}(o,T_{\bf S}(\cG);z)
    =
    \cB_\ell^{\rm ell}(o,\cG;z).
$
  \item The radius-$\fR/8$ neighborhoods of the root before and after
the switching are isomorphic as rooted graphs:
\[
    \cB_{\fR/8}(o,\cG)
    \simeq
    \cB_{\fR/8}(o,T_{\bf S}(\cG)).
\]
\item For every $\alpha\in\mathsf W_{\bf S}$, the switching cell
$\{a_\alpha,b_\alpha,c_\alpha\}$ remains separated from all the other
switching cells by distance larger than $\fR/4$ both before and after the
switching:
\[
\dist_{\cG^{(\bT)}}\big(
    \{a_\alpha,b_\alpha,c_\alpha\},
    \bigcup_{ \beta\neq\alpha}
    \{a_\beta,b_\beta,c_\beta\}
\big)>\frac{\fR}{4},
\quad
\dist_{T_{\bf S}(\cG)^{(\bT)}}\big(
    \{a_\alpha,b_\alpha,c_\alpha\},
    \bigcup_{\beta\neq\alpha}
    \{a_\beta,b_\beta,c_\beta\}
\big)
>
\frac{\fR}{4}.
\]
\item For every $\cG\in\GNH$, $\widetilde T$ is an involution: $\widetilde T\circ\widetilde T=\id$. 
\item The maps \(\widetilde T\) and \(T\) are measure preserving :
$\P'\circ\widetilde T^{-1}=\P'$ and $\P'\circ T^{-1} =\P$.
\end{enumerate}
\end{proposition}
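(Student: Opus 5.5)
We verify the five items in order, since each later item relies on the earlier ones.

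\emph{Items 1 and 2.} Every admissible switching $\vec S_\alpha$ with $\alpha\in\mathsf W_{\bf S}$ replaces the boundary edge $(l_\alpha,a_\alpha)$ by $(l_\alpha,c_\alpha)$, which has the same oriented type $e_\alpha$. It also replaces $(b_\alpha,c_\alpha)$ by $(b_\alpha,a_\alpha)$, and both endpoints of this edge lie outside $\bT$. Hence the edges inside $\bT$ and the types of the edges between $\bT$ and its exterior are unchanged. Item 1 then follows from Remark~\ref{rem:elliptic-locality}, provided we check that the paths defining $\cB_\ell^{\rm ell}$ only use edges inside $\bT$ plus one exit edge, whose type is preserved. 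This holds because the stopping rule in \eqref{eq:elliptic-neighborhood} depends only on type sequences, and $|\cM^{\rm PF}|$ is monotone.

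For item 2, we use $\cT\subset\cB_{C(1+\ell)}(o,\cG)$ with $\ell=\OO(\log\log N)$, which follows from Proposition~\ref{pro.path-weight-decay}. The map we want sends the subtree hanging from $a_\alpha$ in $\cG^{(\bT)}$ to the subtree hanging from $c_\alpha$, away from $b_\alpha$. The condition $I_\alpha=1$ makes $\cB_{\fR/4}(\{a_\alpha,b_\alpha,c_\alpha\},\cG^{(\bT)})$ a tree with $a_\alpha$ far from $\{b_\alpha,c_\alpha\}$, so both hanging subtrees up to depth $\fR/4-C\ell$ are universal-cover-type trees determined by their root type. The condition $J_\alpha=1$ prevents different switchings from interfering within these balls. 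Gluing these type-preserving isomorphisms together with the identity on $\bT$ gives the rooted isomorphism on $\cB_{\fR/8}(o,\cdot)$. Here $\fR/8<\fR/4-C\ell$.

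A caveat: the non-switchable $\alpha$ leave the old edge $(l_\alpha,a_\alpha)$ in place. We must then check that the neighborhood of $a_\alpha$ to depth $\fR/8$ is not modified by the admissible switchings. This follows from $J$ only if the distances involving non-admissible cells are also controlled. If they are not, the right conclusion is weaker, and we may need to interpret the isomorphism as holding for the tree-like $\bar\Omega$ structure.

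\emph{Item 3.} The first inequality is just the definition of $J_\alpha$. For the second, note that a switching $\beta$ alters $\cG^{(\bT)}$ only by deleting $\{b_\beta,c_\beta\}$ and adding $\{a_\beta,b_\beta\}$, with $a_\beta$ now detached from $l_\beta$, which lies in $\bT$ and is removed anyway. Suppose a path of length $\le\fR/4$ in $T_{\bf S}(\cG)^{(\bT)}$ connects cell $\alpha$ to cell $\beta$. Take the first new edge it uses, say from switching $\gamma$. That edge has both endpoints in cell $\gamma$, so the prefix of the path before it is an old path from cell $\alpha$ to cell $\gamma$ of length $\le\fR/4$. This contradicts $J_\alpha$ if $\gamma\neq\alpha$. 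If $\gamma=\alpha$, we instead take the last new edge and argue symmetrically toward cell $\beta$. If the path still uses a new edge of $\alpha$ in the middle, we use the tree condition $I_\alpha$ to reroute it.

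\emph{Items 4 and 5.} For the involution, apply $\widetilde T$ to $(T_{\bf S}(\cG),T({\bf S}))$. By item 1, $\cT$ is the same, and the boundary edges are $(l_\alpha,c_\alpha)$ for $\alpha\in\mathsf W_{\bf S}$. The new resampling data $T_\alpha(\vec S_\alpha)=\{(l_\alpha,c_\alpha),(b_\alpha,a_\alpha)\}$ lies in $\mathsf S_\alpha(T_{\bf S}(\cG))$, since $(b_\alpha,a_\alpha)$ is an edge outside $\bT$ of type $e_\alpha$. We then show that the indicators satisfy $I_\alpha J_\alpha$ unchanged for every $\alpha$. For admissible $\alpha$ this is item 3 together with the tree structure: the relevant radius-$\fR/4$ ball after adding the reverse edge $\{c_\alpha,b_\alpha\}$ is the same graph. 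For non-admissible $\alpha$ the data are unchanged, and its local ball is unaffected by the other switchings by the same separation reasoning. Applying the switchings again restores $\cG$ and ${\bf S}$.

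Measure preservation follows because $\P$ is uniform on $\GNH$ and $|\mathsf S(\cG)|=\prod_\alpha|\vec E^{e_\alpha}_{\cG^{(\bT)}}|$. This product depends only on $\mu$, the boundary types, and the type counts outside $\bT$, and these are all preserved by item 1. So $\P'$ is constant along orbits of the bijection $\widetilde T$. The identity $\P'\circ T^{-1}=\P$ follows by projecting to the first coordinate and using $|\mathsf S|$ invariance.

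The main obstacle is the case analysis showing that $\mathsf W$ is invariant under switching, which is needed in items 3 and 4, especially for the non-admissible indices.
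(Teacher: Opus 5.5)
Your proposal follows the paper's route on all five items. Item 1 uses locality of the elliptic neighborhood under type-preserving rewiring. Item 2 replaces the branch through $a_\alpha$ by the branch through $c_\alpha$, using $I_\alpha$ and $J_\alpha$. Item 3 uses that every modified exterior edge lies inside a single cell. Items 4--5 use preservation of admissibility under reversal together with invariance of $|\mathsf S(\cG)|$; the paper only cites Huang--Yau for these, and your direct sketch is the right argument. Two loose ends should be tightened, though neither needs a new idea.

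\emph{Item 3.} Your split on the first and last new edge misses one case: the first new edge lies in $\{a_\alpha,b_\alpha,c_\alpha\}$ and the last in the target cell $\{a_\beta,b_\beta,c_\beta\}$. The tree condition $I_\alpha$ cannot be used to ``reroute'' such a path. The clean argument is the paper's. Take the sub-path from the last visit to $\{a_\alpha,b_\alpha,c_\alpha\}$ to the first subsequent visit to any other cell. Every modified edge of $T_{\bf S}(\cG)^{(\bT)}$ has both endpoints in a single cell, so this segment uses only edges of $\cG^{(\bT)}$. It has length at most $\fR/4$, which contradicts $J_\alpha=1$. No tree condition is needed.

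\emph{Item 2.} Your hedge about non-admissible indices is unnecessary. $J_\gamma$ is defined against every $\beta\in\qq{1,\mu}\setminus\{\gamma\}$, admissible or not. Hence each performed switching cell is at distance $>\fR/4$ from every other $a_\beta$, both before and after switching by item 3. It is therefore invisible in the part of the radius-$\fR/8$ ball entered through a non-switched boundary edge, and the isomorphism holds exactly as stated.
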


\begin{proof}
Write $\widetilde\cG=T_{\bf S}(\cG)$ and
$\bC_\alpha=\{a_\alpha,b_\alpha,c_\alpha\}$.
The invariance property follows directly from
Remark~\ref{rem:elliptic-locality}.

For the second assertion, each admissible switching replaces the
branch attached through $\{l_\alpha,a_\alpha\}$ by the branch
attached through $\{l_\alpha,c_\alpha\}$.
The condition $I_\alpha=1$ ensures that both branches are trees
on the relevant scale. Since the switching preserves edge types,
these branches have the same rooted type-labelled structure.
Moreover, the separation condition \(J_\alpha=1\), 
which places distinct switching cells at distance larger than \(\fR/4\), guarantees that their portions visible inside 
the radius-\(\fR/8\) neighborhood of \(o\) cannot meet: otherwise two such portions would connect the corresponding switching cells by a path of length at most \(2(\fR/8)=\fR/4\).
Replacing these branches by their type-preserving counterparts
therefore gives a rooted isomorphism
$
\cB_{\fR/8}(o,\cG)
\simeq
\cB_{\fR/8}(o,\widetilde\cG),
$
which fixes $\cT$.

The separation in the original graph follows from $J_\alpha=1$.
In the exterior graph, every modified edge has both endpoints
in a single switching cell.
If a path of length at most $\fR/4$ in
$\widetilde\cG^{(\bT)}$ connected $\bC_\alpha$ to another cell,
the segment between its last departure from $\bC_\alpha$ and its
first visit to another cell would contain only unchanged edges.
This would contradict the original separation, proving the
third assertion.
For the involution and measure-preserving properties, the
arguments in
\cite[Proposition 3.3 and Proposition 3.5]{Huang2024Spectrum}
apply without essential modification.
Indeed, the admissibility conditions are preserved by reversal,
and the numbers of available exterior edges of each type are
determined by the fixed type-labelled neighborhood $\cT$.
\end{proof}

\begin{remark}
\label{rem:z-dependent-local-resampling}
The local resampling constructed above is defined for a fixed
\(z\), since the elliptic neighborhood
$
    \cB_\ell^{\rm ell}(o,\cG;z)
$
depends on $z$.
Thus, more precisely, the maps above should be regarded as
$
    \widetilde T_z
$
and
$
    T_z.
$
However, for simplicity, we suppress the dependence on z, since it plays no essential role in the proof.
\end{remark}

In Section \ref{sec.proof-strategy}, our argument requires two successive local resampling procedures. For the first resampling, the definition of $\bar\Omega$ in Definition~\ref{def:barOmega} provides the required control of the local geometry of the original graph $\cG$. 
The second resampling, however, is performed on the already resampled graph $T_{\mathbf S}\cG$. It is therefore necessary to understand how the geometric properties encoded in $\bar\Omega$ are affected by one local resampling. In the rest of this subsection, we show that, although some of these properties may deteriorate slightly after resampling, the resulting graph still retains sufficiently good local geometry for the second resampling step.
To formulate the weaker geometric conditions that will be used for the resampled graphs, we introduce the following definition.

\begin{definition}\label{def:barOmega1}
Given a base graph $\cH$ and a large constant $\fd$, let $\bar\Omega^+\subset \GNH$ denote the set of graphs:
\begin{itemize}
\item
The radius-$ \fR/8$ neighborhood of every vertex contains at most one cycle.
\item
The radius-$\fR/2$ neighborhood of every vertex has excess at most $C\fd$, where $C$ is a universal constant.
\item
The number of vertices that have a radius-$\fR$ neighborhood that contains a cycle is at most $2N^{\fc}$.
\end{itemize} 
\end{definition}

Notice that the sets $\bar\Omega$ defined in Definition \ref{def:barOmega} and  $ \bar\Omega^+$ are subsets of $\GNH$ defined by deterministic properties. It is  clear that 
 $\bar\Omega \subset  \bar\Omega^+$.  
Compared with $\bar\Omega$, the set $\bar\Omega^+$ should be viewed as a relaxed version of the good geometric set. Moreover,
for any graph in $\bar\Omega^+$, the radius-$\fR/8$ neighborhood of every vertex contains at most one cycle. Therefore, Proposition \ref{p:tree-extension-bound} applies to the tree extension of the radius-$\fR/8$ neighborhood of every vertex in such a graph.

\begin{remark*} 
  The second condition in Definition~\ref{def:barOmega1} serves a different purpose from the first one. If a neighborhood has excess larger than one, the tree-extension estimates established above are in general no longer available for controlling the corresponding Green function. Thus, the radius-$\fR/2$ excess bound is not imposed in order to obtain Green function estimates on that scale. Rather, this condition will be used in the geometric counting arguments below. 
  In addition to the Green function estimates on radius-$\fR/8$ neighborhoods, we shall need to control the number of atypical vertices whose radius-$\fR/2$ neighborhoods fail to have the desired tree-like structure. The uniform bound on the excess of radius-$\fR/2$ neighborhoods provides the geometric input needed for this counting argument. 
\end{remark*}

The next proposition shows that a typical local resampling of a graph $\cG\in\bar\Omega$ belongs to $\bar\Omega^+$.
The proof is purely geometric and is the same as \cite[Proposition 5.12]{Huang2024Spectrum}; we therefore omit it.

\begin{proposition}
  For every large constant $\fd$ and every graph $\cG\in\bar\Omega$, there exists a event $F_0(\cG)\subset\sS(\cG)$ with probability $\P_\cG(F_0(\cG))=1-\OO(N^{-\fd})$ such that, for every switching data $\mathbf S\in F_0(\cG)$, $T_{\mathsf S}(\cG)\in\bar\Omega^+$.
\end{proposition}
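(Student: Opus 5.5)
We construct $F_0(\cG)$ as an intersection of three events on the resampling data, one for each of the three conditions in Definition~\ref{def:barOmega1}. Throughout we use two facts. First, $|\del\cT|\lesssim 4^\ell=(\log N)^{\OO(1)}$ by Proposition~\ref{prop:elliptic-neighborhood-estimates}(i), so $\mu\le(\log N)^{C}$. Second, each sampled edge $(b_\alpha,c_\alpha)$ is uniform among $\gtrsim N$ exterior edges of the prescribed type, independently over $\alpha$.

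The key structural observation is that $T_{\bf S}(\cG)$ differs from $\cG$ only by at most $2\mu$ edge changes. Each admissible switching removes $\{l_\alpha,a_\alpha\},\{b_\alpha,c_\alpha\}$ and adds $\{l_\alpha,c_\alpha\},\{b_\alpha,a_\alpha\}$. Hence any new cycle in $T_{\bf S}(\cG)$ must use a new edge.

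For the first two conditions, fix a vertex $x$. A cycle of length at most $\fR$ in $\cB_{\fR/8}(x,T_{\bf S}(\cG))$ or in $\cB_{\fR/2}(x,T_{\bf S}(\cG))$ is either an old cycle of $\cG$ or it passes through new edges. On the switched side, $I_\alpha=1$ makes $\cB_{\fR/4}(\{a_\alpha,b_\alpha,c_\alpha\},\cG^{(\bT)})$ plus $\{a_\alpha,b_\alpha\}$ a tree, and $J_\alpha=1$ separates distinct cells by more than $\fR/4$. So a short cycle through new edges must connect cells through $\cT$, or through $\cG$-paths of length comparable to $\fR$. Following \cite[Proposition 5.12]{Huang2024Spectrum}, we bound the probability that the random endpoints $b_\alpha$ land within distance $\fR$ of a given set of size $(d_{\max}-1)^{\OO(\fR)}=N^{\OO(\fc)}$. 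For a single $\alpha$ this probability is $\lesssim N^{-1+\OO(\fc)}$. Consequently, the probability that more than $C\fd$ indices create new short cycles near a common region is $\lesssim (\mu N^{-1+\OO(\fc)})^{C\fd}\le N^{-\fd}$, once $\fc$ is small.

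This yields the excess bound $C\fd$ on radius-$\fR/2$ balls. For radius-$\fR/8$ balls we need at most one cycle. An old cycle contributes at most one, by $\cG\in\bar\Omega$. Two cycles would require either two independent near-collisions, of probability $\lesssim N^{-2+\OO(\fc)}\mu^2$ per vertex, or one near-collision interacting with an existing cycle. A union bound over the $N|\bH|$ vertices does not directly give $N^{-\fd}$ here. I would instead argue as in the cited proposition: the only relevant locations are within distance $\fR/8$ of $\cT\cup\{$cells$\}$. These are $(\log N)^C N^{\OO(\fc)}$ vertices, which can be handled by union bound after excluding the rare bad events. I expect this step, showing that no radius-$\fR/8$ ball acquires a second cycle with probability $1-\OO(N^{-\fd})$, to be the main obstacle. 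It requires carefully using that any cycle passing through new edges must return through $\cT$ or through two sampled edges close in $\cG^{(\bT)}$.

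For the third condition, the vertices whose radius-$\fR$ ball newly contains a cycle lie within distance $\fR$ of a new edge. There are at most $2\mu\,(d_{\max}-1)^{\fR}\le(\log N)^C N^{\fc/4}\le N^{\fc}$ such vertices. Adding the at most $N^{\fc}$ old cycle-vertices from $\cG\in\bar\Omega$ gives the bound $2N^{\fc}$ deterministically. Intersecting the events for the first two conditions defines $F_0(\cG)$.
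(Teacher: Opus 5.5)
Your arguments for conditions (2) and (3) of Definition~\ref{def:barOmega1} are essentially right. The gap is condition (1), which you flag as the main obstacle and leave unproved, and the probabilistic plan you sketch for it cannot work. Any event of the form ``a sampled endpoint $b_\alpha$ or $c_\alpha$ lands near another cell or near a cycle'' has probability of order $N^{-1+\OO(\fc)}$ per index. So restricting to the $(\log N)^C N^{\OO(\fc)}$ relevant vertices and taking a union bound gives failure probability of order $N^{-1+\OO(\fc)}$ (or $N^{-2+\OO(\fc)}$ for two collisions), never $\OO(N^{-\fd})$ for large $\fd$. The missing idea is that (1) costs no probability at all. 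It holds for \emph{every} $\bfS$, because only indices with $I_\alpha J_\alpha=1$ are switched and $\cG\in\bar\Omega$.

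First, every simple cycle of $T_{\bfS}(\cG)$ containing a new edge has length larger than $\fR/4+1$.
\begin{itemize}
\item If the cycle avoids $\bT$, it contains either an old path from $a_\alpha$ to $b_\alpha$, of length $>\fR/2$ by $I_\alpha$, or at least two old segments joining distinct cells, each of length $>\fR/4$ by $J_\alpha$.
\item If the cycle meets $\bT$, the exterior segment starting at $c_\alpha$ (or containing $\{a_\alpha,b_\alpha\}$) joins two distinct cells in $T_{\bfS}(\cG)^{(\bT)}$. It therefore has length $>\fR/4$ by Proposition~\ref{prop:elliptic-reverse}(iii).
\end{itemize}
Each independent cycle of a radius-$\fR/8$ ball is witnessed by a BFS cycle of length at most $\fR/4+1$. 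Hence only old cycles of $\cG$ can appear there.

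Second, two old cycles cannot be pulled into a common radius-$\fR/8$ ball. If neither BFS path uses a new edge, both cycles already lie in $\cB_{\fR/8}(x,\cG)$. Otherwise, follow the BFS paths past their last new edge; the remaining old piece falls into one of two cases.
\begin{itemize}
\item It lies within distance $\fR/4$ of a single cell in $\cG^{(\bT)}$. This is impossible by the tree property in $I_\alpha$ together with the separation from every $a_\beta$ in $J_\alpha$.
\item It lies within $\fR/8$ of $\bT\subset\cB_{\OO(\ell)}(o,\cG)$. Then both cycles sit in one radius-$\fR$ ball of $\cG$, contradicting $\cG\in\bar\Omega$.
\end{itemize}
This is the same mechanism as Proposition~\ref{prop:elliptic-reverse}(ii).

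Thus $F_0(\cG)$ is needed only for the radius-$\fR/2$ excess bound. There, cycles of length up to $\fR+1$ can genuinely be created by near-collisions, and your binomial counting is the right tool. Make sure the target set also contains the at most $N^{\fc}$ vertices of $\cG$ near cycles. Old cycles, not only new ones, can enter a radius-$\fR/2$ ball through a new edge, for instance through $\{l_\alpha,c_\alpha\}$ into the ball around $o$.
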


\subsection{Geometry of Admissible Switchings}
We first analyze the geometric properties of the boundary vertices
\(\{a_\alpha\}_{\alpha=1}^{\mu}\). This part is deterministic, since
\(a_\alpha\) is fixed once \(\cG\) and \(\bT\) are fixed; the randomness
of the local sampling enters only through the subsequently sampled
vertices \(b_\alpha\) and \(c_\alpha\).

\begin{lemma}[Proposition 5.2 of \cite{Huang2024Spectrum}]\label{GTstructure}
Let $\cG\in\GNH$ and recall that $\cT=\cB^{\rm ell}_\ell(o,\cG)$ is the elliptic neighborhood defined in Definition \ref{def:elliptic-neighborhood}. Assume that $\ell\ll R$ and $\cB_{R}(o,\cG)$ has excess at most $\omega$. 
Then the following statements hold.
\begin{itemize}
\item
After removing $\T$, most boundary vertices of $\cT$ are far away from the other boundary vertices:
\begin{align}\label{fewclose}
|\{\al\in \qq{1,\mu}: \exists \beta\in \qq{1,\mu} \setminus \{\al\}, \dist_{\cGT}(a_\al, a_\beta)\leq R/2\}|\leq 2\omega.
\end{align}
\item
After removing $\T$, any vertex $x\in \bG \setminus \T$ can only be close to at most $\omega+1$  boundary vertices of $\bT$:
\begin{align}
\label{distfinitedega}
|\{\al\in\qq{1,\mu}: \dist_{\cGT}(x,a_\al)\leq {R}/2\}|\leq \omega+1.
\end{align}
\end{itemize}
\end{lemma}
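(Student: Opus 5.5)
The argument is purely combinatorial and rests on the excess bound, following \cite[Proposition 5.2]{Huang2024Spectrum}; the only new feature is that $\cT$ is an elliptic rather than a metric ball. The first step is to check that this causes no difficulty. By Proposition~\ref{pro.path-weight-decay} and the one-step lower bound \eqref{pef}, every vertex of $\cT$ lies within distance $C(1+\ell)$ of $o$, and every boundary vertex $a_\alpha$ within distance $C(1+\ell)+1$. Since $\ell\ll R$, the graph $\cT$ together with all the relevant short paths in $\cGT$ of length at most $R/2$ from the $a_\alpha$ lies inside $\cB_R(o,\cG)$. Because $\cT$ is connected, any such path can also be joined back to $o$ through $\cT$. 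We then work inside the subgraph $\cK:=\cB_R(o,\cG)$, which has excess at most $\omega$.

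For \eqref{fewclose}, the plan is to build a graph recording each close pair. Call $\alpha$ \emph{bad} if there is some $\beta\neq\alpha$ with $\dist_{\cGT}(a_\alpha,a_\beta)\le R/2$, and fix a shortest such path $P_\alpha$ in $\cGT$. Starting from the connected graph $\cT\cup\partial_{\vec E}\cT$, add the paths $P_\alpha$ one at a time in a greedy order, keeping only those that join two $a$'s already connected to each other through the exterior.
\begin{itemize}
\item Each path $P_\alpha$ joins two boundary vertices that are already connected through $\cT$ (via $l_\alpha$ and $l_\beta$). Adding it therefore creates at least one new independent cycle, unless $P_\alpha$ shares a segment with earlier paths.
\item When paths share segments, the union of exterior paths forms a forest-like structure. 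Each of its components touching $k$ boundary vertices contributes at least $k-1$ to the excess of $\cT\cup\partial_{\vec E}\cT\cup\bigcup P_\alpha\subset\cK$.
\end{itemize}
Summing over components gives (number of bad $\alpha$)$/2\le$ excess $\le\omega$, since each component contains at least two boundary vertices and $k-1\ge k/2$. This yields the bound $2\omega$.

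For \eqref{distfinitedega}, fix $x\notin\bT$ and let $\alpha_0,\dots,\alpha_k$ be the boundary indices with $\dist_{\cGT}(x,a_{\alpha_i})\le R/2$. Take the union of shortest paths from $x$ to each $a_{\alpha_i}$ in $\cGT$. This union is connected and avoids $\bT$. Attaching it to $\cT$ through the edges $(l_{\alpha_i},a_{\alpha_i})$ produces at least $k$ independent cycles, all lying within $\cB_{R}(o,\cG)$ after adjusting the constants in $\ell\ll R$. Hence $k\le\omega$, so the number of such $\alpha$ is at most $\omega+1$.

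The main obstacle is the excess-counting step in the first bullet. One must verify that overlapping exterior paths do not cause cycles to be double-counted, so that the component-wise inequality (excess contribution $\ge k-1$) holds. This is a spanning-forest argument: contract $\cT$ to a single vertex, and note that the cycle rank of a connected graph with $|V|-1+(\text{excess})$ edges is exactly its excess.
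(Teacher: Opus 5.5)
Your argument is correct and is essentially the standard excess count behind the cited \cite[Proposition 5.2]{Huang2024Spectrum}, which is the same decomposition of $\cA=\cB_R^{(\bT)}(o,\cG)$ into components that the paper records in Lemma~\ref{lem:beta2n}. Since $\cT\subset\cB_{C(1+\ell)}(o,\cG)$, all the relevant $R/2$-paths stay in $\cB_R(o,\cG)$, and a connected exterior piece attached to the connected graph $\cT$ by $k$ boundary edges adds at least $k-1$ to the excess, which gives both bounds. The one point to tighten is that $k$ must count boundary indices, i.e.\ edges $(l_\alpha,a_\alpha)$, rather than distinct vertices $a_\alpha$: an exterior vertex can be adjacent to two vertices of $\bT$, so $a_\alpha=a_\beta$ with $\alpha\neq\beta$ is possible. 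With that reading the greedy construction is unnecessary, and you can simply sum over the components of $\cA$.
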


Let $\cA$ denote the annular region obtained by removing $\bT$ from
$\cB_R(o,\cG)$, namely,
$
\cA=\cB_R^{(\bT)}(o,\cG).
$
In general, $\cA$ may have several connected components.
We partition $\qq{1,\mu}$ into disjoint subsets
$
\qq{1,\mu}
=
\sC_1\sqcup\sC_2\sqcup\cdots\sqcup\sC_n
$
according to the connectivity of the vertices $a_\alpha$ in $\cA$:
for $\alpha,\beta\in\qq{1,\mu}$,
\[
\alpha,\beta\in\sC_s
\text{ for some } s
\quad\Longleftrightarrow\quad
a_\alpha \text{ and } a_\beta
\text{ belong to the same connected component of }\cA.
\]

\begin{lemma}[Lemma 5.4 of \cite{Huang2024Spectrum}] \label{lem:beta2n}
Under the same assumptions as in Lemma \ref{GTstructure}, we have
\begin{equation}\label{eq:sum-As}
\sum_{s=1}^{n}
\mathbf 1_{\{|\sC_s|\ge 2\}}\,|\sC_s|
\le 2\omega.
\end{equation}
Moreover, for every $1\le s\le n$, the connected component of $\cA$
containing $\{a_\alpha:\alpha\in\sC_s\}$ has the excess at most $\omega-|\sC_s|+1$.
\end{lemma}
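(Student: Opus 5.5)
We prove the statement about the components $\sC_s$ by a counting argument on excess, working inside $\cB_R(o,\cG)$. Throughout, $\cT$ is connected: every admissible prefix path stays admissible. We also use that $\cT\subset\cB_R(o,\cG)$, since $\ell\ll R$.

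\textbf{Setup.} Fix a component $\cA_s$ of $\cA$ that contains $k:=|\sC_s|$ of the boundary vertices $a_\alpha$. Form the subgraph $\cK$ of $\cB_R(o,\cG)$ consisting of:
\begin{itemize}
\item $\cT$ itself,
\item all of its components $\cA_{s'}$ of $\cA$,
\item all the boundary edges $\{l_\alpha,a_\alpha\}$.
\end{itemize}
Then
\[
\operatorname{excess}(\cK)\le\operatorname{excess}(\cB_R(o,\cG))\le\omega .
\]
Here we use that excess is monotone under taking subgraphs, and that $\cK$ is a union of edges of $\cB_R(o,\cG)$.

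\textbf{Excess identity.} Compute $|E|-|V|$ for $\cK$. Let $x_0$ be the excess of $\cT$ and $x_{s'}$ the excess of $\cA_{s'}$. Since $\cT$ is connected, $|E_\cT|-|V_\cT|=x_0-1$; likewise $|E_{s'}|-|V_{s'}|=x_{s'}-1$. Adding the $\mu$ boundary edges, and noting that $\cK$ is connected (each component $\cA_{s'}$ meets some $a_\alpha$, or else it can be discarded), we get
\[
\operatorname{excess}(\cK)=x_0+\sum_{s'}\bigl(x_{s'}-1\bigr)+\mu
= x_0+\sum_{s'}x_{s'}+\sum_{s'}\bigl(|\sC_{s'}|-1\bigr).
\]
This uses $\mu=\sum_{s'}|\sC_{s'}|$. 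Since every term is nonnegative and the total is at most $\omega$, we get for each fixed $s$
\[
x_s+|\sC_s|-1\le\omega,
\]
which is exactly the excess bound $x_s\le\omega-|\sC_s|+1$.

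\textbf{The sum bound \eqref{eq:sum-As}.} From the same identity,
\[
\sum_{s}\bigl(|\sC_s|-1\bigr)\le\omega .
\]
For $|\sC_s|\ge2$ we have $|\sC_s|\le 2(|\sC_s|-1)$. Hence
\[
\sum_s\mathbf 1_{\{|\sC_s|\ge2\}}|\sC_s|\le 2\omega .
\]

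\textbf{Main obstacle.} The delicate point is the bookkeeping of the boundary. Several boundary edges may share an endpoint $a_\alpha$: the same $a$ can appear for different $\alpha$ with distinct $l_\alpha$. Some $a_\alpha$ may also lie outside $\cB_R$. I would handle this as follows.
\begin{itemize}
\item Count the $\mu$ oriented boundary edges rather than distinct vertices. Repeated vertices only add edges without adding vertices, which increases the excess, so the inequalities remain valid.
\item Use $\ell\ll R$ together with the bound $\cT\subset\cB_{C(1+\ell)}(o,\cG)$ from Proposition~\ref{pro.path-weight-decay}. This guarantees that every $a_\alpha$ lies in $\cB_R(o,\cG)$.
\end{itemize}
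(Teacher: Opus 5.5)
Your proof is correct, and it is essentially the standard excess (cycle-rank) count behind Lemma 5.4 of Huang--Yau, which the paper cites without reproducing: $\operatorname{excess}(\cB_R(o,\cG))\ge\operatorname{excess}(\cT)+\sum_s\bigl(\operatorname{excess}(\cA_s)+|\sC_s|-1\bigr)$, followed by $|\sC_s|\le 2(|\sC_s|-1)$ for $|\sC_s|\ge 2$. Your worry about repeated endpoints $a_\alpha$ is unnecessary, because the $\mu$ boundary edges are distinct and each vertex is counted once, so your identity is exact rather than an inequality.
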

\medskip

The following proposition shows that the tree condition at \(a_\alpha\)
required in the definition of the local sampling indicator \(I_\alpha\)
is satisfied for all but three boundary indices \(\alpha\).

\begin{proposition}[Tree neighborhoods of the boundary vertices]
\label{prop:boundary-tree-neighborhood}

Under the same assumptions as in Lemma \ref{GTstructure}, we have
\[
\left|
\left\{
\alpha\in\qq{1,\mu}:
\cB_{R/2}(a_\alpha,\cGT)
\text{ is not a tree}
\right\}
\right|
\le 3\omega.
\]
\end{proposition}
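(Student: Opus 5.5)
Since $\cB_{R/2}(a_\alpha,\cGT)$ is not a tree exactly when it contains a cycle, the plan is to split the bad indices according to whether $a_\alpha$ shares its component of $\cA=\cB_R^{(\bT)}(o,\cG)$ with another boundary vertex, and to count each part using an excess bound. Because $\ell\ll R$ and $\cT\subset\cB_{C(1+\ell)}(o,\cG)$ by Proposition~\ref{pro.path-weight-decay}, every vertex within distance $R/2$ of $a_\alpha$ in $\cGT$ lies in $\cB_{R/2+C\ell+1}(o,\cG)\subset\cB_R(o,\cG)$. Hence $\cB_{R/2}(a_\alpha,\cGT)$ is a subgraph of the component of $\cA$ containing $a_\alpha$. (If needed, $R$ can be slightly enlarged, or the defining radius of $\cA$ adjusted, to absorb the $C\ell$ term.)

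\emph{Shared components.} First I handle the indices $\alpha$ lying in a class $\sC_s$ with $|\sC_s|\ge2$. By \eqref{eq:sum-As} of Lemma~\ref{lem:beta2n}, there are at most $2\omega$ such indices in total, so I discard all of them regardless of whether their balls are trees.

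\emph{Singleton components.} Next, for singleton classes $\sC_s=\{\alpha\}$, the component $\cA_s$ containing $a_\alpha$ contains no other boundary vertex. By Lemma~\ref{lem:beta2n} it has excess at most $\omega-|\sC_s|+1=\omega$. I want a sharper global statement: the number of singleton components carrying a cycle within distance $R/2$ of their boundary vertex is at most $\omega$. To get it, I count excess globally. The graph $\cB_R(o,\cG)$ has excess at most $\omega$. It is formed from the connected subgraph $\cT$ together with the components $\cA_s$, glued along the edges $\{l_\beta,a_\beta\}$. Gluing adds edges and vertices in a way that makes the excess superadditive: each component $\cA_s$ attaches to $\cT$ through exactly $|\sC_s|$ edges, so
\[
\operatorname{excess}(\cB_R(o,\cG))\ \ge\ \operatorname{excess}(\cT)+\sum_s\Bigl(\operatorname{excess}(\cA_s)+|\sC_s|-1\Bigr).
\]
Each singleton component containing a cycle contributes at least $1$ to this sum, so there are at most $\omega$ of them. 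A tree ball inside a component with a cycle is fine, so it only matters when the ball itself is not a tree, which requires the component to contain a cycle. Singleton bad indices therefore number at most $\omega$.

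Adding the two cases gives at most $2\omega+\omega=3\omega$ bad indices, which is the claimed bound. The step I expect to need the most care is the global excess inequality above. Specifically, I must check two things: that the components of $\cA$ together with $\cT$ and the boundary edges really are contained in $\cB_R(o,\cG)$ with the stated edge accounting (edges between $\cT$ and $\cA_s$ are exactly the $|\sC_s|$ boundary edges, and there are no edges between distinct $\cA_s$); and that the containment $\cB_{R/2}(a_\alpha,\cGT)\subset\cA_s$ holds given the radius bookkeeping.
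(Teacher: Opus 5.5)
Your proposal is correct and follows the paper's proof. You discard the at most $2\omega$ indices in non-singleton classes via \eqref{eq:sum-As}, then observe that a non-tree ball forces a cycle in its singleton component of $\cA$, and that the excesses of these disjoint components sum to at most $\omega$. Your explicit edge count is in fact an equality, $\operatorname{excess}(\cB_R(o,\cG))=\operatorname{excess}(\cT)+\sum_s(\operatorname{excess}(\cA_s)+|\sC_s|-1)$, and it supplies exactly the global bound $\operatorname{excess}(\cA)\le\omega$ that the paper attributes to Lemma~\ref{lem:beta2n}; the per-component bound stated in that lemma would not suffice on its own.
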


\begin{proof}
Lemma~\ref{lem:beta2n} implies that at most \(2\omega\) indices \(\alpha\) belong to a connected
component of \(\cA\) containing more than one boundary vertex
\(a_\alpha\).
It remains to consider the singleton components.  Suppose that
\(\sC_s=\{\alpha\}\) and that
$
\cB_{R/2}(a_\alpha,\cGT)
$
is not a tree.  Since \(R\gg2\ell\), this neighborhood is contained in
the connected component of \(\cA\) containing \(a_\alpha\).
Consequently, this component contains a cycle and hence has excess
at least one.
The singleton components are pairwise disjoint, and excess is
additive over connected components.  Moreover, 
Lemma~\ref{lem:beta2n} gives
$
\operatorname{excess}(\cA)\le \omega.
$
Therefore, there are at most \(\omega\) singleton components for which
\(\cB_{R/2}(a_\alpha,\cGT)\) is not a tree.

Combining the two cases gives
\[
\left|
\left\{
\alpha\in\qq{1,\mu}:
\cB_{R/2}(a_\alpha,\cGT)
\text{ is not a tree}
\right\}
\right|
\le 2\omega+\omega
=3\omega,
\]
which proves the claim.
\end{proof}

We next turn to the geometric properties involving
\(\{b_\alpha,c_\alpha\}_{\alpha=1}^{\mu}\).
Unlike the boundary vertices \(a_\alpha\), these vertices are chosen
randomly by the local sampling. We therefore begin with the following
elementary approximate-uniformity estimate, which will be used repeatedly
to control the probability of geometric coincidences.

\begin{lemma} \label{lem:switchable}
Given $\cG\in\GNH$, for any vertex $x \in \bG\setminus \T$ and $\al\in\qq{1,\mu}$
\begin{equation} \label{e:approxunifbd}
  \P_{\cG}(b_\al= x)\lesssim N^{-1},\qquad \P_\cG(c_\al = x)
  \lesssim N^{-1}.
\end{equation}
\end{lemma}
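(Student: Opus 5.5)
The estimate comes from direct counting. For fixed $\alpha$, the pair $(b_\alpha,c_\alpha)$ is uniform on $\vec E^{e_\alpha}_{\cG^{(\bT)}}$, where $e_\alpha=(i,j)$ is the base type of the boundary edge $(l_\alpha,a_\alpha)$. Hence
\[
\P_\cG(b_\alpha=x)=\frac{\bigl|\{(b,c)\in\vec E^{e_\alpha}_{\cG^{(\bT)}}:b=x\}\bigr|}{\bigl|\vec E^{e_\alpha}_{\cG^{(\bT)}}\bigr|},
\]
and the same formula holds for $c_\alpha$ with $c=x$ in the numerator. I will bound the numerator above by $1$ and the denominator below by a constant multiple of $N$.

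\textbf{Numerator.} In an $N$-lift, each edge of $\cH$ lifts to a perfect matching between the two fibers. So a vertex $x$ with $\pi(x)=i$ has exactly one neighbor along a lifted edge of type $(i,j)$, namely $x_{e_\alpha}$. Therefore at most one oriented edge of type $e_\alpha$ has tail $x$, and the count is $0$ if $\pi(x)\neq i$. By the same matching argument, at most one oriented edge of type $e_\alpha$ has head $x$. Both numerators are therefore at most $1$.

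\textbf{Denominator.} The lift of the base edge $\{i,j\}$ consists of exactly $N$ oriented edges of type $e_\alpha$. Removing $\bT$ discards only those edges with at least one endpoint in $\bT$. Because each vertex meets at most one edge of the matching, the number discarded is at most $|\bT|$. It remains to show $|\bT|=\oo(N)$. By Definition~\ref{def:elliptic-neighborhood}, $\bT\subset\cB_r(o,\cG)$ with $r=\fR/50$, so
\[
|\bT|\le d_{\max}(d_{\max}-1)^{r}\lesssim N^{\fc/200}.
\]
Alternatively, Proposition~\ref{pro.path-weight-decay} gives $\bT\subset\cB_{C(1+\ell)}(o,\cG)$, hence $|\bT|\le(\log N)^{C}$. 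Either way $|\vec E^{e_\alpha}_{\cG^{(\bT)}}|\ge N-|\bT|\ge N/2$ for large $N$.

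Combining the two bounds gives $\P_\cG(b_\alpha=x)\le 2/N$ and $\P_\cG(c_\alpha=x)\le 2/N$, which is \eqref{e:approxunifbd}. There is no real obstacle here. The only point needing care is that the denominator count uses the matching structure of lifts, so that removing $\bT$ costs at most $|\bT|$ edges of each type. This is uniform in $\cG\in\GNH$, with no need for $\cG\in\bar\Omega$.
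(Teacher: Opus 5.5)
Your proof is correct and uses the same counting argument as the paper. You take $(b_\alpha,c_\alpha)$ uniform on $\vec E^{e_\alpha}_{\cG^{(\bT)}}$, use that there are exactly $N$ lifted edges of each type, and note that removing $\bT$ discards only $o(N)$ of them. You also spell out the numerator bound that the paper leaves implicit (at most one edge of type $e_\alpha$ with a given tail or head, by the perfect-matching structure), and your cruder bound $|\bT|\lesssim N^{\fc/200}$ from $\bT\subset\cB_r(o,\cG)$ works just as well as the paper's polylogarithmic bound.
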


\begin{proof}
We recall that, for any $\alpha\in\llbracket 1,\mu\rrbracket$,
the oriented edge $(b_\alpha,c_\alpha)$ is chosen uniformly from
the set $\vec E_{\cG^{(\bT)}}^{\,e_\alpha}$.
Since $\cG$ is an $N$-lift of the fixed base graph $\cH$, for each
oriented edge $e\in\vec E(\cH)$ there are exactly $N$ oriented edges
of $\cG$ projecting onto $e$. It follows from $\ell\sim\log_4\log N$ that removing the neighborhood $\bT$ excludes
at most $\OO((\log N)^\fa)$ such edges. Hence, we have
$
    \bigl|\vec E_{\cG^{(\bT)}}^{\,e_\alpha}\bigr|
    \gtrsim
    N
$, which directly gives \eqref{e:approxunifbd}.
\end{proof}

We next introduce the basic good event on which the local resampling
will be performed. Since the edges \(\{b_\alpha,c_\alpha\}\) are chosen
randomly in \(\cG^{(\bT)}\), the required separation and tree-neighborhood
conditions are not deterministic. The following proposition shows that,
after excluding an event of probability \(\OO(N^{-\fd})\), these geometric
conditions fail for only \(\OO(\fd)\) indices. In particular, on this good
event, all but a bounded number of indices are admissible for the local
sampling. This event will serve as the geometric foundation for the
subsequent local resampling argument. The proof of Proposition \ref{prop:distdeg} is almost identical to that of \cite[Proposition~5.10 and Proposition 5.11]{Huang2024Spectrum}, and we therefore omit the details.
\begin{proposition} \label{prop:distdeg}
For any large $\fd\in\N_+$ and a graph $\cG \in \bar\Omega$ or $\bar\Omega^+$, there exists an event $F_1(\cG)\subset\mathsf S(\cG)$ with probability $\P_\cG(F_1(\cG))=1-\OO(N^{-\fd})$ such that the following estimates hold:
\begin{itemize}
\item
Any vertex $x\in \bG\setminus\T$ is far away from most vertices in the set $\{a_1, b_1, c_1,\dots, a_\mu, b_\mu, c_\mu\}$:
\begin{align}
{\label{distfinitedegb}} |\{\al\in\qq{1,\mu}:\dist_{\cGT}(x,\{a_\al, b_\al, c_\al\})\leq {\fR/4}\}|\lesssim\fd.
\end{align}
\item
Most indices $\al \in \qq{1,\mu}$ are good in the following sense: 
\begin{align}
\label{distdega}
&|\Ba|\lesssim\fd,
\text{with } \Ba =\{\al\in \qq{1,\mu}: {\dist_{\cGT}(a_\al, \{b_\al,c_\al\})\leq \fR/2+2}\},
\\
\label{distdegb}
&|\Bb|\lesssim\fd, 
\text{with }\Bb =\{\al\in \qq{1,\mu}: \exists \beta\neq \al, {\dist_{\cGT}(\{a_\al, b_\al,c_\al\}, \{a_\beta, b_\beta, c_\beta\})\leq {\fR}/4}\},
\\
\label{treeneighbor}
&|\Bc|\lesssim\fd,
\text{with } \Bc =\{\al\in \qq{1, \mu}: \text{$\cB_{\fR/4}(\{b_\al,c_\al\}, \cGT)$ is not a tree}\}.
\end{align}
\item Estimates \eqref{distfinitedegb}  and \eqref{distdegb} also hold for the switched graph $\tcGT:=T_{\mathbf S}(\cG)^{(\bT)}$:
\begin{align}
&|\{\al\in\qq{1,\mu}: \dist_{\tcGT}(x,\{a_\al,b_\al,c_\al\})\leq \fR/4\}|\lesssim\fd,\qquad x\in \bG\setminus \T \label{e:lessshortdist}\\
&\dist_{\tcGT}(\{a_\al,b_\al, c_\al\}, \{a_\beta,b_\beta,c_\beta: \beta\in \qq{1,\mu}\setminus \{\al\}\})>\fR/4,\qquad \al\in \qq{1,\mu}\setminus \Bb \label{tcGTdist}
\end{align}
\end{itemize}
\end{proposition}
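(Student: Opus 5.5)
The proof is a union-bound and moment-counting argument over the independent choices of the sampled edges $(b_\alpha,c_\alpha)$, using only the approximate uniformity of Lemma~\ref{lem:switchable} and the deterministic local geometry of $\cG$. First, $\mu=|\partial_{\vec E}\cT|\lesssim 4^\ell\le(\log N)^{\OO(1)}$ by Proposition~\ref{prop:elliptic-neighborhood-estimates}(i). Second, every ball of radius $\fR$ in $\cG$ contains at most $(d_{\rm max}-1)^{\fR}\le N^{\fc/4}$ vertices. Every bad event below has the following form: some sampled vertex $b_\alpha$ or $c_\alpha$ lands in a set $\mathbb U$ of size at most $N^{\fc/4}(\log N)^{\OO(1)}$, where $\mathbb U$ is determined by $\cG$ and by the other, independent, samples.

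Given $\cG$ and $\{(b_\beta,c_\beta)\}_{\beta\ne\alpha}$, the sample $(b_\alpha,c_\alpha)$ is still uniform and independent. So by Lemma~\ref{lem:switchable} each such event has conditional probability at most $N^{-1+\fc/4}(\log N)^{C}\le N^{-1/2}$, using $\fc<1$. The key step is then this: for any fixed $k$ indices, the probability that all $k$ are simultaneously bad is at most $N^{-k/2}$. To see it, order the indices and reveal the samples sequentially; at each step the landing set is determined by what was revealed earlier. The number of index sets of size $k$ is at most $\mu^k\le(\log N)^{Ck}$. Taking $k=C\fd$ therefore gives a total failure probability $\OO(N^{-\fd})$, and this yields every bound of the form $\lesssim\fd$.

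I would apply this to each item in turn.
\begin{itemize}
\item For $\Ba$ in \eqref{distdega}, $\alpha$ is bad only if $b_\alpha$ or $c_\alpha$ lies in $\cB_{\fR/2+3}(a_\alpha,\cGT)$, which is a deterministic set.
\item For $\Bb$ in \eqref{distdegb}, $\alpha$ is bad only if $a_\alpha$ is close to some $a_\beta$, or $b_\alpha,c_\alpha$ land near one of the $3\mu$ other cell vertices. The first possibility is handled deterministically: the relevant excess is $\omega=\OO(1)$ for $\cG\in\bar\Omega$ ($\omega\lesssim\fd$ for $\bar\Omega^+$), so Lemma~\ref{GTstructure} applies. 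For the second, when counting $k$ bad indices I pair each bad index with a "witness" index; this costs a factor $\mu^{2k}$, which is harmless.
\item For $\Bc$ in \eqref{treeneighbor}, $\cB_{\fR/4}(\{b_\alpha,c_\alpha\},\cGT)$ fails to be a tree only if $b_\alpha$ lies within distance $\fR/4+1$ of a vertex whose radius-$\fR$ neighborhood contains a cycle. There are at most $2N^{\fc}$ such vertices by Definition~\ref{def:barOmega} or \ref{def:barOmega1}, so the landing set has size at most $N^{\fc}\cdot N^{\fc/4}$. This is still $\le N^{1-\epsilon}$ provided $\fc$ is small (or $\fc<4/5$), and the same $k$-fold product argument applies.
\item For \eqref{distfinitedegb}, the $a_\alpha$-part follows from \eqref{distfinitedega}. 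For the sampled vertices, it suffices to take a union over the at most $N$ vertices $x$: the probability that $C\fd$ independent samples all land in $\cB_{\fR/4}(x)$ is at most $N^{-C\fd/2}$.
\end{itemize}

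For the switched graph, \eqref{tcGTdist} is exactly Proposition~\ref{prop:elliptic-reverse}(3) for $\alpha\notin\Bb$. For \eqref{e:lessshortdist}, I would argue as in that proof. A short path in $\tcGT$ from $x$ to a cell either uses no modified edges, in which case it is a path in $\cGT$. Otherwise it first enters some cell, say that of $\beta$, through unmodified edges. Then $x$ is within $\fR/4$ of cell $\beta$ in $\cGT$, and only $\OO(\fd)$ cells can be reached this way. I expect the main obstacle to be this last bookkeeping, together with keeping the landing sets small enough in the $\bar\Omega^+$ case, where the larger excess enlarges the deterministic exceptional sets.
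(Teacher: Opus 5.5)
Your overall route is the one the paper relies on: it omits the proof and refers to \cite[Propositions~5.10--5.11]{Huang2024Spectrum}, which combine independence of the samples, the approximate uniformity of Lemma~\ref{lem:switchable}, union bounds over polylogarithmically many index sets, and Lemma~\ref{GTstructure} for the deterministic $a_\alpha$-configurations.

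\textbf{The step that fails.} As written, your bound for $\Bc$ does not cover the whole parameter range. Enlarging the landing set to $N^{\fc}\cdot N^{\fc/4}$ forces $\fc<4/5$, whereas the statement holds for every fixed $\fc\in(0,1)$. The enlargement is unnecessary:
\begin{itemize}
\item $(b_\alpha,c_\alpha)$ is an edge of $\cGT$, so any cycle in $\cB_{\fR/4}(\{b_\alpha,c_\alpha\},\cGT)$ lies in $\cB_{\fR/4+1}(b_\alpha,\cG)\subset\cB_{\fR}(b_\alpha,\cG)$.
\item Hence $b_\alpha$ is itself one of the at most $2N^{\fc}$ exceptional vertices, and each index is bad with probability $\lesssim N^{-1+\fc}$.
\item Therefore $\P_\cG(|\Bc|\ge k)\le\mu^k(CN^{-1+\fc})^k=\OO(N^{-\fd})$ once $k\ge(\fd+1)/(1-\fc)$.
\end{itemize}
This also shows that your uniform claim of ``$\le N^{-k/2}$ for $k$ simultaneously bad indices'' is false as stated, since here the exponent is only $(1-\fc)k$. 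What you need, and what actually holds, is $N^{-ck}$ for some fixed $c>0$.

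\textbf{Two steps that need filling in.}

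First, your case list for $\Bb$ omits the case $\dist_{\cGT}(a_\alpha,\{b_\beta,c_\beta\})\le\fR/4$, in which $\alpha$'s own sample plays no role.
\begin{itemize}
\item Charge this case to $\beta$: its sample lands in the deterministic set $\cB_{\fR/4}(\{a_\gamma\}_{\gamma},\cGT)$, of size $\lesssim\mu N^{\fc/16}$.
\item By \eqref{distfinitedega}, one such landing makes at most $\OO(\omega)$ indices bad.
\item For sample--sample closeness, mutual witnesses share a single landing event. So $k$ bad indices yield only about $k/2$ conditionally independent events; reveal the samples along a spanning forest of the closeness graph, parent before child. This halves the exponent, which is harmless.
\end{itemize}

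Second, for \eqref{e:lessshortdist}, ``only $\OO(\fd)$ cells can be reached'' must bound the target cell, not just the first cell entered.
\begin{itemize}
\item Take the prefix of a short $\tcGT$-path from $x$ up to its first hit of $\{a_\alpha,b_\alpha,c_\alpha\}$.
\item If the prefix uses no modified edge, it is a path in $\cGT$, and $\alpha$ is counted in \eqref{distfinitedegb}.
\item Otherwise, its first modified edge lies inside a cell $\beta\neq\alpha$. The rest of the path then gives $\dist_{\tcGT}(\{a_\alpha,b_\alpha,c_\alpha\},\{a_\beta,b_\beta,c_\beta\})\le\fR/4$, and \eqref{tcGTdist} forces $\alpha\in\Bb$.
\item So the count is at most $\OO(\fd)+|\Bb|\lesssim\fd$.
\end{itemize}

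Finally, \eqref{tcGTdist} is not literally Proposition~\ref{prop:elliptic-reverse}(3), which assumes $\alpha\in\mathsf W_{\bf S}$ and hence also $I_\alpha=1$. Its proof, however, uses only $J_\alpha=1$, i.e.\ $\alpha\notin\Bb$, so it does apply.
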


\begin{remark*}
  Proposition \ref{prop:boundary-tree-neighborhood}, together with
\eqref{distdega} and \eqref{treeneighbor}, verifies the geometric
conditions entering the indicator \(I_\alpha\), while
\eqref{distdegb} gives the mutual-separation condition required for
\(J_\alpha\). Moreover, it follows from 
\eqref{eq:elliptic-boundary-cardinality} that
$
\mu\sim 4^\ell\sim(\log N)^C.
$
Hence, on the good event \(F_1(\cG)\), at most \(\OO_\fd(1)\) indices are
excluded from the local resampling, which is only a negligible fraction
of all boundary indices.
\end{remark*}

\section{Quantitative Local Law and Proof Strategy}\label{sec.proof-strategy}
In this section, we formulate the quantitative local law that will be proved in the remainder of the paper and describe the main structure of its proof. We first collect the parameters and error scales used throughout the argument. 

\subsection{Choice of parameters}\label{sec:parameters}
\begin{definition}\label{def:parameters}
Fix parameters $\delta,\beta,\fc\in(0,1), \fa>6, \fb\ge 16\fa$ and a large constant $\fd$. After enlarging  $\beta\in(0,1)$ if necessary, assume that $N^{-1+c}\le\beta^{2r}$. 
The radius parameters are defined by: $\fR=(\fc/4)\log N, r=(\fc/200)\log N$ and $\ell=\fa\log_4\log N$.
We denote by
$
\mu
=
\bigl|\partial \cB^{\rm ell}_\ell\bigr|
$
the cardinality of the boundary of the elliptic ball of radius $\ell$.
We restrict to the spectral domain $z\in\bC_+(\mathcal I_\delta)$ with $\eta=\im[z]\ge(\log N)^{\fb}/N$.
With this choice of parameters, we have 
\begin{equation}
  \fR/50\ge r=\OO(\log N)\gg\ell=\OO(\log_4\log N),\qquad \mu\sim 4^\ell\sim(\log N)^{\fa},
\end{equation}
where $\mu\sim 4^\ell$ follows from \eqref{eq:elliptic-boundary-cardinality}.
Define the baseline error by
\begin{equation}
\label{e:def-epsilon-zero}
\varepsilon(z)
:=
4^{6\ell}
\left(
\beta^r
+
\sqrt{\frac{|\im m^{\rm ver}(z)|_\infty}{N\eta}}
+\varepsilon_{\rm edge}+\varepsilon_{\rm cusp}
\right),
\end{equation}
where $|\Im m^{\rm ver}|_\infty$ satisfies the bound from Lemma~\ref{lem:NB-Dyson-regularity} and
\begin{equation}
\label{e:def-epsilon-edge}
\begin{aligned}
\varepsilon_{\rm edge}(z)
:=
\max_{\tau\in\mathcal S_{\rm edge}}
\frac{1}
{N\eta
 \bigl(|z-\tau|+(N\eta)^{-2/3}\bigr)^{1/2}},
\qquad
\varepsilon_{\rm cusp}(z):=
\max_{\tau\in\mathcal S_{\rm cusp}}
\frac{1}
{N\eta
 \bigl(|z-\tau|+(N\eta)^{-3/5}\bigr)^{2/3}}.
\end{aligned}
\end{equation}
The maximum over an empty set is understood to be zero.
Let $\varepsilon_*(z)$ be the stability error scale defined in
\eqref{e:def-epsilon-star}. Finally, define
\begin{equation}
\label{e:def-varphi}
\varphi(z)
:=
4^\ell
\sqrt{
\frac{
|\im m^{\rm ver}(z)|_\infty
+
\varepsilon_*(z)
+
(\log N)\varepsilon(z)
}
{N\eta}
},
\end{equation}
where 
\begin{equation}
\label{e:def-epsilon-star}
\varepsilon_*(z)
:=
\begin{cases}
\displaystyle
\frac{\varepsilon(z)}
{\bigl(\varepsilon(z)+|z-\tau|\bigr)^{1/2}},
&
\text{if $z$ lies near a regular edge $\tau$},\\[3mm]
\displaystyle
\frac{\varepsilon(z)}
{\bigl(\varepsilon(z)+|z-\tau|\bigr)^{2/3}},
&
\text{if $z$ lies near a regular cusp $\tau$},\\[3mm]
\varepsilon(z),
&
\text{otherwise}.
\end{cases}
\end{equation}
\end{definition}

\medskip
With these choices, uniformly on the spectral domain under
consideration,
\begin{equation}
\label{e:parameter-basic-bounds}
\varepsilon(z)
\lesssim
4^{-2\ell},
\qquad
\varphi(z)
\lesssim
(\log N)^{1/2}4^{-2\ell}\varepsilon(z).
\end{equation}
These estimates provide all the error absorptions needed in the
remaining proof.

We next explain the roles and choices of the above parameters. 
The parameter $\delta>0$ is the regularity parameter introduced in Lemma~\ref{lem:NB-Dyson-existence}, which keeps the spectral domain away from the singular set while retaining the regular edges and cusps under consideration.
The constant $\beta\in(0,1)$ is the path-decay rate from Proposition~\ref{pro.path-weight-decay}, and determines the localization error $\beta^r$.

The three radii $\fR$, $r$, and $\ell$ serve different purposes.
The radius $\fR$ controls the local tree-like geometry.
For graphs in either $\bar\Omega$ or $\bar\Omega^+$, every
radius-$\fR/8$ neighborhood contains at most one cycle.
This is the geometric condition needed for the deterministic
Green-function estimates in Lemma~\ref{p:subgraph} and the
localization principle in Lemma~\ref{p:localization}.
In particular, the localized Green function can be controlled
through the corresponding tree-extension model, with the
possible cycle retained in the finite core.
The radius $r$ determines the size of the neighborhood used
in this localization.
We choose $r\leq\fR/50$ to leave room for the local enlargements
appearing below: each relevant connected component of the
unions of $r$-neighborhoods must remain inside a
radius-$\fR/8$ neighborhood.

The elliptic radius $\ell$ determines the size of the resampling
boundary. We have
$
\mu\sim 4^\ell\sim(\log N)^\fa
$
The boundary averaging in Proposition~\ref{prop:lift-concentration}
produces a gain of order $2^{-\ell}$. Hence $\ell$ must be large
enough so that this gain dominates the logarithmic losses appearing
in the concentration and reconstruction estimates. This is the
reason for choosing $\fa>6$. 
On the other hand, the resampling boundary should not grow too fast. Indeed, the switching argument samples vertices attached to the boundary, and its error estimates involve unions and sums over boundary indices. Keeping $ 4^\ell=(\log N)^\fa $ polylogarithmic ensures that collisions among the sampled vertices, intersections with exceptional local neighborhoods, and the losses from summing over the boundary remain negligible.

The parameter $\varepsilon_*$ is the stability-adjusted error scale.
Indeed, the bootstrap first yields
$
|R(Q,z)|_\infty\lesssim \varepsilon(z),
$
and Lemma~\ref{lem.stability2} then gives
$
|Q-m(z)|_\infty\lesssim \varepsilon_*(z).
$
In the stable bulk there is no stability loss, so
$\varepsilon_*\sim\varepsilon$. Near a regular edge or cusp, however,
the Dyson equation becomes degenerate, and the resulting error
may deteriorate to $\varepsilon^{1/2}$ and $\varepsilon^{1/3}$
respectively. Therefore, close to an edge or cusp,
$\varepsilon_*$ can be much larger than the baseline error
$\varepsilon$.

We next explain the two error parameters $\varphi$ and
$\varepsilon$. The quantity $\varphi$ is not an independent error
scale. It is chosen directly from the Ward-identity estimate \eqref{eq.imG_xx} so that, for
each fixed vertex, only an $\OO(4^{-2\ell})$ proportion of the other
vertices can have Green-function entries larger than $\varphi$. See section \ref{sec.cell} for details.

The main quantity that has to be chosen is $\varepsilon$.
Its first contribution, $\beta^r$, is the deterministic localization
error coming from replacing a Green function by the corresponding
radius-$r$ local model. See Proposition \ref{prop:bootstrap-start} for details.

The term
$
\sqrt{
|\Im m^{\rm ver}(z)|_\infty/(N\eta)
}
$
is the natural fluctuation scale coming from the Ward identity.
However, the diagonal Green function is not approximated by
$m^{\rm ver}$ with error $\varepsilon$ itself, but rather with the
stability error $\varepsilon_*$. Near a regular edge or cusp,
$\varepsilon_*$ is larger because the Dyson equation becomes less
stable. Substituting this weaker control back into the Ward estimate
produces the additional errors
$\varepsilon_{\rm edge}$ and $\varepsilon_{\rm cusp}$.
Their definitions are chosen so that they reproduce the natural
self-consistent scales $(N\eta)^{-2/3}$ near a regular edge and
$(N\eta)^{-3/5}$ near a regular cusp, while retaining the stronger
bounds $(N\eta)^{-1}$ away from the corresponding regular edge or cusp.

Finally, the prefactor $4^{6\ell}$ in
\eqref{e:def-epsilon-zero} is a bookkeeping buffer for the switching
argument. Several estimates below involve sums over one or two
boundary indices, and therefore lose powers of $4^\ell$. Enlarging
the basic error by $4^{6\ell}$ ensures that, after passing through
the Ward estimate and the boundary summations, the resulting
quantity $\varphi$ is still much smaller than $\varepsilon$.

An important feature of the error parameter \(\varepsilon\) is its
monotonicity along vertical lines. For fixed $E$, the error $\varepsilon(E+\mathrm i\eta)$ is non-increasing in
$\eta$. Indeed, the Ward term has this property by the Stieltjes transform
representation, and the edge and cusp correction terms in
\eqref{e:def-epsilon-edge} are also non-increasing in $\eta$.
Hence, for $t\ge\eta$,
$
\varepsilon(E+\mathrm i t)
\le
\varepsilon(E+\mathrm i\eta).
$
Therefore the residual along the whole vertical continuation segment can be
controlled by the single endpoint error $\varepsilon(E+\mathrm i\eta)$, as
required in the edge and cusp stability Lemma \ref{lem.stability2}.

The lower bound
$
N\eta\geq(\log N)^\fb
$
has the following role: it guarantees that the Ward fluctuations and
the edge/cusp corrections remain sufficiently small even after the
factor $4^{6\ell}$ is inserted. Since
$4^\ell\sim(\log N)^\fa$, the condition
$\fb\geq16\fa$ leaves enough room for all the required
polylogarithmic losses.

\subsection{Quantitative Local Law and Continuation Argument}
We first state the quantitative local law, which holds throughout the regular spectral regime away from the singular energies, including the stable bulk, regular edges, and regular cusps.
\begin{theorem}\label{thm.main}
    Fix any sufficiently large $\fd\in\N_+$ and let $\cG$ be sampled uniformly from $\bar\Omega$ defined in Definition \ref{def:barOmega}. For every sufficiently large
$N\in\N_+$, with probability
$1-\OO(N^{-\fd})$, the Green's function of $\cG\in\bar\Omega$ satisfies, uniformly for $z\in\bC_+(\mathcal I_\delta)$ with $\eta=\im[z]\ge(\log N)^\fb/N$,
  \begin{equation}\label{eq.main}
    \big|G_{ij}(\cG,z)-G_{ij}(\Ext(\cB_r(\{i,j\},\cG),m(z)),z)\big|\lesssim\varepsilon_*(z),
  \end{equation}
  and the vector of type-averaged diagonal Green functions satisfies
  \begin{equation}\label{eq.main 2}
    |Q^{\rm ver}(z)-m^{\rm ver}(z)|_\infty\lesssim\epsilon_*(z),
  \end{equation}
  where $\epsilon_*$ is defined in \eqref{e:def-epsilon-star}.
\end{theorem}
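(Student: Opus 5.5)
We prove Theorem~\ref{thm.main} by the standard bootstrap/continuation scheme in $\eta$, run separately along each vertical line $E+\mathrm i t$ with $E\in\mathcal I_\delta$.

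\textbf{Initial estimate.} At the large height $\eta_0\sim1$ the Green's function is deterministically close to its local model. Here $\beta^r$ localization via Lemma~\ref{p:localization} and the Ward identity give \eqref{eq.main} and \eqref{eq.main 2} directly; this is Proposition~\ref{prop:bootstrap-start}.

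\textbf{Continuation hypothesis.} For $z=E+\mathrm i\eta$ we assume the weak a priori bound
\[
\max_{i,j}\big|G_{ij}(\cG,z)-G_{ij}(\Ext(\cB_r(\{i,j\},\cG),m),z)\big|\le (\log N)^{-1},\qquad |Q^{\rm ver}-m^{\rm ver}|_\infty\le c_0 .
\]
We also assume the analogous bounds along the segment $t\in[\eta,\eta_0]$.

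\textbf{Two key propositions.} Under this hypothesis we derive, with probability $1-\OO(N^{-\fd-2})$, the following two statements.
\begin{itemize}
\item[(a)] A \emph{concentration/self-consistent} statement. It says the empirical edge weights $Q$ of Definition~\ref{def:empirical-Q-p} satisfy $|R(Q,z)|_\infty\lesssim\varepsilon(z)$. The argument uses the local resampling of Section~\ref{sec:local-resampling} around an elliptic neighborhood $\cT=\cB_\ell^{\rm ell}(o,\cG;z)$. By the Woodbury formula, the switched Green's function at $o$ is expressed through $\Ext(\cT,\cdot)$ with boundary weights given by Green's function entries at the new neighbors $c_\alpha$. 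Averaging over the $\mu\sim4^\ell$ independently sampled boundary edges replaces these entries by $Q_{e_\alpha}$ up to a fluctuation. That fluctuation is controlled by the Ward bound $\varphi$ and by Proposition~\ref{prop:elliptic-neighborhood-estimates}(ii),(v), which give the $2^{-\ell}$ gain. The measure-preserving property in Proposition~\ref{prop:elliptic-reverse} transfers the estimate back to $\cG$, and Lemma~\ref{p:fixpoint} turns the result into a Dyson residual bound.
\item[(b)] A \emph{stability} statement. Lemma~\ref{lem.stability2} converts the residual bound into $|Q-m|_\infty\lesssim\varepsilon_*$. Near edges and cusps, the required continuous family $q_t=Q(E+\mathrm i t)$ is supplied by the continuation hypothesis, and the monotonicity of $\varepsilon$ in $\eta$ gives $\sup_t|R(q_t)|\le\varepsilon(z)$. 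The identity \eqref{e:new-Q-vertex-reconstruction} then gives \eqref{eq.main 2}.
\end{itemize}

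\textbf{Entrywise estimate.} Given $|Q-m|\lesssim\varepsilon_*$, we localize $G_{ij}$ once more, again through a resampled elliptic neighborhood and Woodbury, to $\Ext(\cB_r(\{i,j\}),Q)$. Remark~\ref{r:subgraph} then compares this with $\Ext(\cdot,m)$, with error $\lesssim r\varepsilon_*$. The factor $\log N$ is absorbed because $\varepsilon_*$ dominates, so we take care that the $\dist$ factor is paired with $\beta$-decay. This gives \eqref{eq.main}, which is much stronger than the hypothesis because $\varepsilon_*\ll(\log N)^{-1}$ by \eqref{e:parameter-basic-bounds}.

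\textbf{Continuity and union bound.} We use a lattice of spacing $N^{-4}$ in $z$. The Lipschitz bound $\|\partial_zG\|\le\eta^{-2}$ propagates the improved estimate from $z$ to $z-\mathrm iN^{-4}$, which closes the continuation down to $\eta=(\log N)^\fb/N$. A union bound over $\OO(N^{10})$ lattice points and all vertices then gives probability $1-\OO(N^{-\fd})$. The second resampling needed in (a) is justified by $T_{\bf S}\cG\in\bar\Omega^+$ with high probability.

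\textbf{Main obstacle.} We expect step (a) to be the hardest, specifically proving that after switching the boundary fluctuation averages to order $2^{-\ell}\varphi$ rather than to $\varphi$. This requires controlling correlations between the Green's function entries $G_{c_\alpha c_\beta}$ and the exceptional indices in $\Ba,\Bb,\Bc$. We would first try the Huang--Yau strategy: expand to second order in the Woodbury correction, bound the off-diagonal boundary terms with Proposition~\ref{prop:elliptic-neighborhood-estimates}(v) and the Ward identity, and discard the $\OO(\fd)$ bad indices using Proposition~\ref{prop:distdeg}.
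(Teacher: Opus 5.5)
Your overall architecture matches the paper's: start from Proposition~\ref{prop:bootstrap-start}, resample around an elliptic neighborhood with a Woodbury expansion, apply Lemma~\ref{lem.stability2} with a vertical continuation near edges and cusps, then close with a lattice and union bound. But step (a) has a genuine gap: nothing in it actually bounds the Dyson residual. Lemma~\ref{p:fixpoint} runs in the wrong direction. It bounds $G_{oo}(\Ext(\cT,Q),z)-Q_{ji}/(1-Q_{ji}Q_{ij})$ by $(1+\diam(\cT))|R(Q,z)|_\infty$. On the other hand, \eqref{e:new-Q-vertex-reconstruction} gives exactly $Q_i-Q_{ji}/(1-Q_{ji}Q_{ij})=-R_{ji}(Q,z)\,Q_iQ_{ji}/(1-Q_{ji}Q_{ij})$. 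So if you average $G_{oo}\approx G_{oo}(\Ext(\cT_o,Q),z)$ over roots and invoke the lemma, you only get $|R(Q,z)|_\infty\lesssim\ell\,|R(Q,z)|_\infty+(\text{fluctuation})$, which is vacuous. Fluctuation averaging does not break this circularity. The conditional mean of the boundary error is not zero; it is proportional to $p_eR_{\bar e}(Q,z)$ (see \eqref{e:single-switching-mean-computation} and the identity after it). Hence replacing the boundary entries by $Q_{e_\alpha}$ is valid only modulo $\OO(\log N\,|R(Q,z)|_\infty)$, which is precisely the term carried in \eqref{e:Omega-o-minus-Q}. A Huang--Yau-type self-consistent equation for the $\ell$-level tree map would instead need an inverse stability estimate for the vector equation, and none of the cited lemmas provides one.

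The missing idea is the three-term identity of Claim~\ref{lem:three-term-tree}. On any tree extension with arbitrary boundary weights, $P_{uv}^2-P_{uv}-P_{uu}P_{vv}=0$ for every edge. Its linearization \eqref{e:three-term-tree-derivative} annihilates exactly the diagonal boundary terms $W_{o\tilde a_\alpha}W_{v\tilde a_\alpha}\widetilde D^{\langle\bT\rangle}_{\tilde a_\alpha\tilde a_\alpha}$ of the gluing expansion. This gives $|G_{ov}^2-G_{ov}-G_{oo}G_{vv}|\lesssim(\log N)^3\varepsilon/4^\ell$, with no residual on the right and no concentration needed. Averaging over roots and using the exact relation \eqref{e:residual-three-term-relation}, $R_{ij}(Q,z)=-(p_{ij}^2-p_{ij}-Q_iQ_j)/(p_{ij}Q_j)$, yields $|R(Q,z)|_\infty\lesssim4^{-\ell}(\log N)^3\varepsilon$. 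Only then does the $\log N|R|$ term become harmless and Lemma~\ref{lem.stability2} become applicable.

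There are also two secondary issues. First, your continuation hypothesis, an error $(\log N)^{-1}$ against $\Ext(\cdot,m)$, is too weak for the switching machinery. The Woodbury series and its quadratic terms need $\mu\,|D|_{\max}\ll1$ with $\mu\sim(\log N)^{\fa}$. This is why the paper assumes the $Q$-based bound of size $\varepsilon$ (the event $\Omega(z)$) and runs an $\varepsilon\to\varepsilon/2$ self-improving bootstrap rather than a weak-to-strong one. Second, for bipartite $\cH$ with $0$ in the bulk, Lemma~\ref{lem.stability2} also requires control of $\sum_i\sigma_i(Q_i-m_i)$, which the paper obtains from the exact identity \eqref{e:bipartite-balance-final}.
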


Theorem~\ref{thm.main.intro} and
Theorem~\ref{thm.delocalization.intro} are direct consequences of
Theorem~\ref{thm.main}: the former follows by the standard Stieltjes
inversion argument, while the latter follows by the standard
Green-function argument; see, for instance,
\cite[Section~18.5]{ErdosYau2017}.

The remainder of this subsection presents the proof scheme for Theorem~\ref{thm.main}.
We begin by propagating the local estimates from the large-$|z|$ regime, where they follow directly from a Neumann expansion, to the whole spectral domain under consideration. 
For this purpose, we introduce two nested good events $ \Omega^-(z)\subset\Omega(z)$. 
The event $\Omega(z)$ records the estimates that are stable under a small variation of the spectral parameter, 
whereas $\Omega^-(z)$ contains an additional margin in the error bounds. The main one-step improvement shows that, up to an arbitrarily small probability, membership in $\Omega(z)$ can be upgraded to membership in $\Omega^-(z)$. 
Starting from large $|z|$ and iterating this improvement along a sufficiently fine spectral lattice then yields the desired estimates uniformly in $z$.

We first introduce the basic good event $\Omega(z)$.
\begin{definition}[The set $\Omega(z)$]
\label{def:Omega}
For $z\in\mathbb C_+(\mathcal I_\delta)$, we define the set of spectral regular graphs
$\Omega(z)\subset \bar\Omega$ to be the set of graphs $\cG$ such that
\begin{align}
|Q-m|_\infty\le\epsilon_*(z),\quad
|R(Q(\cG,z),z)|_\infty
+\max_{i,j\in\bG}\left|
G_{ij}(z)
-
G_{ij}\left(
\Ext\bigl(
\cB_r(\{i,j\},\cG),Q(\cG,z)
\bigr),z
\right)
\right|
\leq
\varepsilon(z).
\end{align}
\end{definition}

To carry out the iteration, we also define the event
$\Omega^-(z)$, which imposes a stronger error bound.
\begin{definition}[The set $\Omega^{-}(z)$]
\label{def:Omega-minus}
For $z\in\mathbb C_+(\mathcal I_\delta)$, we define
$\Omega^{-}(z)\subset\Omega(z)\subset \bar\Omega$ to be the set of graphs $\cG$ such that
\begin{align}\label{eq.Omega-minus}
|Q-m|_\infty\le\frac{\epsilon_*(z)}{2},\quad|R(Q(\cG,z),z)|_\infty
+\max_{i,j\in\bG}\left|
G_{ij}(z)
-
G_{ij}\left(
\Ext\bigl(
\cB_r(\{i,j\},\cG),Q(\cG,z)
\bigr),z
\right)
\right|
\leq
\frac{\varepsilon(z)}{2}.
\end{align}
\end{definition}
Compared with $\Omega(z)$, the event $\Omega^-(z)$ improves the defining error bound from $\varepsilon(z)$ to $\varepsilon(z)/2$. This margin is essential for the continuation argument: after moving the spectral parameter to a nearby point, Lipschitz continuity may consume part of the available error, but the graph still remains in $\Omega(z)$. The one-step improvement below can then be applied once again to recover membership in $\Omega^-(z)$.

The following proposition provides the starting point of the continuation argument. It shows that, when $|z|$ is sufficiently large, every graph in $\bar\Omega$ already satisfies the stronger estimates defining $\Omega^-(z)$.
\begin{proposition}\label{prop:bootstrap-start}
  We have $\bar\Omega\subset\Omega^-(z)$ uniformly for every $z\in\bC_+(\mathcal I_\delta)$ with $|z|\ge 2d_{\rm max}$. Here $\bar\Omega$ and $\Omega^-(z)$ are defined in Definition \ref{def:barOmega} and Definition \ref{def:Omega-minus} respectively.
\end{proposition}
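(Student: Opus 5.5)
For $|z|\ge 2d_{\rm max}$ every quantity in Definition~\ref{def:Omega-minus} can be computed by convergent Neumann series, and I would use this to show that all errors are at most $\OO(\beta_0^{r})$ for some $\beta_0<1$. That is far below $\varepsilon(z)/2$, since $\varepsilon\ge 4^{6\ell}\beta^r$ and, after enlarging $\beta$ as allowed in Definition~\ref{def:parameters}, $\beta\ge\beta_0$.

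\textbf{Step 1 (path expansion).} Since $\|A\|\le d_{\rm max}\le |z|/2$, we have
\[
G(z)=-\sum_{k\ge0}z^{-k-1}A^k .
\]
The entry $(A^k)_{ij}$ counts walks of length $k$ from $i$ to $j$. Hence every term depends only on $\cB_{\lfloor k/2\rfloor}(\{i,j\},\cG)$, and the number of such walks is at most $d_{\rm max}^k$. The same expansion holds for $G(\Ext(\cK,q),z)$ with $\cK=\cB_r(\{i,j\},\cG)$, provided the diagonal potential is small. This holds because $|q_e|\lesssim 1/|z|$: for $q=m$ this follows from Lemma~\ref{lem:NB-Dyson-regularity}, and for $q=Q$ it follows from the trivial bound $|G_{xy}|\le\eta^{-1}$ together with the expansion itself, e.g.\ $Q_i=-1/z+\OO(|z|^{-3})$.

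\textbf{Step 2 (comparison of $G$ with its extension).} The key observation is that on a graph in $\bar\Omega$, the radius-$r$ neighborhood with the universal-cover boundary completion reproduces $\cG$ up to depth $r$. Concretely, I would compare $\cG$ and $\Ext(\cK,Q)$ through a Schur complement (equivalently, the Woodbury formula) over the exterior $\bG\setminus\cK$. The effective self-energy at a boundary vertex $v$ is a sum over walks that leave $\cK$ and return; to leading order it equals $\sum_{e\in\mathscr M_\cK(v)}G^{\rm cav}_{ee}$. A walk leaving through an edge of type $e$ and returning to $\cK$ without closing back through the same edge must have length at least $2r$ on the tree-like region. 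Therefore this cavity diagonal agrees with the empirical $Q_e=-p_e/Q_{t(e)}$ up to $\OO((d_{\rm max}/|z|)^{2r})$, and the latter agrees with $m_e$ up to the same error.

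The difference $G_{ij}-G_{ij}(\Ext(\cK,Q))$ is then a sum over walks from $i$ to $j$ that reach $\partial\cK$. Each such walk has length at least $r$, so the difference is bounded by
\[
\sum_{k\ge r}\Big(\frac{d_{\rm max}}{|z|}\Big)^k\lesssim 2^{-r}.
\]
A cleaner alternative is to write both sides as the same convergent power series in $1/z$ whose coefficients agree up to order $2r$. This is exact, because $Q_e$ expands as a series in $1/z$ with coefficients given by averaged walk counts, and these are type-determined on tree-like balls. The only exception is the at most $N^{\fc}$ vertices whose $\fR$-ball contains a cycle, and they contribute $\OO(N^{\fc-1})$ to the averages $Q$ and $p$.

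\textbf{Step 3 (Dyson residual and $Q-m$).} Using the exact vertex–edge relation \eqref{e:new-Q-vertex-reconstruction}, I would bound $R(Q,z)$ by the same walk-agreement argument. The coefficients of $Q$ match those of $m$ up to order $\sim\fR$, except for the $\OO(N^{\fc-1})$ contribution of atypical vertices. This gives
\[
|R(Q,z)|_\infty\lesssim 2^{-r}+N^{\fc-1}\ll\varepsilon.
\]
The point $z$ lies in the stable off-support regime, so Lemma~\ref{lem.stability2} applies, or directly invertibility of $I-m^2B$ with $|m|\lesssim|z|^{-1}$. This yields
\[
|Q-m|_\infty\lesssim|R|_\infty\le \varepsilon_*/2 .
\]

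\textbf{Main obstacle.} I expect the main obstacle to be making Step 2 rigorous in the presence of the single allowed cycle. I would handle this with the walk-counting bound, which needs no tree structure at all: walks touching a cycle are still at most $d_{\rm max}^k$ in number. Only the coefficient matching uses tree-likeness, and it is needed only up to length $\sim r\le\fR/50$.
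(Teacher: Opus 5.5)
Your proof is correct in substance and uses the same ingredients as the paper: absolutely convergent Neumann series for $|z|\ge 2d_{\rm max}$, agreement of walk counts below length $2r$, type-determined coefficients at vertices with tree-like $\fR$-balls, and an $\OO(N^{\fc-1})$ contribution from the remaining vertices. You assemble them differently, in two places.

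For the Green's-function estimate, you compare $G_{ij}$ directly with $G_{ij}(\Ext(\cK,Q),z)$. Any term in either expansion that touches a vertex carrying a boundary weight has length at least $2r$. Since $|Q_e|\le 4/|z|<1$, the row sums of $|A_\cK|+|V_Q|$ stay at most $d_{\rm max}$, so the weights enter only at order $z^{-2r-1}$, whatever their values. The paper instead compares $G$ with $\Ext(\cK,m)=\TE(\cK)$ and then perturbs $m\to Q$ in the boundary weights. That perturbation step first requires $|Q-m|_\infty\lesssim\beta^r$; your version removes this dependency and is slightly cleaner.

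For the self-consistent quantities, you reverse the paper's order: you bound $R(Q,z)$ first and then invoke Lemma~\ref{lem.stability2}. That detour is unnecessary. The coefficient matching you already use gives $|Q^{\rm ver}-m^{\rm ver}|_\infty$ and $|p_e+m_{t(e)}m_e|$ directly, hence $Q_e-m_e$ via $Q_e=-p_e/Q_{t(e)}$. The paper then gets $R$ from the exact identity $R(Q,z)=(Q-m)/(Qm)-B(Q-m)$ with no stability input. The stability lemma would in any case need the a priori bound $|Q-m|_\infty\le c_0$, which is exactly what this matching provides.

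Two points to tighten.

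\textbf{Powers of $|z|$.} Here $\eta$ is unbounded and $1/Q_e\sim z$, so you must track powers of $|z|$. The first coefficients of $Q_i$ and $p_e$ agree exactly with those of $m_i$ and $-m_{t(e)}m_e$ at every vertex. This gives $|Q-m|_\infty\lesssim|z|^{-2}(\beta^r+N^{\fc-1})$, which is what makes $|R(Q,z)|_\infty\lesssim\beta^r$ uniform in $z$. Your stated bounds $2^{-r}+N^{\fc-1}$ hide this.

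\textbf{Schur-complement heuristics in Step 2.} Drop them; they are not right as written, for two reasons.
\begin{itemize}
\item Individual boundary cavity entries need not agree with $Q_e$ or $m_e$ to $\OO((d_{\rm max}/|z|)^{2r})$. The empirical averages carry an $\OO(N^{\fc-1})$ error, and the exterior near a boundary vertex may contain the one allowed, possibly short, cycle.
\item Excursions outside $\cK$ that return at a different boundary vertex can be short, for example when the balls around $i$ and $j$ nearly touch, or through the cycle.
\end{itemize}
None of this is needed, since the walk-counting argument already gives the bound.
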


\begin{proof}
Fix $\cG\in\bar\Omega$ and $|z|\ge 2d_{\max}$.
Enlarging $\beta$ in Proposition~\ref{pro.path-weight-decay}
if necessary, we may assume that $\beta\ge 1/4$.
Since $|z|\ge 2d_{\max}$, the Neumann expansions for $\cG$
and its tree extensions converge absolutely.
Any walk from $i$ to $j$ which distinguishes $\cG$ from its
radius-$r$ local extension must leave
$\cB_r(\{i,j\},\cG)$ and return, and hence has length at least
$2r+2$.
Consequently,
\begin{equation}\label{e:bootstrap-start-truncation}
  \left|
G_{ij}(z)
-
G_{ij}\left(
\Ext\bigl(\cB_r(\{i,j\},\cG),m(z)\bigr),z
\right)
\right|
\le
2\sum_{k\ge 2r+2}\frac{d_{\max}^k}{|z|^{k+1}}
\lesssim
\frac{\beta^r}{|z|^3}.
\end{equation}

Recall that $1\le r\le\fR$.
If $x$ has a radius-$\fR$ tree neighborhood, then we have
$
G_{xx}\bigl(\Ext(\cB_r(x,\cG),m),z\bigr)
=m_x(z)$
and
$
G_{x x_e}\bigl(\Ext(\cB_r(x,\cG),m),z\bigr)
=-m_{x}(z)m_e(z)$
where $x_e$ denotes the unique neighbor of $x$ such that the edge $(x,x_e)$ is of type $e$.
Since $(A^k)_{x x_e}$ counts the walks of length $k$ from $x$ to
$x_e$, and $\cG$ agrees with its universal-cover extension inside
$\cB_r(x,\cG)$, the two Neumann expansions agree for all
$k<2r+1$. Indeed, any walk from $x$ to $x_e$ that leaves
$\cB_r(x,\cG)$ has length at least $(r+1)+r=2r+1$.
Hence
\[
\left|G_{x x_e}(z)+m_{\pi(x)}(z)m_e(z)\right|
\le
2\sum_{k\ge 2r+1}\frac{d_{\max}^k}{|z|^{k+1}}
\lesssim
\frac{\beta^r}{|z|^3}.
\]

There are at most $\OO(N^\fc)$ exceptional vertices.
For these vertices, the Neumann expansion, together with the
vanishing diagonal and unit adjacency entries along edges, gives
\[
\begin{aligned}
G_{xx}(z)
&=-\frac1z+\OO(|z|^{-3}),
&
m_{x}(z)
&=-\frac1z+\OO(|z|^{-3}),
\\
G_{x x_e}(z)
&=-\frac1{z^2}+\OO(|z|^{-3}),
&
-m_{x}(z)m_e(z)
&=-\frac1{z^2}+\OO(|z|^{-4}).
\end{aligned}
\]
Averaging these estimates over each fiber, and using
\eqref{e:bootstrap-start-truncation} for the nonexceptional
diagonal entries, we obtain
$
|Q^{\rm ver}-m^{\rm ver}|_\infty
+
\max_{e\in\vec E_\cH}
\left|p_e+m_{t(e)}m_e\right|
\lesssim
|z|^{-3}(\beta^r+N^{-1+\fc}).
$
The Neumann expansion also gives
$|Q_{t(e)}|,|m_e|\sim  |z|^{-1}$ uniformly in this region.
Using $Q_e=-p_e/Q_{t(e)}$ from
\eqref{e:def-Q-edge-new}, we have the exact identity
$
Q_e-m_e
=
-(p_e+m_{t(e)}m_e)/Q_{t(e)}
-m_e\bigl(Q_{t(e)}-m_{t(e)}\bigr)/Q_{t(e)}.
$
Therefore,
\begin{equation}\label{e:bootstrap-start-Q}
\begin{aligned}
|Q^{\rm ver}-m^{\rm ver}|_\infty
\lesssim
\frac{\beta^r+N^{-1+\fc}}{|z|^3}
\lesssim \frac{\beta^r}{|z|^3},\qquad
|Q-m|_\infty
\lesssim
\frac{\beta^r+N^{-1+\fc}}{|z|^2}
\lesssim \frac{\beta^r}{|z|^2}\lesssim\beta^r,
\end{aligned}
\end{equation}
where we used the parameter condition
$N^{-1+\fc}\lesssim\beta^r$.

In particular, $|Q_e|,|m_e|\sim  |z|^{-1}$ for sufficiently
large $N$.
The identity
$
R(Q,z)=(Q-m)/(Qm)-B(Q-m)
$
therefore implies
\begin{equation}\label{e:bootstrap-start-residual}
|R(Q,z)|_\infty
\lesssim
(1+|z|^2)|Q-m|_\infty
\lesssim
\beta^r.
\end{equation}

Finally, Lemma~\ref{p:subgraph} implies that replacing $m$ by $Q$
changes the boundary self-energy by a matrix of operator norm
$\OO(|Q-m|_\infty)$.
The exact tree-extension resolvent, restricted to the local core,
has operator norm at most $(|z|-d_{\max})^{-1}$.
Thus a Neumann perturbation in the boundary weights yields
\[
\left|
G_{ij}\bigl(\Ext(\cB_r(\{i,j\},\cG),Q),z\bigr)
-
G_{ij}\bigl(\Ext(\cB_r(\{i,j\},\cG),m),z\bigr)
\right|
\lesssim
\frac{|Q-m|_\infty}{|z|^2}
\lesssim
\beta^r.
\]
Combining this with
\eqref{e:bootstrap-start-truncation}--\eqref{e:bootstrap-start-residual},
we conclude that
\[
|R(Q,z)|_\infty
+
\max_{i,j\in\bG}
\left|
G_{ij}(z)
-
G_{ij}\left(
\Ext\bigl(\cB_r(\{i,j\},\cG),Q\bigr),z
\right)
\right|
\lesssim
\beta^r.
\]
By the parameter condition
$\beta^r\ll\varepsilon\lesssim\varepsilon_*$ uniformly in the
region under consideration, the above inequality and \eqref{e:bootstrap-start-Q} imply $\cG\in\Omega^-(z)$.
\end{proof}

The following proposition is the key one-step improvement. It states that, apart from an arbitrarily small exceptional probability, every graph satisfying the estimates of $\Omega(z)$ in fact satisfies the stronger estimates of $\Omega^-(z)$ at the same spectral parameter. Consequently, once the weaker estimates have been propagated to a nearby value of $z$ by continuity, this proposition restores the extra margin needed for the next step of the iteration.
\begin{proposition}
\label{prop:bootstrap}
For any large $\fd,N\in\N_+$, we have
\begin{equation}\label{eq.bootstrap}
  \P(\Omega(z)\setminus\Omega^-(z))=\OO(N^{-\fd+1}),
\end{equation}
uniformly for every $z\in\bC_+(\mathcal I_\delta)$ with $|z|\le 2d_{\rm max}$ and $\eta\ge(\log N)^\fb/N$. Here $\Omega(z)$ and $\Omega^-(z)$ are defined in Definition \ref{def:Omega} and \ref{def:Omega-minus} respectively.
\end{proposition}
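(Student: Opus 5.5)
The plan is to follow the switching/local-resampling scheme of \cite{Huang2024Spectrum}, organized around the two key propositions announced in Section~\ref{sec.two-key-proposition}: a deterministic stability/self-consistency step and a probabilistic concentration step. Fix $z$ and suppose $\cG\in\Omega(z)$. The target is to show that, outside an event of probability $\OO(N^{-\fd+1})$, three quantities improve by a factor $2$: (a) the entrywise comparison $G_{ij}-G_{ij}(\Ext(\cB_r(\{i,j\},\cG),Q),z)$, (b) the Dyson residual $R(Q,z)$, and (c) the distance $|Q-m|_\infty$. Since $Q$ is a fiber average, union bounds over at most $N^2$ pairs $(i,j)$ and over vertices $o$ cost only a factor $N^2$ against the $N^{-\fd}$ probabilities produced by the fixed-$o$ argument. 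This explains the loss to $N^{-\fd+1}$, after adjusting $\fd$.

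\textbf{Step 1: resampling around a fixed root $o$.} By the measure-preservation statement in Proposition~\ref{prop:elliptic-reverse}, $\P(\cG\in\Omega(z)\setminus\Omega^-(z))=\P'(T_{\bf S}\cG\in\Omega\setminus\Omega^-)$. It therefore suffices to bound the probability over the resampling data ${\bf S}$ that the switched graph fails the improved bounds, conditionally on $\cG\in\Omega(z)$ together with the good events $F_0,F_1$ of Proposition~\ref{prop:distdeg}. On these events only $\OO(\fd)$ boundary indices are non-switchable. I would then use the Woodbury formula of Section~\ref{sec:woodbury-switching} to express the Green function of $T_{\bf S}\cG$ near $\cT=\cB^{\rm ell}_\ell(o)$ as the local Green function $G(\Ext(\cT,\cdot))$ coupled to the exterior through the new boundary edges $\{l_\alpha,c_\alpha\}$. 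The result is an effective boundary self-energy at $l_\alpha$ given by $G^{(\bT)}_{c_\alpha c_\alpha}$-type quantities, together with off-diagonal couplings $G_{c_\alpha c_\beta}$.

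\textbf{Step 2: concentration, the main obstacle.} I need to show two things. First, the off-diagonal couplings $G_{c_\alpha c_\beta}$ are typically at most $\varphi$; this follows from the Ward identity together with the uniformity of $c_\alpha$ in Lemma~\ref{lem:switchable}. Second, the fluctuation of each sampled boundary weight around its empirical mean $Q_{e_\alpha}$ averages out. Summing against the propagators $G_{jy}(\TE)$ and using the boundary estimates (ii), (iv), (v) of Proposition~\ref{prop:elliptic-neighborhood-estimates} (transferred to $\Ext(\cdot,Q)$ by Lemma~\ref{p:subgraph}), an independent-sum (Bernstein-type) bound over roughly $4^\ell$ independent indices should give a gain of $2^{-\ell}$ beyond the a priori $\varepsilon$. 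The factor $4^{6\ell}$ built into $\varepsilon$ together with $\fa>6$ is what should absorb the polylogarithmic losses. The hardest point is the non-exchangeability: the weights $G_{c_\alpha c_\alpha}$ are computed in the switched graph, so they must be compared with those of the unswitched graph through a second resampling. This is why $\bar\Omega^+$ is introduced, and I expect this comparison to be the most delicate estimate. The localization principle (Lemma~\ref{p:localization}) then turns the boundary-averaged expression into the improved bound (a), with error $\beta^r$ plus a fluctuation that is $o(\varepsilon)$.

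\textbf{Step 3: self-consistency and stability.} Averaging the improved local identity over $o$ in a fiber and applying the fixed-point estimate (Lemma~\ref{p:fixpoint}) together with the exact relation \eqref{e:new-Q-vertex-reconstruction}, I would derive that $Q$ satisfies the Dyson equation with residual $|R(Q,z)|_\infty\le\varepsilon/4$; here the self-energies are empirical, so the collective mode cancels exactly. Finally, Lemma~\ref{lem.stability2} converts this residual bound into $|Q-m|_\infty\le\varepsilon_*/2$. In the edge and cusp regimes this requires the continuity family $q_t=Q(E+\mathrm it)$, which is supplied by the monotonicity of $\varepsilon$ in $\eta$ together with the assumption that $\Omega$ holds along the vertical segment, as arranged in the continuation scheme. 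In the bipartite case near $0$ one also needs a separate bound on the alternating sum $\sum_i\sigma_i(Q_i-m_i)$, which can be obtained from the spectral symmetry of bipartite lifts.
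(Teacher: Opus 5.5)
\textbf{The gap: your Step 3 cannot improve the residual, because the argument is circular.} Lemma \ref{p:fixpoint} and Lemma \ref{p:localization} both carry an error $(1+\diam)|R(Q,z)|_\infty\sim(\log N)|R(Q,z)|_\infty$.

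The sampled boundary weights also do not fluctuate around $Q_{e_\alpha}$ with zero mean. By Claim \ref{lem:single-switching-concentration-mean}, the conditional mean of each sampled boundary error is of order $|R(Q,z)|_\infty$; its main part is $p_eR_{\bar e}(Q,z)/(1-m_{\bar e}Q_e)^2$.

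Hence the best entrywise improvement the concentration step can give is \eqref{e:Omega-o-minus-Q}, namely $(\log N)^3\varepsilon 2^{-\ell}+(\log N)|R(Q,z)|_\infty$. It is not ``$\beta^r$ plus $o(\varepsilon)$'', as you claim. If you average this over a fiber and feed it into the fixed-point lemma, you get at best
$|R(Q,z)|_\infty\lesssim(\log N)^3\varepsilon2^{-\ell}+(\log N)|R(Q,z)|_\infty$.
This is vacuous, since the coefficient $\log N$ exceeds one. Your phrase ``the collective mode cancels exactly'' has no mechanism behind it: only the vertex reconstruction \eqref{e:new-Q-vertex-reconstruction} is exact, and the directed-edge equation is precisely what must be improved.

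\textbf{The missing idea is the adjacent-entry three-term relation} (Claim \ref{lem:three-term-tree}).
\begin{itemize}
\item On any finite tree with \emph{arbitrary} boundary weights, $P_{uv}^2-P_{uv}-P_{uu}P_{vv}=0$; no Dyson equation is needed.
\item By \eqref{e:three-term-tree-derivative}, the first-order contribution of every diagonal boundary error $\widetilde D^{\langle\bT\rangle}_{\widetilde a_\alpha\widetilde a_\alpha}$ to $G_{ov}^2-G_{ov}-G_{oo}G_{vv}$ vanishes identically, for the mean and the fluctuation alike.
\item Only off-diagonal boundary terms survive. This gives \eqref{e:Omega-o-minus-root}, with error $(\log N)^3\varepsilon4^{-\ell}$ and no $R$-term.
\item Average over roots of type $i$, using $\cG\in\Omega$ to replace averages of products by products of averages at cost $\varepsilon^2$. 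Then apply the exact identity \eqref{e:residual-three-term-relation}, $R_{ij}(Q,z)=-(p_{ij}^2-p_{ij}-Q_iQ_j)/(p_{ij}Q_j)$. This yields $|R(Q,z)|_\infty\lesssim(\log N)^3\varepsilon4^{-\ell}$.
\end{itemize}
Only after this does \eqref{e:Omega-o-minus-Q} give the improved local law. So your Steps 2 and 3 must be reversed: improve the residual first, via a cancellation that does not see $R$.

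\textbf{Three further points.}

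First, in Step 1, measure preservation turns $\P(\Omega\setminus\Omega^-)$ into the probability that $T_{\bf S}\cG\in\Omega\setminus\Omega^-$. There it is the \emph{switched} graph that is assumed good, so you cannot simply condition on $\cG\in\Omega(z)$. The paper uses the involution $\widetilde T$ to show first that, with high probability, the unswitched graph then lies in the weaker set $\Omega_o^+$ (Proposition \ref{p:Omega+}, which includes control of the filled exterior). Only then does it analyse the switching from $\Omega_o^+$ to $\Omega_o^-$ (Proposition \ref{p:Omega-}). This is the real reason for two resamplings, not a comparison of switched and unswitched boundary weights.

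Second, near edges and cusps, $\cG\in\Omega(z)$ gives no information along the vertical segment. Even a bound $|R(q_t)|_\infty\le\varepsilon$ along it would make Lemma \ref{lem.stability2} return $\lesssim\varepsilon_*$, not $\le\varepsilon_*/2$. The paper runs a downward induction over $\OO(N)$ lattice heights inside this proof. It carries the improved bound $|R(Q(z_k),z_k)|_\infty\le C_0\varepsilon(z_k)/(\log N)^3$ at every height and union-bounds the exceptional events. That is where the loss $N^{-\fd+2}$ (then $\fd\to\fd+1$) comes from, not a union over pairs $(i,j)$.

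Third, your treatment of the bipartite mode via spectral symmetry is fine. The paper gets $\sum_i\sigma_i(Q_i-m_i)=0$ exactly from the averaged row identity \eqref{e:averaged-row-identity}.
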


The main ingredients of the proof are the local resampling
estimates, concentration of the boundary terms, and the resulting
self-consistent equation. We defer the proof to Section \ref{sec.two-key-proposition}.

The following proposition explains how the large-$|z|$ initial estimate and the one-step improvement yield the local law uniformly throughout the spectral domain.

\begin{proposition}\label{prop.bootstrap union}
 For any large $\fd>0$ and $N$ sufficiently large, with probability
$1-\OO(N^{-\fd})$ with respect to the uniform measure on $\bar\Omega$,
a graph $\cG\in\bar\Omega$ belongs to $\Omega^-(z)$ uniformly for all
$
z\in\C_+(\mathcal I_\delta)$ with
$
\eta\ge(\log N)^{\fb}/N.
$
\end{proposition}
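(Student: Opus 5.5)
We prove the statement by the standard continuity argument on a discrete lattice of spectral parameters, combining Proposition~\ref{prop:bootstrap-start} (initialization) with Proposition~\ref{prop:bootstrap} (one-step improvement) and a union bound.

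\textbf{Step 1: the lattice.} Introduce a lattice $\mathcal L\subset\{z\in\bC_+(\mathcal I_\delta):\eta\ge(\log N)^\fb/N\}$ with mesh $N^{-4}$. Restrict to $|z|\le 2d_{\rm max}$ and $\eta\le 2d_{\rm max}$ (larger $|z|$ is handled by Proposition~\ref{prop:bootstrap-start} directly), so that $|\mathcal L|\lesssim N^{8}$.

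Each $z\in\mathcal L$ is joined to the region $|z|\ge 2d_{\rm max}$ by a lattice path. We go vertically upward from $z=E+\mathrm i\eta$ to height $2d_{\rm max}$, and then, if needed, horizontally out of the disk. Both legs stay in $\bC_+(\mathcal I_\delta)$ and have $\OO(N^4)$ steps. Along the vertical leg $\varepsilon$ is non-increasing, so nothing is lost there.

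\textbf{Step 2: the deterministic Lipschitz transfer.} For lattice neighbours $z,z'$ with $|z-z'|\le N^{-4}$, we show $\Omega^-(z)\subset\Omega(z')$. The required estimates are as follows.
\begin{itemize}
\item The resolvent bound $\|\partial_z G\|\le\eta^{-2}\le N^2$ gives $|G_{ij}(z)-G_{ij}(z')|\le N^{-2}$. Hence $Q^{\rm ver}$ and $p$ move by $\OO(N^{-2})$.
\item Since $|Q_i|\sim1$ on $\Omega(z)$, the ratios $Q_e$ also move by $\OO(N^{-2})$, and therefore so does $R(Q,z)$.
\item $m$ is $\alpha$-Hölder by Lemma~\ref{lem:NB-Dyson-regularity}, so it moves by $\OO(N^{-4\alpha})$. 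We can make this polynomially small by taking the mesh $N^{-C}$ with $C$ large.
\item The local extension Green functions $G(\Ext(\cB_r,Q(z)),z)$ are Lipschitz in both $z$ and $Q$ by Lemma~\ref{p:subgraph}.
\item $\varepsilon$ and $\varepsilon_*$ change by a relative factor $1+\oo(1)$ on such scales, and $\varepsilon\ge(\log N)^{-C}N^{-1/2}\gg N^{-2}$.
\end{itemize}
Consequently the halved bounds at $z$ become the full bounds at $z'$.

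\textbf{Step 3: induction along paths.} Define the good event $\mathcal E:=\bigcap_{z\in\mathcal L}\bigl(\Omega(z)^c\cup\Omega^-(z)\bigr)$. By Proposition~\ref{prop:bootstrap} and a union bound,
\[
\P(\mathcal E^c)\lesssim N^{8}N^{-\fd+1}.
\]
Renaming $\fd$ gives probability $1-\OO(N^{-\fd})$.

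On $\mathcal E$ we induct along each path. Its first point lies in $\Omega^-$ by Proposition~\ref{prop:bootstrap-start}. If the graph lies in $\Omega^-(z)$, Step 2 puts it in $\Omega(z')$, and $\mathcal E$ then upgrades this to $\Omega^-(z')$. Hence the graph lies in $\Omega^-(z)$ for every $z\in\mathcal L$. For off-lattice $z$, Step 2 (run with a $1/4$-margin variant) shows that membership in $\Omega^-$ at the nearest lattice point transfers.

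\textbf{Main obstacle: edges and cusps.} The subtle point is near regular edges and cusps. There Proposition~\ref{prop:bootstrap} relies on Lemma~\ref{lem.stability2}, which requires a continuous family $q_t$ along the vertical segment with $|R|\le\varepsilon$ and $|q_t-m|\le c_0$. This is exactly why the induction should go vertically. On $\mathcal E$, all lattice points above $z$ already satisfy $\Omega^-$; with Lipschitz interpolation this gives the continuity hypothesis \eqref{eq.continuity}, using the monotonicity of $\varepsilon$ in $\eta$.

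I expect this branch selection to be the main obstacle. We must check that Proposition~\ref{prop:bootstrap} is applied only where the vertical segment is already controlled. If needed, we therefore order the induction by decreasing $\eta$: process all lattice points at a given height before moving to lower heights.
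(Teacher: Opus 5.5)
Your argument is correct and follows the paper's proof: start from Proposition~\ref{prop:bootstrap-start}, transfer $\Omega^-(z)\subset\Omega(z')$ between neighbouring points of a fine spectral lattice by Lipschitz continuity, upgrade back to $\Omega^-(z')$ with Proposition~\ref{prop:bootstrap}, and absorb the polynomial lattice size into the union bound by enlarging $\fd$; your finer mesh $N^{-4}$ with the crude bound $\|\partial_z G\|\le\eta^{-2}$ is a harmless, arguably cleaner, variant of the paper's mesh $N^{-1}$. The edge/cusp branch-selection issue you flag is already handled inside Proposition~\ref{prop:bootstrap}, whose proof runs its own downward induction along the vertical line through $E$, so when it is used as a black box no ordering of the lattice points is needed here.
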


\begin{proof}
The proof follows exactly the same argument as \cite[Proposition 4.9]{Huang2024Spectrum}; we therefore only indicate the main idea.
By Proposition~\ref{prop:bootstrap-start}, we have
$\bar\Omega\subset\Omega^-(z)$ for $|z|\ge 2d_{\rm max}$, which
provides the starting point of the bootstrap.
We then move from this region to
$\{z\in\C_+(\mathcal I_\delta):\eta\ge(\log N)^\fb/N\}$
along a sufficiently fine deterministic lattice in the spectral
parameter with mesh size
$N^{-1}$. By the Lipschitz continuity of the resolvent and of
$Q(\cG,z)$ in $z$, membership in $\Omega^-$ at one lattice point
implies membership in $\Omega$ at the next one, while
Proposition~\ref{prop:bootstrap} upgrades this to membership in
$\Omega^-$ with probability $1-\OO(N^{-\fd})$.

Iterating this argument over the lattice and using the same Lipschitz
continuity to pass from the lattice to all spectral parameters gives
\[
\P\left(
  \cG\in\Omega^-(z)
  \text{ uniformly for all }
  z\in\C_+(\mathcal I_\delta),\
  \eta\ge\frac{(\log N)^\fb}{N}
\right)
=
1-\OO(N^{-\fd}).
\]
Here we apply Proposition~\ref{prop:bootstrap} with a larger value of
$\fd$ if necessary, so that the polynomial loss from the union bound
over the spectral lattice is absorbed.
\end{proof}

We next turn to the proof of Theorem~\ref{thm.main}. The main difference between its conclusion and that of Proposition~\ref{prop.bootstrap union} is that the weights attached to the boundary in the local comparison graph are given by the deterministic Dyson solution $m$, rather than by the empirical weights $Q$. 

\begin{proof}[Proof of Theorem \ref{thm.main}]
Since we already have Proposition \ref{prop.bootstrap union}, it is sufficient to prove that, for every $\cG\in\Omega^-(z)$, its Green's function satisfies \eqref{eq.main} and \eqref{eq.main 2}. 
Since Lemma \ref{p:subgraph} implies that replacing $Q$ by $m$ gives an error $\OO(|Q-m|_\infty)$, it follows from \eqref{eq.Omega-minus} that
\[
\left|
G_{ij}(z)
-
G_{ij}\left(
\Ext\bigl(
\cB_r(\{i,j\},\cG),m(z)
\bigr),z
\right)
\right|
\lesssim
\epsilon+\epsilon_*\lesssim\epsilon_*,
\]
which proves \eqref{eq.main}.
Averaging the above estimate over the diagonal entries in each
fiber, and using that all but at most $\OO(N^\fc)$ vertices have a
radius-$\fR$ tree neighborhood, we obtain
\[
|Q^{\rm ver}(z)-m^{\rm ver}(z)|_\infty
\lesssim
\epsilon_*+N^{-1+\fc}
\lesssim
\epsilon_*,
\]
which proves \eqref{eq.main 2}.
\end{proof}

\medskip

It therefore remains only to prove Proposition~\ref{prop:bootstrap}. In the next subsection, we establish this result through two key propositions.

\subsection{Two key propositions}\label{sec.two-key-proposition}
We now turn to the proof of Proposition~\ref{prop:bootstrap}. The argument is based on two successive local resampling steps around a fixed root $o$. 
Roughly speaking, the first resampling transfers the global estimates encoded in $\Omega(z)$ to a local event $\Omega_o^+(z)$ that is stable under the filled switching construction. 
The second resampling produces the stronger rootwise event $\Omega_o^-(z)$, which contains the improved estimates needed to close the iteration.

Before defining the local event $\Omega_o^+(z)$, we introduce the
filled exterior associated with $\bT$. Recall that
$\cG^{(\bT)}$ denotes the graph obtained from $\cG$ by removing
$\bT$. We define
$
\cG^{\langle\bT\rangle}
:=
\TE\bigl(\cG^{(\bT)}\bigr),
$
where $\TE(\cdot)$ is the tree-extension operator defined in
Definition~\ref{def.tree-extension}. The construction is illustrated
in Figure~\ref{fig:filled_removal_definition}. We denote the Green function of
$\cG^{\langle\bT\rangle}$ by
$
G^{\langle\bT\rangle}(z).
$

\begin{figure}[H]
    \centering
    \includegraphics[width=0.50\linewidth]{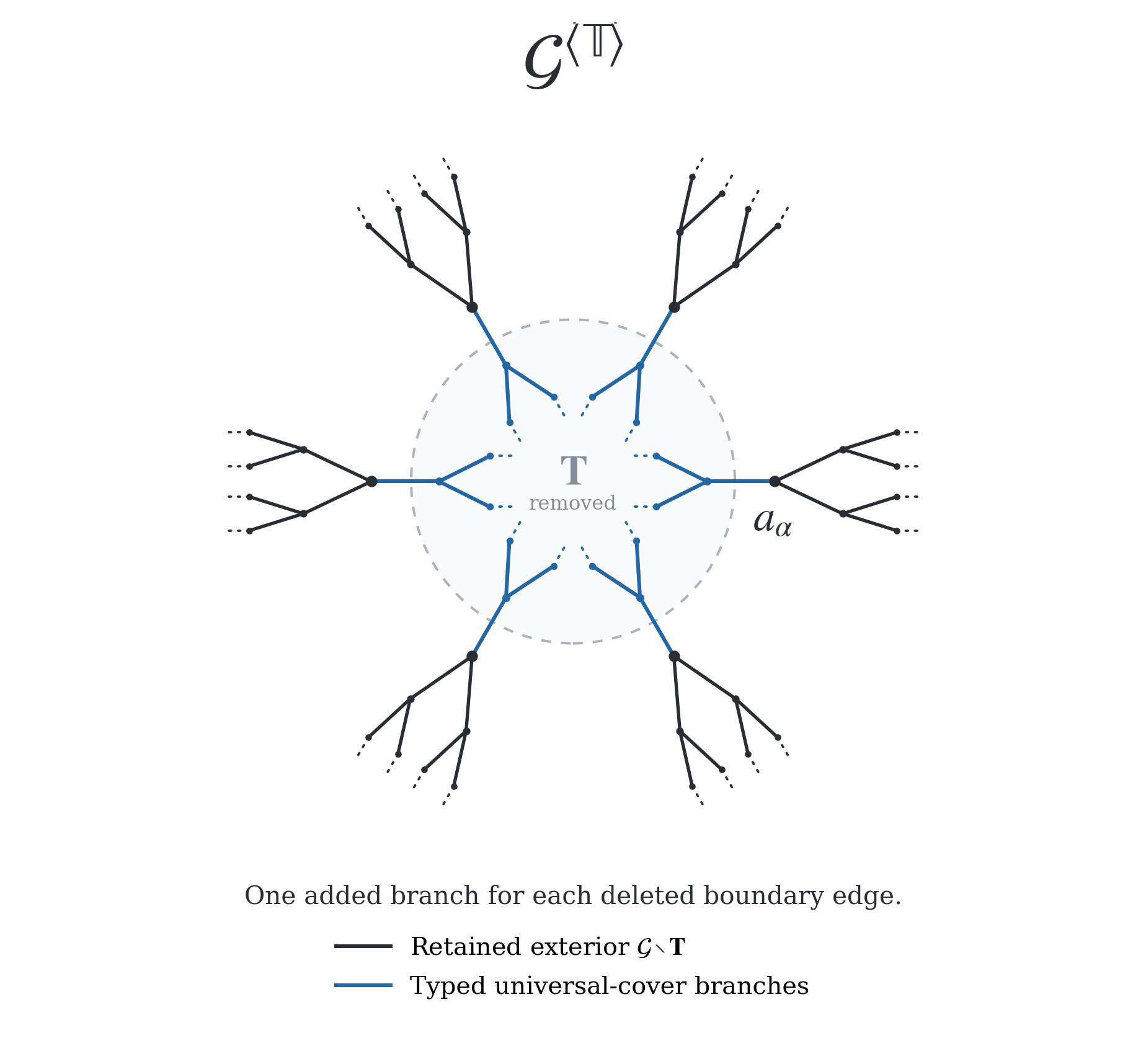}
    \caption{The filled exterior $\cG^{\langle\bT\rangle}$
    associated with $\bT$.}
    \label{fig:filled_removal_definition}
\end{figure}

\begin{definition}[The set $\Omega_o^{+}(z)$]
\label{def:Omega-o-plus}
For $z\in\mathbb C_+(\mathcal I_\delta)$ with
$
\eta\geq (\log N)^{\fb}/N
$,
let
$
\cT=\cB^{\rm ell}_\ell(o,\cG;z)
$
with fixed root $o\in\bG$.
We define $\Omega_o^{+}(z)\subset\bar\Omega^{+}$ to be the set of
graphs $\cG$ such that
\begin{align}
&|Q-m|_\infty\lesssim\epsilon_*(z),\qquad\qquad \max_{i,j\in\bG\setminus\bT}\left|
G^{\langle\bT\rangle}_{ij}(z)
-
G_{ij}\left(
\Ext\bigl(
\cB_r(\{i,j\},\cG^{\langle\bT\rangle}),
Q(\cG,z)
\bigr),z
\right)
\right|
\lesssim(\log N)\varepsilon(z),\label{e:Omega-o-plus-filled}\\[2mm]
&|R(Q(\cG,z),z)|_\infty
\lesssim\varepsilon(z),\qquad\max_{i,j\in\bG}\left|
G_{ij}(z)
-
G_{ij}\left(
\Ext\bigl(
\cB_r(\{i,j\},\cG),Q(\cG,z)
\bigr),z
\right)
\right|
\lesssim
(\log N)^3\varepsilon(z).\label{e:Omega-o-plus-G}
\end{align}

\end{definition}

The first key proposition shows that these additional local estimates are inherited from the global good event $\Omega(z)$ after one elliptic switching, except for a set of switching data of arbitrarily small probability. In this sense, $\Omega_o^+(z)$ is the intermediate event produced by the first local resampling.
\begin{proposition}\label{p:Omega+}
Let $z\in \bC_+(\mathcal I_\delta)$ with $\eta\geq (\log N)^\fb/N$. 
Then for any $\cG\in \Omega(z)$ and large $\fd,N\in\N_+$, there exists $F_2(\cG) \subset \sS(\cG)$, defined in Proposition \ref{prop:cell-sparsity}, with $\P_{\cG}(F_2(\cG))= 1-\OO(N^{-\fd})$ such that for every switching data $\bfS\in F_2(\cG)$, 
the switched graph $\widetilde{\cG}=T_{\bf S}(\cG)\in \Omega_o^+(z)$. 
Here, the sets $\Omega(z)$ and $\Omega_o^+(z)$ are defined in Definitions \ref{def:Omega} and \ref{def:Omega-o-plus}.
\end{proposition}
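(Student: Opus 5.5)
We prove Proposition~\ref{p:Omega+} by comparing $\widetilde\cG=T_{\bf S}(\cG)$ with $\cG$ through finite-rank (Woodbury) perturbations, on a good event $F_2(\cG)\subset F_0(\cG)\cap F_1(\cG)$. On $F_0$ we have $\widetilde\cG\in\bar\Omega^+$. By Proposition~\ref{prop:elliptic-reverse}, $\cT$ is unchanged and $\cB_{\fR/8}(o,\cdot)$ is preserved. On $F_1$ all but $\OO(\fd)$ indices are switchable and the cells $\{a_\alpha,b_\alpha,c_\alpha\}$ are separated by more than $\fR/4$. To this we add a \emph{cell-sparsity} condition (this will be Proposition~\ref{prop:cell-sparsity}). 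For every vertex $x$ and every fixed $y$, the sampled vertices $b_\alpha,c_\alpha$ should rarely land where $|G_{xb_\alpha}|$ or $|G_{yc_\alpha}|$ exceeds $\varphi$. The Ward identity gives $\sum_y|G_{xy}|^2=\Im G_{xx}/\eta$, so only an $\OO(4^{-2\ell})$ fraction of vertices carry large entries. Lemma~\ref{lem:switchable} gives approximate uniformity of $b_\alpha,c_\alpha$. A binomial tail bound then shows that, with probability $1-\OO(N^{-\fd})$, at most $\OO(\fd)$ of the $\mu\sim4^\ell$ indices are ``bad'' for any fixed pair; a union bound over the $N^2$ pairs is affordable.

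\textbf{Empirical weights.} First we check the $Q$ estimates in \eqref{e:Omega-o-plus-filled}--\eqref{e:Omega-o-plus-G}. The switching changes $\OO(\mu)$ edges, hence $A$ by a perturbation of rank $\OO(\mu)$. Averaging the resolvent difference over a fiber of size $N$, the Woodbury formula together with the Ward identity bounds the change of each $Q_i$ and $p_e$ by $\OO(\mu^2/(N\eta))\ll\varepsilon$, using the factor $4^{6\ell}$ in $\varepsilon$. Thus $|Q(\widetilde\cG)-m|_\infty\lesssim\varepsilon_*$ and $|R(Q(\widetilde\cG),z)|_\infty\lesssim\varepsilon$ are inherited directly from $\cG\in\Omega(z)$.

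\textbf{Filled exterior.} Next comes the estimate on $G^{\langle\bT\rangle}$ for $\widetilde\cG$. We obtain $G^{\langle\bT\rangle}(\cG)$ from $G(\cG)$ by replacing $\bT$ with exact tree branches. This is a rank-$\OO(\mu)$ boundary modification, and by the Woodbury formula the correction is expressed through $G_{\cdot a_\alpha}$ and a $\mu\times\mu$ matrix built from $G|_{\{a_\alpha\}}$. Using $\cG\in\Omega(z)$, we approximate each entry by its local $\Ext(\cdot,Q)$ model. Because the $a_\alpha$ are mostly separated (Lemma~\ref{GTstructure}), the $\mu\times\mu$ matrix is diagonal up to $\OO(\varphi)$ off-diagonal entries, except for $\OO(\omega)$ indices. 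Inverting it gives the claimed bound on $\cG^{\langle\bT\rangle}$ with a loss of $\log N$. The switching acts on $\cG^{\langle\bT\rangle}$ only through the separated edges $\{b_\alpha,c_\alpha\}$ and the reattached branches at $a_\alpha$. We express $G^{\langle\bT\rangle}(\widetilde\cG)$ by a second Woodbury formula whose off-diagonal couplings are $G^{\langle\bT\rangle}_{b_\alpha b_\beta}$, $G^{\langle\bT\rangle}_{c_\alpha a_\beta}$, and similar entries. On the cell-sparsity event all but $\OO(\fd)$ of these are $\lesssim\varphi$, and the remainder are handled by the locality of the tree structure.

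\textbf{Full Green function and main obstacle.} Finally we reattach $\cT$ to the switched filled exterior, again by Woodbury, using Proposition~\ref{prop:elliptic-neighborhood-estimates}(ii),(iv),(v) to sum boundary contributions. This costs at most two further logarithmic factors and gives \eqref{e:Omega-o-plus-G} with $(\log N)^3\varepsilon$. We compare each resulting entry with $G_{ij}(\Ext(\cB_r(\{i,j\},\widetilde\cG),Q))$ via Lemma~\ref{p:localization} and Remark~\ref{r:subgraph}. I expect the hardest step to be controlling the inverse of the $\OO(\mu)$-dimensional Woodbury matrix uniformly. A naive bound loses a factor $\mu\varphi$ per index, which is too large. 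The plan is to show that the matrix equals a block-diagonal local part (computable from tree extensions, hence $\OO(1)$-invertible by Proposition~\ref{p:tree-extension-bound}) plus a perturbation whose $\ell^\infty\to\ell^\infty$ norm is $\OO(\mu\varphi+\fd\,\varepsilon)\ll1$. Proving this needs the estimate $\varphi\lesssim(\log N)^{1/2}4^{-2\ell}\varepsilon$ from \eqref{e:parameter-basic-bounds}, and the $\OO(\fd)$ exceptional indices must be treated exactly.
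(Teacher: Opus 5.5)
Your architecture matches the paper's: filled removal of $\bT$, switching on the filled exterior, gluing $\bT$ back (each a Woodbury comparison with a localized model), a Ward-identity cell-sparsity event, and a final replacement of $Q$ by $\widetilde Q$. However, you place the difficulty in the wrong step, and one structural claim is false. Invertibility is not the issue. Write each comparison as $W\sum_k(DJF_\partial J^{\mathsf T})^kDW^{\mathsf T}$, with $F_\partial$ computed exactly from the switched \emph{localized} Green function and $D=G-P$ bounded entrywise by $(\log N)\varepsilon$. The series then converges by the crude bound $\mu(\log N)^2\varepsilon\ll1$, since $\varepsilon\lesssim 4^{-2\ell}$. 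The false claim is in the filled-removal step: the boundary matrix is not ``diagonal up to $\OO(\varphi)$''. In $\cG$ the vertices $a_\alpha$ are joined through $\bT$, so $G_{a_\alpha a_\beta}$ and its local model have the size of the PF path weight through $\bT$, which is order one for siblings. Lemma~\ref{GTstructure} gives separation only in $\cG^{(\bT)}$. The correct kernel contains the dense interior block $G^{\rm in}_{\bL\bL}$, controlled only through its row sums $\OO(\ell)$ via Proposition~\ref{prop:elliptic-neighborhood-estimates}(iv), together with the sparse exterior block $P^{\langle\bT\rangle}_{\bA\bA}$.

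The genuine gap is the leading term of the gluing expansion for $i,j\in\bT$ near the root, which you dismiss as ``two further logarithmic factors''. For $i=j=o$ the transfer weights satisfy $|W_{o\widetilde a_\alpha}|\sim 2^{-\ell}$ for all $\mu\sim 4^\ell$ indices. If you know $\widetilde D^{\langle\bT\rangle}_{\widetilde a_\alpha\widetilde a_\beta}$ only entrywise, you get $\sum_{\alpha,\beta}|W_{o\widetilde a_\alpha}||\widetilde D^{\langle\bT\rangle}_{\widetilde a_\alpha\widetilde a_\beta}||W_{o\widetilde a_\beta}|\lesssim 4^{\ell}(\log N)\varepsilon$, which does not close the bootstrap. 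Proposition~\ref{prop:elliptic-neighborhood-estimates}(v) controls only the diagonal terms $\alpha=\beta$.

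What is missing is the decorrelation the switching is designed to produce. For isolated cells you must show $|\widetilde G^{\langle\bT\rangle}_{c_\alpha c_\beta}|\lesssim\varphi+(\log N)^2\varepsilon^2$ in the \emph{switched} exterior (second bullet of Proposition~\ref{prop:stability-switching-filled-exterior}). This yields $\sum_{\alpha\neq\beta}|\widetilde D^{\langle\bT\rangle}_{\widetilde a_\alpha\widetilde a_\beta}|\lesssim(\log N)^3\varepsilon$ and $\sup_{y\notin\bT}\sum_\alpha|\widetilde D^{\langle\bT\rangle}_{\widetilde a_\alpha y}|\lesssim(\log N)^2\varepsilon$. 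The bound comes from a refined Woodbury analysis in which the localized model vanishes across cells. The first-order term then sees only pre-switch entries such as $G^{\langle\bT\rangle}_{b_\beta c_\alpha}$, which are at most $\varphi$ by cell sparsity. The higher-order terms are quadratic in $(\log N)\varepsilon$, and $\varphi\lesssim(\log N)^{1/2}4^{-2\ell}\varepsilon$ makes the $\mu^2$ off-diagonal terms affordable. Your proposal asserts $\varphi$-smallness only for pre-switch couplings and uses it for inversion, not for this bilinear form.

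Two smaller corrections. First, the Green-function part of cell sparsity has per-index probability only $4^{-2\ell}$, so the binomial tail gives $\OO(\log N)$ exceptional indices, not $\OO(\fd)$. Second, the $Q$-comparison cannot come first, because bounding its Woodbury kernel requires $|\widetilde G|\lesssim1$ on the switching vertices, which is an output of the gluing step.
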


The proof of the above proposition is in Section \ref{sec:stability}.

The event $\Omega_o^+(z)$ is sufficient to control the switched filled resolvent, but it does not yet contain the improvement needed to close the iteration. We therefore introduce a second local event $\Omega_o^-(z)$. It strengthens the approximation at the fixed root $o$ by the averaging gain $2^{-\ell}$ and, when the root has a tree neighborhood, also records the three-term Green-function relation that will later yield the improved Dyson residual.
\begin{definition}[The set $\Omega_o^{-}(z)$]\label{def:Omega-o-minus}
For $z\in\mathbb C_+(\mathcal I_\delta)$ with
$
\eta\geq (\log N)^{\fb}/N
$,
let
$
\cT=\cB^{\rm ell}_\ell(o,\cG;z)
$
with fixed root $o\in\bG$.
We define $\Omega_o^{-}(z)$ to be the set of graphs $\cG$ such that,
\begin{align}
&|R(Q(\cG,z),z)|_\infty
\lesssim\varepsilon(z),\qquad|Q-m|_\infty\lesssim\epsilon_*(z)\label{eq.Omega-minus-RQ}\\
&\max_{i\in\bG}\left|
 G_{oi}
-
G_{oi}\left(
\operatorname{Ext}
\bigl(
\mathcal B_r(\{o,i\},\mathcal G),Q(z)
\bigr),z
\right)
\right|\lesssim
\frac{(\log N)^3\varepsilon(z)}{2^\ell}+
\log N\big|
R(Q(\cG,z),z)\big|_\infty.\label{e:Omega-o-minus-Q}
\end{align}
Moreover, if $o$ has a radius-$\fR$ tree neighborhood in $\cG$, then for every neighbor $v$ of $o$
\begin{equation}\label{e:Omega-o-minus-root}
\left|
 G_{ov}^2- G_{ov}
-
 G_{oo} G_{vv}
\right|
\lesssim
\frac{(\log N)^3\varepsilon(z)}{4^\ell}
\end{equation}
\end{definition}

The second key proposition is the corresponding improvement step. Starting from a graph in $\Omega_o^+(z)$, a second local resampling produces a graph in $\Omega_o^-(z)$ with overwhelming conditional probability.

\begin{proposition}\label{p:Omega-}
Let $z\in \bC_+(\mathcal I_\delta)$ with $\eta\geq (\log N)^\fb/N$. 
Then for any $\cG\in \Omega_o^+(z)$ and large $\fd,N\in\N_+$, there exists $F_3(\cG) \subset \sS(\cG)$, defined in Proposition \ref{prop:lift-concentration}, with $\P_{\cG}(F_3(\cG))= 1-\OO(N^{-\fd})$ such that for every switching data $\bfS\in F_3(\cG)$, 
the switched graph $\widetilde{\cG}=T_{\bf S}(\cG)\in \Omega_o^-(z)$. 
Here, the sets $\Omega_o^+(z)$ and $\Omega_o^-(z)$ are defined in Definitions \ref{def:Omega-o-plus} and \ref{def:Omega-o-minus}.
\end{proposition}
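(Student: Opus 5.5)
We use a Woodbury comparison between $\widetilde\cG=T_{\bf S}(\cG)$ and a local model in which the exterior of $\cT=\cB^{\rm ell}_\ell(o,\cG;z)$ is replaced by the filled exterior. Since $\cG\in\Omega_o^+(z)$, the filled resolvent $G^{\langle\bT\rangle}$ is controlled on $\bG\setminus\bT$ by \eqref{e:Omega-o-plus-filled}. By Proposition~\ref{prop:elliptic-reverse}, the elliptic neighborhood, the radius-$\fR/8$ geometry of $o$, and the separation of switching cells are all unchanged by the switching. Hence $\widetilde\cG$ is obtained from $\cG$ by rewiring the boundary edges $\{l_\alpha,a_\alpha\}$ to $\{l_\alpha,c_\alpha\}$ for $\alpha\in\mathsf W_{\bf S}$. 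Conditioning on $\cG$, the only randomness is the uniformly chosen edges $(b_\alpha,c_\alpha)$.

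The first step is the Schur complement on $\bT$ in $\widetilde\cG$:
\[
\widetilde G|_{\bT}=\Bigl(A|_{\bT}-z-\sum_{\alpha,\beta}\mathbf e_{l_\alpha}\widetilde G^{(\bT)}_{c_\alpha c_\beta}\mathbf e_{l_\beta}^{\mathsf T}\Bigr)^{-1}.
\]
For each $\alpha$ we then compare $\widetilde G^{(\bT)}_{c_\alpha c_\alpha}$ with $Q_{e_\alpha}$, and show that the off-diagonal terms $\widetilde G^{(\bT)}_{c_\alpha c_\beta}$ with $\alpha\ne\beta$ are small. The exterior switched graph differs from $\cG^{(\bT)}$ only by the simple switchings in the cells $\{a_\alpha,b_\alpha,c_\alpha\}$. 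These cells are far apart on the good event $F_1$ of Proposition~\ref{prop:distdeg}, so the Woodbury formula expresses $\widetilde G^{(\bT)}$ through $G^{\langle\bT\rangle}$ entries between cell vertices, up to localization errors $\beta^r$. Distinct cells lie at distance $>\fR/4$ in $\cG^{(\bT)}$. Using \eqref{e:Omega-o-plus-filled} and Lemma~\ref{p:localization}, each off-diagonal entry $G^{\langle\bT\rangle}_{xy}$ is then $\OO((\log N)\varepsilon)$.

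The main gain comes from randomness. Since $b_\alpha,c_\alpha$ are nearly uniform by Lemma~\ref{lem:switchable}, the Ward identity gives
\[
\frac1N\sum_y|G_{xy}|^2=\frac{\Im G_{xx}}{N\eta},
\]
which is bounded in terms of $|\Im m^{\rm ver}|_\infty+\varepsilon_*$. So $\E_{\bf S}|G_{x c_\alpha}|^2\lesssim\varphi^2/4^{2\ell}$. A high-moment concentration bound over the roughly independent indices then gives the following. Outside an event of probability $N^{-\fd}$, only $\OO(\fd)$ indices have cross entries larger than $\varphi$; the total cross contribution is small; and the diagonal terms satisfy
\[
\frac1{\mu}\sum_\alpha\bigl(\widetilde G^{(\bT)}_{c_\alpha c_\alpha}-Q_{e_\alpha}\bigr)=\OO(2^{-\ell}(\log N)^3\varepsilon+\log N|R|_\infty).
\]
This average is where the $2^{-\ell}$ gain comes from. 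The expected value of the diagonal term $G^{\langle\bT\rangle}_{c_\alpha c_\alpha}$ over uniform $c_\alpha$ of type $e_\alpha$ reproduces $Q_{e_\alpha}$, up to the Dyson residual; this is exactly why the empirical weights $Q_e=-p_e/Q_{t(e)}$ were chosen. The fluctuations average out over the roughly $4^\ell$ boundary indices. The exceptional $\OO(\fd)$ indices are controlled by the per-index a priori bound $(\log N)^3\varepsilon$, times the boundary weight bounds.

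Next, we insert this into the resolvent on $\bT$ and compare with $\Ext(\cT,Q)$ by the second-order Woodbury expansion. The first-order error is $\sum_\alpha G_{o l_\alpha}(\Ext)\,\Delta_\alpha\,G_{l_\alpha i}(\Ext)$. Here $\Delta_\alpha$ is a centered fluctuation plus a bias of size $|R|_\infty$. Proposition~\ref{prop:elliptic-neighborhood-estimates}(ii),(v), combined with the concentration bound, make this sum $\OO((\log N)^3\varepsilon 2^{-\ell}+\log N|R|_\infty)$. The second-order terms are quadratic in the $\Delta_\alpha$ and are bounded using (ii) and \eqref{e:parameter-basic-bounds}. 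For $i$ outside $\bT$, we extend through $G_{oi}=\sum_\alpha \widetilde G_{ol_\alpha}\widetilde G^{(\bT)}_{c_\alpha i}$ and apply the same averaging, which gives \eqref{e:Omega-o-minus-Q}. The bounds \eqref{eq.Omega-minus-RQ} are inherited, since $Q$ changes by only $\OO(\mu/N)$ under the switching.

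For \eqref{e:Omega-o-minus-root}, take $o$ with a tree neighborhood. In the exact local model $\Ext(\cT,q)$ on a tree the identity $G_{ov}^2-G_{ov}... $ structure holds exactly, being the tree algebraic relation $G_{oo}G_{vv}=G_{ov}^2-G_{ov}$ (up to sign conventions) coming from the Schur complement along the edge $ov$. The identity is quadratic in the perturbation, so the first-order deviations cancel. Only terms of order $(2^{-\ell}(\log N)^3\varepsilon)^2$ survive, together with the mixed boundary sums controlled by (v), giving the $4^{-\ell}$ factor.

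I expect the main obstacle to be the concentration step, specifically showing that the bias of each diagonal term equals $Q_{e_\alpha}$ up to $\OO(|R|_\infty)$ uniformly near edges and cusps. This requires relating the average of $G^{\langle\bT\rangle}_{cc}$ over a fibre to the full-graph quantities $Q_i,p_e$. I would try the Woodbury formula first, undoing the tree filling with the error controlled by \eqref{e:Omega-o-plus-filled}.
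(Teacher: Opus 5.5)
Your architecture parallels the paper's. A Schur complement on $\bT$ with exterior self-energy $\Sigma_{\alpha\beta}=\widetilde G^{(\bT)}_{\widetilde a_\alpha\widetilde a_\beta}$ is algebraically the same object as the gluing expansion of Lemma~\ref{lem:compressed-woodbury-gluing}. The vertex-deleted block can be recovered from the filled one by a rank-$\mu$ unfilling whose kernel is a small perturbation of a bounded local kernel, so using $\widetilde G^{(\bT)}$ is not itself the obstruction.

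The main gap is the step that should produce the $4^{-\ell}$ in \eqref{e:Omega-o-minus-root}. The identity is not ``quadratic in the perturbation''. Expanding $G_{ov}^2-G_{ov}-G_{oo}G_{vv}$ around the tree model $P$ leaves the linear term $(2P_{ov}-1)D_{ov}-P_{vv}D_{oo}-P_{oo}D_{vv}$. For a generic $D$ this is as large as $D$ itself, i.e.\ only $\OO(2^{-\ell}(\log N)^3\varepsilon+\log N|R(Q,z)|_\infty)$ by \eqref{e:Omega-o-minus-Q}. That is too weak. Moreover, the bias $\log N|R|_\infty$ must be absent entirely, because \eqref{e:Omega-o-minus-root} is exactly what is later averaged to improve $R$. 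Neither concentration nor property (v) of Proposition~\ref{prop:elliptic-neighborhood-estimates} can supply this; (v) gives only $\OO(1)$ for two vertices of level $\OO(1)$.

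The missing idea is an exact cancellation of the diagonal boundary contributions. In the paper, \eqref{e:three-term-tree-derivative} kills each coefficient $(2\widetilde P_{ov}-1)W_{o\widetilde a_\alpha}W_{v\widetilde a_\alpha}-\widetilde P_{vv}W_{o\widetilde a_\alpha}^2-\widetilde P_{oo}W_{v\widetilde a_\alpha}^2$ separately, together with the mean of $\widetilde D^{\langle\bT\rangle}_{\widetilde a_\alpha\widetilde a_\alpha}$. In your framework the same fact reads as follows. For a tree $\bT$, the three-term identity holds exactly for $(A|_{\bT}-z-\Sigma)^{-1}$ for \emph{every} $\Sigma$ that is diagonal in the $\bT$-index. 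So only entries coupling distinct $l_\alpha\neq l_\beta$ break it. Their contribution is bounded by $\max_\alpha|G_{ol_\alpha}|\max_\beta|G_{vl_\beta}|\sum_{\alpha\neq\beta}|\Sigma_{\alpha\beta}|\lesssim4^{-\ell}(\log N)^3\varepsilon$. Here the $4^{-\ell}$ comes from the stopping rule $|\cM^{\rm PF}|\sim2^{-\ell}$ at the boundary, and the sum is controlled by cell sparsity. You need to state and prove this cancellation.

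Several other steps are asserted but not established.

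(i) $\widetilde G^{(\bT)}_{c_\alpha c_\alpha}$ depends on all of $\mathbf S$, so ``roughly independent'' requires a decoupling. The paper uses the single-switch graphs $\cG^{\langle\bT\rangle}_{[\alpha]}$, which depend on $\vec S_\alpha$ alone (through $I_\alpha$, not $J_\alpha$). It applies Hoeffding to these exactly independent errors. It then returns to the fully switched graph at cost $\mu(\varphi+(\log N)^2\varepsilon^2)^2$ per isolated cell, plus a crude bound on the $\OO(\log N)$ non-isolated indices. Relatedly, the number of Green-function-correlated indices is only $\OO(\log N)$ with probability $1-\OO(N^{-\fd})$, not $\OO(\fd)$. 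The reason is that the per-index bound is only $4^{-2\ell}=(\log N)^{-2\fa}$, not $N^{-1+\fc}$.

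(ii) Your description of the mean is incorrect as stated: $\E_\cG G^{\langle\bT\rangle}_{c_\alpha c_\alpha}\approx Q_{\pi(c_\alpha)}$ is a vertex quantity, not $Q_{e_\alpha}$. What is needed has three parts:
\begin{enumerate}
\item[(a)] to first order and up to a bounded deterministic factor, the diagonal error equals $Q_e^2G^{\langle\bT\rangle}_{b_\alpha b_\alpha}+2Q_eG^{\langle\bT\rangle}_{b_\alpha c_\alpha}+G^{\langle\bT\rangle}_{c_\alpha c_\alpha}-Q_e$, the numerator in \eqref{e:single-switching-scalar-expansion};
\item[(b)] by the Ward identity, the sampled $2\times2$ block averages to $(Q_i,p_e,Q_j)$ up to $(\log N)^2\varphi^2$;
\item[(c)] the exact identity $Q_e^2Q_j+2Q_ep_e+Q_i-Q_e=p_eR_{\bar e}(Q,z)$ holds, which follows from $p_e=-Q_jQ_e=-Q_iQ_{\bar e}$.
\end{enumerate}
This is what makes the bias $\OO(|R|_\infty)$ rather than $\OO(\varepsilon)$.

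(iii) $|\widetilde Q-Q|_\infty$ is not $\OO(\mu/N)$. Each switching changes every $G_{uu}$ by roughly $\sum_x|G_{ux}|^2$. The correct bound comes from Woodbury plus Ward: $|\widetilde Q-Q|_\infty\lesssim4^{2\ell}\max_x\Im G_{xx}/(N\eta)\lesssim(\log N)^2\varphi^2$, as in Lemma~\ref{lem:direct-switching-Q}. This is still negligible, so \eqref{eq.Omega-minus-RQ} survives, but the justification you give is wrong.
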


The proof of the above proposition is in Section \ref{sec:concentration}.

Let us summarize the relations among the good events introduced so far.
By definition and Proposition \ref{p:Omega+},
\[
\Omega^-(z)\subset \Omega(z)\subset\bar\Omega,
\qquad
\Omega_o^+(z)\subset \bar\Omega^+.
\]
The event $\Omega^-(z)$ is the strengthened version of $\Omega(z)$,
with the same type of global estimates but with an additional margin
in the error bounds.

The event $\Omega_o^+(z)$ is the local event associated with the first
resampling step. Compared with $\Omega(z)$, it allows logarithmic
losses in the original Green-function estimates, while introducing
the corresponding control for the filled exterior
$\cG^{\langle\bT\rangle}$. Thus it is adapted to the switching
configuration around the fixed root $o$.

The event $\Omega_o^-(z)$ is the local event produced after the second
resampling step. Its estimates are concentrated at the root $o$ and
contain the additional averaging gain coming from the elliptic
boundary. It also records the local three-term relation that will be
used to improve the Dyson residual.

Accordingly, the two local events are related to $\Omega(z)$ through
the successive high-probability transitions
\[
\Omega(z)
\;\xrightarrow{\text{first local resampling}}\;
\Omega_o^+(z)
\;\xrightarrow{\text{second local resampling}}\;
\Omega_o^-(z).
\]

The key mechanism in the proof of Proposition~\ref{prop:bootstrap} is to obtain $\Omega_o^-(z)$ simultaneously for all roots $o$ and then use the deterministic implication 
\[ \Omega(z)\bigcap_{o\in\bG}\Omega_o^-(z) \subset \Omega^-(z). \] 
The former follows from the two resampling propositions and a union bound over the roots, while the latter converts the resulting rootwise estimates into the desired global improvement. We now carry out this argument.

\begin{proof}[Proof of Proposition \ref{prop:bootstrap}]
For simplicity, we omit \(z\) from the notation throughout the proof.
We first claim that, for every fixed root $o\in\bG$,
\[
\P\bigl(\Omega\setminus\Omega_o^-\bigr)
=
\OO(N^{-\fd}).
\]
Let $\P'$ be the joint law from
Definition~\ref{def:elliptic-enlarged-space} and write
$\widehat A:=T^{-1}(A)$ where $T$ is defined in \eqref{eq:elliptic-T-map}.
Since Proposition \ref{prop:elliptic-reverse} implies $T$ is measure preserving, we have
$
\P\bigl(\Omega\setminus\Omega_o^-\bigr)
=
\P'\bigl(\widehat\Omega\setminus\widehat\Omega_o^-\bigr).
$
By Proposition \ref{prop:elliptic-reverse}, the switching map $\widetilde T$ defined in \eqref{eq:elliptic-Ttilde} is a measure-preserving involution.
Applying Proposition \ref{p:Omega+} and Proposition \ref{p:Omega-} therefore gives
\[
\P'\bigl(
\widehat\Omega\setminus\widetilde T(\widehat\Omega_o^+)
\bigr)
+
\P'\bigl(
\widetilde T(\widehat\Omega_o^+)\setminus\widehat\Omega_o^-
\bigr)
=
\P'\bigl(
\widetilde T(\widehat\Omega)\setminus\widehat\Omega_o^+
\bigr)
+
\P'\bigl(
\widetilde T(\widehat\Omega_o^+)\setminus\widehat\Omega_o^-
\bigr)=
\OO(N^{-\fd}).
\]
Hence
$
\P'\bigl(
\widehat\Omega\setminus\widehat\Omega_o^-
\bigr)
=
\OO(N^{-\fd}),
$
which proves the claim.

Taking a union bound over all roots $o\in\bG$, and recalling that
$|\bG|=\OO(N)$ since the base graph is fixed, we obtain
\begin{equation}
\label{e:bootstrap-union-root}
\P\left(
\Omega
\setminus
\bigcap_{o\in\bG}\Omega_o^-
\right)
=
\OO(N^{-\fd+1}).
\end{equation}

It remains to prove the inclusion
$
\Omega
\bigcap_{o\in\bG}\Omega_o^-
\subset
\Omega^-.
$
Let
$
\cG\in
\Omega
\bigcap_{o\in\bG}\Omega_o^-.
$
We first improve the Dyson residual.
For every directed edge $e=(i,j)\in\vec E_\cH$, recall
\[
Q_i
=
\frac1N
\sum_{\pi(x)=i}G_{xx},
\qquad
p_{ij}
=
\frac1N
\sum_{\pi(x)=i}G_{xx_e}.
\]
For every root $x$ having a radius-$\fR$ tree neighborhood,
Definition~\ref{def:Omega-o-minus} gives
\[
\left|
G_{xx_e}^2
-
G_{xx_e}
-
G_{xx}G_{x_ex_e}
\right|
\lesssim
\frac{(\log N)^3\varepsilon}{4^\ell}.
\]
Averaging this identity over the roots of type $i$, using
$\cG\in\Omega$ to replace the individual Green function entries by
their common local comparison values, and using that at most
$\OO(N^\fc)$ roots fail to have a radius-$\fR$ tree neighborhood and that the Green's function entries are uniformly bounded,
gives
\begin{equation}
\label{e:bootstrap-averaged-quadratic}
\left|
p_{ij}^2-p_{ij}-Q_iQ_j
\right|
\lesssim
\frac{(\log N)^3\varepsilon}{4^\ell}
+\varepsilon^2+
N^{-1+\fc}\lesssim \frac{(\log N)^3\epsilon}{4^\ell}.
\end{equation}

The exact vertex reconstruction identity \eqref{e:new-Q-vertex-reconstruction} and  the relations
$
p_{ij}=-Q_iQ_{ij}=-Q_jQ_{ji}
$
give
\begin{align}
R_{ij}(Q,z)
=
-\frac1{Q_{ij}}
-z
-\sum_{\substack{k:k\sim j\\ k\neq i}}Q_{jk}
=
-\frac1{Q_{ij}}
+\frac1{Q_j}
+Q_{ji}
=
-\frac{p_{ij}^2-p_{ij}-Q_iQ_j}{p_{ij}Q_j}.
\label{e:residual-three-term-relation}
\end{align}
Since $p_{ij}$ and $Q_j$ are uniformly bounded away from zero on
$\mathbb C_+(\mathcal I_\delta)$ with $|z|\le 2d_{\rm max}$ and the above results hold for every directed edge $(i,j)$, it follows from \eqref{e:bootstrap-averaged-quadratic} that
\begin{equation}\label{eq.R-improved}
|R(Q,z)|_\infty\lesssim 4^{-\ell}(\log N)^3\varepsilon\lesssim\varepsilon/(\log N)^3. 
\end{equation}
This also improves the Green-function approximation.
Fix arbitrary $i,j\in\bG$, and apply
Definition~\ref{def:Omega-o-minus} with root $o=i$.
Substituting \eqref{eq.R-improved} into
\eqref{e:Omega-o-minus-Q} yields
\begin{align}
&|R(Q,z)|_\infty
+
\max_{i,j\in\bG}
\left|
G_{ij}(z)
-
G_{ij}\left(
\Ext\bigl(\cB_r(\{i,j\},\cG),Q(\cG,z)\bigr),z
\right)
\right|
\lesssim
\frac{(\log N)^3\varepsilon(z)}{2^\ell}
+
\frac{\varepsilon(z)}{(\log N)^2}
\le \frac{\varepsilon(z)}2
\label{e:bootstrap-local-improvement}
\end{align}
for sufficiently large $N$.
Thus, to obtain $\cG\in\Omega^-(z)$, it remains to prove
\[
|Q(\cG,z)-m(z)|_\infty\le\frac{\varepsilon_*(z)}2.
\]

Next we use Lemma~\ref{lem.stability2} to control
$|Q-m|_\infty$ by the Dyson residual.
In the stable bulk and stable off-support regimes away from the possible
bipartite singularity at $0$, 
Lemma~\ref{lem.stability2} directly implies $|Q-m|_\infty\lesssim|R(Q,z)|_\infty\lesssim\varepsilon/(\log N)^3\le\varepsilon_*/2$ for sufficiently large $N$.

We next consider the possible bipartite instability at $z=0$.
Suppose that
\[
\bH=\bH_+\sqcup\bH_-,
\qquad
\sigma_i=
\begin{cases}
1,&i\in\bH_+,\\
-1,&i\in\bH_-.
\end{cases}
\]
By \eqref{e:new-Q-vertex-reconstruction}, the vertex quantities
associated with the directed-edge vector $Q$ in
Lemma~\ref{lem.stability2} coincide with the empirical quantities
$Q^{\rm ver}$.
Using $p_{\bar e}=p_e$, multiplying
\eqref{e:averaged-row-identity} by $\sigma_i$, and summing over
$i\in\bH$, we obtain
\begin{align}
\sum_{i\in\bH}\sigma_i
+
z\sum_{i\in\bH}\sigma_iQ_i
=
\sum_{e\in\vec E_\cH}\sigma_{t(e)}p_e=
\frac12
\sum_{e\in\vec E_\cH}
\left(
\sigma_{t(e)}p_e
+
\sigma_{h(e)}p_{\bar e}
\right)
=0,
\label{e:empirical-bipartite-balance-Q}
\end{align}
where we used $\sigma_{h(e)}=-\sigma_{t(e)}$.
Hence
$
\sum_{i\in\bH}\sigma_iQ_i
=
-z^{-1}\sum_{i\in\bH}\sigma_i.
$
The deterministic solution $(m,m^{\rm ver})$ satisfies the same identity, and therefore
\begin{equation}
\label{e:bipartite-balance-final}
\sum_{i\in\bH}
\sigma_i
\bigl(
Q_i-m_i
\bigr)
=0.
\end{equation}
Thus the exceptional bipartite mode in
\eqref{eq.bulk-0-stability} produces no additional error. The same conclusion holds as in the regular bulk and off-support regimes.

It remains to consider a spectral parameter
$z=E+\mathrm i\eta$ near a regular edge or cusp $\tau$.
We prove the required probability estimate by a downward
induction along the vertical line through $E$.
The case $|z|=2d_{\rm max}$ follows from the starting estimate,
so suppose that $|z|<2d_{\rm max}$.
Choose $\eta_0>\eta$ such that
$|E+\mathrm i\eta_0|=2d_{\rm max}$, and choose heights
\[
\eta_0>\eta_1>\cdots>\eta_K=\eta,
\qquad
\eta_{k-1}-\eta_k\le N^{-1},
\qquad K=\OO(N).
\]
Write $z_k:=E+\mathrm i\eta_k$.
For each $k$, define the exceptional event
$ B_k
:=
\Omega(z_k)\setminus
\bigcap_{o\in\bG}\Omega_o^-(z_k),
$
and set
$
\mathcal{D}
:=
\bar\Omega\setminus\bigcup_{k=0}^K B_k.
$
By \eqref{e:bootstrap-union-root} and a union bound over
the lattice heights,
\begin{equation}
\label{e:bootstrap-vertical-exception}
\P(\bar\Omega\setminus\mathcal D)
=
\OO(N^{-\fd+2}).
\end{equation}

Fix $\cG\in\mathcal D$.
We prove by induction that, for every $0\le k\le K$,
\begin{equation}
\label{e:bootstrap-vertical-induction}
\cG\in\Omega^-(z_k),
\qquad
|R(Q(\cG,z_k),z_k)|_\infty
\le C_0\frac{\varepsilon(z_k)}{(\log N)^3},
\end{equation}
where $C_0$ is a sufficiently large fixed constant.

At $k=0$, Proposition \ref{prop:bootstrap-start} implies that the large-$|z|$ starting estimate gives
$\cG\in\Omega^-(z_0)\subseteq\Omega(z_0)$.
Since $\cG\notin B_0$, we obtain
$\cG\in\Omega^-(z_0)\bigcap_o\Omega_o^-(z_0)$.
Thus \eqref{eq.R-improved} supplies the residual estimate
in \eqref{e:bootstrap-vertical-induction}, proving the base case.

Suppose now that \eqref{e:bootstrap-vertical-induction}
holds at all heights $j<k$.
We first transfer the weaker estimates to $z_k$.
More generally, let $s$ be a previously treated lattice height,
and let $\eta\le t\le s$ satisfy $s-t\le N^{-1}$.
The uniform diagonal Green-function bounds at $E+\mathrm i s$,
the resolvent identity and the Ward identity give, as in
\cite[Claim~4.13]{Huang2024Spectrum},
\begin{equation}\label{eq.G-downward}
\max_{x,y}
|G_{xy}(E+\mathrm i t)-G_{xy}(E+\mathrm i s)|
\lesssim\frac1{Nt}.
\end{equation}
The same bound holds for the vertex and adjacent-entry averages.
By the exact vertex reconstruction identity \eqref{e:new-Q-vertex-reconstruction}, the vertex
averages at $s$ are uniformly bounded away from zero,
and the preceding estimate preserves this property at $t$.
Using $Q_e=-p_e/Q_{t(e)}$, and the Stieltjes representation
of $m$, we obtain
\begin{equation}
\label{e:bootstrap-vertical-continuity}
|Q(\cG,E+\mathrm i t)-Q(\cG,E+\mathrm i s)|_\infty
+
|m(E+\mathrm i t)-m(E+\mathrm i s)|_\infty
\lesssim\frac1{Nt}.
\end{equation}

For the local comparison Green functions, write
$
P(u)
:=
G\bigl(\Ext(\cB_r(\{x,y\},\cG),Q(\cG,E+\mathrm i u)),E+\mathrm i u\bigr).
$
The resolvent identity and Cauchy--Schwarz therefore imply
\begin{align}\label{eq.P-downward}
|P_{xy}(t)-P_{xy}(s)|
\lesssim
(r+1)\left(
|t-s|
+
|Q(\cG,E+\mathrm i t)-Q(\cG,E+\mathrm i s)|_\infty
\right)
\lesssim\frac{\log N}{Nt}.
\end{align}
Since the current vertical line is near a regular edge or cusp,
the corresponding correction term in
\eqref{e:def-epsilon-zero} gives, 
$
\varepsilon(E+\mathrm i t)
\gtrsim 4^{6\ell}/(Nt)\sim(\log N)^{6\fa}/(Nt).
$
It follows directly that
$
\log N/(Nt)
=
o\left(\varepsilon(E+\mathrm i t)/(\log N)^3\right).
$
Taking $s=\eta_{k-1}$ and $t=\eta_k$, the preceding estimates \eqref{eq.G-downward}-\eqref{eq.P-downward} and the margins in $\Omega^-(z_{k-1})$
give
$
\cG\in\Omega(z_k).
$
Since $\cG\in\mathcal D$, it follows that
\[
\cG\in\Omega(z_k)\bigcap_{o\in\bG}\Omega_o^-(z_k).
\]
Thus \eqref{eq.R-improved} and
\eqref{e:bootstrap-local-improvement} hold at $z_k$.
In particular, choosing $C_0$ sufficiently large gives
the residual estimate in \eqref{e:bootstrap-vertical-induction}.

If $z_k$ lies in a stable regime, the preceding stability
argument already gives $\cG\in\Omega^-(z_k)$.
Otherwise, consider the continuous family
\[
q_t:=Q(\cG,E+\mathrm i t),
\qquad \eta_k\le t\le\eta_0.
\]
For each $t$ in this interval, choose a previously treated
lattice height $t'=\eta_j$, $j<k$, such that
$0\le t'-t\le N^{-1}$.
By \eqref{e:bootstrap-vertical-continuity} and the induction
hypothesis,
\[
|q_t-m(E+\mathrm i t)|_\infty
\le
\frac{\varepsilon_*(E+\mathrm i t')}{2}
+\frac{C}{Nt}
=o(1)\le c_0
\]
uniformly along the segment.
In particular, the entries of $q_t$ remain uniformly
bounded away from zero.
The definition of the Dyson residual and
\eqref{e:bootstrap-vertical-continuity} therefore give
$
|R(q_t,E+\mathrm i t)-R(q_{t'},E+\mathrm i t')|_\infty
\lesssim(Nt)^{-1}.
$
Using the residual estimate retained at all previous lattice
heights and the monotonicity of $\varepsilon$, we obtain
\begin{align*}
\sup_{\eta_k\le t\le\eta_0}
|R(q_t,E+\mathrm i t)|_\infty
\lesssim
\frac{\varepsilon(z_k)}{(\log N)^3}
+\frac1{N\eta_k}
\lesssim
\frac{\varepsilon(z_k)}{(\log N)^3}.
\end{align*}
Thus the continuation assumption in
Lemma~\ref{lem.stability2} holds with residual bound
$C\varepsilon(z_k)/(\looparrowright^3$.
Applying the corresponding stability estimate, and noting
that this improves the stability error by at least a factor
$(\log N)^{-1}$, yields
\[
|Q(\cG,z_k)-m(z_k)|_\infty
\lesssim
\frac{\varepsilon_*(z_k)}{\log N}
\le\frac{\varepsilon_*(z_k)}2
\]
for sufficiently large $N$.
Together with \eqref{e:bootstrap-local-improvement}, this
proves $\cG\in\Omega^-(z_k)$ and completes the induction.

Since $z_K=z$, we have shown that
$\mathcal D\subseteq\Omega^-(z)$.
Therefore,
\[
\P\bigl(\Omega(z)\setminus\Omega^-(z)\bigr)
\le
\P\bigl(\bar\Omega\setminus\mathcal D\bigr)
=
\OO(N^{-\fd+2}),
\]
by \eqref{e:bootstrap-vertical-exception}.
Since $\fd>0$ is arbitrary, replacing $\fd$ by $\fd+1$ yields \eqref{eq.bootstrap} and completes the proof.
\end{proof}

\section{Proof of Proposition \ref{p:Omega+}: Stability under local resampling}\label{sec:stability}
The purpose of this section is to prove
Proposition~\ref{p:Omega+}, namely the stability of the local law
under the first local resampling around a fixed root $o$.
The proof is organized around three operations: we first remove the
elliptic neighborhood $\bT$ and replace it by its filled exterior,
then perform the admissible switchings on the exterior graph, and
finally glue $\bT$ back. The main task is to show that the local
Green-function approximation remains stable throughout these
operations.

We first recall and summarize the graph notation used in this section.
The graph $\cG$ denotes the original lift, while
$
\cG^{\langle\bT\rangle}
:=
\TE\bigl(\cG^{(\bT)}\bigr)
$
is the filled exterior obtained by removing $\bT$ and applying the
tree extension. Given switching data $\mathbf S$, we denote by
$
\widetilde{\cG}
:=
T_{\mathbf S}(\cG)
$
the switched graph, and by
$
\widetilde{\cG}^{\langle\bT\rangle}
$
the graph obtained by performing the corresponding switchings on the
filled exterior $\cG^{\langle\bT\rangle}$.
Their Green functions are denoted respectively by
\[
G,
\qquad
G^{\langle\bT\rangle},
\qquad
\widetilde G,
\qquad
\widetilde G^{\langle\bT\rangle}.
\]
Thus, throughout this section, a tilde indicates the switched graph,
while the superscript $\langle\bT\rangle$ indicates the filled
exterior obtained after removing $\bT$.

Thus the graph transformations and the corresponding Green functions
may be summarized schematically as
\[
\begin{array}{ccccccc}
\cG
&
\xrightarrow{\ \text{filled removal of }\bT\ }
&
\cG^{\langle\bT\rangle}
&
\xrightarrow{\ \text{local switching}\ }
&
\widetilde{\cG}^{\langle\bT\rangle}
&
\xrightarrow{\ \text{glue back }\bT\ }
&
\widetilde{\cG}=T_{\mathbf S}(\cG)
\\[2mm]
G
&
\longrightarrow
&
G^{\langle\bT\rangle}
&
\longrightarrow
&
\widetilde G^{\langle\bT\rangle}
&
\longrightarrow
&
\widetilde G .
\end{array}
\]

\subsection{Woodbury Formula under Switching}
\label{sec:woodbury-switching}

In this subsection, we introduce a Woodbury expansion adapted to the switching
arguments used throughout the paper. The basic observation is that a switching
changes the adjacency matrix only through a finite-rank perturbation supported
on the switching vertices. By comparing the true Green function with a suitable
local reference Green function, the effect of switching can then be expressed
in terms of the pre-existing approximation error.

Throughout this subsection, by a switching we mean the following pair switching.
For vertices \(u,v\), write
\begin{equation}
\label{e:def-Delta}
\Delta_{uv}
:=
e_ue_v^{\mathsf T}
+
e_ve_u^{\mathsf T}.
\end{equation}
For each \(\alpha\in\qq{\mu}\), consider two pairs
$
(x_\alpha,y_\alpha)
$
and
$
(x_\alpha^\star,y_\alpha^\star),
$
and replace them by
$
(x_\alpha,y_\alpha^\star)
$
and
$
(x_\alpha^\star,y_\alpha).
$
We encode the corresponding change by
\begin{equation}
\label{e:def-xi-pair}
\xi_\alpha
:=
\Delta_{x_\alpha y_\alpha}
+
\Delta_{x_\alpha^\star y_\alpha^\star}
-
\Delta_{x_\alpha y_\alpha^\star}
-
\Delta_{x_\alpha^\star y_\alpha},\qquad 
\Xi
:=
\sum_{\alpha=1}^{\mu}\xi_\alpha.
\end{equation}
If \(H\) and \(\widetilde H\) denote the adjacency matrices before and after the switchings, respectively,
then we directly have
$
\widetilde H-H
=
-\Xi.
$
Introduce
\begin{equation}
\label{e:def-UV-pair}
U
:=
\bigl(
e_{x_1}-e_{x_1^\star},
\ldots,
e_{x_\mu}-e_{x_\mu^\star}
\bigr),
\qquad
V
:=
\bigl(
e_{y_1}-e_{y_1^\star},
\ldots,
e_{y_\mu}-e_{y_\mu^\star}
\bigr).
\end{equation}
Then a direct expansion gives
$
\Xi
=
UV^{\mathsf T}+VU^{\mathsf T}
=
\begin{pmatrix}V&U\end{pmatrix}
\begin{pmatrix}U&V\end{pmatrix}^{\mathsf T},
$

\subsubsection{Woodbury formula}

We recall the standard Woodbury identity. Let \(A\) be an invertible matrix
and let \(X,Y\) be rectangular matrices such that
$
\mathbb I+Y^{\mathsf T}A^{-1}X
$
is invertible. The Woodbury formula gives
\begin{equation}
\label{e:woodbury}
(A+XY^{\mathsf T})^{-1}
=
A^{-1}
-
A^{-1}X
\bigl(
\mathbb I+Y^{\mathsf T}A^{-1}X
\bigr)^{-1}
Y^{\mathsf T}A^{-1}.
\end{equation}

Let
$
G:=(H-z)^{-1}
$
and
$
\widetilde G=(\widetilde H-z)^{-1}.
$
Applying \eqref{e:woodbury} to
$
\widetilde{H}-H
=
-\begin{pmatrix}V&U\end{pmatrix}
\begin{pmatrix}U&V\end{pmatrix}^{\mathsf T}
$,
we obtain
\begin{equation}
\label{e:woodbury-G}
\widetilde G-G
=
G
F_G
G,\qquad \mbox{where~}F_G:=\begin{pmatrix}V&U\end{pmatrix}
\left(
\mathbb I
-
\begin{pmatrix}U&V\end{pmatrix}^{\mathsf T}
G
\begin{pmatrix}V&U\end{pmatrix}
\right)^{-1}
\begin{pmatrix}U&V\end{pmatrix}^{\mathsf T}.
\end{equation}

The matrix \(F_G\) also admits another representation. Indeed, we have
$
F_G
=
G^{-1}\widetilde G G^{-1}
-
G^{-1}.
$
Since
$
\Xi
=
G^{-1}-\widetilde G^{-1},
$
we obtain
\begin{align}
F_G
=
\bigl(
G^{-1}-\widetilde G^{-1}
\bigr)
\widetilde G G^{-1}
=
\Xi\widetilde G
\bigl(
\widetilde G^{-1}+\Xi
\bigr)
=
\Xi+\Xi\widetilde G\Xi.
\label{e:F-Xi-general}
\end{align}
This form will be particularly convenient for exploiting the geometry of the
switching vertices.

\begin{lemma}[Boundary factorization]
\label{lem:F-boundary-factorization}
The switching kernel \(F_G\) admits the factorization
\begin{equation}
\label{e:F-boundary-factorization}
F_G
=
\begin{pmatrix}
V&U
\end{pmatrix}
F_\partial
\begin{pmatrix}
V&U
\end{pmatrix}^{\mathsf T},\qquad 
\mbox{where~}F_\partial:
=
\begin{pmatrix}
U^{\mathsf T}\widetilde G U
&
I+U^{\mathsf T}\widetilde G V
\\[1mm]
I+V^{\mathsf T}\widetilde G U
&
V^{\mathsf T}\widetilde G V
\end{pmatrix}.
\end{equation}
Consequently, we have
\begin{equation}\label{eq.woodbury1}
  \widetilde G-G=G\begin{pmatrix}
V&U
\end{pmatrix}
F_\partial
\begin{pmatrix}
V&U
\end{pmatrix}^{\mathsf T}G.
\end{equation}
\end{lemma}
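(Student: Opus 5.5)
The factorization is pure algebra: start from the representation \eqref{e:F-Xi-general}, $F_G=\Xi+\Xi\widetilde G\Xi$, and insert the low-rank form of $\Xi$. Write
\[
W:=\begin{pmatrix}V&U\end{pmatrix},\qquad W':=\begin{pmatrix}U&V\end{pmatrix}.
\]
From \eqref{e:def-xi-pair}--\eqref{e:def-UV-pair} we have $\Xi=UV^{\mathsf T}+VU^{\mathsf T}=W{W'}^{\mathsf T}$.

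The key observation is that $W'$ is obtained from $W$ by swapping its two column blocks. Hence $W'=WP$, where
\[
P=\begin{pmatrix}0&I\\ I&0\end{pmatrix}
\]
is the symmetric block permutation of size $2\mu$. It follows that $\Xi=WPW^{\mathsf T}$, and also $\Xi=W'PW'^{\mathsf T}$.

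Substituting into $F_G=\Xi+\Xi\widetilde G\Xi$ gives
\[
F_G=WPW^{\mathsf T}+WPW^{\mathsf T}\widetilde G\,WPW^{\mathsf T}=W\bigl(P+P\,W^{\mathsf T}\widetilde G W\,P\bigr)W^{\mathsf T}.
\]
Next compute the inner block matrix. We have
\[
W^{\mathsf T}\widetilde G W=\begin{pmatrix}V^{\mathsf T}\widetilde G V& V^{\mathsf T}\widetilde G U\\ U^{\mathsf T}\widetilde G V& U^{\mathsf T}\widetilde G U\end{pmatrix}.
\]
Conjugating by $P$ swaps both the block rows and the block columns, so
\[
P\,W^{\mathsf T}\widetilde G W\,P=\begin{pmatrix}U^{\mathsf T}\widetilde G U& U^{\mathsf T}\widetilde G V\\ V^{\mathsf T}\widetilde G U& V^{\mathsf T}\widetilde G V\end{pmatrix}.
\]
Adding $P$ places the identities in the off-diagonal blocks, which is exactly $F_\partial$ in \eqref{e:F-boundary-factorization}. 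Finally, \eqref{eq.woodbury1} follows by inserting this factorization into $\widetilde G-G=GF_GG$ from \eqref{e:woodbury-G}.

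The only step that needs care is justifying \eqref{e:F-Xi-general} and \eqref{e:woodbury-G}. Both matrices $H-z$ and $\widetilde H-z$ are invertible for $z\in\bC_+$, since $H$ and $\widetilde H$ are real symmetric. So $F_G:=G^{-1}\widetilde G G^{-1}-G^{-1}$ is well defined without any appeal to the invertibility of the Woodbury middle factor. With this definition, $\widetilde G-G=GF_GG$ holds identically.

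The identity $F_G=\Xi+\Xi\widetilde G\Xi$ then uses only $\Xi=G^{-1}-\widetilde G^{-1}$, which holds because $\widetilde H-H=-\Xi$. Expanding $(G^{-1}-\widetilde G^{-1})\widetilde G G^{-1}$ and writing $G^{-1}=\widetilde G^{-1}+\Xi$ gives the claim. The remaining point is to check the sign conventions: that $\Xi=W{W'}^{\mathsf T}$ matches the definition of $\xi_\alpha$ via the expansion $(e_x-e_{x^\star})(e_y-e_{y^\star})^{\mathsf T}+(\text{transpose})$.
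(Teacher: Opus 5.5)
Your argument is correct and is essentially the paper's proof: both reduce to $F_G=\Xi+\Xi\widetilde G\Xi$ with $\Xi=UV^{\mathsf T}+VU^{\mathsf T}$. The paper verifies the factorization by expanding $\begin{pmatrix}V&U\end{pmatrix}F_\partial\begin{pmatrix}V&U\end{pmatrix}^{\mathsf T}$ term by term, while your block swap $P$ carries out the same computation in the factoring direction; defining $F_G:=G^{-1}\widetilde G G^{-1}-G^{-1}$ to bypass invertibility of the Woodbury middle factor is harmless tidying, since that factor is invertible anyway by the matrix determinant lemma, as $H-z$ and $\widetilde H-z$ both are.
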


\begin{proof}
Expanding the right-hand side of
\eqref{e:F-boundary-factorization}, we obtain
\begin{align*}
&
\begin{pmatrix}
V&U
\end{pmatrix}
\begin{pmatrix}
U^{\mathsf T}\widetilde G U
&
I+U^{\mathsf T}\widetilde G V
\\
I+V^{\mathsf T}\widetilde G U
&
V^{\mathsf T}\widetilde G V
\end{pmatrix}
\begin{pmatrix}
V^{\mathsf T}\\
U^{\mathsf T}
\end{pmatrix}
\\
&
=
V(U^{\mathsf T}\widetilde G U)V^{\mathsf T}
+
V(I+U^{\mathsf T}\widetilde G V)U^{\mathsf T}
+
U(I+V^{\mathsf T}\widetilde G U)V^{\mathsf T}
+
U(V^{\mathsf T}\widetilde G V)U^{\mathsf T}
\\
&
=
UV^{\mathsf T}
+
VU^{\mathsf T}
+
\bigl(UV^{\mathsf T}+VU^{\mathsf T}\bigr)
\widetilde G
\bigl(UV^{\mathsf T}+VU^{\mathsf T}\bigr)
=
\Xi+\Xi\widetilde G\Xi
=
F_G.
\end{align*}
Substituting the above equality into \eqref{e:woodbury-G} yields \eqref{eq.woodbury1}.
\end{proof}

The factorization \eqref{e:F-boundary-factorization} is purely algebraic.
In particular, once the perturbation has the pair-switching form
\eqref{e:def-xi-pair}, the Woodbury kernel always reduces to the same
\(2\times2\) block structure.

\subsubsection{Expansion of the Green function}
In some cases, rather than using \eqref{eq.woodbury1} directly, we
compare \(G\) with a localized reference Green function.  More
precisely, let \(\mathcal K\subset\mathcal G\) be a subgraph containing
all vertices involved in the switching under consideration, and let
\(Q\) be the boundary weight used in the corresponding extension.  We
define the localized Green function on \(\mathcal K\) by
\[
G\bigl(\Ext(\mathcal K,Q),z\bigr).
\]
Since \(\bK\) is in general a proper subset of \(\bG\),
the matrix \(G\bigl(\Ext(\mathcal K,Q),z\bigr)\) and the original Green matrix \(G\) naturally
act on spaces of different dimensions.  For the purpose of the
Woodbury comparison below, we identify \(G\bigl(\Ext(\mathcal K,Q),z\bigr)\) with its
zero extension to \(\bG\times\bG\); namely, throughout this section we
write
\[
P_{xy}
:=
\begin{cases}
G_{xy}\bigl(\Ext(\mathcal K,Q),z\bigr),
& x,y\in\bK,\\
0,
& \text{otherwise}.
\end{cases}
\]
Thus \(P\) is regarded as a matrix on the same ambient vertex space
\(\bG\).  This convention is only an embedding of the
localized Green matrix into the ambient matrix space.

Let \(\widetilde{\mathcal K}\) be the graph obtained from
\(\mathcal K\) by performing the same pair switchings as in
\(\mathcal G\), and define the localized Green function after switching by
$
G\bigl(\Ext(\widetilde{\mathcal K},Q),z\bigr).
$
We again identify \(G\bigl(\Ext(\widetilde{\mathcal K},Q),z\bigr)\) with its zero extension to
\(\bG\times\bG\), and denote it by $\widetilde P$.  Since the switching vectors \(U\) and \(V\) are
supported entirely in \(\bK\), this zero-extension convention does not
affect any of the Woodbury products appearing below.

Applying Lemma \ref{lem:F-boundary-factorization} on the localized
vertex set \(\bK\), and then extending the resulting identity by zero
to \(\bG\times\bG\), gives
\begin{equation}
\label{e:woodbury-L}
\widetilde P-P
=
P F_P P,
\qquad\mbox{where~}
F_P
:=
\begin{pmatrix}V&U\end{pmatrix}
\begin{pmatrix}
U^{\mathsf T}\widetilde P U
&
I+U^{\mathsf T}\widetilde P V
\\[1mm]
I+V^{\mathsf T}\widetilde P U
&
V^{\mathsf T}\widetilde P V
\end{pmatrix}
\begin{pmatrix}V&U\end{pmatrix}^{\mathsf T}.
\end{equation}
Then the graph geometry enters through the four compressed Green function matrices
\begin{equation}
\label{e:four-F-blocks}
U^{\mathsf T}\widetilde P U,
\qquad
U^{\mathsf T}\widetilde P V,
\qquad
V^{\mathsf T}\widetilde P U,
\qquad
V^{\mathsf T}\widetilde P V.
\end{equation}
For the different switching configurations considered in the rest of the paper, the geometry of
the underlying graph often
allows us to simplify the boundary kernel \(F_\partial\) substantially. In
particular, some of the above blocks may vanish identically, become diagonal
or block diagonal, or admit sharper estimates due to separation and local
tree structure. These further simplifications depend on the specific
switching configuration and will therefore be carried out separately in the
corresponding applications.

Set
$
D
:=
G-P.
$
The quantity \(D\) measures the error of the local approximation before
switching. The following lemma gives an expansion of the difference $\widetilde G-G$ in terms of the localized kernel matrix $F_P$ and the local approximation error $D$.

\begin{lemma}[Woodbury expansion, Lemma 10.2 in \cite{huang2026lecture}]
\label{lem:woodbury-G-expansion}
Assume that the corresponding Neumann series converges. Then
\begin{equation}
\label{e:woodbury-G-expansion}
\widetilde G-G
=
\sum_{k=0}^{\infty}
GF_P(DF_P)^kG.
\end{equation}
\end{lemma}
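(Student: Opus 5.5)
The plan is to begin from the exact Woodbury identity \eqref{e:woodbury-G} in the form \eqref{e:F-Xi-general}: with $\Xi=G^{-1}-\widetilde G^{-1}$ we have $F_G=\Xi+\Xi\widetilde G\Xi$ and $\widetilde G-G=GF_GG$. The task is then to rewrite $F_G$, which involves the true Green functions, in terms of the localized kernel $F_P$ and the approximation error $D=G-P$. The proof is purely algebraic.

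The key observation is that the localized kernel satisfies the same identity. Since $U,V$ are supported in $\bK$, applying \eqref{e:F-Xi-general} on $\bK$ gives $F_P=\Xi+\Xi\widetilde P\Xi$. Moreover, on the subspace spanned by the switching vectors, $\mathbb I-\Xi P$ is inverted by $\mathbb I+\Xi\widetilde P$, so that
\[
F_P=\Xi(\mathbb I-P\Xi)^{-1}=(\mathbb I-\Xi P)^{-1}\Xi .
\]
Here $P$ is read on $\bK$ and extended by zero, which is harmless because every product is sandwiched between factors of $\Xi$, and these are supported in $\bK$. The same algebra applied to the ambient resolvent gives
\[
F_G=\Xi(\mathbb I-G\Xi)^{-1}=\Xi\bigl(\mathbb I-P\Xi-D\Xi\bigr)^{-1}.
\]

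Next, factor
\[
\mathbb I-P\Xi-D\Xi=(\mathbb I-P\Xi)\bigl(\mathbb I-(\mathbb I-P\Xi)^{-1}D\Xi\bigr).
\]
Inverting and using $(\mathbb I-P\Xi)^{-1}D\Xi$-type commutations, this yields
\[
F_G=\Xi(\mathbb I-P\Xi)^{-1}\bigl(\mathbb I-D\Xi(\mathbb I-P\Xi)^{-1}\bigr)^{-1}=F_P(\mathbb I-DF_P)^{-1}.
\]
Expanding the last inverse as the assumed convergent Neumann series gives $F_G=\sum_{k\ge0}F_P(DF_P)^k$. Inserting this into $\widetilde G-G=GF_GG$ yields \eqref{e:woodbury-G-expansion}.

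The main obstacle is justifying these invertibility manipulations in finite rank. $\Xi$ is singular as a matrix on $\bG$, so the inverses must be taken in the compressed $2\mu$-dimensional form, using $(\mathbb I-Y^{\mathsf T}PX)^{-1}$ with $\Xi=XY^{\mathsf T}$ and the push-through identity $X(\mathbb I-Y^{\mathsf T}PX)^{-1}Y^{\mathsf T}=XY^{\mathsf T}(\mathbb I-PXY^{\mathsf T})^{-1}$. I would carry out every step in this compressed form. The hypothesis that the Neumann series converges is exactly what makes $\mathbb I-DF_P$ invertible. I would also check separately that the zero-extension convention for $P$ does not alter any product: each occurrence of $P$ is multiplied on both sides by $U$, $V$, or their transposes, all supported in $\bK$.
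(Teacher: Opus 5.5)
Your argument is correct. It is the standard derivation behind the cited Lemma~10.2; the paper itself gives no proof. From $F_G=\Xi(\mathbb I-G\Xi)^{-1}$, $F_P=\Xi(\mathbb I-P\Xi)^{-1}$ and $G=P+D$ one gets $\mathbb I-G\Xi=(\mathbb I-DF_P)(\mathbb I-P\Xi)$, hence $F_G=F_P(\mathbb I-DF_P)^{-1}$, and substituting into $\widetilde G-G=GF_GG$ gives the claim.

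The compressed $2\mu$-dimensional bookkeeping is unnecessary, because no inverse of $\Xi$ ever appears. The matrix $\mathbb I-G\Xi=G\widetilde G^{-1}$ is invertible outright. So is $\mathbb I-P\Xi$, which equals $P\widetilde P^{-1}$ on $\bK$ and the identity elsewhere. Therefore $\mathbb I-DF_P=(\mathbb I-G\Xi)(\mathbb I-P\Xi)^{-1}$ is automatically invertible, and the convergence hypothesis is used only to expand its inverse as a series.
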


Using the above lemma, we can obtain an expansion relating the Green function errors with respect to their local approximations before and after switching.

\begin{lemma}[Woodbury error expansion]
\label{lem:woodbury-error-expansion}
Assume that the Neumann series converges. Then
\begin{equation}
\label{e:woodbury-error-expansion}
\widetilde G-\widetilde P
=
(I+PF_P)
\left(
\sum_{k=0}^{\infty}
(DF_P)^kD
\right)
(I+F_PP).
\end{equation}
\end{lemma}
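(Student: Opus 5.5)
The identity \eqref{e:woodbury-error-expansion} is purely algebraic. I would derive it directly from Lemma~\ref{lem:woodbury-G-expansion} together with the localized Woodbury identity \eqref{e:woodbury-L}, writing every quantity in terms of $P$, $D=G-P$ and $F_P$. The idea is to express $\widetilde G$ and $\widetilde P$ as series sharing a common structure, so that their difference collapses into the sandwiched form $(I+PF_P)(\cdots)(I+F_PP)$.

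First, by \eqref{e:woodbury-L}, $\widetilde P=P+PF_PP$. By Lemma~\ref{lem:woodbury-G-expansion}, $\widetilde G=G+\sum_{k\ge0}GF_P(DF_P)^kG$. Set $S:=\sum_{k\ge0}(DF_P)^k$, which is assumed convergent. Since $(DF_P)^k D = D(F_PD)^k$, we also have $SD=D\sum_k(F_PD)^k$. Then
\[
\widetilde G-\widetilde P=(G-P)+GF_PSG-PF_PP .
\]
Next, substitute $G=P+D$ in the middle term and expand it into four pieces:
\[
GF_PSG=PF_PSP+PF_PSD+DF_PSP+DF_PSD .
\]
To handle the leftover terms involving $P$ on both sides, I would use the resummation $S=I+DF_PS=I+SDF_P$. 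This gives $PF_PSP-PF_PP=PF_P(S-I)P=PF_PSDF_PP$, so this combination contributes $PF_P(SD)F_PP$.

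Now collect all the terms:
\[
D+DF_PSD+PF_PSD+DF_PSP+PF_PSDF_PP .
\]
I would identify these pieces as follows. The sum $D+DF_PSD=(I+DF_PS)D=SD$. The term $DF_PSP$ equals $(S-I)P$, and it can be rewritten as $SDF_PP$ by the other form of the resummation. The term $PF_PSD$ is already in the required form. Together these assemble to
\[
SD+PF_PSD+SDF_PP+PF_PSDF_PP=(I+PF_P)\,SD\,(I+F_PP),
\]
and since $SD=\sum_k(DF_P)^kD$, this is exactly \eqref{e:woodbury-error-expansion}.

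The only delicate step is this bookkeeping: one has to apply $S-I=DF_PS=SDF_P$ on the correct side in each term. No analytic estimates are needed beyond the assumed convergence of the Neumann series, which justifies the rearrangements. As a sanity check, I would verify the identity at first order in $D$, where both sides should reduce to $(I+PF_P)D(I+F_PP)$.
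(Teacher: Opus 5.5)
Your proof is correct and follows the paper's argument. Both start from Lemma~\ref{lem:woodbury-G-expansion} and $\widetilde P-P=PF_PP$, substitute $G=P+D$, and regroup. Your resummation identities $S-I=DF_PS=SDF_P$ simply make explicit the paper's step of ``collecting the terms according to the number of occurrences of $D$.''
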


\begin{proof}
  Lemma \ref{lem:woodbury-G-expansion} gives $
\widetilde G-G
=
\sum_{k=0}^{\infty}
GF_P(DF_P)^kG.
$
Using \(G=P+D\) and the above equality, we obtain
\begin{align*}
\widetilde G-\widetilde P
=
G-P
+
(\widetilde G-G)
-
(\widetilde P-P)
=
D
+
\sum_{k\geq0}
(P+D)F_P(DF_P)^k(P+D)
-
PF_PP.
\end{align*}
Collecting the terms according to the number of occurrences of \(D\), we
obtain \eqref{e:woodbury-error-expansion}.
\end{proof}

In particular, the leading contribution is
$
(I+PF_P)D(I+F_PP)$
while every term with \(k\geq1\) contains at least one additional factor
\(DF_P\).

\subsection{Filled removal and the enlarged comparison graph}\label{subsec:filled-removal-enlarged-graph}
We first verify that the local Green-function approximation remains
stable for the graph $\cG^{\langle\bT\rangle}$. This is
the first step in passing from the global event $\Omega(z)$ to the
local event $\Omega_o^+(z)$.
\begin{proposition}[Stability under the filled removal of \(\bT\)]
\label{prop:woodbury-stability-filled-T}
Fix $z\in\bC_+(\mathcal I_\delta)$ with $\eta\ge(\log N)^\fb/N$ and let $\cG\in\Omega(z)$. Then for every $i,j\in\bG\backslash\bT$,
\[
\left|
G^{\langle \bT\rangle}_{ij}
-
G_{ij}\left(
\operatorname{Ext}
\bigl(
\mathcal B_r(\{i,j\},\mathcal G^{\langle \bT\rangle}), Q(z)
\bigr),z
\right)
\right|
\lesssim
(\log N)\varepsilon.
\]
In particular, we have $\Omega(z)\subset\Omega_o^+$, where $\Omega_o^+$ is defined in Definition \ref{def:Omega-o-plus}.
\end{proposition}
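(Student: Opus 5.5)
We compare $\cG^{\langle\bT\rangle}$ with $\cG$ through a finite-rank perturbation supported on the edge boundary of $\bT$, and then control the comparison with the Woodbury machinery of Section~\ref{sec:woodbury-switching}. The idea is to pass through an enlarged comparison graph in which $\bT$ is still present but disconnected from the exterior.

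\emph{Reduction to an edge-deletion perturbation.} Let $\cG'$ be the graph on $\bG$ obtained by deleting the $\mu$ boundary edges $\{l_\alpha,a_\alpha\}$ and attaching, at each $a_\alpha$, a fresh universal-cover branch $\cY_{\bar e_\alpha}$. Restricted to $\bG\setminus\bT$, this graph coincides with $\cG^{\langle\bT\rangle}$: the component containing $\bG\setminus\bT$ is exactly $\TE(\cG^{(\bT)})$ near the cut. Equivalently, by Proposition~\ref{greentree}, each branch may be replaced by the diagonal weight $-m_{\bar e_\alpha}$ at $a_\alpha$. Hence $G^{\langle\bT\rangle}$ is obtained from $G$ by a rank-$O(\mu)$ perturbation supported on $\{l_\alpha,a_\alpha\}_\alpha$, and the Woodbury formula \eqref{e:woodbury} applies directly. (It is of edge-removal plus diagonal form rather than pair-switching form.)

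\emph{Reference Green function and error expansion.} For fixed $i,j\notin\bT$, we take as local reference
\[
\cK=\cB_r(\{i,j\},\cG)\cup\bigcup_{\alpha\in\mathsf A}\cB_r(a_\alpha,\cG),
\]
where $\mathsf A$ is the set of boundary indices whose $a_\alpha$ lies within distance about $2r$ of $\{i,j\}$. By Lemma~\ref{GTstructure}, $|\mathsf A|\lesssim 1$, and since $r\le\fR/50$ this set lies in a radius-$\fR/8$ neighborhood containing at most one cycle. Boundary indices outside $\mathsf A$ are handled by $\beta$-decay: each contributes $O(\beta^{r})$, and summing over $\mu\lesssim(\log N)^\fa$ of them costs only $4^\ell\beta^r\ll\varepsilon$.

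Set $P=G(\Ext(\cK,Q))$ and $D=G-P$. On $\Omega(z)$ each entry of $D$ is $O(\varepsilon)$ after localization, via Lemma~\ref{p:localization}. We then apply the analogue of Lemma~\ref{lem:woodbury-error-expansion} to write
\[
G^{\langle\bT\rangle}-\widetilde P=(I+PF_P)\Bigl(\sum_k (DF_P)^kD\Bigr)(I+F_PP),
\]
where $\widetilde P$ is the filled local extension. By construction $\widetilde P$ agrees with $G(\Ext(\cB_r(\{i,j\},\cG^{\langle\bT\rangle}),Q))$ up to $O(|Q-m|_\infty)$ corrections, using Remark~\ref{r:subgraph}.

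\emph{Main obstacle: bounding the kernel sums.} The difficulty is controlling $F_P$ and the sums over boundary indices so that only a $\log N$ loss appears, rather than a power of $4^\ell$. The blocks of $F_P$ are compressed tree-extension Green functions. They are bounded entrywise, and off the diagonal they decay like $\beta^{\dist}$ by Proposition~\ref{p:tree-extension-bound}, because the $a_\alpha$ are mutually separated except for $O(1)$ indices (Lemma~\ref{GTstructure}). Thus $F_P$ is block diagonal up to negligible errors.

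For the leading term $(I+PF_P)D(I+F_PP)$ there are two cases. When $i,j$ are far from $\bT$, the $P$-factors connecting $i$ to $a_\alpha$ decay, which yields $\varepsilon$. When $i$ or $j$ is near $\bT$, only $|\mathsf A|=O(1)$ boundary points contribute.

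The sum over $(DF_P)^k$ contains $\sum_\alpha D_{x a_\alpha}(F_P)_{\alpha\alpha}D_{a_\alpha y}$, and entrywise bounds lose a factor $\mu$ here. I would instead decompose $D$ into its localized part, which is $O(\varepsilon)$ with geometric decay along paths, and a small remainder. The path decay then lets the sum over $\alpha$ converge geometrically, as in Proposition~\ref{prop:elliptic-neighborhood-estimates}(ii). This gives $\|DF_P\|\lesssim\varepsilon\log N\ll 1$, so the Neumann series converges and the total error is $O((\log N)\varepsilon)$, the $\log N$ coming from the $(1+\dist)$ factor in Lemma~\ref{p:subgraph}.

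The remaining conditions of $\Omega_o^+$ follow directly from $\Omega(z)$ together with $\bar\Omega\subset\bar\Omega^+$, which proves the inclusion $\Omega(z)\subset\Omega_o^+$.
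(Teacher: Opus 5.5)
Your reduction step has a genuine gap. Your graph $\cG'$ deletes the boundary edges $\{l_\alpha,a_\alpha\}$ but fills only the exterior side, so $\bT$ is left as a bare finite graph. Write $A_{\bT}$ for the adjacency matrix of the subgraph induced on $\bT$. The perturbed resolvent (local or global) restricted to $\bT$ is then $(A_{\bT}-z)^{-1}$, and the Woodbury kernel $F_P=\Xi+\Xi\widetilde P\Xi$ contains its $\bL\times\bL$ block. Equivalently, after a Schur complement over $\bT$, the exterior kernel is the difference between the self-energy $\bigl((A_{\bT}-z)^{-1}_{l_\alpha l_\beta}\bigr)_{\alpha,\beta}$ that $\bT$ actually exerts on $\bA$ and $\operatorname{diag}(m_{a_\alpha l_\alpha})$. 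This is the resolvent of a graph with $(\log N)^{\fa}$ vertices at $\eta\ge(\log N)^{\fb}/N$. It is bounded only by $1/\eta$, reaches that size when $\operatorname{Re} z$ is near an eigenvalue of $A_{\bT}$, and nothing in $\Omega(z)$ controls it. So your claim that the blocks of $F_P$ are compressed tree-extension Green functions, bounded entrywise, fails for your own construction. With it go both $\|DF_P\|\ll1$ and the bound on the leading term. This is exactly the artificial singularity the paper's construction is built to avoid. The missing idea is to fill $\bT$ from the inside as well. The paper switches each $(l_\alpha,a_\alpha)$ with an edge of the same type in a disjoint copy $\cX_\alpha$ of the universal cover. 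Afterwards the interior component $\cG^{\rm in}$ is $\bT$ with an exact branch attached at every $l_\alpha$; equivalently, you could also place the weight $-m_{l_\alpha a_\alpha}$ at $l_\alpha$. The kernel $F_\partial$ of Lemma~\ref{lem:compressed-woodbury-filled-T} then involves only $G^{\rm in}_{\bL\bL}$, $P^{\langle\bT\rangle}_{\bA\bA}$ and $M_\pm$, all of which are controlled by tree-extension estimates.

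Even with this fix, several steps need correcting.

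\textbf{Interior block.} It is not block diagonal up to negligible errors. The $l_\alpha$ lie at mutual distance $\OO(\ell)$ inside $\bT$, so the off-diagonal entries are only polylogarithmically small. You need Proposition~\ref{prop:elliptic-neighborhood-estimates}(iv) to get $\sum_\beta|G^{\rm in}_{l_\alpha l_\beta}|\lesssim\ell$, and hence $\|F_\partial\|_\infty\lesssim\log_4\log N$.

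\textbf{The factor $\mu$.} The loss you try to avoid is harmless, and your remedy is unavailable. $D=G-P$ is by definition what remains after the local part is removed. The only information on it is the entrywise bound $|D|\lesssim(\log N)\varepsilon$, with no spatial decay; the $\log N$ comes from the diameter factor in Lemma~\ref{p:localization}, not from Lemma~\ref{p:subgraph}. The crude bound $\|DJF_\partial J^{\mathsf T}\|_\infty\lesssim\mu(\log N)\varepsilon\log_4\log N\lesssim(\log N)^{\fa+2}\varepsilon\ll1$ already suffices, because $\varepsilon\lesssim4^{-2\ell}$ and $\fa>6$. Moreover $\mu$ enters only the terms with $k\ge1$, which carry an extra factor $\varepsilon$.

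\textbf{Far boundary indices.} Indices far from $\{i,j\}$ do not contribute $\OO(\beta^r)$; their influence passes through entries of $D$. The paper takes the core $\cB_r(\bT\cup\{i,j\},\cG)$, which contains every boundary vertex, so all switchings are realized in the comparison graph. It then uses $\sum_\alpha|P^{\langle\bT\rangle}_{wa_\alpha}|\lesssim1$ to get $|W_i|_1\lesssim1$ and a leading term of size $(\log N)\varepsilon$. There, closeness is counted in $\cG^{(\bT)}$ via Lemma~\ref{GTstructure}. In $\cG$ itself all $a_\alpha$ are within $\OO(\ell)$ of one another, so your claim $|\mathsf A|\lesssim1$ fails for $i$ near $\bT$.

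\textbf{Exact branches versus $Q$-weights.} Trading the exact branches for $Q$-weights costs $|Q-m|_\infty\lesssim\varepsilon_*$, which exceeds $(\log N)\varepsilon$ near edges and cusps. The paper keeps the branches intact in the comparison graph and pays only the $(\log N)|R(Q,z)|_\infty+\beta^{2r}$ of Lemma~\ref{p:localization}.
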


Next, we realize the passage from $\cG$ to
$\cG^{\langle\bT\rangle}$ through a suitable collection of switchings.
Recall that
$
\partial_{\vec E} \cT
=
\bigl\{
e_\alpha=(l_\alpha,a_\alpha):
1\leq \alpha\leq\mu
\bigr\}
$
where
$
l_\alpha\in \bT
$
and
$
a_\alpha\notin \bT.
$
Fix \(i,j\in \bG\setminus \bT\), and define the common local core
\[
\mathcal G_o
:=
\mathcal B_r\bigl(\bT\cup\{i,j\},\mathcal G\bigr),
\]
which contains \(\bT\), \(i,j\), and all the boundary vertices
\(l_\alpha,a_\alpha\). 
Define the auxiliary graphs associated with $\cG$ and $\cG_o$:
\[
\mathcal G^{\rm enl}
:=
\mathcal G
\coprod_{\alpha=1}^{\mu}\mathcal X_\alpha,
\qquad
\Ext(\mathcal{G}^{\rm enl}_o,Q)
:=
\Ext(\cG_o,Q)
\coprod_{\alpha=1}^{\mu}\mathcal X_\alpha,
\]
where $\cX_\alpha$ is a disjoint copy of the universal cover tree $\cX$.
Since components are disjoint, their Green matrices are
\[
G^{\rm enl}
=
G
\bigoplus_{\alpha=1}^{\mu}G_{\mathcal X_\alpha}
\quad
\mbox{and}
\quad
P^{\rm enl}
=
P
\bigoplus_{\alpha=1}^{\mu}G_{\mathcal X_\alpha},\qquad\mbox{where}~P
:=
G\bigl(\Ext(\mathcal G_o, Q(z)),z\bigr).
\]
Thus the two enlarged Green matrices agree exactly on all the added
universal-cover copies:
$D^{\rm enl}:=G^{\rm enl}-P^{\rm enl}=D\oplus0$ where $D:=G-P$.

We now describe how to use the switching action to obtain the graph $\cG^{\langle\bT\rangle}$. 
For every \(1\leq\alpha\leq\mu\), choose an edge $(l^\star_\alpha,a^\star_\alpha)$ in $\cX_\alpha$ having the same directed type as $(l_\alpha,a_\alpha)$ and perform the switching
\[
(l_\alpha,a_\alpha),
\quad
(l_\alpha^\star,a_\alpha^\star)
\longrightarrow
(l_\alpha,a_\alpha^\star),
\quad
(a_\alpha,l_\alpha^\star).
\]
After all the switchings, the enlarged graph splits into two components:
an exterior component consisting of $\bG\setminus\bT$ together with
the truncated branches rooted at $l_\alpha^\star$, and an interior
component consisting of $\bT$ together with those rooted at
$a_\alpha^\star$. The exterior
component of the switched actual graph is \(\mathcal G^{\langle \bT\rangle}\), and $G^{\langle\bT\rangle}$ is its Green's function.
We also denote the interior component by $\cG^{\rm in}$ and its Green matrix by $G^{\rm in}$.
The filled switching construction is illustrated in
Figure~\ref{fig:filled-switching-decomposition}.

Apply the same switchings to the enlarged graph $\Ext(\cG^{\rm enl}_o,Q)$. Since the common local core \(\mathcal G_o\)
contains \(\bT,i,j\) and all the boundary vertices, the exterior component
of the switched  graph is precisely
$
\Ext
\bigl(\mathcal G_o^{\langle \bT\rangle}, Q\bigr),
$
where \(\mathcal G_o^{\langle \bT\rangle}\) is obtained from \(\mathcal G_o\)
by the same procedure. We write
$
P^{\langle \bT\rangle}
:=
G\big(
\operatorname{Ext}
\bigl(\mathcal G_o^{\langle \bT\rangle}, Q\bigr),z
\big).
$

The interior components of the two enlarged graphs obtained after
switching are identical. Hence the restrictions of their Green
functions to the interior component are both given by
\(G^{\mathrm{in}}\). 
Let $G^{\rm aux}$ and $P^{\rm aux}$ denote the Green's functions
of the enlarged graphs obtained after switching from the original
graph and the localized graph, respectively.
Since the exterior and interior components are disconnected after switching, all Green-function blocks
between the two components vanish. 
Thus we can write
\begin{equation*}
  G^{\rm aux}=G^{\langle\bT\rangle}\oplus G^{\rm in}\qquad\mbox{and}\qquad P^{\rm aux}=P^{\langle \bT\rangle}\oplus G^{\rm in}.
\end{equation*}
It follows that the two switched
Green functions differ only on the exterior component, i.e., $G^{\rm aux}-P^{\rm aux}=(G^{\langle\bT\rangle}-P^{\langle \bT\rangle})\oplus 0$.
Accordingly, in the remainder of the argument we restrict all
Green-function differences and the corresponding estimates to the
exterior component. The common interior Green function
\(G^{\mathrm{in}}\) will enter only through the boundary blocks appearing
in the effective switching kernel.

\begin{figure}[H]
    \centering
    \includegraphics[width=1\linewidth]{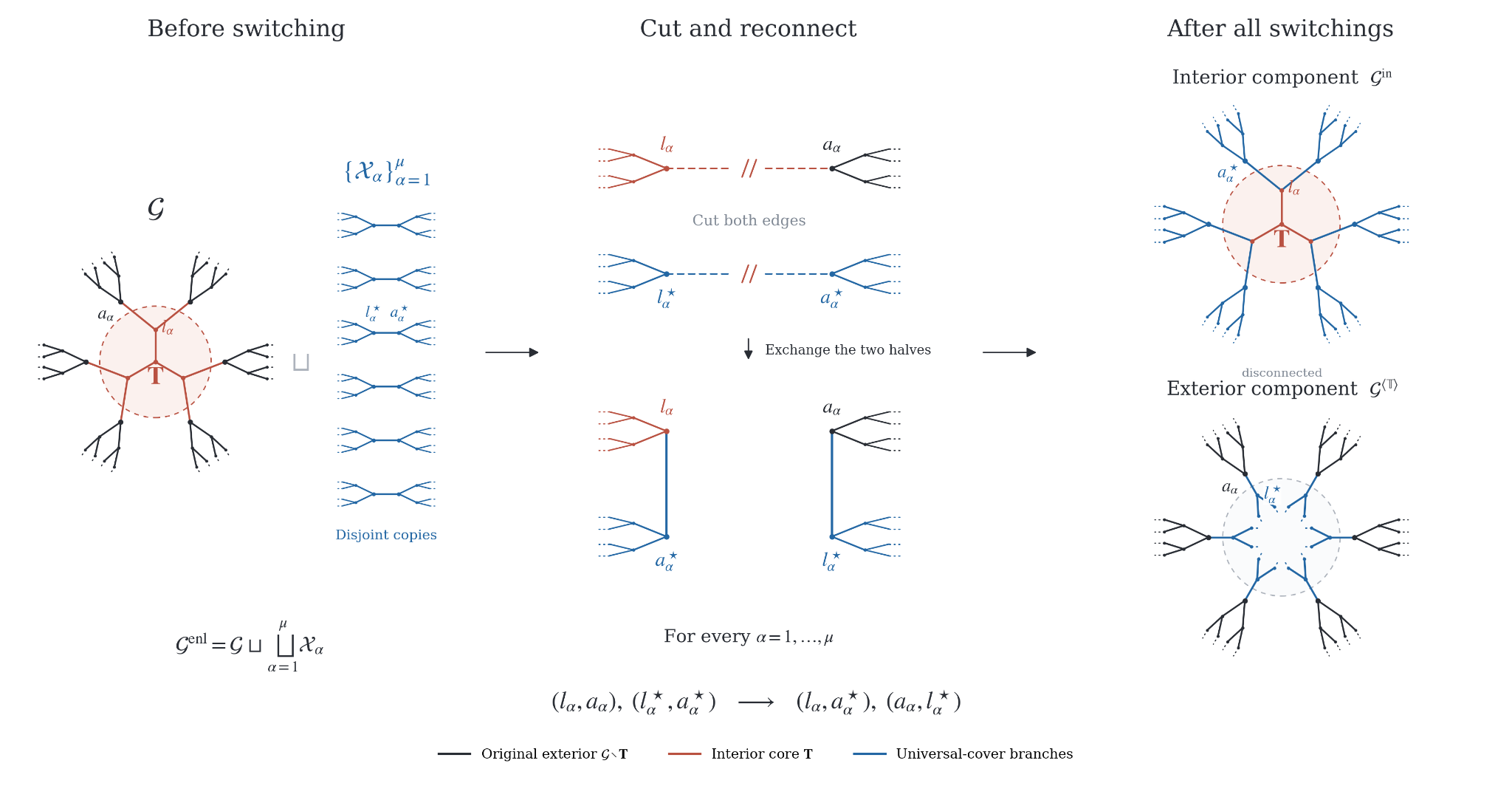}
    \caption{The filled switching construction and the resulting
    exterior--interior decomposition.}
    \label{fig:filled-switching-decomposition}
\end{figure}

We use the Woodbury formula to prove Proposition \ref{prop:woodbury-stability-filled-T}. 
Compared with the general Woodbury error expansion in
Lemma \ref{lem:woodbury-error-expansion}, the following lemma gives a more refined
representation tailored to
\(
G^{\langle\bT\rangle}_{ij}-P^{\langle\bT\rangle}_{ij}.
\)
The improvement comes from exploiting the specific geometry created by the
switching: the boundary kernel in Lemma \ref{lem:F-boundary-factorization} can be
further simplified according to the decomposition of the switched graph into
interior and exterior components and the resulting cell structure. As a
consequence, the full Woodbury correction can be compressed onto the switching
vertices, so that all interactions between the error matrix \(D\) and the
switching perturbation are transmitted through these boundary vertices only. 

For notational convenience, set
\[
\bA
:=
(a_\alpha)_{\alpha=1}^{\mu},
\qquad
\bL
:=
(l_\alpha)_{\alpha=1}^{\mu},
\qquad
\bA^\star
:=
(a_\alpha^\star)_{\alpha=1}^{\mu},
\qquad
\bL^\star
:=
(l_\alpha^\star)_{\alpha=1}^{\mu}.
\]

\begin{lemma}
\label{lem:compressed-woodbury-filled-T}
Under the same setting as Proposition \ref{prop:woodbury-stability-filled-T}, define the linear maps
\[
J_{\rm out},J_{\rm in}:\mathbb C^\mu\longrightarrow\mathbb C^{\bG},
\qquad
J_{\rm out}e_\alpha=e_{a_\alpha},
\qquad
J_{\rm in}e_\alpha=e_{l_\alpha},
\]
and set
$
J:=
\begin{pmatrix}
J_{\rm out}&J_{\rm in}
\end{pmatrix}
\in\mathbb C^{\bG\times 2\mu}.
$
Define the boundary kernel \(F_\partial\in\mathbb C^{2\mu\times2\mu}\) by
\begin{equation}\label{eq:F-boundary-matrix}
F_\partial
=
\begin{pmatrix}
G^{\rm in}_{\bL\bL}
+M_+
+M_+P^{\langle\bT\rangle}_{\bA\bA}M_+
&
I
+M_+P^{\langle\bT\rangle}_{\bA\bA}
+G^{\rm in}_{\bL\bL}M_-
\\[1mm]
I
+P^{\langle\bT\rangle}_{\bA\bA}M_+
+M_-G^{\rm in}_{\bL\bL}
&
P^{\langle\bT\rangle}_{\bA\bA}
+M_-
+M_-G^{\rm in}_{\bL\bL}M_-
\end{pmatrix},
\end{equation}
where 
$
M_+:=\operatorname{diag}
\bigl(
m_{a_\alpha l_\alpha}\bigr)_{\alpha=1}^\mu
$
and
$
M_-:=\operatorname{diag}
\bigl(
m_{l_\alpha a_\alpha}\bigr)_{\alpha=1}^\mu.
$
Moreover, let
$
W
\in
\mathbb C^{(\bG\setminus\bT)\times\bG}
$
be the matrix defined as 
\begin{equation}
\label{eq:transfer-entry-rewritten}
W_{wu}
=
\delta_{wu}
+
\sum_{\alpha=1}^{\mu}
P^{\langle\bT\rangle}_{wa_\alpha}
\left(
\mathbf 1_{\{u=l_\alpha\}}
+
m_{a_\alpha l_\alpha}
\mathbf 1_{\{u=a_\alpha\}}
\right).
\end{equation}
Then, for any \(i,j\in\bG\setminus\bT\), we have
\begin{equation}
\label{eq:compressed-woodbury-series-rewritten}
G^{\langle\bT\rangle}_{ij}
-
P^{\langle\bT\rangle}_{ij}
=
W_i
\sum_{k=0}^{\infty}
\left(
DJF_\partial J^{\mathsf T}
\right)^k
D
W_j^{\mathsf T},
\end{equation}
where \(W_i\)
denotes the row of \(W\) indexed by \(i\).
\end{lemma}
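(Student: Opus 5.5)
We apply Lemma \ref{lem:woodbury-G-expansion} on the enlarged graphs and then compress the resulting kernel onto the boundary vertices. Let $U,V$ be the switching matrices of \eqref{e:def-UV-pair} for the pairs $(x_\alpha,y_\alpha)=(l_\alpha,a_\alpha)$ and $(x^\star_\alpha,y^\star_\alpha)=(l^\star_\alpha,a^\star_\alpha)$. Take $\mathcal K=\Ext(\cG^{\rm enl}_o,Q)$ as the localized reference, with Green function $P^{\rm enl}$, and $D^{\rm enl}=D\oplus 0$. Lemma \ref{lem:woodbury-G-expansion} gives
\[
G^{\rm aux}-G^{\rm enl}=\sum_{k\ge0}G^{\rm enl}F_P(D^{\rm enl}F_P)^kG^{\rm enl}.
\]
By Lemma \ref{lem:F-boundary-factorization} on the localized side, $F_P=(V\ U)F^{P}_\partial(V\ U)^{\mathsf T}$. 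Here $F^P_\partial$ has blocks $U^{\mathsf T}P^{\rm aux}U$, etc., and $P^{\rm aux}=P^{\langle\bT\rangle}\oplus G^{\rm in}$. We rerun the algebra of Lemma \ref{lem:woodbury-error-expansion} to obtain
\[
G^{\rm aux}-P^{\rm aux}=(I+P^{\rm enl}F_P)\sum_k(D^{\rm enl}F_P)^kD^{\rm enl}(I+F_PP^{\rm enl}),
\]
and restrict this to entries $i,j\in\bG\setminus\bT$.

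\emph{Elimination of the starred vertices.} Since $D^{\rm enl}$ vanishes on every copy $\cX_\alpha$, only the components of $U,V$ on $\bG$ survive between consecutive $D$'s: $e_{l_\alpha}$ from $U$ and $e_{a_\alpha}$ from $V$. The starred components enter only through the kernel blocks and through the outer factors. We then compute the compressed blocks in the switched localized graph using the exterior/interior decomposition.
\begin{itemize}
\item $a_\alpha$ and $l^\star_\alpha$ lie in the exterior, so their entries are given by $P^{\langle\bT\rangle}$.
\item $l_\alpha$ and $a^\star_\alpha$ lie in the interior, so their entries are given by $G^{\rm in}$.
\item Cross blocks between the two components vanish.
\item $l^\star_\alpha$ is a neighbour of $a_\alpha$ heading an exact truncated branch $\cY$. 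Proposition \ref{greentree} (the identity $G_{xy}=-m_{xy}G_{xx}$ through an attached exact branch) gives $P^{\langle\bT\rangle}_{w l^\star_\alpha}=-m_{a_\alpha l_\alpha}P^{\langle\bT\rangle}_{wa_\alpha}$ for $w$ in the exterior.
\item Similarly $G^{\rm in}_{w a^\star_\alpha}=-m_{l_\alpha a_\alpha}G^{\rm in}_{wl_\alpha}$ for $w$ in the interior.
\item The diagonal entries at the starred vertices combine with the cross terms through the Dyson equation.
\end{itemize}
Substituting these identities expresses every quantity in terms of $P^{\langle\bT\rangle}_{\bA\bA}$, $G^{\rm in}_{\bL\bL}$ and $M_\pm$. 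Collecting the reduced $2\mu\times2\mu$ middle matrix sandwiched by $J$ should reproduce exactly \eqref{eq:F-boundary-matrix}. The entries $M_+$, $M_-$ and $M_\pm(\cdot)M_\pm$ arise from combining the $\pm$ parts of $U,V$ with the branch factors $-m$.

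\emph{Outer factors.} For the outer factors, $(I+P^{\rm enl}F_P)_{i,\cdot}$ restricted to $\bG$ will be shown to equal $W_i$. For $i$ in the exterior, only the exterior blocks of $P^{\rm aux}$ contribute. The row $e_i^{\mathsf T}+\sum_\alpha P^{\langle\bT\rangle}_{ia_\alpha}(\cdots)$ then picks up $e_{l_\alpha}$ from the identity block of $F^P_\partial$. It also picks up $m_{a_\alpha l_\alpha}e_{a_\alpha}$ from the $l^\star_\alpha$ contribution, after the branch identity. This is precisely \eqref{eq:transfer-entry-rewritten}. The right factor is handled symmetrically, using symmetry of the Green matrices.

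\emph{Main obstacle.} The main difficulty is the bookkeeping in this compression. Each $l_\alpha$, $a_\alpha$ appears in both $U$ and $V$ with signs, and one must check that the starred diagonal terms ($P^{\langle\bT\rangle}_{l^\star l^\star}=m$-type quantities) combine with the Dyson relation into the clean diagonal matrices $M_\pm$. I would verify this one switching at a time ($\mu=1$, a $4\times4$ computation) and then use that distinct $\alpha$'s interact only through $P^{\langle\bT\rangle}_{\bA\bA}$ and $G^{\rm in}_{\bL\bL}$. Convergence of the series is assumed, as in Lemma \ref{lem:woodbury-G-expansion}.
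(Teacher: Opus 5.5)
Your proposal is correct and follows essentially the same route as the paper. The shared steps are the enlarged graphs with disjoint copies $\mathcal X_\alpha$, the Woodbury error expansion with $D^{\rm enl}=D\oplus 0$ (so only $J=(V\ U)|_{\bG,\cdot}$ survives between consecutive $D$'s), evaluation of the four kernel blocks via vanishing exterior--interior cross blocks and the branch identities, and the transfer factor read off from row $i$ of $I+P^{\rm enl}F_P$. Two points should be made explicit. First, the outer factor is obtained from the exact identity $I+P^{\rm enl}F_P=I+P^{\rm aux}\Xi$, which your reference to the ``exterior blocks of $P^{\rm aux}$'' uses implicitly. Second, the starred diagonal blocks come from the Schur-complement identities $P^{\rm aux}_{\bL^\star\bL^\star}=M_++M_+P^{\langle\bT\rangle}_{\bA\bA}M_+$ and $P^{\rm aux}_{\bA^\star\bA^\star}=M_-+M_-G^{\rm in}_{\bL\bL}M_-$, not from a separate application of the Dyson equation.
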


\begin{proof}[Proof of Lemma \ref{lem:compressed-woodbury-filled-T}]
Let \(H^{\rm enl}\) and \(\widetilde H^{\rm enl}\) denote the 
adjacency matrices of the enlarged graphs before and after switching. Then
\[
\widetilde H^{\rm enl}
-
H^{\rm enl}
=
-\sum_{\alpha=1}^\mu
\xi_\alpha,\qquad\mbox{where}~
\xi_\alpha
:=
\Delta_{a_\alpha l_\alpha}
+
\Delta_{a^\star_\alpha l^\star_\alpha}
-
\Delta_{a_\alpha l^\star_\alpha}
-
\Delta_{a^\star_\alpha l_\alpha}
\]
Lemma \ref{lem:F-boundary-factorization} and Lemma \ref{lem:woodbury-error-expansion} directly imply
\begin{equation}
\label{eq:woodbury-error-master}
G^{\rm aux}-P^{\rm aux}
=
(I+P^{\rm enl}F)\sum_{k=0}^\infty (D^{\rm enl} F)^k
D^{\rm enl}
(I+FP^{\rm enl}),
\end{equation}
where \begin{equation}
\label{eq:F-boundary-factorization}
F
=
\begin{pmatrix}V&U\end{pmatrix}
\begin{pmatrix}
U^{\mathsf T} P^{\rm aux} U
&
I+U^{\mathsf T} P^{\rm aux} V
\\[1mm]
I+V^{\mathsf T}P^{\rm aux} U
&
V^{\mathsf T}P^{\rm aux} V
\end{pmatrix}
\begin{pmatrix}V&U\end{pmatrix}^{\mathsf T},\qquad U=(e_{l_\alpha}-e_{l_\alpha^\star})_{\alpha=1}^\mu,\quad V=(e_{a_\alpha}-e_{a_\alpha^\star})_{\alpha=1}^\mu.
\end{equation}
As \(D^{\rm enl}=D\oplus 0\) vanishes on the added universal-cover copies, it follows that $(D^{\rm enl}F)^k
D^{\rm enl}=(D \left. F\right|_{\bG})^k D$, where $\left. F\right|_{\bG}$ means the restriction of $F$ to the vertices $\bG$.
Let \(W\) be the block of
\(I+P^{\rm enl}F\) whose rows are indexed by original vertices outside \(\bT\) and
whose columns are indexed by original graph vertices.
Since all Green matrices under consideration are complex symmetric, the
corresponding block of \(I+FP^{\rm enl}\) is \(W^{\mathsf T}\).
Consequently, for \(i,j\in\bG\setminus\bT\), we have
\begin{equation*}
G^{\langle \bT\rangle}_{ij}-P^{\langle \bT\rangle}_{ij}
=
W_i
\sum_{k=0}^{\infty}
\left(
D\left.F\right|_{\bG}
\right)^k
D
W^{\mathsf T}_j
\end{equation*}
provided the series converges, where \(W_i\) and \(W_j\) denote the corresponding rows of \(W\).

After switching, \(\bA\) and \(\bL^\star\) belong to the exterior
component, whereas \(\bL\) and \(\bA^\star\) belong to the interior component
containing \(\bT\). Hence all Green-function blocks joining these two
components vanish. Moreover, \eqref{e:Schur} and \eqref{e:Schur1} give
\begin{align}
P^{\rm aux}_{\bL^\star \bA}
&=-M_+P^{\langle\bT\rangle}_{\bA\bA},
&
P^{\rm aux}_{\bL^\star\bL^\star}
&=M_++M_+P^{\langle\bT\rangle}_{\bA\bA}M_+,
\label{eq:exterior-branch-blocks}
\\
P^{\rm aux}_{\bL \bA^\star}
&=-G^{\rm in}_{\bL\bL}M_-,
&
P^{\rm aux}_{\bA^\star \bA^\star}
&=M_-+M_-G^{\rm in}_{\bL\bL}M_-,
\label{eq:interior-branch-blocks}
\end{align}

We can now calculate the four blocks in
\eqref{eq:F-boundary-factorization}. For example,
\begin{align*}
U^{\mathsf T} P^{\rm aux} U
=
P^{\rm aux}_{\bL\bL}
-P^{\rm aux}_{\bL\bL^\star}
-P^{\rm aux}_{\bL^\star\bL}
+P^{\rm aux}_{\bL^\star\bL^\star}
=
G^{\rm in}_{\bL\bL}+M_++M_+P^{\langle\bT\rangle}_{\bA\bA}M_+,
\end{align*}
because the two mixed blocks vanish. Similarly,
\begin{align*}
I+U^{\mathsf T} P^{\rm aux} V
=
I+P^{\rm aux}_{\bL \bA}
- P^{\rm aux}_{\bL \bA^\star}
- P^{\rm aux}_{\bL^\star \bA}
+ P^{\rm aux}_{\bL^\star \bA^\star}
=
I+G^{\rm in}_{\bL\bL}M_-+M_+P^{\langle\bT\rangle}_{\bA\bA}.
\end{align*}
The remaining two blocks are obtained in the same way:
\[
I+V^{\mathsf T} P^{\rm aux} U
=
I+P^{\langle\bT\rangle}_{\bA\bA}M_++M_-G^{\rm in}_{\bL\bL}\qquad\mbox{and}\qquad V^{\mathsf T} P^{\rm aux} V
=
P^{\langle\bT\rangle}_{\bA\bA}+M_-+M_-G^{\rm in}_{\bL\bL}M_-.
\]
Since $J=(V~U)|_{\bG,\cdot}$, we have $F|_{\bG}=JF_\partial J^{\mathsf T}$
where
\begin{equation}
F_\partial
=
\begin{pmatrix}
G^{\rm in}_{\bL\bL}+M_++M_+P^{\langle\bT\rangle}_{\bA\bA}M_+
&
I+M_+P^{\langle\bT\rangle}_{\bA\bA}+G^{\rm in}_{\bL\bL}M_-
\\[1mm]
I+P^{\langle\bT\rangle}_{\bA\bA}M_++M_-G^{\rm in}_{\bL\bL}
&
P^{\langle\bT\rangle}_{\bA\bA}+M_-+M_-G^{\rm in}_{\bL\bL}M_-
\end{pmatrix}.
\end{equation}

It remains to calculate the factors \(W\) and \(W^{\mathsf T}\).  From
\(I+P^{\rm enl}F=I+P^{\rm aux}\Xi\) and applying $P^{\rm aux}_{wl_\alpha^\star}
=
-m_{a_\alpha l_\alpha} P^{\rm aux}_{wa_\alpha }$
and
$
P^{\rm aux}_{wa_\alpha}
=
P^{\langle \bT\rangle}_{wa_\alpha}
$, for \(w\in \bG\backslash\bT\) we obtain
\begin{equation}
  W_{wu}
=
\delta_{wu}
+
\sum_{\alpha=1}^{\mu}
P^{\rm aux}_{wa_\alpha}
\mathbf 1_{\{u=l_\alpha\}}
-
\sum_{\alpha=1}^{\mu}
P^{\rm aux}_{w l_\alpha^\star}
\mathbf 1_{\{u=a_\alpha\}}=\delta_{wu}
+
\sum_{\alpha=1}^{\mu}
P^{\langle \bT\rangle}_{wa_\alpha}
\left(
\mathbf 1_{\{u=l_\alpha\}}
+
m_{a_\alpha l_\alpha }\mathbf 1_{\{u=a_\alpha\}}
\right).  
\end{equation}
\end{proof}

To control the Woodbury expansion, we need uniform summation bounds on
the Green-function entries connecting the switching boundary. These
estimates are used to bound both the effective boundary kernel
$F_\partial$ and the transfer matrix $W$ appearing in
Lemma~\ref{lem:compressed-woodbury-filled-T}. The following lemma
provides the required bounds for the exterior and interior components.

\begin{lemma}\label{claim:boundary-sum}
  Under the same setting as Proposition \ref{prop:woodbury-stability-filled-T}, the exterior component Green functions and the interior component Green functions satisfy
  \begin{equation}\label{eq:blackbox-exterior-sum}
  \sum_{\alpha=1}^{\mu}
  \left|
  P^{\langle \bT\rangle}_{wa_\alpha}
  \right|
  \lesssim1,\qquad w\in\bG\backslash\bT\qquad\mbox{and}\qquad
  \sup_{1\leq\alpha\leq\mu}
\sum_{\beta=1}^{\mu}
\left|
G^{\mathrm{in}}_{l_\alpha l_\beta}
\right|
\lesssim\log_4\log N.
  \end{equation}
\end{lemma}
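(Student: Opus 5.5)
Both bounds are deterministic consequences of the path-weight estimates for tree extensions, so the plan is to reduce each sum to PF-weight sums controlled by Proposition~\ref{prop:elliptic-neighborhood-estimates} and Lemma~\ref{p:subgraph}.

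\emph{Exterior sum.} Here $P^{\langle\bT\rangle}=G(\Ext(\cG_o^{\langle\bT\rangle},Q),z)$. The graph $\cG_o^{\langle\bT\rangle}$ is obtained from $\cB_r(\bT\cup\{i,j\},\cG)$ by deleting $\bT$, so it lies inside a radius-$\fR/8$ neighborhood and contains at most one cycle in each component. Since $\cG\in\Omega(z)$ gives $|Q-m|_\infty\lesssim\varepsilon_*\ll (\log N)^{-1}$, Lemma~\ref{p:subgraph} lets us replace $Q$ by $m$, i.e.\ by the tree extension, at a relative cost $O(r\,|Q-m|_\infty)\ll1$. This gives
\[
|P^{\langle\bT\rangle}_{wa_\alpha}|\lesssim \max_{\gamma:w\leadsto a_\alpha}|\cM(\gamma)|\lesssim\max_{\gamma}|\cM^{\rm PF}(\gamma)|,
\]
where the paths avoid $\bT$. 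Fix $w$.
\begin{itemize}
\item The $a_\alpha$ lying in components of $\cB_r^{(\bT)}$ not containing $w$ contribute zero.
\item In the component of $w$, the paths $w\leadsto a_\alpha$ form, up to the bounded excess, a prefix-free family. Indeed, $a_\alpha$ is a boundary vertex: once a path reaches some $a_\alpha$, continuing toward another $a_\beta$ would have to re-enter $\bT$. The exception is when two $a$'s are close in $\cG^{(\bT)}$, and by Lemma~\ref{GTstructure} there are $O(\omega)$ such exceptions.
\end{itemize}
Then~\eqref{eq:PF-prefix-free-bound} gives $\sum_\alpha|\cM^{\rm PF}(w\leadsto a_\alpha)|^2\lesssim1$, which by itself only yields an $\ell^2$ bound. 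To obtain $\ell^1$, use the structure more carefully: in the exterior tree, each $a_\alpha$ hangs off $\bT$, so distinct $a_\alpha$ reachable from $w$ without passing through $\bT$ all lie in the single subtree containing $w$. After removing $\bT$ that subtree meets at most $O(\omega+1)$ of the $a_\alpha$ within distance $R/2$, by~\eqref{distfinitedega}. The far ones have weight $\beta^{R/2}\mu\ll1$. Hence the sum is $O(1)$.

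\emph{Interior sum.} $G^{\rm in}$ is the Green function of $\bT$ with every boundary edge completed by exact universal-cover branches, i.e.\ exactly $\TE(\cT)$ (the branches at $a^\star_\alpha$ have the type of the missing edge). Proposition~\ref{p:tree-extension-bound} (at most one cycle) therefore gives $|G^{\rm in}_{l_\alpha l_\beta}|\lesssim\max|\cM^{\rm PF}(l_\alpha\leadsto l_\beta)|$. Each $l_\beta$ is adjacent to a boundary vertex $a_\beta$, and one step changes PF weights only by a constant factor. So
\[
\sum_\beta|G^{\rm in}_{l_\alpha l_\beta}|\lesssim\sum_\beta|\cM^{\rm PF}(l_\alpha\leadsto a_\beta)|,
\]
with paths running inside $\bT$ plus one step. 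This is the tree-extension analogue of~\eqref{eq:elliptic-boundary-L1} with $j=l_\alpha$. The right-hand side there is $(1+\ell_{l_\alpha})2^{\ell-\ell_{l_\alpha}}$, which is too large unless $\ell_{l_\alpha}\approx\ell$. But $l_\alpha$ is adjacent to the outer boundary, so $|\cM^{\rm PF}(o\leadsto l_\alpha)|\gtrsim2^{-\ell}$ together with one-step comparability forces $\ell_{l_\alpha}\ge\ell-O(1)$. Then~\eqref{eq:elliptic-boundary-L1} gives $\lesssim 1+\ell\sim\log_4\log N$. One should double-check that $\TE(\cT)$ and $\TE(\cB_r(o,\cG))$ give comparable path weights for paths inside $\bT$. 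This holds since both are controlled by $\max|\cM|$ over the same simple paths through $\bT$.

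\emph{Main obstacle.} The hardest step is the exterior $\ell^1$ bound. Prefix-freeness only gives $\ell^2$ control, and the argument must genuinely use that $w$ sees the boundary $\{a_\alpha\}$ from outside $\bT$, through only boundedly many near-boundary vertices. If the counting via Lemma~\ref{GTstructure} fails, I would instead sum dyadically in $\dist(w,a_\alpha)$ using~\eqref{distfinitedegb}, combined with the geometric decay $\beta^{\dist}$ from Proposition~\ref{pro.path-weight-decay}.
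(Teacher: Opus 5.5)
Your proposal is correct and follows essentially the paper's argument. For the exterior sum you split the indices by $\cG^{(\bT)}$-distance to $w$ and use Lemma~\ref{GTstructure} to see that only $O(\omega+1)$ of them are near, each contributing $O(1)$. For the interior sum you apply Proposition~\ref{prop:elliptic-neighborhood-estimates}(iv) with $j=l_\alpha$ and $\ell_{l_\alpha}\ge\ell-O(1)$. The only real difference is that you bound the far indices by path decay, $\mu\beta^{R/2}\ll1$, whereas the paper asserts they vanish exactly by localization.

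One small correction: $\ell_{l_\alpha}\ge\ell-O(1)$ follows from the upper bound $|\cM^{\rm PF}(o\leadsto l_\alpha)|\lesssim2^{-\ell}$, not from the lower bound $\gtrsim2^{-\ell}$ you wrote. The upper bound holds because $a_\alpha\notin\bT$ and one extra step changes the PF weight by at most a constant factor.
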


\begin{proof}
Fix \(w\in\bG\setminus\bT\). By Lemma \ref{GTstructure}, there are at most
\(\OO(\fd)\) indices \(\alpha\) such that
$
\dist_{\cGT}(w,a_\alpha)\leq \fR/4.
$
For these indices, the uniform Green function bound gives
\[
\sum_{\alpha:\,\dist_{\cGT}(w,a_\alpha)\leq \fR/4}
\left|P^{\langle\bT\rangle}_{wa_\alpha}\right|
\lesssim 1,
\]
since \(\fd\) is fixed.
On the other hand, if
$
\dist_{\cGT}(w,a_\alpha)>\fR/4,
$
then, since $r\le \fR/50$ is the localization radius in the definition of
$P^{\langle\bT\rangle}$, the vertices $w$ and $a_\alpha$ cannot belong
to the same localized component. Hence
$
P^{\langle\bT\rangle}_{wa_\alpha}=0.
$
This proves the first inequality.

For the second estimate, recall that the interior component is a
tree-like elliptic neighborhood with at most one cycle, and that the
vertices \(l_\beta\) belong to its boundary. Hence
Proposition~\ref{prop:elliptic-neighborhood-estimates}\,(iv), applied
with \(j=l_\alpha\), yields
\[
\sum_{\beta=1}^{\mu}
\left|G^{\mathrm{in}}_{l_\alpha l_\beta}\right|
\leq
\sum_{y\in\del\cB_\ell^{\rm ell}}
\left|G^{\mathrm{in}}_{l_\alpha y}\right|
\lesssim \ell\lesssim\log_4\log N.
\]
This completes the proof.
\end{proof}

\begin{proof}[Proof of Proposition \ref{prop:woodbury-stability-filled-T}]

Lemma \ref{claim:boundary-sum} implies that the absolute row sums of \(P^{\langle\bT\rangle}_{\bA\bA}\) are
uniformly bounded, those of \(G^{\rm in}_{\bL\bL}\) are \(\OO(\log_4\log N)\), and \eqref{eq.mbound-comparable} implies that
entries of \(M_\pm\) are \(O(1)\). It follows directly from
\eqref{eq:F-boundary-matrix} that
\begin{equation}
\label{eq:Ffrak-row-bound-rewritten}
\|F_\partial\|_{\infty}=\max_{1\le x\le 2\mu}\sum_{y=1}^{2\mu}|(F_\partial)_{xy}|
\lesssim
\log_4\log N.
\end{equation}
Therefore, it follows from \eqref{eq:blackbox-exterior-sum} and
\eqref{eq.mbound-comparable} that
\begin{equation}
\label{eq:transfer-row-bound-rewritten}
\begin{aligned}
\sum_u |W_{wu}|
\leq
1+
\sum_{\alpha=1}^{\mu}
\left|
P^{\langle\bT\rangle}_{w a_\alpha}
\right|
\bigl(
1+|m_{a_\alpha l_\alpha}|
\bigr)
\lesssim
1,
\qquad
w\notin\bT.
\end{aligned}
\end{equation}

We next study the effective bound of the error matrix $D$. Although \(D\) is defined on the whole vertex set \(\bG\), the
Woodbury expansion \eqref{eq:compressed-woodbury-series-rewritten} implies the effective support in this expansion is $\bS_i\times \bS_j$, where $\bS_i:=({i}\cup\bA\cup\bL)$ and $\bS_j:=({j}\cup\bA\cup\bL)$.
Since $\cG\in\Omega(z)$, we have $|G_{uv}-G_{uv}(\Ext(\cB_r(\{u,v\},\cG),Q),z)|\lesssim\epsilon$ for every $u\in\bS_i$ and $v\in\bS_j$. By Lemma \ref{p:localization}, we have $|P_{uv}-G_{uv}(\Ext(\cB_r(\{u,v\},\cG),Q),z)|\lesssim(\log N)\epsilon$.
Then it follows from the triangle inequality that $|D_{uv}|=|G_{uv}-P_{uv}|\lesssim (\log N)\epsilon$. 
With the above estimate at hand, throughout the following analysis we can always regard \(D\) as satisfying the entrywise bound
$
|D|\lesssim (\log N)\epsilon.
$

Using the estimate $D\lesssim(\log N)\epsilon$ together with \eqref{eq:Ffrak-row-bound-rewritten} implies
\[
\left\|
D J F_\partial J^{\mathsf T}
\right\|_{\infty}
\lesssim
\mu(\log N)\varepsilon
\| F_\partial\|_{\infty}
\lesssim
4^\ell(\log N)^2\varepsilon
\lesssim(\log N)^{\fa+2}\varepsilon
\ll1,
\]
where the last inequality follows from the choice of parameters.
Hence the series is absolutely convergent. Next, we analyze each term in the series.
The term \(k=0\) in \eqref{eq:compressed-woodbury-series-rewritten}
satisfies, by $D\lesssim(\log N)\epsilon$ entrywise and
\eqref{eq:transfer-row-bound-rewritten},
\begin{equation}\label{eq.W-sum}
\left|
\bigl(WD W^{\mathsf T}\bigr)_{ij}
\right|
\leq
(\log N)\varepsilon
\left(\sum_u|W_{iu}|\right)
\left(\sum_v|W_{jv}|\right)
\lesssim
(\log N)\varepsilon.
\end{equation}
For \(k\geq1\), the elementary inequality $|x^{\mathsf T}Ay|\le|x|_1\|A\|_\infty|y|_\infty$ together with \eqref{eq.W-sum} and \eqref{eq:transfer-row-bound-rewritten} imply 
\begin{equation}
  |W_i(DJ F_\partial J^{\mathsf T})^kDW_j^{\mathsf{T}}|\le |W_i|_1\|DJ F_\partial J^{\mathsf T}\|^k_\infty|DW^{\mathsf{T}}_j|_\infty\lesssim (\log N)\epsilon((\log N)^{\fa+2}\epsilon)^k.
\end{equation}
 Summing the geometric series in \eqref{eq:compressed-woodbury-series-rewritten}, we conclude that
$
\left|
G^{\langle \bT\rangle}_{ij}
-
P^{\langle \bT\rangle}_{ij}
\right|
\lesssim
(\log N)\varepsilon.
$
Finally, Lemma~\ref{p:localization} applies, since the attached infinite tree branches are exact universal-cover branches and can be represented by the boundary weights $m$ in the extension of the finite core.
\begin{equation}
\left|
P^{\langle \bT\rangle}_{ij}
-
G_{ij}\left(
\operatorname{Ext}
\bigl(
\mathcal B_r(\{i,j\},\mathcal G^{\langle \bT\rangle}),Q(z)
\bigr),z
\right)
\right|
\leq
(\log N)\varepsilon,
\end{equation}
which implies 
\[
\left|
G^{\langle \bT\rangle}_{ij}
-
G_{ij}\left(
\operatorname{Ext}
\bigl(
\mathcal B_r(\{i,j\},\mathcal G^{\langle \bT\rangle}),Q(z)
\bigr),z
\right)
\right|
\leq
\left|
G^{\langle \bT\rangle}_{ij}
-
P^{\langle \bT\rangle}_{ij}
\right|
+
(\log N)\varepsilon
\lesssim
(\log N)\varepsilon.
\]
Hence we conclude the proof.
\end{proof}

\subsection{Cell structure}\label{sec.cell}
The purpose of the cell structure is to isolate the possible interactions
between different switching pairs. Although the sampled edges
$\{b_\alpha,c_\alpha\}$ are typically far apart, geometric separation alone
is not sufficient for the subsequent Green function expansions, since two
distant switching pairs may still interact through a relatively large
off-diagonal Green function entry. We therefore group together precisely
those switching supports which are either geometrically close or strongly
coupled through the Green function. The main point is that such interactions
are sparse with overwhelming probability, so that all but a small number of
sampled pairs form isolated cells and may be treated essentially independently.

The sparsity of this cell structure is controlled by the Ward identity.
Indeed, after obtaining a uniform bound on
$\Im G^{\langle\bT\rangle}_{xx}$, the Ward identity gives
\[
\sum_{y\in\bG\setminus\bT}
\left|G^{\langle\bT\rangle}_{xy}\right|^2
\lesssim
\frac{\Im G^{\langle\bT\rangle}_{xx}}{\eta},
\]
and hence only a small number of vertices $y$ can satisfy
$|G^{\langle\bT\rangle}_{xy}|\ge\varphi$. Combined with the
approximate uniformity of the sampled vertices $b_\alpha,c_\alpha$,
this shows that only few switching pairs can interact with any fixed
vertex or with one another. We now formalize this observation through
the following cell decomposition.

Fix a graph $\cG\in\GNH$.
For $x,y\in\bG\setminus\bT$, write $x\simeq y$ if
\begin{equation}\label{e:cell-relation}
\dist_{\cG^{\langle\bT\rangle}}(x,y)\le \frac{\fR}{4}
\qquad\text{or}\qquad
\bigl|G^{\langle\bT\rangle}_{xy}(z)\bigr|\ge\varphi(z).
\end{equation}
For two sets $\bX,\bY\subset\bG\setminus\bT$, write $\bX\simeq \bY$
if $x\simeq y$ for some $x\in \bX$ and $y\in \bY$.

\begin{definition}[Cell structure]\label{def:cell-structure}
Starting from
\[
\{a_1,\ldots,a_\mu\},
\qquad
\{b_\alpha,c_\alpha\},
\quad \alpha\in\mathsf W_S,
\]
repeatedly merge any two sets related by $\simeq$.
The resulting sets are called cells and are denoted by
$\mathbb S_0,\mathbb S_1,\ldots,\bS_\kappa$, where
$\{a_1,\ldots,a_\mu\}\subset\mathbb S_0$. We denote the set of indices corresponding to singleton-pair cells by
\[
\mathsf U
:=
\left\{
\alpha\in\mathsf W_S:
\{b_\alpha,c_\alpha\}=\mathbb S_s
\text{ for some }s\ge1
\right\}.
\]
\end{definition}

This is the analogue of
\cite[Definition~5.13]{Huang2024Spectrum}, without the
$\cB_1$-enlargement.
In particular, for distinct cells $\mathbb S_s$ and $\mathbb S_t$,
\begin{equation}\label{e:cell-separation}
\dist_{\cG^{\langle\bT\rangle}}
(\mathbb S_s,\mathbb S_t)>\frac{\fR}{4},
\qquad
\max_{\substack{x\in\mathbb S_s\\y\in\mathbb S_t}}
\bigl|G^{\langle\bT\rangle}_{xy}(z)\bigr|<\varphi(z).
\end{equation}
To include the added vertices $l_\alpha^\star$, recall that each
$l_\alpha^\star$ is the root of a covering branch attached to
$a_\alpha$ before switching.
The covering-branch identity and the boundedness of the Dyson
weights imply
\[
\bigl|G^{\langle\bT\rangle}_{x l_\alpha^\star}(z)\bigr|
\lesssim
\bigl|G^{\langle\bT\rangle}_{x a_\alpha}(z)\bigr|,
\qquad x\in\bG\setminus\bT.
\]
Hence, 
we obtain, whenever $\{b_\alpha,c_\alpha\}=\mathbb S_s$
for some $s\ge1$,
\begin{equation}\label{e:cell-switching-support}
\max_{\substack{
x\in\{b_\alpha,c_\alpha\}\\
y\in\bW_S''\setminus\{b_\alpha,c_\alpha\}
}}
\bigl|G^{\langle\bT\rangle}_{xy}(z)\bigr|
\lesssim\varphi(z),
\end{equation}
where $
\bW_S''
:=
\cup_{\beta\in\mathsf W_S}
\{a_\beta,b_\beta,c_\beta,l_\beta^\star\}.
$

We next refine the good geometric event from Proposition~\ref{prop:distdeg} by incorporating the Green's function bounds and the resulting sparsity of the cell structure.
\begin{proposition}
\label{prop:cell-sparsity}
Fix $z\in\bC_+(\mathcal I_\delta)$ with $\eta\ge(\log N)^\fb/N$ and let $\cG\in\Omega(z)$ or $\Omega^+_o(z)$.
For every large $\fd>0$, there exists an event $F_2(\cG)\subset F_1(\cG)\subset\mathsf S(\cG)$, where $F_1(\cG)$ is
defined in Proposition~\ref{prop:distdeg}, with probability $\P_\cG(F_2(\cG))=1-\OO(N^{-\fd})$ such that the following estimates hold:
\begin{itemize}
  \item The graph has the required geometric properties: \eqref{distfinitedegb}-\eqref{tcGTdist} hold.
  \item The graph has good Green's function bounds and a sparse cell structure:
  \begin{align}
\sup_{x\in\bG\setminus\bT}
\left|
\left\{
\alpha\in\{1,\ldots,\mu\}:
x\simeq\{b_\alpha,c_\alpha\}
\right\}
\right|
\lesssim\log N\qquad\mbox{and}\qquad
\left|
\mathsf W_{\mathbf S}\setminus\mathsf U
\right|
\lesssim\log N.
\label{e:cell-exceptional}
\end{align}
\end{itemize}
\end{proposition}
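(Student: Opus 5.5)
We will define $F_2(\cG):=F_1(\cG)\cap F_2'(\cG)$, where $F_1(\cG)$ comes from Proposition~\ref{prop:distdeg}, so the geometric bullet is immediate, and it remains to construct $F_2'$ carrying \eqref{e:cell-exceptional}. The first input is deterministic: for $\cG\in\Omega(z)$ (resp.\ $\Omega_o^+(z)$), Proposition~\ref{prop:woodbury-stability-filled-T} (resp.\ \eqref{e:Omega-o-plus-filled}) gives $|G^{\langle\bT\rangle}_{xy}-G_{xy}(\Ext(\cB_r(\{x,y\},\cG^{\langle\bT\rangle}),Q),z)|\lesssim(\log N)\varepsilon$. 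Combining this with $|Q-m|_\infty\lesssim\varepsilon_*$, Lemma~\ref{p:subgraph} and the second bound of Proposition~\ref{p:tree-extension-bound} yields
\[
\Im G^{\langle\bT\rangle}_{xx}\lesssim |\Im m^{\rm ver}|_\infty+\varepsilon_*+(\log N)\varepsilon .
\]
The Ward identity on the filled exterior then gives $\sum_y|G^{\langle\bT\rangle}_{xy}|^2\lesssim(|\Im m^{\rm ver}|_\infty+\varepsilon_*+(\log N)\varepsilon)/\eta$. By the definition \eqref{e:def-varphi} of $\varphi$, this means
\[
\#\{y:|G^{\langle\bT\rangle}_{xy}|\ge\varphi\}\lesssim N4^{-2\ell}.
\]
Since the filled exterior has bounded degree, we also have $\#\{y:\dist(x,y)\le\fR/4\}\le (d_{\max}-1)^{\fR/4}\le N^{\fc/16}$. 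Together, $|\{y: y\simeq x\}|\lesssim N4^{-2\ell}$ for every $x\in\bG\setminus\bT$. The branch vertices of the tree extension are irrelevant here, since $b_\alpha,c_\alpha$ are original vertices.

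The probabilistic step uses Lemma~\ref{lem:switchable}. Conditionally on $\cG$, the pairs $(b_\alpha,c_\alpha)$ are independent over $\alpha$, and each satisfies $\P_\cG(b_\alpha=y),\P_\cG(c_\alpha=y)\lesssim N^{-1}$. Hence, for a fixed $x$ and each $\alpha$, $\P_\cG(x\simeq\{b_\alpha,c_\alpha\})\lesssim 4^{-2\ell}$. The count $\sum_\alpha \mathbf 1\{x\simeq\{b_\alpha,c_\alpha\}\}$ is thus a sum of $\mu\sim4^\ell$ independent Bernoulli variables with mean $\lesssim 4^{-\ell}\ll1$. A Chernoff/binomial tail bound gives
\[
\P\bigl(\text{count}\ge k\bigr)\le \binom{\mu}{k}(C4^{-2\ell})^k\le (C4^{-\ell})^k .
\]
Taking $k=C'\fd\log N/\log\log N\lesssim\log N$ makes this $\le N^{-\fd-2}$. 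A union bound over the $\OO(N)$ vertices $x$ gives the first estimate in \eqref{e:cell-exceptional}.

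For the second estimate, note that an index $\alpha\in\mathsf W_{\mathbf S}\setminus\mathsf U$ must have $\{b_\alpha,c_\alpha\}$ related by $\simeq$ to some $a_\beta$, or to some other pair $\{b_\beta,c_\beta\}$. In either case $\alpha$ appears in a \emph{collision graph} on indices, defined as follows:
\begin{itemize}
\item $\alpha$ is linked to the root cell if $\{b_\alpha,c_\alpha\}\simeq\{a_1,\dots,a_\mu\}$;
\item $\alpha\sim\beta$ if $\{b_\alpha,c_\alpha\}\simeq\{b_\beta,c_\beta\}$.
\end{itemize}
The root-linking events are bounded as follows. For fixed $\alpha$, the deterministic count above gives $\P_\cG(\{b_\alpha,c_\alpha\}\simeq\{a_\beta\}_\beta)\lesssim \mu\cdot4^{-2\ell}\lesssim4^{-\ell}$, independently over $\alpha$. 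The same binomial tail as before shows that, with probability $1-\OO(N^{-\fd})$, at most $\OO(\log N)$ indices are root-linked.

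For pairwise links we use sequential exposure. Expose $(b_\beta,c_\beta)$ in order of $\beta$. Conditionally on the earlier pairs, the probability that $\alpha$ links to some earlier pair is $\lesssim \mu\cdot 4^{-2\ell}\lesssim4^{-\ell}$, by the bound on $\{y:y\simeq x\}$ applied to the at most $2\mu$ earlier vertices. A martingale (Freedman/Azuma-type) or direct binomial-domination argument then bounds the number of indices having any such link by $\OO(\log N)$, outside an event of probability $N^{-\fd}$. Every index in $\mathsf W_{\mathbf S}\setminus\mathsf U$ is either root-linked or an endpoint of a pairwise link, and each link involves two indices, so $|\mathsf W_{\mathbf S}\setminus\mathsf U|$ is at most twice the sum of these counts.

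The main obstacle we expect is the deterministic Ward step: controlling $\Im G^{\langle\bT\rangle}_{xx}$ uniformly near edges and cusps on the filled graph. This is precisely why $\varphi$ is defined with $\varepsilon_*$ and $(\log N)\varepsilon$ inside the square root. We would check that the localization comparison transfers imaginary-part bounds with these errors, using Lemma~\ref{p:localization} together with Proposition~\ref{p:tree-extension-bound}. A secondary subtlety is that $\simeq$ also involves the $l_\beta^\star$ vertices. Their effect is handled through $|G^{\langle\bT\rangle}_{xl^\star_\beta}|\lesssim|G^{\langle\bT\rangle}_{xa_\beta}|$, which changes only constants.
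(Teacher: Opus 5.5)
Your argument is essentially the paper's. It runs through the same steps:
\begin{enumerate}
\item the bound $\Im G^{\langle\bT\rangle}_{xx}\lesssim|\Im m^{\rm ver}|_\infty+\varepsilon_*+(\log N)\varepsilon=4^{-2\ell}N\eta\varphi^2$;
\item the Ward identity, which gives $\lesssim 4^{-2\ell}N$ vertices related to a fixed $x$;
\item Lemma~\ref{lem:switchable}, which gives a per-index probability $\lesssim 4^{-2\ell}$;
\item conditional independence over $\alpha$, a binomial tail, and a union bound over $x$.
\end{enumerate}
The first estimate is fine as you wrote it. For the second estimate you are more explicit than the paper, which only says ``the same union-bound argument.'' However, your final counting step does not follow as written.

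Sequential exposure bounds $\#\{\alpha:\alpha\text{ is linked to some }\beta<\alpha\}$, which counts only the \emph{later} endpoints of links. An index linked only to later indices is not counted, and one later index can be linked to several earlier ones. So ``each link involves two indices'' does not give a factor $2$; that would require counting links, not indices having at least one backward link.

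The fix is immediate. Run the same exposure in reverse order. This bounds, with the same probability, the number of indices linked to some \emph{later} index. Every index in $\mathsf W_{\mathbf S}\setminus\mathsf U$ is then root-linked, a later endpoint, or an earlier endpoint, and each of the three counts is $\OO(\log N)$. Alternatively, you can bound the number of link edges by counting forests of links.

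Also note that in both the root-linked and the pairwise steps the per-index probability is $p\asymp 4^{-\ell}\asymp\mu^{-1}$. The expected count is therefore $\OO(1)$, and the crude bound $\binom{\mu}{k}p^k\le(\mu p)^k$ you displayed earlier is no longer small. You need $\binom{\mu}{k}p^k\le(e\mu p/k)^k$, which for $k\asymp\log N$ gives $(C/\log N)^{C\log N}\ll N^{-\fd-1}$. This is the form the paper uses.

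With these two adjustments the proof is complete. Your remark about the $l^\star_\beta$ vertices is unnecessary here, since $\simeq$ is only defined on $\bG\setminus\bT$.
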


\begin{proof}
Since $F_2(\cG)\subset F_1(\cG)$, the geometric properties are immediate from Proposition~\ref{prop:distdeg}, so it remains only to prove the Green's function bound and the sparsity of the cell structure.

\textit{Step 1: Imaginary part of diagonal term.}
Since $\cG\in\Omega(z)$ or $\Omega_o^+(z)$, we have $|Q-m|_\infty\lesssim\epsilon_*$.
Consequently,
\begin{equation}\label{eq.imG_xx}
  \begin{aligned}
    |\im G^{\langle\bT\rangle}_{xx}(z)|&\lesssim |\im G_{xx}\big(\Ext(\cB_r(x,\cG^{\langle\bT\rangle}),Q),z\big)|+(\log N)\epsilon\\
    &\lesssim|\im G_{xx}\big(\Ext(\cB_r(x,\cG^{\langle\bT\rangle}),m),z\big)|+|Q-m|_\infty+(\log N)\epsilon\\
    &\lesssim |\im m^{\rm ver}|_\infty+\epsilon_*+(\log N)\epsilon=4^{-2\ell}N\eta\varphi^2,
  \end{aligned}
\end{equation}
where we use Lemma \ref{p:subgraph} in the second inequality, Proposition \ref{p:tree-extension-bound} in the third inequality and the definition of $\varphi$ in the last equality.

Using the above inequality, the Ward identity \eqref{e:Ward} on the filled graph gives
\[
\sum_{y\in\bG\setminus\bT}
\bigl|G^{\langle\bT\rangle}_{xy}(z)\bigr|^2
\lesssim
4^{-2\ell}N\varphi^2.
\]

\textit{Step 2: One-cell correlation probability.}
For every $x\in\bG\setminus\bT$, set $\bV_x:=\{u\in\bG\setminus\bT:|G_{xu}^{\langle\bT\rangle}|\ge\varphi\}$. It follows directly that $|\bV_x|\lesssim 4^{-2\ell}N$.
Thus, for every $x\in\bG\setminus\bT$, we have
\begin{align*}
  \P_\cG(x\simeq\{b_\alpha,c_\alpha\})&\lesssim\P_\cG\big(\dist_{\cG^{\langle\bT\rangle}}(x,\{b_\alpha,c_\alpha\})\le\fR/4\big)+\P_\cG\big(|G^{\langle\bT\rangle}_{x b_\alpha}|\vee|G^{\langle\bT\rangle}_{x c_\alpha}|\ge\varphi\big)\\
  &\lesssim\frac{(d_{\rm max}-1)^{\fR/4}}{N}+\frac{|\bV_x|}{N}\lesssim 4^{-2\ell}.
\end{align*}

\textit{Step 3: Sparsity of the cell structure.}
We now conclude by the same union-bound argument as in
\cite[Proposition~5.14]{Huang2024Spectrum}.
For every fixed $x\in\bG\setminus\bT$, the events
\[
x\simeq\{b_\alpha,c_\alpha\},
\qquad \alpha\in[1,\mu],
\]
are independent under the local sampling. Hence, for any sufficiently
large constant $C>0$,
\[
\begin{aligned}
\P_\cG\left(
\left|
\left\{
\alpha\in[1,\mu]:
x\simeq\{b_\alpha,c_\alpha\}
\right\}
\right|
\ge C\log N
\right)
\le
\binom{\mu}{C\log N}
\bigl(C4^{-2\ell}\bigr)^{C\log N} \le
\left(
\frac{C\mu4^{-2\ell}}{\log N}
\right)^{C\log N}
\ll N^{-\fd-2},
\end{aligned}
\]
where we used $\mu\sim4^\ell\sim(\log N)^{\fa}$. A union bound over
$x\in\bG\setminus\bT$ proves the first inequality in \eqref{e:cell-exceptional} with probability
$1-\OO(N^{-\fd})$.

It remains to prove the second inequality \eqref{e:cell-exceptional}. By the definition of
the cells, an index $\alpha\in\mathsf W_S$ for which
$\{b_\alpha,c_\alpha\}$ is not itself one of the cells
$\mathbb S_s$, $s\ge1$, can only occur if
$
\alpha\in \sB_1\cup \sB_2
$
where $\sB_1$ and $\sB_2$ are defined in \eqref{distdega} and \eqref{distdegb},
or if
\[
\{b_\alpha,c_\alpha\}
\simeq
\{a_\beta\}_{\beta=1}^\mu
\bigcup_{\substack{\beta\in\mathsf W_S\\ \beta\ne\alpha}}
\{b_\beta,c_\beta\}.
\]
On the event $F_1(\cG)$ of Proposition~\ref{prop:distdeg}, we have
$
|\sB_1|+|\sB_2|\lesssim\fd.
$
For the remaining indices, the same union-bound argument as above
shows that their number is at most $\OO(\log N)$ with probability
$1-\OO(N^{-\fd})$. Therefore,
\[
\left|
\left\{
\alpha\in\mathsf W_S:
\{b_\alpha,c_\alpha\}\ne\mathbb S_s
\text{ for every }s\ge1
\right\}
\right|
\lesssim\log N,
\]
which proves \eqref{e:cell-exceptional}.

By the above argument, conditionally on $F_1(\cG)$,
\eqref{e:cell-exceptional} holds with probability
$1-\OO(N^{-\fd})$. Since
$\P_\cG(F_1(\cG))=1-\OO(N^{-\fd})$, taking $F_2(\cG)$ to be the
intersection of $F_1(\cG)$ with this event yields
\[
F_2(\cG)\subset F_1(\cG),
\qquad
\P_\cG(F_2(\cG))=1-\OO(N^{-\fd}).
\]

\end{proof}

\subsection{Stability under switching of the filled exterior graph}\label{subsec:stability-switching-filled-exterior}
We next study the effect of the admissible switchings on the filled
exterior graph $\cG^{\langle\bT\rangle}$. The goal of this subsection
is to show that the local Green-function approximation remains stable
after these switchings, and to obtain a sharper off-diagonal bound for
isolated switching cells. 

\begin{proposition}
[Stability under switching of the filled exterior graph]
\label{prop:stability-switching-filled-exterior}
Fix $z\in\bC_+(\mathcal I_\delta)$ with $\eta\ge (\log N)^\fb/N$ and let $\cG\in\Omega(z)$ or $\Omega^+_o(z)$. Then for every $\mathbf S\in F_2(\cG)$, defined in Proposition \ref{prop:cell-sparsity}, with $\widetilde G=T_{\mathbf S}(\cG)$, the following estimates hold:
\begin{itemize}
  \item For all \(i,j\in\bG\setminus\bT\),
\begin{equation}\label{eq:filled-analogue-550}
\left|
\widetilde G^{\langle\bT\rangle}_{ij}(z)
-
G_{ij}
\left(
\operatorname{Ext}
\bigl(
\mathcal B_r
(i,j,\widetilde{\mathcal G}^{\langle\bT\rangle}),Q(z)
\bigr),
z
\right)
\right|
\lesssim(\log N)
\varepsilon.
\end{equation}

\item For every $\alpha\in \mathsf U$, defined in Definition \ref{def:cell-structure}, we have
\begin{equation}
\label{eq:filled-analogue-551-553}
\left|
\widetilde G^{\langle\bT\rangle}_{i c_\alpha}(z)
\right|
\lesssim
\varphi
+
(\log N)^2\varepsilon^2,
\end{equation}
provided that 
\[
\mbox{either}\qquad
i=c_\beta
\quad\text{for some}\quad
\beta\in\mathsf W_S\setminus\{\alpha\}\qquad\mbox{or}\qquad i\not\simeq\{b_\alpha,c_\alpha\}~\mbox{and}~
\operatorname{dist}_{\widetilde{\mathcal G}^{\langle\bT\rangle}}
(i,a_\alpha)
\geq \mathfrak R/4.
\]
\end{itemize}
\end{proposition}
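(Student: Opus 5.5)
We follow the template of the proof of Proposition~\ref{prop:woodbury-stability-filled-T}: compare the switched filled graph $\widetilde\cG^{\langle\bT\rangle}$ with a local reference via the Woodbury error expansion of Lemma~\ref{lem:woodbury-error-expansion}. The base graph is $\cG^{\langle\bT\rangle}$, on which Proposition~\ref{prop:woodbury-stability-filled-T} (or directly \eqref{e:Omega-o-plus-filled} when $\cG\in\Omega_o^+$) already gives $|G^{\langle\bT\rangle}_{uv}-P_{uv}|\lesssim(\log N)\varepsilon$. The perturbation consists of the admissible pair switchings $(l^\star_\alpha,a_\alpha),(b_\alpha,c_\alpha)\to(l^\star_\alpha,c_\alpha),(b_\alpha,a_\alpha)$ for $\alpha\in\mathsf W_{\mathbf S}$. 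Here $l^\star_\alpha$ is the root of the filled branch that replaced the edge $(l_\alpha,a_\alpha)$, so these switchings are exactly the ones inducing $T_{\mathbf S}$ on the exterior.

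\textbf{Reference graph.} We take the local core to be $\cK:=\cB_r(\{i,j\}\cup\bW''_{\mathbf S},\cG^{\langle\bT\rangle})$ and set $P:=G(\Ext(\cK,Q),z)$, $\widetilde P:=G(\Ext(\widetilde\cK,Q),z)$ (zero-extended), and $D:=G^{\langle\bT\rangle}-P$. By Lemma~\ref{p:localization} and \eqref{e:Omega-o-plus-filled}, we have $|D_{uv}|\lesssim(\log N)\varepsilon$ on the relevant support. The cell decomposition (Definition~\ref{def:cell-structure}, \eqref{e:cell-separation}) makes $\cK$ a disjoint union of components of diameter $\OO(r)$, each with at most one cycle. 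Singleton-pair cells split off, and only $\OO(\log N)$ exceptional indices lie in $\mathsf W_{\mathbf S}\setminus\mathsf U$, by \eqref{e:cell-exceptional}.

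\textbf{Kernel estimates.} As in Lemma~\ref{lem:compressed-woodbury-filled-T}, we would compute the boundary kernel $F_\partial$ of \eqref{e:woodbury-L} from the blocks \eqref{e:four-F-blocks}. For isolated cells these blocks are essentially diagonal: $\widetilde P_{a_\alpha b_\alpha}$ and $\widetilde P_{c_\alpha l^\star_\alpha}$ are tree quantities given by Proposition~\ref{greentree}, and cross-cell entries vanish identically. The interactions inside $\mathbb S_0$ (the $a_\alpha$'s) are controlled by row sums as in Lemma~\ref{claim:boundary-sum}, and the multi-pair cells are handled using \eqref{distfinitedegb} and \eqref{e:cell-exceptional}. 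This gives $\|F_\partial\|_\infty\lesssim\log N$ and transfer rows $\sum_u|W_{wu}|\lesssim\log N$. We then get $\|DJF_\partial J^{\mathsf T}\|_\infty\lesssim\mu(\log N)^2\varepsilon\ll1$, so the series converges. Bounding the $k=0$ term and summing geometrically yields $|\widetilde G^{\langle\bT\rangle}_{ij}-\widetilde P_{ij}|\lesssim(\log N)\varepsilon$, up to log factors. Lemma~\ref{p:localization} applied to $\widetilde\cK$ then gives \eqref{eq:filled-analogue-550}.

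\textbf{Off-diagonal bound \eqref{eq:filled-analogue-551-553}.} We expand $\widetilde G^{\langle\bT\rangle}_{ic_\alpha}$ with the same formula. In either case of the hypothesis, $i$ and $c_\alpha$ lie in different components of $\widetilde\cK$, so $\widetilde P_{ic_\alpha}=0$. For the leading term $((I+PF)D(I+FP))_{ic_\alpha}$ we split $D=G^{\langle\bT\rangle}-P$. The $G^{\langle\bT\rangle}$ entries linking $i$ to the switching support of the isolated cell $\{b_\alpha,c_\alpha\}$ (namely $c_\alpha,b_\alpha,a_\alpha,l^\star_\alpha$) are bounded by $\varphi$, using \eqref{e:cell-switching-support}, $i\not\simeq\{b_\alpha,c_\alpha\}$, and $|G^{\langle\bT\rangle}_{xl^\star_\alpha}|\lesssim|G^{\langle\bT\rangle}_{xa_\alpha}|$; the corresponding $P$ entries vanish by separation. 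Every higher term, as well as the parts routed through other boundary vertices, contains two $D$ factors, each $\OO((\log N)\varepsilon)$; this produces the $(\log N)^2\varepsilon^2$ term. The case $i=c_\beta$ is handled symmetrically, with both endpoints in isolated cells.

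\textbf{Main obstacle.} The delicate step is the second bullet. The single-$D$ contributions running from $i$ through the $a_\beta$'s, or through non-isolated cells, must be shown to be $\OO(\varphi)$ rather than $\OO((\log N)\varepsilon)$. I would first try to show that the only single-$D$ paths reaching $c_\alpha$ pass through its own cell, where entries are $G^{\langle\bT\rangle}$ values already below $\varphi$; all other routes would then need a second $D$ factor.
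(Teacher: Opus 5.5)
\textbf{Gap in the second bullet.} Your framework is the paper's: the Woodbury error expansion against a localized reference containing all switching vertices, as in Lemma~\ref{lem:compressed-woodbury-filled-switching}. The gap is in the second bullet, and your last paragraph only partly acknowledges it.

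Your leading-term argument asserts that $G^{\langle\bT\rangle}_{ia_\alpha}$ and $G^{\langle\bT\rangle}_{il^\star_\alpha}$ are $\lesssim\varphi$. Nothing gives this.
\begin{itemize}
\item $a_\alpha$ lies in $\mathbb S_0$, not in the isolated cell $\{b_\alpha,c_\alpha\}$. So neither \eqref{e:cell-switching-support} nor $i\not\simeq\{b_\alpha,c_\alpha\}$ controls it.
\item The hypothesis $\dist(i,a_\alpha)\ge\fR/4$ only kills the reference entry $\widetilde P_{ia_\alpha}$, not $G^{\langle\bT\rangle}_{ia_\alpha}$.
\item Routes through other boundary vertices, such as $W_{ia_\beta}D_{a_\beta a_\alpha}W_{c_\alpha a_\alpha}$, carry a \emph{single} $D$ factor. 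That factor equals $G^{\langle\bT\rangle}_{a_\beta a_\alpha}$ and is only $\OO((\log N)\varepsilon)$. So your claim that such routes contain two $D$ factors is false for the $k=0$ term.
\end{itemize}
These terms are genuinely present. In row $c_\alpha$ of $I+\widetilde P\Xi$, the coefficients at $l^\star_\alpha$ and $a_\alpha$ are $-\widetilde P_{c_\alpha c_\alpha}$ and $-m_{a_\alpha l_\alpha}\widetilde P_{c_\alpha c_\alpha}$, both of order one. No triangle-inequality bound can therefore reach $\varphi+(\log N)^2\varepsilon^2$.

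\textbf{The missing idea is an exact cancellation.} Before switching, $l^\star_\alpha$ roots an exact universal-cover branch hanging off $a_\alpha$. If the reference keeps this branch whole, rather than its radius-$r$ truncation, the Schur complement gives $D_{\cdot\, l^\star_\alpha}=-m_{a_\alpha l_\alpha}D_{\cdot\, a_\alpha}$. The two contributions above then cancel.

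After eliminating $l^\star_\alpha$, the $a_\alpha$-coefficient of the effective row $W_{c_\alpha}$ is $-(\widetilde P_{c_\alpha b_\alpha}+m_{a_\alpha l_\alpha}\widetilde P_{c_\alpha a_\alpha})$, and this vanishes. Indeed, $I_\alpha=1$ makes the local picture a tree, so the switching disconnects $c_\alpha$ from $\{a_\alpha,b_\alpha\}$ and $\widetilde P_{c_\alpha a_\alpha}=\widetilde P_{c_\alpha b_\alpha}=0$. Together with cross-cell vanishing from $J_\alpha=1$, this gives $\operatorname{supp}W_{c_\beta}\subset\{b_\beta,c_\beta\}$ for \emph{every} $\beta\in\mathsf W_S$. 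So in the case $i=c_\beta$ you do not need $\beta$ to be isolated.

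In the other case, $\widetilde P_{ia_\alpha}=\widetilde P_{ib_\alpha}=\widetilde P_{ic_\alpha}=0$. This is because a short path from $i$ to $\{b_\alpha,c_\alpha\}$ after switching would have to pass through $a_\alpha$. It follows that $\operatorname{supp}W_i\subset\{i\}\cup\bigcup_{\beta\neq\alpha}\{a_\beta,b_\beta,c_\beta\}$.

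Consequently the single-$D$ term only involves $D_{xy}=G^{\langle\bT\rangle}_{xy}$ with $y\in\{b_\alpha,c_\alpha\}$ and $x$ outside the $\alpha$-cell. These entries are $<\varphi$ by $\alpha\in\mathsf U$ and $i\not\simeq\{b_\alpha,c_\alpha\}$. The same support property gives $|(DW_{c_\alpha}^{\mathsf T})_x|\lesssim\varphi$ on the other switching vertices, which the higher-order terms need.

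\textbf{First bullet.} You only reach $(\log N)\varepsilon$ ``up to log factors.'' This falls short of the stated bound and of \eqref{e:Omega-o-plus-filled}, which Proposition~\ref{p:Omega+} needs. The loss is avoidable:
\begin{itemize}
\item Organize the kernel by $J_\alpha=1$ (Proposition~\ref{prop:elliptic-reverse}(iii)) rather than by the $\simeq$-cells. Then $F_\partial$ is a direct sum of $2\times2$ blocks with $\|F_\partial\|_\infty\lesssim1$.
\item By \eqref{e:lessshortdist}, only $\OO(\fd)$ switching cells lie within range of a given $w$, so $\sum_u|W_{wu}|\lesssim1$.
\end{itemize}
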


Recall
$
\mathsf W_S\subset\llbracket 1,\mu\rrbracket
$
denotes the admissible switching indices. For every
\(\alpha\in\mathsf W_S\), the graph
\(\mathcal G^{\langle\bT\rangle}\) contains the two edges
$
(a_\alpha,l_\alpha^\star)
$
and
$
(b_\alpha,c_\alpha).
$
The switching replaces them by
\begin{equation}
\label{eq:filled-exterior-switching}
(a_\alpha,l_\alpha^\star),
\quad
(b_\alpha,c_\alpha)
\longrightarrow
(a_\alpha,b_\alpha),
\quad
(c_\alpha,l_\alpha^\star).
\end{equation}
For \(\alpha\notin\mathsf W_S\), no switching is performed. 
Since all switchings in this step act only on the exterior
graph, the interior component $\cG^{\rm in}$ does not enter this subsection.
The exterior component after all switchings is
\(\widetilde{\mathcal G}^{\langle\bT\rangle}\), and its Green matrix is
\(\widetilde G^{\langle\bT\rangle}\).
The exterior switching construction is illustrated in
Figure~\ref{fig:filled-exterior-switching}.

\begin{figure}[H]
    \centering
    \includegraphics[width=1\linewidth]{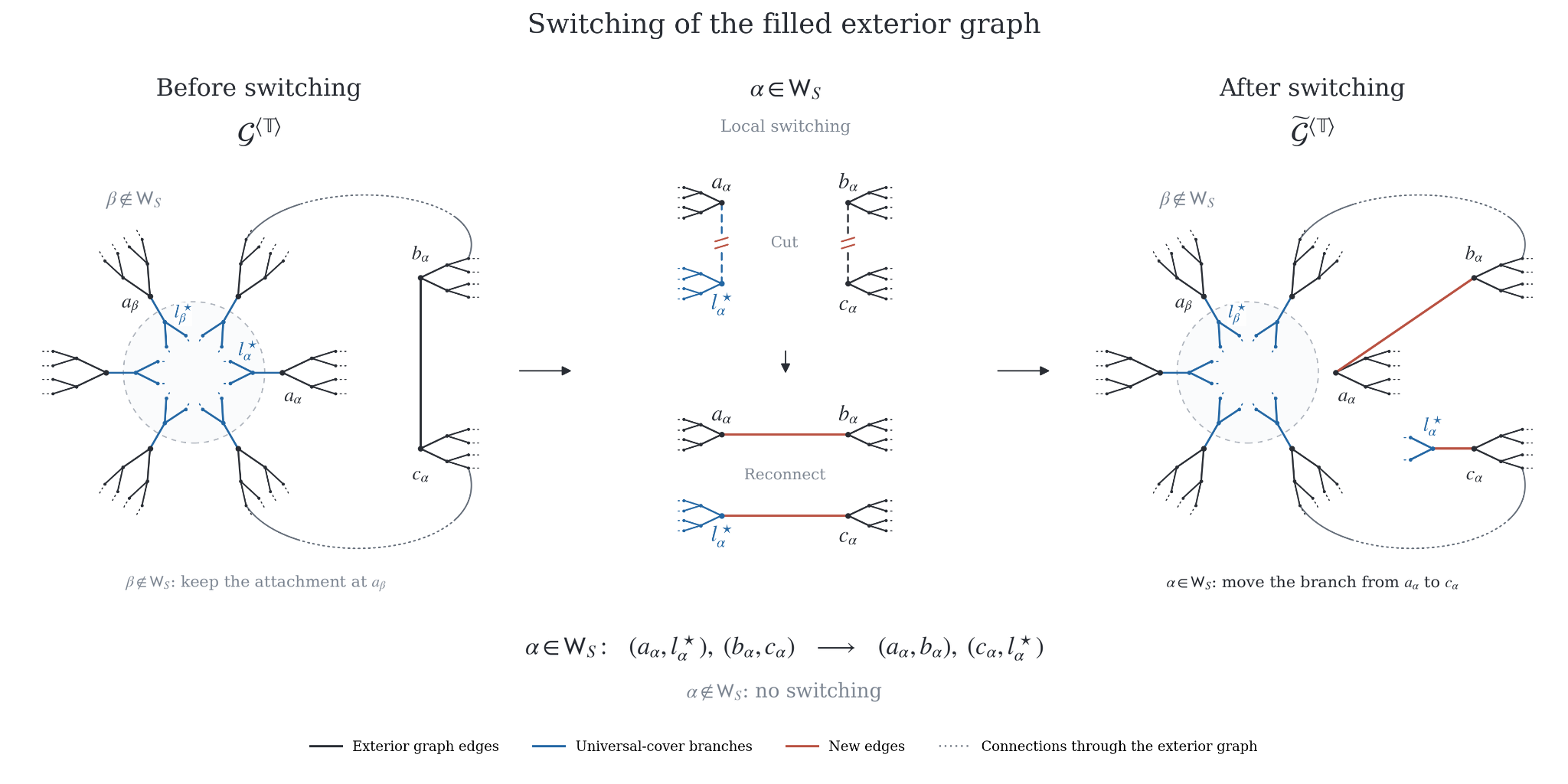}
    \caption{The filled exterior switching from
    $\cG^{\langle\bT\rangle}$ to
    $\widetilde{\cG}^{\langle\bT\rangle}$.}
    \label{fig:filled-exterior-switching}
\end{figure}

We next modify the localized comparison graph so that it contains the
same switching operations as the original graph. To this end, define
the set of original vertices involved in the admissible switchings by
\begin{equation}\label{def.bW_S}
\mathbb W_S
:=
\{a_\alpha,b_\alpha,c_\alpha:
\alpha\in\mathsf W_S\},
\end{equation}
and enlarge the original localized neighborhood
\(\mathcal B_r(\{i,j\},\mathcal G^{\langle\bT\rangle})\) to
\[
\mathcal G_o^{\mathsf W_S}
:=
\mathcal B_r
\bigl(
\{i,j\}\cup\mathbb W_S,
\mathcal G^{\langle\bT\rangle}
\bigr).
\]
Here and throughout this section, the radius-$r$ localization is
understood to apply only to the original vertices of $\mathcal G$:
all universal-cover branches introduced by the filled removal and
attached to the retained original vertices are kept in their entirety.
In particular, for every $\alpha\in\mathsf W_S$, the whole
universal-cover branch attached to $a_\alpha$ through
$l_\alpha^\star$ is contained in $\mathcal G_o^{\mathsf W_S}$ and is
not truncated by the radius-$r$ localization.

Thus, for every \(\alpha\in\mathsf W_S\), the localized graph contains
all vertices needed to realize the switching
\eqref{eq:filled-exterior-switching}. 
For notational simplicity, we slightly abuse notation and continue to
write $P^{\langle\bT\rangle}$ for the localized reference Green matrix
constructed from $\mathsf W_S$ throughout this section.
After attaching the boundary
weights \(Q\) to \(\mathcal G_o^{\mathsf W_S}\), we denote the resulting
localized Green matrix by
\begin{equation}\label{def.P<T>}
P^{\langle\bT\rangle}:=G(\Ext(\cG^{\sf W_S}_o,Q(z)),z).
\end{equation}
The approximation error before switching is defined by 
$
D^{\langle\bT\rangle}:=G^{\langle\bT\rangle}-P^{\langle\bT\rangle}.
$
We denote by
$
\widetilde{\mathcal G}_o^{\mathsf W_S}
$
the graph obtained from \(\mathcal G_o^{\mathsf W_S}\) by performing,
for every \(\alpha\in\mathsf W_S\), the same switching
\eqref{eq:filled-exterior-switching} as in
\(\mathcal G^{\langle\bT\rangle}\).
Attaching the same boundary weights \(Q(z)\) to
\(\widetilde{\mathcal G}_o^{\mathsf W_S}\), we define the corresponding
switched localized Green matrix by
\[
\widetilde P^{\langle\bT\rangle}:=G(\Ext(\widetilde \cG^{\sf W_S}_o,Q(z)),z).
\]

Again, we first calculate the refined Woodbury formula in this case.

\begin{lemma}
\label{lem:compressed-woodbury-filled-switching}
Under the same setting as Proposition \ref{prop:stability-switching-filled-exterior},
for each $\alpha\in\mathsf W_S$ define
\begin{equation}\label{eq:effective-switching-vectors}
v_\alpha^{\rm eff}
:=
-e_{b_\alpha}
-
m_{a_\alpha l_\alpha}e_{a_\alpha}\in\bC^{\bG},
~
u_\alpha
:=
e_{a_\alpha}-e_{c_\alpha}\in\bC^{\bG},
\qquad
J
:=
\bigl(
v_\alpha^{\rm eff},u_\alpha
\bigr)_{\alpha\in\mathsf W_S}
\in
\mathbb C^{\bG\times 2|\mathsf W_S|}.
\end{equation}
Define the boundary kernel
$
F_\partial\in
\mathbb C^{2|\mathsf W_S|\times 2|\mathsf W_S|}
$
cellwise by
\begin{equation}
\label{eq:F-boundary-direct-sum-refined-switching}
F_\partial
=
\bigoplus_{\alpha\in\mathsf W_S}
F_\partial^{(\alpha)},\quad F_\partial^{(\alpha)}
=
\begin{pmatrix}
\widetilde P^{\langle\bT\rangle}_{a_\alpha a_\alpha}
+
\widetilde P^{\langle\bT\rangle}_{c_\alpha c_\alpha}
&
1
-
\widetilde P^{\langle\bT\rangle}_{a_\alpha b_\alpha}
+
m_{a_\alpha l_\alpha}
\widetilde P^{\langle\bT\rangle}_{c_\alpha c_\alpha}
\\[1mm]
1
-
\widetilde P^{\langle\bT\rangle}_{b_\alpha a_\alpha}
+
m_{a_\alpha l_\alpha}
\widetilde P^{\langle\bT\rangle}_{c_\alpha c_\alpha}
&
m_{a_\alpha l_\alpha}
+
\widetilde P^{\langle\bT\rangle}_{b_\alpha b_\alpha}
+
m_{a_\alpha l_\alpha}^{\,2}
\widetilde P^{\langle\bT\rangle}_{c_\alpha c_\alpha}
\end{pmatrix}.
\end{equation}
Moreover, let
$
W\in
\mathbb C^{(\bG\setminus\bT)\times\bG}
$
be the effective transfer matrix defined by
\begin{align*}
W_{wu}
=
\delta_{wu}
-
&\sum_{\alpha\in\mathsf W_S}\Big[
\bigl(
\widetilde P^{\langle\bT\rangle}_{wb_\alpha}
+
m_{a_\alpha l_\alpha}
\widetilde P^{\langle\bT\rangle}_{wa_\alpha}
\bigr)
\mathbf 1_{\{u=a_\alpha\}}\\-
&\bigl(
\widetilde P^{\langle\bT\rangle}_{wc_\alpha}
-
\widetilde P^{\langle\bT\rangle}_{wa_\alpha}
\bigr)
\mathbf 1_{\{u=b_\alpha\}}-\bigl(
\widetilde P^{\langle\bT\rangle}_{wb_\alpha}
+
m_{a_\alpha l_\alpha}
\widetilde P^{\langle\bT\rangle}_{wc_\alpha}
\bigr)
\mathbf 1_{\{u=c_\alpha\}}\Big].
\label{eq:transfer-entry-refined-switching}
\end{align*}
Then, for any $i,j\in\bG\setminus\bT$,
\begin{equation}
\label{eq:compressed-woodbury-series-refined-switching}
\widetilde G^{\langle\bT\rangle}_{ij}
-
\widetilde P^{\langle\bT\rangle}_{ij}
=
W_i
\sum_{k=0}^{\infty}
\left(
D^{\langle\bT\rangle}|_\bG
J F_\partial J^{\mathsf T}
\right)^k
D^{\langle\bT\rangle}|_\bG
W_j^{\mathsf T},
\end{equation}
where $W_i$ denotes the row of $W$ indexed by $i$.
\end{lemma}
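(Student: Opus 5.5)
We follow the proof of Lemma~\ref{lem:compressed-woodbury-filled-T}, with the filled exterior $\cG^{\langle\bT\rangle}$ as unswitched graph and $\Ext(\cG_o^{\mathsf W_S},Q)$ as localized reference. For each $\alpha\in\mathsf W_S$ the switching \eqref{eq:filled-exterior-switching} has the pair form \eqref{e:def-xi-pair} with $(x_\alpha,y_\alpha)=(a_\alpha,l_\alpha^\star)$ and $(x^\star_\alpha,y^\star_\alpha)=(c_\alpha,b_\alpha)$. This gives $U=(e_{a_\alpha}-e_{c_\alpha})_\alpha$ and $V=(e_{l^\star_\alpha}-e_{b_\alpha})_\alpha$. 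Lemma~\ref{lem:F-boundary-factorization} and Lemma~\ref{lem:woodbury-error-expansion}, applied with $G=G^{\langle\bT\rangle}$, $P=P^{\langle\bT\rangle}$ and $D=D^{\langle\bT\rangle}$, give
\[
\widetilde G^{\langle\bT\rangle}-\widetilde P^{\langle\bT\rangle}=(I+P^{\langle\bT\rangle}F)\sum_{k\ge0}(D^{\langle\bT\rangle}F)^kD^{\langle\bT\rangle}(I+FP^{\langle\bT\rangle}).
\]
Here $F$ is built from the four blocks $U^{\mathsf T}\widetilde P^{\langle\bT\rangle}U$, $U^{\mathsf T}\widetilde P^{\langle\bT\rangle}V$, $V^{\mathsf T}\widetilde P^{\langle\bT\rangle}U$ and $V^{\mathsf T}\widetilde P^{\langle\bT\rangle}V$. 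Convergence of the series is assumed, as in the statement.

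The first reduction removes the branch vertices $l^\star_\alpha$ from the factors adjacent to $D$. By the convention in the definition of $\cG_o^{\mathsf W_S}$, the whole universal-cover branch at $l^\star_\alpha$ is kept in both localized graphs. Hence $G^{\langle\bT\rangle}$ and $P^{\langle\bT\rangle}$ agree exactly on that branch, in the sense that $D^{\langle\bT\rangle}$ vanishes in every row and column indexed by branch vertices. Before switching, the branch hangs off $a_\alpha$, so the branch identity $G_{x l^\star_\alpha}=-m_{a_\alpha l_\alpha}G_{x a_\alpha}$ (Proposition~\ref{greentree}, \eqref{e:Schur1}) holds for both $G^{\langle\bT\rangle}$ and $P^{\langle\bT\rangle}$. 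After switching, the branch hangs off $c_\alpha$, so the same identity holds for $\widetilde P^{\langle\bT\rangle}$ with $c_\alpha$ in place of $a_\alpha$. Moreover, $\widetilde P^{\langle\bT\rangle}_{l^\star_\alpha l^\star_\alpha}=m_{a_\alpha l_\alpha}+m^2_{a_\alpha l_\alpha}\widetilde P^{\langle\bT\rangle}_{c_\alpha c_\alpha}$. Substituting these relations, every occurrence of $e_{l^\star_\alpha}$ sitting next to $D^{\langle\bT\rangle}$ is replaced by $-m_{a_\alpha l_\alpha}e_{a_\alpha}$. This explains the effective vector $v^{\rm eff}_\alpha=-e_{b_\alpha}-m_{a_\alpha l_\alpha}e_{a_\alpha}$, obtained as the restriction of $V$ to $\bG$ after this substitution. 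We thus get $F|_\bG=JF_\partial J^{\mathsf T}$ and $(D^{\langle\bT\rangle}F)^kD^{\langle\bT\rangle}=(D^{\langle\bT\rangle}|_\bG JF_\partial J^{\mathsf T})^kD^{\langle\bT\rangle}|_\bG$.

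The second step computes $F_\partial$ entrywise from the four blocks. The block-diagonal (cellwise) form requires $\widetilde P^{\langle\bT\rangle}_{xy}=0$ whenever $x\in\{a_\alpha,b_\alpha,c_\alpha,l^\star_\alpha\}$ and $y\in\{a_\beta,b_\beta,c_\beta,l^\star_\beta\}$ with $\alpha\neq\beta$. My plan is to obtain this from separation: by the $J_\alpha$ condition and \eqref{tcGTdist} the cells are more than $\fR/4\gg 2r$ apart, so their $r$-neighborhoods lie in different components of $\widetilde\cG_o^{\mathsf W_S}$. This is the step I expect to be the main obstacle. If $i$ or $j$ is close to several cells, their neighborhoods may link cells into one component of the localized graph, and then the cross entries do not vanish. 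Two routes are available. The first is to show that such links only produce entries through $i$ or $j$ and are absorbed into $W$. The second is to read the lemma as an identity for the local graph, where each cell component is distinct except through $\{i,j\}$, and to handle the linking cases separately later. Within a single cell, the entries should reduce as follows. First, $\widetilde P^{\langle\bT\rangle}_{a_\alpha c_\alpha}=0$, because after switching $a_\alpha$ and $c_\alpha$ lie on opposite sides of the new edges, and $I_\alpha$ gives a tree of radius $\fR/4$. Second, $\widetilde P^{\langle\bT\rangle}_{c_\alpha b_\alpha}=0$ for the same reason. Third, the branch identity at $c_\alpha$ handles the remaining $l^\star_\alpha$ entries. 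Expanding the block formula then gives the stated $2\times2$ matrix $F^{(\alpha)}_\partial$; for instance, its upper-left entry is $U^{\mathsf T}\widetilde PU=\widetilde P_{aa}+\widetilde P_{cc}-2\widetilde P_{ac}$ with the last term vanishing.

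Finally, the row of $I+P^{\langle\bT\rangle}F$ indexed by $w\in\bG\setminus\bT$ is identified as $W_w$. Since $I+P^{\langle\bT\rangle}F=I+\widetilde P^{\langle\bT\rangle}\Xi$ by \eqref{e:F-Xi-general}, we have $W_{wu}=\delta_{wu}+(\widetilde P^{\langle\bT\rangle}\Xi)_{wu}$. Here $\Xi=UV^{\mathsf T}+VU^{\mathsf T}$, restricted to columns in $\bG$ and with the $l^\star_\alpha$ column folded in by the branch identity. Reading off the coefficients of $\mathbf 1_{\{u=a_\alpha\}}$, $\mathbf 1_{\{u=b_\alpha\}}$ and $\mathbf 1_{\{u=c_\alpha\}}$ produces the stated formula for $W$. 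Complex symmetry of all the Green matrices gives the right factor $W_j^{\mathsf T}$, which completes \eqref{eq:compressed-woodbury-series-refined-switching}.
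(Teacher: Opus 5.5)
Your overall route is the paper's: the same $U=(e_{a_\alpha}-e_{c_\alpha})_\alpha$ and $V=(e_{l^\star_\alpha}-e_{b_\alpha})_\alpha$, the error expansion of Lemma~\ref{lem:woodbury-error-expansion}, folding of the $l^\star_\alpha$ coordinates, block vanishing of $\widetilde P^{\langle\bT\rangle}$, and $I+P^{\langle\bT\rangle}F=I+\widetilde P^{\langle\bT\rangle}\Xi$ for the transfer factor. There are two gaps.

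\textbf{The branch error does not vanish.} Your assertion that $D^{\langle\bT\rangle}$ vanishes in every row and column indexed by a branch vertex is false, and it contradicts the relation you use in the next sentence. In Lemma~\ref{lem:compressed-woodbury-filled-T} the error vanishes on the added trees only because the copies $\mathcal X_\alpha$ are separate components. Here the branch at $l^\star_\alpha$ hangs off $a_\alpha$ in both $\cG^{\langle\bT\rangle}$ and $\Ext(\cG_o^{\mathsf W_S},Q)$. The branch Green function $G^{\cY}$ is common to both matrices, but the Schur complement gives, for both, $G_{yx}=-G^{\cY}_{yl^\star_\alpha}G_{a_\alpha x}$ for $y$ in the branch and $x$ outside it, and $G_{yy'}=G^{\cY}_{yy'}+G^{\cY}_{yl^\star_\alpha}G_{a_\alpha a_\alpha}G^{\cY}_{l^\star_\alpha y'}$ inside it. Hence the branch rows of $D^{\langle\bT\rangle}$ are multiples of its $a_\alpha$ row. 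In particular $D^{\langle\bT\rangle}_{l^\star_\alpha\,\cdot}=-m_{a_\alpha l_\alpha}D^{\langle\bT\rangle}_{a_\alpha\,\cdot}$, which is nonzero in general. If $D^{\langle\bT\rangle}e_{l^\star_\alpha}$ were $0$, you would get $v^{\rm eff}_\alpha=-e_{b_\alpha}$ rather than the stated vector. This cavity relation (the paper's \eqref{eq:D-cavity-filled-switching}) is what does the work:
\begin{itemize}
\item it produces $v^{\rm eff}_\alpha$;
\item it lets the $l^\star_\alpha$ column of $I+\widetilde P^{\langle\bT\rangle}\Xi$ be absorbed into the $a_\alpha$ column, where the two $m_{a_\alpha l_\alpha}\widetilde P^{\langle\bT\rangle}_{wc_\alpha}$ terms cancel;
\item it ensures that only $D^{\langle\bT\rangle}|_\bG$ survives in the series.
\end{itemize}

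\textbf{The block-diagonal step is left open.} The step you call the main obstacle is not resolved, and your second fallback would weaken the lemma. In fact it closes by a distance count, and the identity holds exactly for all $i,j\in\bG\setminus\bT$. Suppose a component of $\widetilde{\cG}_o^{\mathsf W_S}$ contained vertices of two distinct cells.
\begin{itemize}
\item Take a simple path in it between them; it avoids the pendant branches.
\item Assign to each vertex a center in $\{i,j\}\cup\mathbb W_S$ at distance at most $r$, letting cell vertices be their own centers.
\item Between the last center in the starting cell and the first center in another cell, only $i$ and $j$ can occur. A switched edge joins two vertices of one cell, so it cannot lie in that stretch. Hence consecutive centers there are at distance at most $2r+1$ in $\cG^{(\bT)}$.
\item Removing repetitions, two distinct cells are at distance at most $3(2r+1)<\fR/4$ in $\cG^{(\bT)}$, since $r=\fR/50$. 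This contradicts item~3 of Proposition~\ref{prop:elliptic-reverse}.
\end{itemize}
So all cross-cell entries of $\widetilde P^{\langle\bT\rangle}$ vanish and $F_\partial$ is block diagonal. You also need the same count, which you did not give, for $\widetilde P^{\langle\bT\rangle}_{a_\alpha c_\alpha}=\widetilde P^{\langle\bT\rangle}_{b_\alpha c_\alpha}=0$ when the balls around $i,j$ are present. A bridge through $i$ or $j$ would give one of two things, both excluded by $I_\alpha=1$:
\begin{itemize}
\item $\operatorname{dist}_{\cG^{(\bT)}}(a_\alpha,\{b_\alpha,c_\alpha\})\le 6r+3<\fR/2$; or
\item a cycle through the edge $\{b_\alpha,c_\alpha\}$ inside $\mathcal B_{\fR/4}(\{b_\alpha,c_\alpha\},\cG^{(\bT)})$.
\end{itemize}
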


The derivation of the exact Woodbury identity proceeds by the same matrix-algebraic manipulations as in the proof of Lemma \ref{lem:compressed-woodbury-filled-T}. Since the argument is purely computational, we defer the proof of the above lemma to Appendix~\ref{app:woodbury}.

\begin{proof}[Proof of Proposition \ref{prop:stability-switching-filled-exterior}]
The equality \eqref{eq:F-boundary-direct-sum-refined-switching} together with $\widetilde{P}^{\langle\bT\rangle}\lesssim 1$ and $|m_{a_\alpha l_\alpha}|\lesssim 1$ directly 
yields
$
  \|F_\partial\|_\infty\lesssim 1.
$
By the analogue of Lemma \ref{claim:boundary-sum} for the switched comparison
graph and $|m_{a_\alpha l_\alpha}|\lesssim 1$, we obtain
\begin{equation}
\label{eq.sum-W}
\begin{aligned}
\sum_u |W_{wu}|
\leq
1+
\sum_{\alpha\in\mathsf W_S}
\Big[
2\left|
\widetilde P^{\langle\bT\rangle}_{w b_\alpha}
\right|
+
\bigl(1+|m_{a_\alpha l_\alpha}|\bigr)
\left(
\left|
\widetilde P^{\langle\bT\rangle}_{w a_\alpha}
\right|
+
\left|
\widetilde P^{\langle\bT\rangle}_{w c_\alpha}
\right|
\right)
\Big]
\lesssim 1 .
\end{aligned}
\end{equation}
Next we study the effective error bound of the error matrix $D^{\langle\bT\rangle}_\bG$. Proposition  \ref{prop:woodbury-stability-filled-T} gives 
\[
  |G^{\langle\bT\rangle}_{ij}-G_{ij}(\Ext(\cB_r(\{i,j\},\cG^{\langle\bT\rangle}),Q),z)|\lesssim(\log N)\varepsilon.
\]
By the same argument in the proof of Proposition \ref{prop:woodbury-stability-filled-T}, we can regard $D^{\langle\bT\rangle}|_\bG$ as satisfying the entrywise bound $|D^{\langle\bT\rangle}|_\bG|\lesssim(\log N)\varepsilon$.
Together with $
  \|F_\partial\|_\infty\lesssim 1
$ and \eqref{eq.sum-W},
repeating the argument in the proof of Proposition \ref{prop:woodbury-stability-filled-T} implies \eqref{eq:filled-analogue-550}.

 We next prove the refined estimates. We first record a consequence of
the geometric separation which will be used repeatedly. Since the
localization radius $r\le\fR/50$, whenever $\dist_{\cG^{\langle\bT\rangle}}(i,j)>\fR/4$ ($\dist_{\widetilde\cG^{\langle\bT\rangle}}(i,j)>\fR/4$), we have $P^{\langle\bT\rangle}_{ij}
=
0~(\widetilde P^{\langle\bT\rangle}_{ij}=0)$.

Fix $\alpha\in \mathsf U$.
Since \(\alpha\in U\subset \mathsf W_S\), Proposition \ref{prop:elliptic-reverse} implies that, whenever
$
i=c_\beta
$
and
$
\beta\in\mathsf W_S\setminus\{\alpha\},
$
$\dist_{\widetilde{\cG}^{\langle\bT\rangle}}(i,c_\alpha)>\fR/4$. Hence we obtain
$
\widetilde P^{\langle\bT\rangle}_{ic_\alpha}=0.
$
On the other hand, if
$
i\not\simeq\{b_\alpha,c_\alpha\},
$
we also claim that $
\operatorname{dist}_{\widetilde{\mathcal G}^{\langle\bT\rangle}}
(i,\{b_\alpha,c_\alpha\})\geq \mathfrak R/4.
$
Indeed, suppose that
\(
\operatorname{dist}_{\widetilde{\mathcal G}^{\langle\bT\rangle}}
(i,\{b_\alpha,c_\alpha\})<\mathfrak R/4
\)
and let \(\gamma\) be a corresponding path. By the isolation of
\(\alpha\in \mathsf U\subset\mathsf W_S\), the path \(\gamma\) cannot meet any
other switching cell, while it cannot enter the universal-cover branch
attached to \(c_\alpha\). Hence the only new switching edge that
\(\gamma\) could use is \(\{a_\alpha,b_\alpha\}\). This is also
impossible, since otherwise \(a_\alpha\in\gamma\) and therefore
$
\operatorname{dist}_{\widetilde{\mathcal G}^{\langle\bT\rangle}}
(i,a_\alpha)
\le |\gamma|<\mathfrak R/4,
$
contrary to the assumption. 
Consequently, we have $\widetilde P^{\langle\bT\rangle}_{ib_\alpha}=\widetilde P^{\langle\bT\rangle}_{ic_\alpha}=0$.
Therefore, in both cases under consideration, we have
\[
\widetilde G^{\langle\bT\rangle}_{ic_\alpha}
=
W_i D^{\langle\bT\rangle}\big|_{\bG} W^{\mathsf T}_{c_\alpha}+
W_i
\sum_{k=1}^{\infty}
\left(
D^{\langle\bT\rangle}|_\bG
J F_\partial J^{\mathsf T}
\right)^k
D^{\langle\bT\rangle}|_\bG
W_{c_\alpha}^{\mathsf T},
\]

We next estimate the leading order $(W D^{\langle\bT\rangle}|_\bG W^{\mathsf T})_{ic_\alpha}$.
Definition of $W_{uv}$ in Lemma \ref{lem:compressed-woodbury-filled-switching} and the block vanishing of
\(\widetilde P^{\langle\bT\rangle}\) \eqref{eq.block-vanish} imply
$
\operatorname{supp} W_{c_\beta}
\subset
\{b_\beta,c_\beta\}
$
for all $\beta\in\mathsf W_S$. 
Hence, when $i=c_\beta$ for some
$\beta\in\mathsf W_S\backslash\{\alpha\}$, only the block of
$D^{\langle\bT\rangle}|_\bG$ with rows indexed by
$\{b_\beta,c_\beta\}$ and columns indexed by
$\{b_\alpha,c_\alpha\}$ contributes to the expression under consideration.
 Since $\alpha\in\mathsf U\subset\mathsf W_S$, Proposition \ref{prop:elliptic-reverse} gives $\dist_{\cG^{\langle\bT\rangle}}(i,j)>\fR/4$ for $i\in \{b_\beta, c_\beta\}$ and $j\in\{b_\alpha,c_\alpha\}$, which implies $P^{\langle\bT\rangle}_{ij}=0$. Hence $D^{\langle\bT\rangle}_{ij}=G^{\langle\bT\rangle}_{ij}-P^{\langle\bT\rangle}_{ij}=G^{\langle\bT\rangle}_{ij}$. 
Moreover $\alpha\in \mathsf U$ gives $|G^{\langle\bT\rangle}_{ij}|\lesssim \varphi$. Therefore we can regard $D^{\langle\bT\rangle}|_\bG$ as satisfying the entrywise bound $\big|D^{\langle\bT\rangle}|_\bG\big|\lesssim\varphi$.
Together with \(\sum_u|W_{iu}|\lesssim1\), this yields
\begin{equation}\label{eq.Wphi}
\left|
W_iD^{\langle\bT\rangle}\big|_{\bG}
W_{c_\alpha}^{\mathsf T}
\right|
\lesssim\varphi.
\end{equation}
For the case $i\not\simeq\{b_\alpha,c_\alpha\}$ and
$
\operatorname{dist}_{\widetilde{\mathcal G}^{\langle\bT\rangle}}
(i,a_\alpha)\geq \mathfrak R/4
$, definition of $W_{uv}$ in Lemma \ref{lem:compressed-woodbury-filled-switching} and the block vanishing of
\(\widetilde P^{\langle\bT\rangle}\) \eqref{eq.block-vanish}, together with $\widetilde P^{\langle\bT\rangle}_{ib_\alpha}=\widetilde P^{\langle\bT\rangle}_{ic_\alpha}=0$ proved before, imply that $\supp W_i\subset\{i\}\bigcup_{\beta\neq\alpha}\{a_\beta,b_\beta,c_\beta\}$. On the other hand, $\supp W_{c_\alpha}\subset\{b_\alpha, c_\alpha\}$. Then \eqref{eq.Wphi} follows from the same argument.

It remains to estimate the higher-order terms in
\eqref{eq:compressed-woodbury-series-refined-switching}.
Here we exploit the isolation of the cell containing $\alpha$.
In the matrix products below, we suppress the restriction of
$D^{\langle\bT\rangle}$ to $\bG$.
Since $\alpha\in\mathsf U$, the cell separation implies
$P^{\langle\bT\rangle}_{xy}=0$ for
$x\in\{a_\beta,b_\beta,c_\beta\}$,
$\beta\in\mathsf W_S\setminus\{\alpha\}$, and
$y\in\{b_\alpha,c_\alpha\}$.
Hence \eqref{e:cell-switching-support} gives
$
|D^{\langle\bT\rangle}_{xy}|
=
|G^{\langle\bT\rangle}_{xy}|
\lesssim\varphi
$
on these entries.
Together with
$\supp W_{c_\alpha}\subset\{b_\alpha,c_\alpha\}$ and
$\sum_y|W_{c_\alpha y}|\lesssim1$, this implies that
$
|(D^{\langle\bT\rangle}W_{c_\alpha}^{\mathsf T})_x|
\lesssim
\varphi
$
on every switching cell different from the $\alpha$-cell, whereas
the crude bound gives
$
|(D^{\langle\bT\rangle}W_{c_\alpha}^{\mathsf T})_x|
\lesssim(\log N)\varepsilon
$
on the $\alpha$-cell.
Since $F_\partial$ is block diagonal with
$\|F_\partial\|_\infty\lesssim1$, it follows from the choice of
parameters that
\[
\|JF_\partial J^{\mathsf T}
D^{\langle\bT\rangle}W_{c_\alpha}^{\mathsf T}\|_1
\lesssim
(\log N)\varepsilon
+
4^\ell\varphi
\lesssim
(\log N)\varepsilon.
\]
Together with
$\|W_iD^{\langle\bT\rangle}\|_\infty\lesssim(\log N)\varepsilon$, this gives
$
|W_iD^{\langle\bT\rangle}JF_\partial J^{\mathsf T}
D^{\langle\bT\rangle}W_{c_\alpha}^{\mathsf T}|
\lesssim
(\log N)^2\varepsilon^2.
$
Moreover, by symmetry, we have
$
\|JF_\partial J^{\mathsf T}D^{\langle\bT\rangle}\|_{1}
=
\|D^{\langle\bT\rangle}JF_\partial J^{\mathsf T}\|_{\infty}
\lesssim
4^\ell(\log N)\varepsilon=o(1).
$
Thus, writing the $k$-th term as
$
W_iD^{\langle\bT\rangle}
(JF_\partial J^{\mathsf T}D^{\langle\bT\rangle})^{k-1}
JF_\partial J^{\mathsf T}
D^{\langle\bT\rangle}W_{c_\alpha}^{\mathsf T},
$
every additional factor
$JF_\partial J^{\mathsf T}D^{\langle\bT\rangle}$
produces a geometric gain in the $\ell^1$ norm.
Consequently,
\begin{equation}
\label{eq:higher-series-quadratic-refined}
\left|
W_i
\sum_{k=1}^{\infty}
\left(
D^{\langle\bT\rangle}|_\bG
JF_\partial J^{\mathsf T}
\right)^k
D^{\langle\bT\rangle}|_\bG
W_{c_\alpha}^{\mathsf T}
\right|
\lesssim
(\log N)^2\varepsilon^2.
\end{equation}
Combining \eqref{eq.Wphi} and
\eqref{eq:higher-series-quadratic-refined}, we conclude
\eqref{eq:filled-analogue-551-553}.
\end{proof}

\subsection{Stability after gluing back \(\bT\)}
We next glue the elliptic neighborhood $\bT$ back into the switched
filled exterior. The purpose of this subsection is to transfer the
stability estimates obtained for
$\widetilde{\cG}^{\langle\bT\rangle}$ to the full switched graph
$\widetilde{\cG}$. The resulting bounds depend on whether the two
vertices lie inside or outside $\bT$, reflecting the decay of the
Green function across the elliptic neighborhood.

\begin{proposition}[Stability after gluing back \(\bT\)]
\label{prop:woodbury-stability-glued-T}
Fix $z\in\bC_+(\mathcal I_\delta)$ with $\eta\ge (\log N)^\fb/N$ and let $\cG\in\Omega(z)$ or $\Omega^+_o(z)$. Then for every $\bFS\in F_2(\cG)$, defined in Proposition \ref{prop:cell-sparsity}, the following three estimates hold:
\begin{align}
&
\left|
\widetilde G_{ij}
-
G_{ij}\left(
\operatorname{Ext}
\bigl(
\mathcal B_r(\bT\cup\{i,j\},\widetilde{\mathcal G}),Q(z)
\bigr),z
\right)
\right|
\lesssim
\left(
\frac{1+|\ell_i-\ell_j|}
     {2^{|\ell_i-\ell_j|}}
+
\frac{(\log N)^2}
     {2^{2\ell-\ell_i-\ell_j}}
\right)(\log N)\varepsilon,
\qquad
i,j\in\bT,
\label{eq:glued-stability-TT}
\\
&
\left|
\widetilde G_{ij}
-
G_{ij}\left(
\operatorname{Ext}
\bigl(
\mathcal B_r(\bT\cup\{i,j\},\widetilde{\mathcal G}),Q(z)
\bigr),z
\right)
\right|
\lesssim
\frac{(\log N)^3\varepsilon}
     {2^{\ell-\ell_i}},\qquad
i\in\bT,\quad j\notin\bT,
\label{eq:glued-stability-T-out}
\\
&
\left|
\widetilde G_{ij}
-
G_{ij}\left(
\operatorname{Ext}
\bigl(
\mathcal B_r(\bT\cup\{i,j\},\widetilde{\mathcal G}),Q(z)
\bigr),z
\right)
\right|
\lesssim
(\log N)^3 \varepsilon,
\qquad
i,j\notin\bT,
\label{eq:glued-stability-out-out}
\end{align}
where $\ell_i$ is the elliptic level defined in \eqref{eq:elliptic-level}.
\end{proposition}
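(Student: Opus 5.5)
We reverse the filled removal: starting from the switched filled exterior $\widetilde\cG^{\langle\bT\rangle}$ together with the interior component $\cG^{\rm in}$, the full switched graph $\widetilde\cG$ is recovered by a second collection of pair switchings. These undo the boundary cuts, i.e.\ they exchange $(a_\alpha,l_\alpha^\star),(l_\alpha,a_\alpha^\star)$ back to $(a_\alpha,l_\alpha),(l_\alpha^\star,a_\alpha^\star)$. This is exactly the inverse of the construction in Lemma~\ref{lem:compressed-woodbury-filled-T}, now applied to the disjoint union $\widetilde\cG^{\langle\bT\rangle}\oplus\cG^{\rm in}$.

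The reference graph is the disjoint union of $\Ext(\widetilde\cG_o^{\langle\bT\rangle},Q)$ and $\cG^{\rm in}$. After the reverse switching, it becomes $\Ext(\mathcal B_r(\bT\cup\{i,j\},\widetilde\cG),Q)$ together with decoupled universal-cover copies. The error matrix is $\widetilde D=(\widetilde G^{\langle\bT\rangle}-\widetilde P^{\langle\bT\rangle})\oplus 0$, since the interior parts coincide exactly. By Proposition~\ref{prop:stability-switching-filled-exterior}, this error has entrywise size $(\log N)\varepsilon$, after the same localization-principle reduction used for $D$ in the proof of Proposition~\ref{prop:woodbury-stability-filled-T}. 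Lemma~\ref{lem:woodbury-error-expansion}, with the boundary-factorized kernel of Lemma~\ref{lem:F-boundary-factorization}, then gives a compressed expansion
\[
\widetilde G_{ij}-\widetilde P_{ij}=W_i\sum_{k\ge0}\bigl(\widetilde D JF_\partial J^{\mathsf T}\bigr)^k\widetilde D\, W_j^{\mathsf T}.
\]
The kernel $F_\partial$ is built from $\widetilde P^{\langle\bT\rangle}_{\bA\bA}$, $G^{\rm in}_{\bL\bL}$ and $M_\pm$, as in \eqref{eq:F-boundary-matrix}. The transfer row $W_i$ is supported on $\{i\}\cup\bA$ and, through $\bL$, on $\bT$-side entries. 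The key structural point is that $\widetilde D$ vanishes on every block touching $\bT$. Hence for $i\in\bT$ the row $W_i$ enters $\widetilde D$ only through the entries $W_{i a_\alpha}$, and these are proportional to $G^{\rm in}_{il_\beta}$-type quantities, namely products of interior Green functions with $M_\pm$.

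Each estimate then follows from a weight bound on $W_i$. For $i,j\notin\bT$ the row sums satisfy $\sum_u|W_{iu}|\lesssim\log N$: Lemma~\ref{claim:boundary-sum} and the cell-sparsity bound \eqref{e:cell-exceptional} control the exterior contributions, and the interior boundary sum contributes a factor $\ell\lesssim\log\log N$. The leading term is therefore $\lesssim(\log N)^2\cdot(\log N)\varepsilon$, and the higher-order terms form a geometric series as before because $\|\widetilde DJF_\partial J^{\mathsf T}\|_\infty\lesssim 4^\ell(\log N)^2\varepsilon\ll1$. This gives \eqref{eq:glued-stability-out-out}.

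For $i\in\bT$, we use Lemma~\ref{p:subgraph} to transfer Proposition~\ref{prop:elliptic-neighborhood-estimates}(iv) to $G^{\rm in}$ with weights $Q$. This gives
\[
\sum_\alpha|W_{ia_\alpha}|\lesssim\sum_{y\in\partial\cB^{\rm ell}_\ell}|G^{\rm in}_{iy}|\lesssim(1+\ell_i)2^{\ell-\ell_i}.
\]
That bound alone is too weak. The plan is therefore to exploit the $L^2$ structure instead: take Cauchy--Schwarz in the boundary index and use \eqref{eq:elliptic-boundary-L2}, \eqref{eq:elliptic-boundary-pair}, and the fact that the entries of $\widetilde D$ between distinct isolated switching cells are only $\lesssim\varphi$ by \eqref{e:cell-switching-support} and \eqref{eq:filled-analogue-551-553}.

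For $i,j\in\bT$ we split $\widetilde D$ restricted to $\bA\times\bA$ into its diagonal (same-cell) part and its off-diagonal part. The diagonal part, of size $(\log N)\varepsilon$, is summed against $|G^{\rm in}_{il_\alpha}||G^{\rm in}_{l_\alpha j}|$; by \eqref{eq:elliptic-boundary-pair} this yields the term $(1+|\ell_i-\ell_j|)2^{-|\ell_i-\ell_j|}(\log N)\varepsilon$. The off-diagonal part contains the $\OO(\log N)$ exceptional non-isolated cells, which are bounded using the root-to-boundary weight $2^{-(\ell-\ell_i)}$ from \eqref{eq:PF-boundary-root-weight}. The remaining isolated pairs contribute $\varphi\,(\sum_\alpha|G^{\rm in}_{il_\alpha}|)(\sum_\beta|G^{\rm in}_{jl_\beta}|)\lesssim\varphi\,\ell^2 4^\ell 2^{-\ell_i-\ell_j}$, which is absorbed since $\varphi 4^{2\ell}\ll\varepsilon$ by \eqref{e:parameter-basic-bounds}. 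Together these produce the term $(\log N)^2 2^{-(2\ell-\ell_i-\ell_j)}(\log N)\varepsilon$. The mixed case \eqref{eq:glued-stability-T-out} uses the same bookkeeping with only one factor from the interior.

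\textbf{Main obstacle.} The hard step is exactly this off-diagonal bookkeeping inside $\bT$: showing that the $\mu\sim4^\ell$ boundary terms do not sum coherently. This requires the cell decomposition, in which isolated cells give the gain $\varphi$ and only $\OO(\log N)$ indices are non-isolated. It also requires checking that the higher-order terms $k\ge1$ retain the same $2^{-(\ell-\ell_i)}$ factor from the outer $W_i$, and this holds because every term begins with the same row $W_i$.
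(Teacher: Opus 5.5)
Your proposal is correct and follows essentially the paper's route. It uses the gluing Woodbury expansion of Lemma~\ref{lem:compressed-woodbury-gluing} and splits $\widetilde D^{\langle\bT\rangle}$ on the boundary into two parts: same-index terms, controlled by \eqref{eq:elliptic-boundary-pair}, and cross terms, namely $\OO((\log N)^2)$ non-isolated pairs weighted by $2^{-(2\ell-\ell_i-\ell_j)}$ plus the remaining pairs of size $\lesssim\varphi+(\log N)^2\varepsilon^2$. Together with the row bound $|W_j|_1\lesssim\log N$ for exterior indices, this is exactly the paper's case analysis.

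Two small corrections. First, the edges being reglued are $(\widetilde a_\alpha,l_\alpha^\star)$, with $\widetilde a_\alpha=c_\alpha$ for $\alpha\in\mathsf W_S$, not $(a_\alpha,l_\alpha^\star)$. Second, in the gluing direction the compressed kernel and the rows $W_{i\widetilde a_\alpha}$ are expressed through the post-gluing comparison Green function $\widetilde P$, not through $\widetilde P^{\langle\bT\rangle}$ and $G^{\rm in}$. This changes nothing in your estimates, because $\widetilde P$ obeys the same elliptic path-weight bounds.
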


We first specify the localized exterior graph on
which the gluing operation will be performed.

Recall that
$
\widetilde a_\alpha
:=
\begin{cases}
c_\alpha,
&
\alpha\in\mathsf W_S,
\\
a_\alpha,
&
\alpha\notin\mathsf W_S,
\end{cases}
$
and 
$
\widetilde{\bA}
=
(\widetilde a_\alpha)_{\alpha=1}^\mu.
$
In the switched filled graph
$\widetilde{\mathcal G}^{\langle\bT\rangle}$,
the vertex $\widetilde a_\alpha$ is connected to the corresponding
truncated universal-cover branch through the edge
$
(\widetilde a_\alpha,l_\alpha^\star).
$
We define the localized exterior core by
\begin{equation}
\label{eq:localized-filled-core-after-switching}
\widetilde{\mathcal G}^{\langle\bT\rangle}_o
:=
\mathcal B_{r}
\bigl([
\{i,j\}\setminus\bT],
\widetilde{\mathcal G}^{\langle\bT\rangle}
\bigr)\cup\cB_{r-1}(\widetilde{\bA},\widetilde\cG^{\langle\bT\rangle}).
\end{equation}
As before, the radius-$r$ localization in
\eqref{eq:localized-filled-core-after-switching}
is understood to apply only to the original vertices $\bG$.
For every $1\leq\alpha\leq\mu$, the truncated universal-cover branch
attached to $\widetilde a_\alpha$ through
$l_\alpha^\star$ is retained and is not truncated by the
radius-$r$ localization.
This exterior core is important for the subsequent gluing operation.
Indeed, for every $\alpha$, the localized exterior graph contains
both $\widetilde a_\alpha$ and $l_\alpha^\star$, together with the
whole truncated branch rooted at $l_\alpha^\star$.
Hence all exterior vertices and edges involved in gluing $\bT$ back
are present in the localized graph.
Attaching the boundary weights $Q(z)$ to the boundary of
$\widetilde{\mathcal G}^{\langle\bT\rangle}_o$, we define the localized Green function
\[
\widetilde P^{\langle\bT\rangle}
:=
G\bigl(
\operatorname{Ext}
(
\widetilde{\mathcal G}^{\langle\bT\rangle}_o,Q(z)
),
z
\bigr).
\]

We next reintroduce the interior component $\cG^{\rm in}$ which was suppressed in the
previous section. Recall that this interior component is unaffected by
the exterior switchings. For every $1\leq\alpha\leq\mu$, it contains
the edge
$
(l_\alpha,a_\alpha^\star).
$
Recall that $G^{\rm in}$ denotes the Green matrix of this common interior
component.
We therefore define the auxiliary Green matrices before gluing by
\[
\widetilde G^{\rm aux}
:=
\widetilde G^{\langle\bT\rangle}
\oplus
G^{\rm in},
\qquad
\widetilde P^{\rm aux}
:=
\widetilde P^{\langle\bT\rangle}
\oplus
G^{\rm in}.
\]
Since the interior components of the two auxiliary graphs are
identical, their Green-function difference is supported entirely on
the exterior component. In particular,
\begin{equation}
\label{eq:tilde-D-aux-direct-sum}
\widetilde D^{\rm aux}
:=
\widetilde G^{\rm aux}
-
\widetilde P^{\rm aux}
=\widetilde D^{\langle\bT\rangle}
\oplus 0,\qquad \widetilde D^{\langle\bT\rangle}:=
\widetilde G^{\langle\bT\rangle}
-
\widetilde P^{\langle\bT\rangle}.
\end{equation}

We now glue $\bT$ back into both auxiliary graphs.
For each $1\leq\alpha\leq\mu$, the corresponding edges in the
interior and exterior components are respectively
$
(l_\alpha,a_\alpha^\star)
$
and
$
(\widetilde a_\alpha,l_\alpha^\star).
$
The gluing operation replaces them by
\begin{equation}
\label{eq:gluing-back-switching}
(l_\alpha,a_\alpha^\star),
\quad
(\widetilde a_\alpha,l_\alpha^\star)
\longrightarrow
(l_\alpha,\widetilde a_\alpha),
\quad
(l_\alpha^\star,a_\alpha^\star).
\end{equation}
We perform exactly the same switchings
\eqref{eq:gluing-back-switching} on the actual and localized
auxiliary graphs.

For the actual auxiliary graph, the edges
$(l_\alpha,\widetilde a_\alpha)$ reconnect $\bT$ to the switched
exterior graph, while the edges
$(l_\alpha^\star,a_\alpha^\star)$ glue the two complementary
truncated universal-cover branches back into the disjoint copies
$\mathcal X_\alpha$. Thus the resulting enlarged graph is
$
\widetilde{\mathcal G}
\coprod_{\alpha=1}^{\mu}\mathcal X_\alpha.
$
The gluing-back construction is illustrated in
Figure~\ref{fig:gluing-t-back}.

\begin{figure}[H]
    \centering
    \includegraphics[width=1\linewidth]{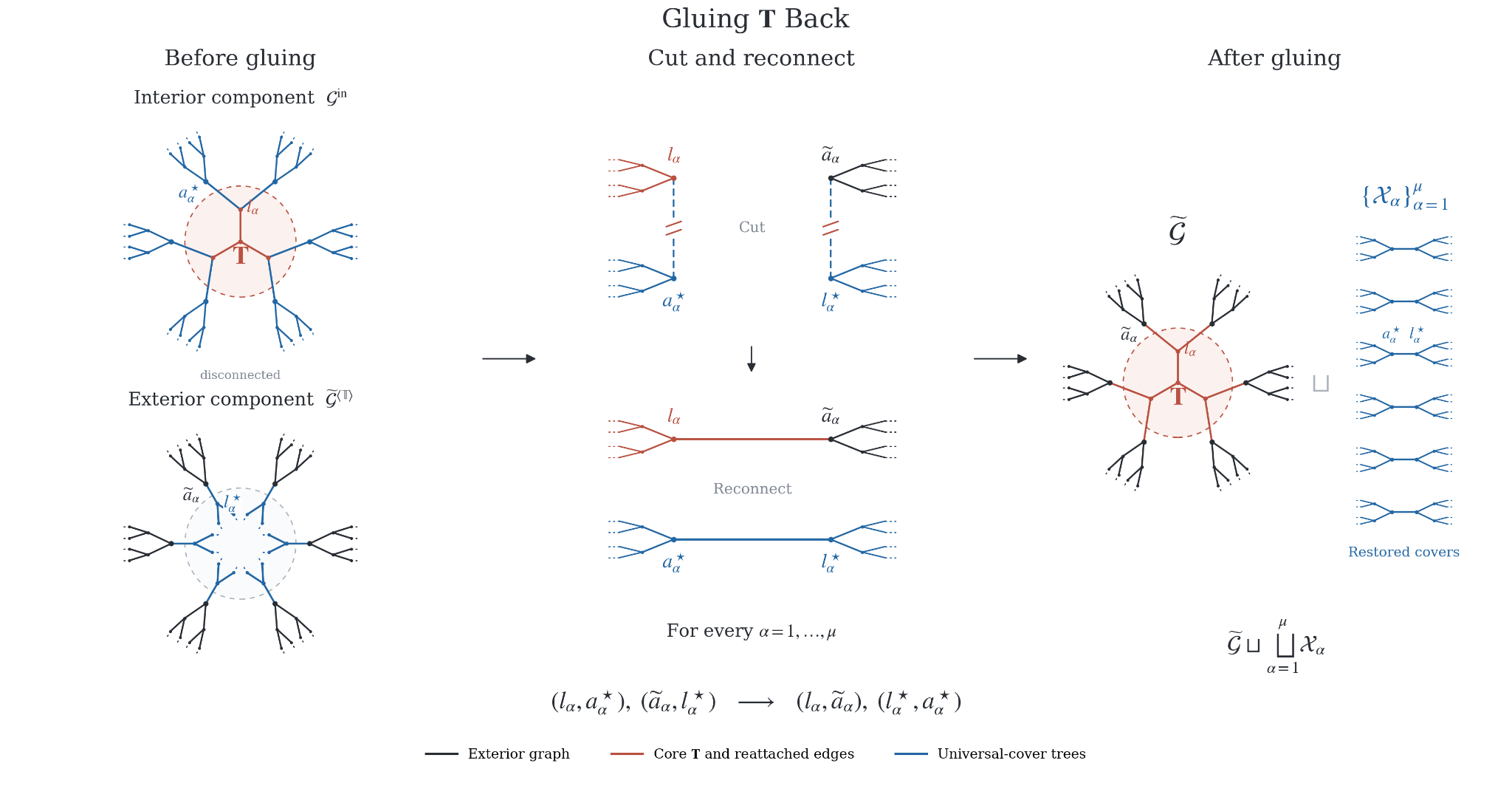}
    \caption{The gluing-back construction.}
    \label{fig:gluing-t-back}
\end{figure}

The same description applies to the localized auxiliary graph.
Indeed, by
\eqref{eq:localized-filled-core-after-switching}, its exterior core
contains the radius-$r$ neighborhoods of all
$\widetilde a_\alpha$, and the corresponding universal-cover branches
are retained in their entirety. After adjoining the common interior
component containing $\bT$ and performing
\eqref{eq:gluing-back-switching}, these pieces are joined through the
edges
$
(l_\alpha,\widetilde a_\alpha)
$
for 
$
1\leq\alpha\leq\mu.
$
Consequently, the original-vertex part of the resulting localized
graph is precisely
\begin{equation}
\label{eq:localized-core-after-gluing}
\widetilde{\mathcal G}_o
:=
\mathcal B_r
\bigl(
\{i,j\}\cup\bT,
\widetilde{\mathcal G}
\bigr),
\end{equation}
while the complementary truncated branches are reassembled into the
disjoint universal-cover copies $\mathcal X_\alpha$.

Therefore the enlarged localized graph after gluing is
$
\operatorname{Ext}
\bigl(
\widetilde{\mathcal G}_o,Q(z)
\bigr)
\coprod_{\alpha=1}^{\mu}\mathcal X_\alpha.
$
Define
\[
\widetilde P
:=
G\bigl(
\operatorname{Ext}
(
\widetilde{\mathcal G}_o,Q(z)
),
z
\bigr).
\]
Accordingly, the Green matrices of the two enlarged graphs after
gluing are
\[
\widetilde G^{\rm enl}
=
\widetilde G
\bigoplus_{\alpha=1}^{\mu}
G_{\mathcal X_\alpha},
\qquad
\widetilde P^{\rm enl}
=
\widetilde P
\bigoplus_{\alpha=1}^{\mu}
G_{\mathcal X_\alpha}.
\]

Again, we first calculate the refined Woodbury formula in this case.
For notation convenience, set
\[
\widetilde{\bA}
:=
(\widetilde a_\alpha)_{\alpha=1}^{\mu},
\qquad
\bL
:=
(l_\alpha)_{\alpha=1}^{\mu},
\qquad
\bA^\star
:=
(a_\alpha^\star)_{\alpha=1}^{\mu},
\qquad
\bL^\star
:=
(l_\alpha^\star)_{\alpha=1}^{\mu}.
\]

\begin{lemma}
\label{lem:compressed-woodbury-gluing}
Under the same setting as Proposition \ref{prop:woodbury-stability-glued-T}, define the boundary embedding
\[
J:\mathbb C^\mu\longrightarrow\mathbb C^{\bG},
\qquad
Je_\alpha=e_{\widetilde a_\alpha}.
\]
Then, whenever the corresponding Neumann series converges, for every
\(i,j\in\bG\),
\begin{equation}
\label{eq:compressed-woodbury-gluing}
\widetilde G_{ij}-\widetilde P_{ij}
=
W_i
\sum_{k=0}^{\infty}
\left(
\widetilde D^{\langle\bT\rangle}|_\bG
J\widetilde F_\partial J^{\mathsf T}
\right)^k
\widetilde D^{\langle\bT\rangle}|_\bG
W_j^{\mathsf T},
\end{equation}
where
\begin{equation}
\label{eq:F-boundary-matrix-gluing}
\widetilde F_\partial
=
\begin{pmatrix}
M_+&I
\end{pmatrix}
\begin{pmatrix}
\widetilde P_{\widetilde{\bA}\widetilde{\bA}}
&
\widetilde P_{\widetilde{\bA}\bL}
\\[1mm]
\widetilde P_{\bL\widetilde{\bA}}
&
\widetilde P_{\bL\bL}
\end{pmatrix}
\begin{pmatrix}
M_+\\ I
\end{pmatrix}
-
M_+,
\qquad
M_+
:=
\operatorname{diag}
\bigl(
m_{\widetilde a_\alpha l_\alpha}\bigr)_{\alpha=1}^\mu
.
\end{equation}
and
\begin{equation}
\label{eq:transfer-entry-gluing}
W_{xu}
=
\delta_{xu}
-
\sum_{\alpha=1}^{\mu}
\left(
\widetilde P_{xl_\alpha}
+
m_{\widetilde a_\alpha l_\alpha}
\widetilde P_{x\widetilde a_\alpha}
\right)
\mathbf 1_{\{u=\widetilde a_\alpha\}},
\qquad x,u\in\bG.
\end{equation}
\end{lemma}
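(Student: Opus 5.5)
The plan is to run the same argument as in the proof of Lemma~\ref{lem:compressed-woodbury-filled-T}, now applied to the gluing switchings \eqref{eq:gluing-back-switching}. I would write them in the pair-switching notation of Section~\ref{sec:woodbury-switching} with
\[
(x_\alpha,y_\alpha)=(l_\alpha,a^\star_\alpha),
\qquad
(x^\star_\alpha,y^\star_\alpha)=(l^\star_\alpha,\widetilde a_\alpha),
\]
so that
\[
U=(e_{l_\alpha}-e_{l^\star_\alpha})_{\alpha=1}^{\mu},
\qquad
V=(e_{a^\star_\alpha}-e_{\widetilde a_\alpha})_{\alpha=1}^{\mu}.
\]
The pre-gluing auxiliary pair $(\widetilde G^{\rm aux},\widetilde P^{\rm aux})$ plays the role of $(G,P)$. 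The post-gluing enlarged pair $(\widetilde G^{\rm enl},\widetilde P^{\rm enl})$ plays the role of $(\widetilde G,\widetilde P)$. Lemma~\ref{lem:woodbury-error-expansion}, with the kernel from Lemma~\ref{lem:F-boundary-factorization} built from $\widetilde P^{\rm enl}$, then gives
\[
\widetilde G^{\rm enl}-\widetilde P^{\rm enl}
=(I+\widetilde P^{\rm enl}F)\sum_{k\ge0}\bigl(\widetilde D^{\rm aux}F\bigr)^k\widetilde D^{\rm aux}(I+F\widetilde P^{\rm enl}),
\qquad F=\Xi+\Xi\widetilde P^{\rm enl}\Xi .
\]
For $i,j\in\bG$ the left side is exactly $\widetilde G_{ij}-\widetilde P_{ij}$, since the copies $\mathcal X_\alpha$ coincide in both enlarged graphs.

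The first reduction uses \eqref{eq:tilde-D-aux-direct-sum}. Since $\widetilde D^{\rm aux}=\widetilde D^{\langle\bT\rangle}\oplus0$ vanishes on the interior component and on all retained universal-cover branches, only the block of $F$ indexed by exterior original vertices survives between two factors of $\widetilde D$. Among the switching vertices, the only exterior original ones are the $\widetilde a_\alpha$. Hence $F$ compresses to $J\widetilde F_\partial J^{\mathsf T}$ for some $\mu\times\mu$ matrix $\widetilde F_\partial$. Likewise, the outer factor $I+\widetilde P^{\rm enl}F$ only needs its columns at original exterior vertices, which gives a transfer matrix $W$ supported on the columns $u=x$ and $u=\widetilde a_\alpha$. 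By complex symmetry, the right factor is $W^{\mathsf T}$. This yields the structure of \eqref{eq:compressed-woodbury-gluing}.

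It remains to identify the entries. Here I would use the branch identities \eqref{e:Schur}--\eqref{e:Schur1} in the pre-gluing exterior component, exactly as in \eqref{eq:exterior-branch-blocks}. The vertex $l^\star_\alpha$ is the root of a truncated branch attached to $\widetilde a_\alpha$, so
\[
P_{x l^\star_\alpha}=-m_{\widetilde a_\alpha l_\alpha}P_{x\widetilde a_\alpha},
\qquad
P_{l^\star_\alpha l^\star_\beta}=\delta_{\alpha\beta}m_{\widetilde a_\alpha l_\alpha}+m_{\widetilde a_\alpha l_\alpha}P_{\widetilde a_\alpha\widetilde a_\beta}m_{\widetilde a_\beta l_\beta}.
\]
Analogous relations hold on the glued side, where $l_\alpha^\star$ and $a_\alpha^\star$ are reassembled into $\mathcal X_\alpha$. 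Substituting these into $\Xi+\Xi\widetilde P^{\rm enl}\Xi$ should turn the $\widetilde a$-block into the quadratic form in \eqref{eq:F-boundary-matrix-gluing}. The rows $\pm e_{l_\alpha}$ and $\mp e_{l^\star_\alpha}$ of $\Xi$ should combine, with the $l^\star$ contributions rewritten via $M_+$, into $(M_+\ I)$ acting on the $(\widetilde{\bA},\bL)$ block of $\widetilde P$. The diagonal root term of the branch should produce the subtracted $M_+$. The same substitution in $(I+\widetilde P^{\rm enl}F)_{x,\widetilde a_\alpha}$ should give the coefficient $-(\widetilde P_{xl_\alpha}+m_{\widetilde a_\alpha l_\alpha}\widetilde P_{x\widetilde a_\alpha})$ in \eqref{eq:transfer-entry-gluing}.

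I expect the main obstacle to be this last bookkeeping step. One must decide consistently which of the two sides (pre- or post-gluing) each Green-function entry is evaluated on. One must also verify that the universal-cover contributions from $\mathcal X_\alpha$ cancel exactly, leaving only $M_+$ and entries of $\widetilde P$. I would first carry out the computation for a single index $\alpha$ with scalar entries, check that it reproduces $\widetilde F_\partial$ and $W$, and then note that cross terms between different $\alpha$ appear only through $\widetilde P_{\widetilde{\bA}\widetilde{\bA}}$, $\widetilde P_{\widetilde{\bA}\bL}$ and $\widetilde P_{\bL\bL}$. Convergence of the Neumann series is assumed in the statement, so no analytic estimate is needed here.
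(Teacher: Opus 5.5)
Your overall framework is the paper's: the pair switching with $U=(e_{l_\alpha}-e_{l^\star_\alpha})$ and $V=(e_{a^\star_\alpha}-e_{\widetilde a_\alpha})$, fed into Lemma~\ref{lem:woodbury-error-expansion}. However, two steps fail as written.

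First, the outer factor in Lemma~\ref{lem:woodbury-error-expansion} is $I+PF_P$ with the \emph{pre}-switching localized matrix, i.e.\ $I+\widetilde P^{\rm aux}F$, not $I+\widetilde P^{\rm enl}F$. The missing ingredient is the resolvent identity $\widetilde P^{\rm enl}=\widetilde P^{\rm aux}+\widetilde P^{\rm aux}\Xi\widetilde P^{\rm enl}$. It gives $\widetilde P^{\rm aux}F=\widetilde P^{\rm aux}\Xi+\widetilde P^{\rm aux}\Xi\widetilde P^{\rm enl}\Xi=\widetilde P^{\rm enl}\Xi$. This is why $W$ can be written purely in post-gluing entries with a single factor of $\Xi$. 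Your factor equals $I+\widetilde P^{\rm enl}\Xi+\widetilde P^{\rm enl}\Xi\widetilde P^{\rm enl}\Xi$, and the last term does not cancel after compression. So the computation you outline would not return \eqref{eq:transfer-entry-gluing}.

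Second, your compression rests on a false premise: $\widetilde D^{\rm aux}$ does not vanish on the retained branches rooted at $l^\star_\alpha$. Before gluing, these branches hang off $\widetilde a_\alpha$ inside the exterior component. They are the same exact universal-cover branches in $\widetilde G^{\rm aux}$ and $\widetilde P^{\rm aux}$, so the Schur formula gives $\widetilde D^{\rm aux}_{\cdot\,\bL^\star}=-\widetilde D^{\rm aux}_{\cdot\,\widetilde{\bA}}M_+$, cf.\ \eqref{eq:cavity-error-gluing}, which is nonzero. This identity is what produces every $M_+$ in the statement:
\begin{itemize}
\item it gives $\widetilde D^{\rm aux}(V\ U)=\widetilde D^{\rm aux}J(-I\ M_+)$, hence $\widetilde F_\partial=(-I\ M_+)K(-I\ M_+)^{\mathsf T}$, with $K$ the $2\times2$ block kernel of Lemma~\ref{lem:F-boundary-factorization} built from $\widetilde P^{\rm enl}$;
\item it folds the $l^\star_\alpha$-column of $I+\widetilde P^{\rm enl}\Xi$ (entry $\widetilde P_{x\widetilde a_\alpha}$) into the $\widetilde a_\alpha$-column, producing the term $-m_{\widetilde a_\alpha l_\alpha}\widetilde P_{x\widetilde a_\alpha}$ in $W$.
\end{itemize}
If you keep only the $\widetilde a$-coordinates, as your first reduction does, you get $(\Xi\widetilde P^{\rm enl}\Xi)_{\widetilde{\bA}\widetilde{\bA}}=\widetilde P_{\bL\bL}+\operatorname{diag}(\widetilde P^{\rm enl}_{l^\star_\alpha l^\star_\alpha})$. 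This has no $\widetilde P_{\widetilde{\bA}\widetilde{\bA}}$ or cross terms at all.

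Relatedly, the pre-gluing branch identities you list cannot be substituted into $\Xi+\Xi\widetilde P^{\rm enl}\Xi$. Its entries are post-gluing, where $l^\star_\alpha$ sits in $\mathcal X_\alpha$, disconnected from $\widetilde a_\alpha$. The consistent bookkeeping, which resolves the obstacle you flagged, is as follows. Pre-gluing cavity identities act only on $\widetilde D^{\rm aux}$. The kernel blocks are evaluated with $\widetilde P^{\rm enl}$, using cross-component vanishing and the $\mathcal X_\alpha$ identities \eqref{eq:restored-star-identities-gluing}. Under these, the $\mathcal X_\alpha$ contributions collapse to $+M_+$, and combined with the $-2M_+$ from the two identity blocks they give the subtracted $M_+$.
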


The derivation of the exact Woodbury identity proceeds by the same matrix-algebraic manipulations as in the proof of Lemma \ref{lem:compressed-woodbury-filled-T}. Since the argument is purely computational, we defer the proof of the above lemma to Appendix~\ref{app:woodbury}.

The following lemma gives the boundary summation estimates for
$\widetilde D^{\langle\bT\rangle}$ needed to control the leading term
$
W\widetilde D^{\langle\bT\rangle}W^{\mathsf T}
$
in the proof of Proposition \ref{prop:woodbury-stability-glued-T}.

\begin{lemma}\label{claim:boundary sum}
  Under the same setting as Proposition \ref{prop:woodbury-stability-glued-T}, we have
\begin{equation}
\sup_{y\notin\bT}
\sum_{\alpha=1}^{\mu}
\left|
\widetilde D_{\widetilde a_\alpha y}^{\langle\bT\rangle}
\right|
\lesssim
(\log N)^2\varepsilon\qquad\mbox{and}\qquad
\sum_{\substack{\alpha,\beta=1\\ \alpha\neq\beta}}^{\mu}
\left|
\widetilde D_{\widetilde a_\alpha\widetilde a_\beta}^{\langle\bT\rangle}
\right|
\lesssim
(\log N)^3\varepsilon.
\end{equation}
\end{lemma}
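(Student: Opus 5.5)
We have $\widetilde a_\alpha=c_\alpha$ for $\alpha\in\mathsf W_S$ and $\widetilde a_\alpha=a_\alpha$ otherwise. We split the boundary indices into three groups:
\begin{itemize}
\item the isolated indices $\alpha\in\mathsf U$;
\item the non-isolated switched indices $\mathsf W_S\setminus\mathsf U$;
\item the unswitched indices $\llbracket1,\mu\rrbracket\setminus\mathsf W_S$.
\end{itemize}
By Proposition~\ref{prop:cell-sparsity} and the remark following Proposition~\ref{prop:distdeg}, the second and third groups together contain $\OO(\log N)$ indices on $F_2(\cG)$. For each of these we use only the crude entrywise bound $|\widetilde D^{\langle\bT\rangle}_{xy}|\lesssim(\log N)\varepsilon$ from \eqref{eq:filled-analogue-550}, combined with the localization principle (Lemma~\ref{p:localization}). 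These produce $\OO(\log N)$ terms of size $(\log N)\varepsilon$, so they contribute $(\log N)^2\varepsilon$ to the first sum. In the double sum, pairs with at least one such index contribute at most $\OO(\log N)\cdot\mu$ terms, which we handle below.

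For $\alpha\in\mathsf U$ and fixed $y\notin\bT$, we sort $\alpha$ by the relation between $y$ and $c_\alpha$.

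\emph{Case 1: $y\simeq\{b_\alpha,c_\alpha\}$ or $\dist_{\widetilde\cG^{\langle\bT\rangle}}(y,a_\alpha)<\fR/4$.} By the first inequality of \eqref{e:cell-exceptional} and by \eqref{e:lessshortdist}, there are at most $\OO(\log N)$ such $\alpha$. We bound each of them crudely by $(\log N)\varepsilon$.

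\emph{Case 2: all remaining $\alpha\in\mathsf U$.} The geometric separation shows that $c_\alpha$ and $y$ are at distance larger than $\fR/4$, so the localized matrix vanishes: $\widetilde P^{\langle\bT\rangle}_{yc_\alpha}=0$. Hence $\widetilde D^{\langle\bT\rangle}_{c_\alpha y}=\widetilde G^{\langle\bT\rangle}_{yc_\alpha}$. By \eqref{eq:filled-analogue-551-553} this entry is bounded by $\varphi+(\log N)^2\varepsilon^2$. Summing over at most $\mu\sim4^\ell$ such indices gives
\[
4^\ell\bigl(\varphi+(\log N)^2\varepsilon^2\bigr)\lesssim\varepsilon,
\]
using \eqref{e:parameter-basic-bounds}, namely $\varphi\lesssim(\log N)^{1/2}4^{-2\ell}\varepsilon$ and $\varepsilon\lesssim4^{-2\ell}$. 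Together with Case 1, this proves the first bound.

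For the double sum we argue the same way, now over pairs $\alpha\neq\beta$.

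\emph{Pairs with both indices in $\mathsf U$.} Here $\widetilde a_\beta=c_\beta$, so the first case of \eqref{eq:filled-analogue-551-553} (taking $i=c_\beta$) applies directly, again with $\widetilde P^{\langle\bT\rangle}_{c_\alpha c_\beta}=0$ by the cell separation \eqref{tcGTdist}. These $\mu^2$ pairs then contribute
\[
4^{2\ell}\bigl(\varphi+(\log N)^2\varepsilon^2\bigr)\lesssim(\log N)^{1/2}\varepsilon+(\log N)^2\,4^{2\ell}\varepsilon^2\lesssim\varepsilon,
\]
since $4^{2\ell}\varepsilon\lesssim1$.

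\emph{Pairs with at least one exceptional index.} Fix an exceptional $\alpha$, of which there are $\OO(\log N)$. The sum over $\beta$ of $|\widetilde D^{\langle\bT\rangle}_{\widetilde a_\alpha\widetilde a_\beta}|$ is exactly the first estimate with $y=\widetilde a_\alpha\notin\bT$, so it is $\lesssim(\log N)^2\varepsilon$. Multiplying by the $\OO(\log N)$ choices of $\alpha$ gives $(\log N)^3\varepsilon$.

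\textbf{Main obstacle.} The delicate point is verifying the distance hypothesis of \eqref{eq:filled-analogue-551-553}, and the vanishing of $\widetilde P^{\langle\bT\rangle}$, in the switched filled graph. In particular, for $y=a_\beta$ with $\beta\notin\mathsf W_S$ we must check that the distance from $y$ to $a_\alpha$ really exceeds $\fR/4$ outside the $\OO(\fd)$ exceptional indices. We plan to obtain this from Lemma~\ref{GTstructure} together with \eqref{tcGTdist}.
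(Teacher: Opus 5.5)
Your argument is correct. The first estimate is the paper's proof with the exceptional set written out. It consists of the $\OO(\log N)$ indices outside $\mathsf U$, and, for $\alpha\in\mathsf U$, the $\OO(\log N)$ indices with $y\simeq\{b_\alpha,c_\alpha\}$ (first bound in \eqref{e:cell-exceptional}) or $\dist(y,a_\alpha)<\fR/4$ (by \eqref{e:lessshortdist}). All of these are bounded crudely. The remaining indices are handled by $\widetilde P^{\langle\bT\rangle}_{yc_\alpha}=0$ together with \eqref{eq:filled-analogue-551-553}.

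For the double sum you take a different route. The paper applies \eqref{eq:filled-analogue-551-553} to every pair with at least one index in $\mathsf U$, and uses the crude bound only on the $\OO((\log N)^2)$ pairs with both indices outside $\mathsf U$. You instead use \eqref{eq:filled-analogue-551-553} only when both indices lie in $\mathsf U$, so that the case $i=c_\beta$ applies with no further geometric input. Every pair containing an exceptional index you handle by setting $y=\widetilde a_\alpha$ in your first estimate and using the symmetry of $\widetilde D^{\langle\bT\rangle}$. Both routes give $(\log N)^3\varepsilon$.

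Your version is slightly more robust. For a mixed pair with $\alpha\in\mathsf U$ and $\beta\notin\mathsf W_S$ one has $\widetilde a_\beta=a_\beta$, and the hypothesis $\dist(a_\beta,a_\alpha)\ge\fR/4$ in \eqref{eq:filled-analogue-551-553} is not automatic; it fails, though only for a bounded number of pairs by \eqref{e:lessshortdist}. The paper's pair count passes over this, whereas your reduction absorbs such pairs through the Case~1 count.

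For the same reason, the ``main obstacle'' you flag is already settled by your own Case~1. It counts exactly the $\alpha$ with $\dist(y,a_\alpha)<\fR/4$ via \eqref{e:lessshortdist}, since distances between original vertices in $\widetilde{\cG}^{\langle\bT\rangle}$ coincide with those in $T_{\mathbf S}(\cG)^{(\bT)}$ (the attached branches are dead ends). So no separate appeal to Lemma~\ref{GTstructure} is needed there.

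One small slip: from $4^{2\ell}\varepsilon\lesssim1$ you get $4^{2\ell}(\log N)^2\varepsilon^2\lesssim(\log N)^2\varepsilon$, not $\lesssim\varepsilon$. This is harmless given the target $(\log N)^3\varepsilon$.
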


\begin{proof}
From the preceding proof, we have the crude entrywise bound
$
\left|\widetilde D_{xy}^{\langle\bT\rangle}\right|
\lesssim (\log N)\varepsilon .
$
On the other hand, for $\alpha\in\mathsf U$, if
$
y\not\simeq\{b_\alpha,c_\alpha\}$
and 
$
\dist_{\widetilde\cG^{\langle\bT\rangle}}(y,a_\alpha)
\ge \fR/4,
$
then the proof of Proposition \ref{prop:stability-switching-filled-exterior} gives
$
\left|
\widetilde D_{\widetilde a_\alpha y}^{\langle\bT\rangle}
\right|
=
\left|
\widetilde G_{\widetilde a_\alpha y}^{\langle\bT\rangle}
\right|
\lesssim
\varphi+(\log N)^2\varepsilon^2.
$
By Proposition~\ref{prop:cell-sparsity} and the geometric properties
of $F_1(\cG)$, for every fixed $y\notin\bT$ there are at most
$\OO(\log N)$ indices for which the above conditions fail. Therefore,
\[
\sup_{y\notin\bT}
\sum_{\alpha=1}^{\mu}
\left|
\widetilde D_{\widetilde a_\alpha y}^{\langle\bT\rangle}
\right|
\lesssim
\mu\bigl(\varphi+(\log N)^2\varepsilon^2\bigr)
+(\log N)^2\varepsilon
\lesssim
(\log N)^2\varepsilon .
\]

Similarly, if $\alpha\ne\beta$ and at least one of
$\alpha,\beta$ belongs to $\mathsf U$, the separation property and
\eqref{eq:filled-analogue-551-553} give
$
\left|
\widetilde D_{\widetilde a_\alpha\widetilde a_\beta}^{\langle\bT\rangle}
\right|
\lesssim
\varphi+(\log N)^2\varepsilon^2.
$
Since
$
|\mathsf W_{\mathbf S}\setminus\mathsf U|\lesssim\log N,
$
only $\OO((\log N)^2)$ pairs require the crude bound. Thus
\[
\sum_{\substack{\alpha,\beta=1\\\alpha\ne\beta}}^\mu
\left|
\widetilde D_{\widetilde a_\alpha\widetilde a_\beta}^{\langle\bT\rangle}
\right|
\lesssim
\mu^2\bigl(\varphi+(\log N)^2\varepsilon^2\bigr)
+(\log N)^3\varepsilon
\lesssim
(\log N)^3\varepsilon .
\]
\end{proof}

\begin{proof}[Proof of Proposition \ref{prop:woodbury-stability-glued-T}]
Since the graphs associated with $\widetilde P$ and $P$ have the same structural property, namely, each contains at most one cycle, the same argument as in \eqref{eq:Ffrak-row-bound-rewritten} yields
\begin{equation}
\label{eq:F-boundary-row-bound-gluing}
\|\widetilde F_\partial\|_{\infty}=\max_{1\le x\le \mu}\sum_{y=1}^{\mu}|(\widetilde F_\partial)_{xy}|
\lesssim
\log_4\log N.
\end{equation}
As in the previous proof, it is enough to control the error contributed by the leading term.
Writing $W=I+\widehat W$, where $\widehat W$ is supported on the boundary indices
$\{\tilde a_\alpha\}_{\alpha=1}^{\mu}$, we have
\[
\begin{aligned}
\bigl(
W\widetilde D^{\langle\bT\rangle} W^{\mathsf T}
\bigr)_{ij}
=
\widetilde D^{\langle\bT\rangle}_{ij}
+
\sum_{\alpha=1}^{\mu}
\widehat W_{i\tilde a_\alpha}
\widetilde D^{\langle\bT\rangle}_{\tilde a_\alpha j}
+
\sum_{\beta=1}^{\mu}
\widetilde D^{\langle\bT\rangle}_{i\tilde a_\beta}
\widehat W_{j\tilde a_\beta}
+
\sum_{\alpha,\beta=1}^{\mu}
\widehat W_{i\tilde a_\alpha}
\widetilde D^{\langle\bT\rangle}_{\tilde a_\alpha\tilde a_\beta}
\widehat W_{j\tilde a_\beta}.
\end{aligned}
\]

Again we first study the effective error bound of the error matrix $\widetilde D^{\langle\bT\rangle}_\bG$. Proposition \ref{prop:stability-switching-filled-exterior} gives 
\[
  |\widetilde G^{\langle\bT\rangle}_{ij}-\widetilde G_{ij}(\Ext(\cB_r(\{i,j\},\widetilde \cG^{\langle\bT\rangle},Q(z)),z)|\lesssim(\log N)\varepsilon.
\]
By the same argument in the proof of Proposition \ref{prop:woodbury-stability-filled-T}, we can regard $\widetilde D^{\langle\bT\rangle}|_\bG$ as satisfying the entrywise bound $\big|\widetilde D^{\langle\bT\rangle}|_\bG\big|\lesssim(\log N)\varepsilon$.

Suppose first that \(i,j\in\bT\). 
Since the identity part of $W$ does not contribute on either side,
\[
\bigl(
W\widetilde D^{\langle\bT\rangle} W^{\mathsf T}
\bigr)_{ij}
=
\sum_{\alpha,\beta=1}^{\mu}
W_{i\tilde a_\alpha}
\widetilde D^{\langle\bT\rangle}_{\tilde a_\alpha\tilde a_\beta}
W_{j\tilde a_\beta}.
\]
Separating the diagonal and off-diagonal parts and applying
$|\widetilde D^{\langle\bT\rangle}|_\bG|\le (\log N)\varepsilon$ entrywise and Lemma \ref{claim:boundary sum}, we obtain
\begin{align}\label{eq:TT-zero-term-gluing}
\left|
\bigl(
W\widetilde D^{\langle\bT\rangle} W^{\mathsf T}
\bigr)_{ij}
\right|
\lesssim
(\log N)\varepsilon
\sum_{\alpha=1}^{\mu}
|W_{i\tilde a_\alpha}W_{j\tilde a_\alpha}|+
(\log N)^3\varepsilon
\max_{\alpha,\beta}
|W_{i\tilde a_\alpha}W_{j\tilde a_\beta}|.
\end{align}

In the remainder of the proof, we shall repeatedly use the elliptic-neighborhood estimates for the
transfer factors \(W\). Indeed, when $x\in\bT$,
\eqref{eq:transfer-entry-gluing} gives
\[
W_{x\widetilde a_\alpha}
=
\delta_{x\widetilde a_\alpha}
-
\widetilde P_{xl_\alpha}
-
m_{\widetilde a_\alpha l_\alpha}
\widetilde P_{x\widetilde a_\alpha}=-
\widetilde P_{xl_\alpha}
-
m_{\widetilde a_\alpha l_\alpha}
\widetilde P_{x\widetilde a_\alpha}.
\]
The vertices \(l_\alpha\) and \(\widetilde a_\alpha\) are adjacent in the
corresponding local comparison graph, which is a tree extension with at
most one cycle, and
$
|m_{\widetilde a_\alpha l_\alpha}|\lesssim 1.
$
Thus, up to the harmless diagonal term,
\(W_{x\widetilde a_\alpha}\) is a uniformly bounded linear combination
of two Green function entries on the same local tree extension.
Consequently, we can apply
Proposition \ref{prop:elliptic-neighborhood-estimates} and Lemma \ref{p:subgraph} to $W$, which implies $\sum_\alpha
|W_{i\tilde a_\alpha}W_{j\tilde a_\alpha}|
\lesssim
(1+|\ell_i-\ell_j|)2^{-|\ell_i-\ell_j|}$. Moreover $\max_{\alpha,\beta}
|W_{i\tilde a_\alpha}W_{j\tilde a_\beta}|\lesssim 2^{-(2\ell-\ell_i-\ell_j)}$ follows directly from the definition \eqref{eq:elliptic-level}.
This proves \eqref{eq:glued-stability-TT}.

Next, let \(i\in\bT\) and \(j\notin\bT\). In this case the identity part
of $W$ contributes only on the $j$-side, and hence
\[
\begin{aligned}
\bigl(
W\widetilde D^{\langle\bT\rangle} W^{\mathsf T}
\bigr)_{ij}
=
\sum_{\alpha=1}^{\mu}
W_{i\tilde a_\alpha}
\widetilde D^{\langle\bT\rangle}_{\tilde a_\alpha j}
+
\sum_{\alpha,\beta=1}^{\mu}
W_{i\tilde a_\alpha}
\widetilde D^{\langle\bT\rangle}_{\tilde a_\alpha\tilde a_\beta}
\widehat W_{j\tilde a_\beta}
=
\sum_{\alpha=1}^{\mu}
\sum_{y\notin\bT}
W_{i\tilde a_\alpha}
\widetilde D^{\langle\bT\rangle}_{\tilde a_\alpha y}
W_{jy}.
\end{aligned}
\]
We first claim 
  \begin{equation}\label{eq:blackbox-exterior-transfer-gluing}
\sup_{x\notin\bT}
\sum_{\alpha=1}^{\mu}
|W_{x\tilde a_\alpha}|
\lesssim \log N.
\end{equation}
Indeed, by Proposition \ref{pro.path-weight-decay}, Proposition \ref{prop:elliptic-neighborhood-estimates} and Lemma \ref{p:subgraph}, we always have
\begin{equation}
\sup_{x\notin\bT}
\sum_{\alpha=1}^{\mu}
|W_{x\tilde a_\alpha}|
\lesssim \sup_{x\in\partial \bT}
\sum_{\alpha=1}^{\mu}
|W_{x\tilde a_\alpha}|\lesssim\ell\lesssim\log N,
\end{equation}
which implies 
$
|W_j|_1
\lesssim \log N.
$
Then using Lemma \ref{claim:boundary sum}, the leading term
satisfies
\begin{align*}
\left|
\bigl(
W\widetilde D^{\langle\bT\rangle} W^{\mathsf T}
\bigr)_{ij}
\right|=\left|\sum_{\alpha=1}^\mu\sum_{y\notin\bT}W_{i\tilde a_\alpha}\widetilde D^{\langle\bT\rangle}_{\tilde a_\alpha y}W_{jy}\right|
\lesssim
\max_\alpha|W_{i\tilde a_\alpha}|
\sup_{y\notin\bT}
\sum_{\alpha=1}^{\mu}
\left|
\widetilde D^{\langle\bT\rangle}_{\tilde a_\alpha y}
\right|
|W_j|_1
\lesssim
(\log N)^3\varepsilon
2^{-(\ell-\ell_i)}.
\end{align*}

Finally, suppose that \(i,j\notin\bT\). In this case the identity part
of $W$ contributes on both sides, and
\[
\begin{aligned}
\bigl(
W\widetilde D^{\langle\bT\rangle} W^{\mathsf T}
\bigr)_{ij}
=
\widetilde D^{\langle\bT\rangle}_{ij}
+
\sum_{\alpha=1}^{\mu}
\widehat W_{i\tilde a_\alpha}
\widetilde D^{\langle\bT\rangle}_{\tilde a_\alpha j}
+
\sum_{\beta=1}^{\mu}
\widetilde D^{\langle\bT\rangle}_{i\tilde a_\beta}
\widehat W_{j\tilde a_\beta}
+
\sum_{\alpha,\beta=1}^{\mu}
\widehat W_{i\tilde a_\alpha}
\widetilde D^{\langle\bT\rangle}_{\tilde a_\alpha\tilde a_\beta}
\widehat W_{j\tilde a_\beta}
=
\sum_{x,y\notin\bT}
W_{ix}\widetilde D^{\langle\bT\rangle}_{xy}W_{jy}.
\end{aligned}
\]
It follows from
\eqref{eq:blackbox-exterior-transfer-gluing} that
$
|W_i|_1+|W_j|_1\lesssim\log N.
$
The triangle inequality implies
\[
\left|
\bigl(
W\widetilde D^{\langle\bT\rangle} W^{\mathsf T}
\bigr)_{ij}
\right|\lesssim(\log N)\varepsilon|W_i|_1|W_j|_1\lesssim (\log N)^3\varepsilon.
\]
\end{proof}

\subsection{Changing from \(Q\) to \(\widetilde Q\)}
\label{sec:changing-Q}
In this subsection, we  compare the self-consistent quantities before and after the
switching. Recall that the switching replaces the two edges
$
\{l_\alpha,a_\alpha\}$ and
$
\{b_\alpha,c_\alpha\}$
by
$
\{l_\alpha,c_\alpha\}
$
and
$
\{a_\alpha,b_\alpha\},
$
and denote the resulting graph by $\widetilde{\mathcal G}$. Throughout
this subsection, quantities with a tilde refer to the switched graph
$\widetilde{\mathcal G}$.

Although the switching changes the resolvent locally, its effect becomes
much smaller after averaging over each fiber. The following lemma,
based on the Ward identity, quantifies this observation and shows that
the self-consistent quantities associated with $\mathcal G$ and
$\widetilde{\mathcal G}$ differ only by a negligible error.

\begin{lemma}[Direct switching estimate]
\label{lem:direct-switching-Q}
Fix $z\in\bC_+(\mathcal I_\delta)$ with $|z|\le 2d_{\rm max}$ and $\eta\ge (\log N)^\fb/N$ and let $\cG\in\Omega(z)$ or $\Omega^+_o(z)$. Then for every $\bFS\in F_2(\cG)$, defined in Proposition \ref{prop:cell-sparsity}, the following three estimates hold. For every \(i,j\in\bG\), we have
\begin{equation}
\label{e:direct-switching-Q}
\left|
(\widetilde G-G)_{ij}
\right|
\lesssim
4^\ell
\sum_{x\in\bW_S'}
\left(
|G_{ix}|^2
+
|G_{jx}|^2
\right).
\end{equation}
where $\bW_S'
:=
\bigl\{
l_\alpha,a_\alpha,b_\alpha,c_\alpha:
\alpha\in\mathsf W_S
\bigr\}.$
Consequently, we have
\begin{equation}\label{e:bold-Q-Qtilde}
|
\widetilde{Q}^{\rm ver}
- Q^{\rm ver}
|_{\infty}
+|\widetilde p-p|_\infty+|
\widetilde{Q}
- Q
|_{\infty}\lesssim(\log N)^2\varphi^2.
\end{equation}
\end{lemma}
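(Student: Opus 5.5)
The plan is to write the switching $\cG\to\widetilde\cG$ in the Woodbury form of Lemma~\ref{lem:F-boundary-factorization}, that is, \eqref{eq.woodbury1}, and then bound the kernel.

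\textbf{Step 1: the Woodbury form.} Each admissible switching $\alpha\in\mathsf W_S$ replaces $\{l_\alpha,a_\alpha\},\{b_\alpha,c_\alpha\}$ by $\{l_\alpha,c_\alpha\},\{a_\alpha,b_\alpha\}$. So $\widetilde H-H=-(UV^{\mathsf T}+VU^{\mathsf T})$ with $U=(e_{l_\alpha}-e_{b_\alpha})_\alpha$ and $V=(e_{a_\alpha}-e_{c_\alpha})_\alpha$. Then
\[
\widetilde G-G=G\begin{pmatrix}V&U\end{pmatrix}F_\partial\begin{pmatrix}V&U\end{pmatrix}^{\mathsf T}G,
\]
where $F_\partial$ is the $2|\mathsf W_S|\times 2|\mathsf W_S|$ block matrix built from entries of $\widetilde G$ on $\bW_S'$. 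All vectors in $U,V$ are supported on $\bW_S'$.

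\textbf{Step 2: bound the kernel.} By Proposition~\ref{prop:woodbury-stability-glued-T}, the entries of $\widetilde G$ are uniformly bounded, since the local comparison Green functions are bounded and the errors are $o(1)$. As $|\bW_S'|\le 4\mu\lesssim4^\ell$, the matrix $\mathcal F:=(V\ U)F_\partial(V\ U)^{\mathsf T}$ is supported on $\bW_S'\times\bW_S'$ with entries $\OO(1)$ (the identity blocks only add $\OO(1)$). Hence
\[
|(\widetilde G-G)_{ij}|\le\sum_{x,y\in\bW_S'}|G_{ix}||\mathcal F_{xy}||G_{yj}|\lesssim\sum_{x,y}\bigl(|G_{ix}|^2+|G_{jy}|^2\bigr)\lesssim4^\ell\sum_{x\in\bW_S'}\bigl(|G_{ix}|^2+|G_{jx}|^2\bigr),
\]
using $2|ab|\le a^2+b^2$ and summing the free index over at most $4\mu$ values. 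This gives \eqref{e:direct-switching-Q}. The only subtle point is boundedness of $F_\partial$, which involves $\widetilde G$ rather than $G$. This is where Proposition~\ref{prop:woodbury-stability-glued-T}, valid on $F_2(\cG)$, is used.

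\textbf{Step 3: average over a fiber.} I expect this to be the main obstacle, because the crude bound has the wasteful factor $4^\ell$, which must be compensated by averaging. For $Q^{\rm ver}_i$, average \eqref{e:direct-switching-Q} with $i=j=u$ over $\pi(u)=i$:
\[
|\widetilde Q_i-Q_i|\lesssim\frac{4^\ell}{N}\sum_{x\in\bW_S'}\sum_{u}|G_{ux}|^2=\frac{4^\ell}{N\eta}\sum_{x\in\bW_S'}\Im G_{xx}
\]
by the Ward identity. On $\Omega$ (or $\Omega_o^+$), $\Im G_{xx}\lesssim|\Im m^{\rm ver}|_\infty+\varepsilon_*+(\log N)^3\varepsilon$, as in \eqref{eq.imG_xx} but with $G$ in place of $G^{\langle\bT\rangle}$, where the $(\log N)^3$ loss is absorbed. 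With $|\bW_S'|\lesssim4^\ell$ this gives
\[
\frac{4^{2\ell}(\log N)^2}{N\eta}\bigl(|\Im m^{\rm ver}|_\infty+\varepsilon_*+(\log N)\varepsilon\bigr)\lesssim(\log N)^2\varphi^2
\]
by definition \eqref{e:def-varphi}.

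For $p_e$, use $(i,j)=(u,u_e)$ and sum $|G_{u_ex}|^2$ over $u$; since $u\mapsto u_e$ is a bijection of fibers, the same Ward bound applies. Finally, $\widetilde Q_e-Q_e=-\widetilde p_e/\widetilde Q_{t(e)}+p_e/Q_{t(e)}$ is controlled by the two differences, because $Q_{t(e)}$ is bounded away from zero (it is close to $m$) and the perturbation is $o(1)$. This yields \eqref{e:bold-Q-Qtilde}.
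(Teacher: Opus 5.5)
Your proof is essentially the paper's. It uses the same Woodbury factorization with $U=(e_{l_\alpha}-e_{b_\alpha})_\alpha$ and $V=(e_{a_\alpha}-e_{c_\alpha})_\alpha$, and bounds the kernel through the uniform boundedness of $\widetilde G$ from Proposition~\ref{prop:woodbury-stability-glued-T}. It then uses the counting $|\bW_S'|\lesssim 4^\ell$ and finishes with the Ward identity averaged over each fiber.

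One step does not go through as written: the $p_e$ bound. $\widetilde p_e$ is defined with the $e$-neighbor of $u$ in $\widetilde\cG$; write $\widetilde u_e$ for it. For the $\OO(\mu)$ vertices touched by the switchings, $\widetilde u_e$ differs from $u_e$. Taking $(i,j)=(u,u_e)$ in \eqref{e:direct-switching-Q} only controls $\widetilde G_{uu_e}-G_{uu_e}$. It says nothing about $\widetilde G_{u\widetilde u_e}-G_{uu_e}$ for those $u$. Your remark that $u\mapsto u_e$ is a bijection of fibers does not help here, since the bijection itself changes under the switching.

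The fix, which is what the paper does, is to treat these $\OO(\mu)$ terms crudely. The Green function entries are uniformly bounded, so after dividing by $N$ these terms contribute $\OO(\mu/N)$. This error is absorbed into $(\log N)^2\varphi^2$, for the following reason. For $|z|\le 2d_{\max}$, the Stieltjes representation with $\nu_i$ supported in $[-\Sigma,\Sigma]$ gives $\Im m_i(z)\gtrsim\eta$. Hence \eqref{e:def-varphi} yields $\varphi^2\gtrsim 4^{2\ell}/N\gg\mu/N$.

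With this repair, the rest of your argument for $\widetilde Q^{\rm ver}$, $\widetilde p$, and $\widetilde Q$ goes through as you wrote it.
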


\begin{proof}
  Define 
$
U
:=
\bigl(
e_{l_\alpha}-e_{b_\alpha}
\bigr)_{\alpha\in\mathsf W_S}
$
and
$
V
:=
\bigl(
e_{a_\alpha}-e_{c_\alpha}
\bigr)_{\alpha\in\mathsf W_S}.
$
For the above switching, Lemma \ref{lem:F-boundary-factorization} implies that
\begin{equation}
\widetilde G-G=G\begin{pmatrix}
V&U
\end{pmatrix}
F_\partial
\begin{pmatrix}
V&U
\end{pmatrix}^{\mathsf T}G,\qquad 
\mbox{where~}F_\partial:
=
\begin{pmatrix}
U^{\mathsf T}\widetilde G U
&
I+U^{\mathsf T}\widetilde G V
\\[1mm]
I+V^{\mathsf T}\widetilde G U
&
V^{\mathsf T}\widetilde G V
\end{pmatrix}.
\end{equation}
It follows from $|\widetilde P|\lesssim 1$ and Proposition \ref{prop:woodbury-stability-glued-T} that $|\widetilde{G}|\lesssim 1$. Therefore, we can obtain $\max_{i,j}|(F_\partial)_{ij}|\lesssim 1$.
Every column of \(U\) and \(V\) is the difference of two coordinate
vectors supported on \(\bW_S'\).  Therefore, we can obtain
\begin{align*}
\left|
(\widetilde G-G)_{ij}
\right|
&\lesssim
\Big(
\sum_{x\in\bW_S'}|G_{ix}|
\Big)
\Big(
\sum_{y\in\bW_S'}|G_{yj}|
\Big)
\lesssim
|\bW_S'|
\Big(
\sum_{x\in\bW_S'}|G_{ix}|^2
\Big)^{1/2}
\Big(
\sum_{y\in\bW_S'}|G_{jy}|^2
\Big)^{1/2}.
\end{align*}
Since \(|\bW_S'|\lesssim|\sf W_S|\lesssim 4^\ell\), the elementary inequality
\(ab\leq (a^2+b^2)/2\) yields \eqref{e:direct-switching-Q}.

Next we analyze $|\widetilde Q-Q|_\infty$. 
Fix \(i\in\bH\).
Summing over the vertices of type \(i\) and using \eqref{e:direct-switching-Q}, we obtain
\begin{align}
&
\sum_{u:\pi(u)=i}
\left|
\widetilde G_{uu}-G_{uu}
\right|
\lesssim
4^\ell
\sum_{x\in\bW_S'}
\sum_{u:\pi(u)=i}
|G_{ux}|^2\le 4^\ell\sum_{x\in\bW_S'}\sum_{u\in\bG}|G_{ux}|^2=4^\ell\sum_{x\in\bW_S'}\frac{\Im G_{xx}}{\eta}.
\label{e:sum-Q-before-Ward}
\end{align}
where in the last step we used the Ward identity \eqref{e:Ward}.
Then, since $|\bW_S'|\lesssim 4^\ell$ and $\cG\in\Omega(z)$ or $\Omega_o^+(z)$, we obtain
\begin{align}
\label{e:sum-Q-after-Ward}
|\widetilde Q_i-Q_i|
&\le
\frac1N
\sum_{u:\pi(u)=i}
\left|
\widetilde G_{uu}-G_{uu}
\right|
\lesssim
\frac{4^{2\ell}}{N\eta}
\max_{x\in\bW_S'}
\Im G_{xx}
\lesssim
\frac{4^{2\ell}}{N\eta}
\left(
|\Im m^{\rm ver}|_\infty
+
\varepsilon_*
+
(\log N)^3\varepsilon
\right)
\lesssim
(\log N)^2\varphi^2,
\end{align}
where the last two inequalities follow from \eqref{eq.imG_xx} and \eqref{e:Omega-o-plus-G}.

The estimate for $\widetilde p-p$ follows in exactly the same way.
Indeed, fix a directed edge $e=(i,j)\in\vec E_\cH$.
Notice that $\widetilde p_e$ is defined using the $e$-neighbor of $u$
after the switching, which may differ from $u_e$. Since the switching
changes the $e$-neighbor of at most $\OO(\mu)$ vertices, and the Green
function entries are uniformly bounded on the bounded spectral region
under consideration, replacing the switched $e$-neighbors by the
original ones introduces an error $\OO(\mu)$. Thus, by
\eqref{e:direct-switching-Q},
\begin{align*}
N|\widetilde p_e-p_e|
\le
\sum_{\substack{u\in\bG\\ \pi(u)=i}}
\left|
\widetilde G_{u u_e}-G_{u u_e}
\right|
+\OO(\mu)
\lesssim
4^\ell
\sum_{x\in\bW_S'}
\sum_{\substack{u\in\bG\\ \pi(u)=i}}
\left(
|G_{ux}|^2+|G_{u_e x}|^2
\right)
+\OO(\mu).
\end{align*}
After dividing by $N$, the additional error is $\OO(\mu/N)$,
which is absorbed into $(\log N)^{2}\varphi^2$ by the choice of
parameters.
Then, the Ward identity gives
\[
|\widetilde p_e-p_e|
\lesssim
\frac{4^{2\ell}}{N\eta}
\max_{x\in\bW_S'}\Im G_{xx}
\lesssim
(\log N)^2\varphi^2.
\]
Hence
$
|\widetilde Q^{\rm ver}-Q^{\rm ver}|_\infty
+
|\widetilde p-p|_\infty
\lesssim
(\log N)^2\varphi^2.
$

Finally, by the definition
$
Q_e=-p_e/Q_{t(e)}
$
and
$
\widetilde Q_e=-\widetilde p_e/\widetilde Q_{t(e)},
$
and the a priori bounds
$
|Q_i|\sim |\widetilde Q_i|\sim 1$
and
$
|p_e|\lesssim 1,
$
we obtain
$
|\widetilde Q_e-Q_e|
\lesssim
|\widetilde p_e-p_e|
+
|\widetilde Q_{t(e)}-Q_{t(e)}|
\lesssim
(\log N)^2\varphi^2.
$
Taking the maximum over $e\in\vec E_\cH$ proves
\eqref{e:bold-Q-Qtilde}.

\end{proof}

\subsection{Proof of Proposition \ref{p:Omega+}}\label{sec.5.7}
Proposition~\ref{prop:stability-switching-filled-exterior} implies \eqref{e:Omega-o-plus-filled}.
On the other hand, Proposition~\ref{prop:woodbury-stability-glued-T},
together with Lemma~\ref{p:localization}, yields uniformly in
$i,j\in\bG$,
\[
\left|
\widetilde G_{ij}
-
G_{ij}\left(
\Ext\bigl(
\cB_r(\{i,j\},\widetilde{\cG}),Q
\bigr),z
\right)
\right|
\lesssim
(\log N)^3\varepsilon.
\]

It therefore remains only to replace the original empirical weight
$Q$ by the switched weight $\widetilde Q$. 
Lemma~\ref{lem:direct-switching-Q} gives
$
|\widetilde Q-Q|_\infty
\lesssim
(\log N)^2\varphi^2,
$
which implies $|\widetilde Q-m|_\infty\lesssim|Q-m|_\infty+(\log N\varphi)^2\lesssim\varepsilon_*$.
Moreover, by Lemma~\ref{p:subgraph} and the choice of parameters,
replacing $Q$ by $\widetilde Q$ in the local comparison graph
produces an error negligible compared with the corresponding
right-hand sides above. Consequently,
\begin{align*}
\left|
\widetilde G^{\langle\bT\rangle}_{ij}
-
G_{ij}\left(
\Ext\bigl(
\cB_r(\{i,j\},
\widetilde{\cG}^{\langle\bT\rangle}),\widetilde Q
\bigr),z
\right)
\right|
\lesssim
(\log N)\varepsilon,
~
\left|
\widetilde G_{ij}
-
G_{ij}\left(
\Ext\bigl(
\cB_r(\{i,j\},\widetilde{\cG}),\widetilde Q
\bigr),z
\right)
\right|
\lesssim
(\log N)^3\varepsilon.
\end{align*}

Finally, since $\cG\in\Omega(z)$,
$
|R(Q,z)|_\infty\leq\varepsilon
$.
The Dyson residual is uniformly Lipschitz in a fixed neighborhood of
the solution, and therefore
$
|R(\widetilde Q,z)|_\infty
\lesssim
|R(Q,z)|_\infty+|\widetilde Q-Q|_\infty
\lesssim
\varepsilon.
$
Thus all the defining estimates of
$\Omega_o^+(z)$ hold for $\widetilde{\cG}$.
Since Proposition~\ref{prop:cell-sparsity} gives
$
\P_\cG(F_2(\cG))
=
1-\OO(N^{-\fd}),
$
the conclusion of Proposition~\ref{p:Omega+} follows.

\section{Proof of Proposition \ref{p:Omega-}: Improved Local Approximation}
\label{sec:concentration}

In this section, we prove Proposition~\ref{p:Omega-}. We first establish concentration for the diagonal errors produced by the individual filled switchings and identify their mean. 
We then insert these estimates into the gluing expansion for $\bT$ to obtain the improved local approximation, and finally use the three-term identity on the comparison tree to derive the corresponding root estimate.

\subsection{Concentration in the switched graph}
We now prove the concentration estimate needed for the switched graph.
For a fixed vertex type $i\in \bH$, let
\[
\mathsf A_i
:=
\bigl\{
\alpha\in\qq{1,\mu}:
\pi(\widetilde a_\alpha)=i
\bigr\}.
\]
For every $\alpha$, let
$\mathcal G_{[\alpha]}^{\langle\bT\rangle}$
be the graph obtained from
$\mathcal G^{\langle\bT\rangle}$
by performing only the $\alpha$-th filled switching when
$I_\alpha=1$:
\[
\{a_\alpha,l_\alpha^\star\},
\ \{b_\alpha,c_\alpha\}
\quad\longmapsto\quad
\{a_\alpha,b_\alpha\},
\ \{c_\alpha,l_\alpha^\star\}.
\]
When $I_\alpha=0$, we set
$
\mathcal G_{[\alpha]}^{\langle\bT\rangle}
=
\mathcal G^{\langle\bT\rangle}.
$
In this definition, we use only the single-sample condition
$I_\alpha$, not the joint isolation condition $J_\alpha$.
Let $G_{[\alpha]}^{\langle\bT\rangle}$
denote the corresponding Green function, and define
\[
P_{[\alpha],c_\alpha c_\alpha}^{\langle\bT\rangle}
:=
G_{c_\alpha c_\alpha}
\left(
\Ext\big(
\cB_r\big(
\{a_\alpha, b_\alpha,c_\alpha\},
\mathcal G_{[\alpha]}^{\langle\bT\rangle}
\big),
Q
\big),z
\right).
\]
We also define
\begin{equation}
\label{e:single-switching-Z-alpha}
Z_\alpha
:=
G_{[\alpha],c_\alpha c_\alpha}^{\langle\bT\rangle}
-
P_{[\alpha],c_\alpha c_\alpha}^{\langle\bT\rangle},
\qquad
\widetilde Z_\alpha
:=
\widetilde G^{\langle\bT\rangle}_{
\widetilde a_\alpha\widetilde a_\alpha}
-
G_{\widetilde a_\alpha\widetilde a_\alpha}
\left(
\Ext\left(
\cB_r\left(
\{a_\alpha, b_\alpha, c_\alpha\},
\widetilde{\mathcal G}^{\langle\bT\rangle}
\right),
Q
\right),z
\right).
\end{equation}

\begin{proposition}[Concentration for a fixed vertex type]
\label{prop:lift-concentration}
Let $z\in\mathbb C^+$ with $|z|\le 2d_{\rm max}$ and $\eta\ge(\log N)^\fb/N$ and
assume that $\mathcal G\in\Omega_o^+(z)$.
Conditioned on the underlying graph and all randomness prior to the local sampling, let $\{w_\alpha\}_{\alpha\in\mathsf A_i}$ be deterministic weights satisfying
$
\sum_{\alpha\in\mathsf A_i}|w_\alpha|\lesssim 1.
$
Then there exists an event
$
F^i_3(\mathcal G)\subset F_2(\mathcal G)
$
such that
$
\mathbb P_{\mathcal G}\bigl(F^i_3(\mathcal G)\bigr)
=
1-\mathcal \OO(N^{-\fd}),
$
and, for every switching configuration
$\mathbf S\in F^i_3(\mathcal G)$,
\begin{equation}
\label{e:lift-concentration}
\left|
\sum_{\alpha\in\mathsf A_i}
w_\alpha\widetilde Z_\alpha
\right|
\lesssim
(\log N)^2\varepsilon
\left(
\sum_{\alpha\in\mathsf A_i}|w_\alpha|^2
\right)^{1/2}
+(\log N)|R(Q,z)|_\infty+(\log N\epsilon)^2.
\end{equation}
Define
$
F_3(\cG):=\bigcap_{i\in\bH}F_3^i(\cG).
$
Then, on the event \(F_3(\cG)\), the above concentration inequality holds simultaneously for all types \(i\in\bH\).

\end{proposition}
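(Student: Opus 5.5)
The proof splits into three steps: reduce each $\widetilde Z_\alpha$ to a single-switching quantity, identify the conditional mean of that quantity, and control its fluctuations by independence of the samples.

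\textbf{Step 1: reduction to single switchings.} For $\alpha\in\mathsf U$ (the isolated indices), the cell separation \eqref{e:cell-separation} and the bound \eqref{e:cell-switching-support} show that the other switchings interact with the $\alpha$-cell only through Green entries of size $\lesssim\varphi$. We apply the Woodbury error expansion (Lemma~\ref{lem:woodbury-error-expansion}), in the compressed form of Lemma~\ref{lem:compressed-woodbury-filled-switching}, to the passage from $\cG_{[\alpha]}^{\langle\bT\rangle}$ to $\widetilde\cG^{\langle\bT\rangle}$. The cross terms are bounded exactly as in the proof of \eqref{eq:filled-analogue-551-553}, which gives
\[
\widetilde Z_\alpha=Z_\alpha+\OO\bigl(4^\ell\varphi^2+(\log N)^2\varepsilon^2\bigr),\qquad \alpha\in\mathsf U .
\]
By \eqref{e:parameter-basic-bounds} this error is absorbed into $(\log N\varepsilon)^2$ after weighting by $\sum|w_\alpha|\lesssim1$. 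For $\alpha\notin\mathsf U$ we use the crude bound $|\widetilde Z_\alpha|\lesssim(\log N)\varepsilon$. Since $|\mathsf W_S\setminus\mathsf U|\lesssim\log N$ and the indices in $\mathsf B_1\cup\mathsf B_2\cup\mathsf B_3$ number $\OO(\fd)$, these indices contribute at most $(\log N)^2\varepsilon\max_\alpha|w_\alpha|\le(\log N)^2\varepsilon(\sum|w_\alpha|^2)^{1/2}$. The right-hand side of \eqref{e:lift-concentration} is therefore a total budget, and it remains to prove the bound for $\sum_\alpha w_\alpha Z_\alpha$.

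\textbf{Step 2: conditional mean of $Z_\alpha$.} Conditionally on $\cG$, the pairs $(b_\alpha,c_\alpha)$ are independent and uniform over fiber edges. Hence the $Z_\alpha$ are independent and each depends only on $\vec S_\alpha$. We show that $|\E_\cG Z_\alpha|\lesssim(\log N)|R(Q,z)|_\infty+(\log N\varepsilon)^2$.

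Perform the single switching by Woodbury (Lemma~\ref{lem:F-boundary-factorization}) around $c_\alpha$. To leading order this gives $G_{[\alpha],c_\alpha c_\alpha}$ as the Schur-complement expression in which the branch through $b_\alpha$ is replaced by the $a_\alpha$-branch with $G^{\langle\bT\rangle}_{a_\alpha a_\alpha}$, up to the error $D$ propagated linearly through $W$, $F_\partial$, and then quadratically. Averaging over the uniform choice of the edge $(b_\alpha,c_\alpha)$ of type $e_\alpha$ turns the linear terms into fiber averages such as $\frac1N\sum_{\pi(x)=j}G_{xx}$ and $\frac1N\sum G_{xx_e}$. By \eqref{e:Omega-o-plus-filled} and Lemma~\ref{lem:direct-switching-Q}, these agree with $Q^{\rm ver}$ and $p$ up to $(\log N)^2\varphi^2$ plus $\OO(4^\ell/N)$ from the removed $\bT$.

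The comparison quantity $P_{[\alpha],c_\alpha c_\alpha}$ is the tree-extension Green function with weights $Q$. Using the fixed-point estimate Lemma~\ref{p:fixpoint} and the reconstruction identity \eqref{e:new-Q-vertex-reconstruction}, its self-consistent value coincides with the averaged Woodbury expression up to $(1+\diam)|R(Q,z)|_\infty\lesssim(\log N)|R(Q,z)|_\infty$. The linear mean therefore cancels, and the quadratic remainders are $\OO((\log N\varepsilon)^2)$.

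\textbf{Step 3: fluctuations.} With the mean identified, we bound $|Z_\alpha-\E_\cG Z_\alpha|\lesssim(\log N)\varepsilon$ and use independence across $\alpha$ to apply a Hoeffding / high-moment Marcinkiewicz–Zygmund bound. This gives, with probability $1-\OO(N^{-\fd})$,
\[
\Bigl|\sum_\alpha w_\alpha(Z_\alpha-\E Z_\alpha)\Bigr|\lesssim\sqrt{\fd\log N}\,(\log N)\varepsilon\Bigl(\sum|w_\alpha|^2\Bigr)^{1/2},
\]
which is within budget. One technical point is that $\mathsf U$ and $\mathsf W_S$ depend jointly on all samples. We handle this by working with $Z_\alpha$, which is defined using only $I_\alpha$ and so is a function of $\vec S_\alpha$ alone, and by intersecting with $F_2(\cG)$ only at the end. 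Finally, $F_3$ is the intersection over the finitely many types $i\in\bH$.

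\textbf{Main obstacle.} The hardest part is Step 2. One must expand the Woodbury formula for a single switching precisely enough that the averaged linear term matches the fixed-point value of $P_{[\alpha]}$ built from $Q$, so that the residual left over is exactly the Dyson residual $R(Q,z)$ and no $\OO(\varepsilon)$ term survives. The approach is to write both sides through the three-term structure $p_{ij}=-Q_iQ_{ij}$ and the averaged row identity \eqref{e:averaged-row-identity}.
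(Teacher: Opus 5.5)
Your three-step architecture matches the paper's. The paper first applies Hoeffding to the $Z_\alpha$, then compares $\widetilde Z_\alpha$ with $Z_\alpha$ on $\mathsf U$ (crude bound on the $\OO(\log N)$ other indices), then bounds the mean. Your Steps 1 and 3 are fine.

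The gap is in Step 2, which you rightly call the heart. The single-switching structure you describe is wrong in two places, and either error leaves a term that averaging over $(b_\alpha,c_\alpha)$ cannot remove.

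First, under $\{a_\alpha,l_\alpha^\star\},\{b_\alpha,c_\alpha\}\mapsto\{a_\alpha,b_\alpha\},\{c_\alpha,l_\alpha^\star\}$, the vertex $c_\alpha$ is joined to $l_\alpha^\star$, the root of an \emph{exact} universal-cover branch. Nothing involving $a_\alpha$ or $G^{\langle\bT\rangle}_{a_\alpha a_\alpha}$ replaces its $b_\alpha$-branch. If $G^{\langle\bT\rangle}_{a_\alpha a_\alpha}$ entered $Z_\alpha$ at linear order, as you describe, then $D_{a_\alpha a_\alpha}$ would survive the averaging. It is a non-sampled error of size $(\log N)\varepsilon$.

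Second, $P^{\langle\bT\rangle}_{[\alpha],c_\alpha c_\alpha}$ is not the pure $Q$-extension, because it carries that exact $m_{\bar e}$-branch. Lemma~\ref{p:fixpoint} must be combined with a rank-one filling step, which is stable since $|1-m_{\bar e}Q_e|\sim1$. This gives $Q_e/(1-m_{\bar e}Q_e)+\OO((\log N)|R(Q,z)|_\infty)$. The pure-$Q$ value $Q_e/(1-Q_eQ_{\bar e})$ differs from this by order $|Q-m|_\infty$. Near regular edges and cusps that is only controlled by $\varepsilon_*\gg\varepsilon$, so it is not covered by the right-hand side of \eqref{e:lift-concentration}.

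The mechanism that works is the paper's. Write $Z_\alpha=(W_\alpha)_{c_\alpha}D(W_\alpha)_{c_\alpha}^{\mathsf T}+\OO((\log N\varepsilon)^2)$ with $D=G^{\langle\bT\rangle}-P^{\langle\bT\rangle}$. Two facts make the $a_\alpha$-coordinates of the row cancel: $P^{\langle\bT\rangle}_{[\alpha],c_\alpha a_\alpha}=P^{\langle\bT\rangle}_{[\alpha],c_\alpha b_\alpha}=0$, and the cavity relation $D_{l_\alpha^\star\,\cdot}=-m_{a_\alpha l_\alpha}D_{a_\alpha\,\cdot}$, where $m_{a_\alpha l_\alpha}=m_{\bar e}$. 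The row then reduces to $(1+m_{\bar e}P)e_{c_\alpha}+Pe_{b_\alpha}$ with $P=P^{\langle\bT\rangle}_{[\alpha],c_\alpha c_\alpha}$, supported on the sampled pair. With the intertwining identity $W_\alpha P^{\langle\bT\rangle}=P^{\langle\bT\rangle}_{[\alpha]}$ this yields
\[
Z_\alpha=\frac{Q_e^2G^{\langle\bT\rangle}_{b_\alpha b_\alpha}+2Q_eG^{\langle\bT\rangle}_{b_\alpha c_\alpha}+G^{\langle\bT\rangle}_{c_\alpha c_\alpha}-Q_e}{(1-m_{\bar e}Q_e)^2}+\OO\bigl((\log N)|R(Q,z)|_\infty+(\log N\varepsilon)^2\bigr).
\]
Averaging the sampled block to $(Q_i,p_e;p_e,Q_j)$ turns the numerator into exactly $Q_e^2Q_j+2Q_ep_e+Q_i-Q_e=p_eR_{\bar e}(Q,z)$. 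The deterministic $m$ survives only in the denominator. This is the cancellation your Step 2 needs.

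One smaller point: the expansion is valid only when $I_\alpha=1$ and $\{b_\alpha,c_\alpha\}$ is far from every $a_\beta$. This has probability $1-\OO(N^{-1+\fc})$ only if $\cB_{\fR/4}(a_\alpha,\cG^{(\bT)})$ is a tree; otherwise $I_\alpha\equiv0$. Since you are bounding $\E_\cG Z_\alpha$, the exceptional indices must form a deterministic set. The paper bounds it by $|\mathsf A_i\setminus\mathsf A_i^{\rm good}|\le 3\fd$, using the radius-$\fR/2$ excess bound in $\bar\Omega^+$ and Proposition~\ref{prop:boundary-tree-neighborhood}, and treats those terms crudely. The random sets $\mathsf B_1,\mathsf B_2,\mathsf B_3$ from your Step 1 cannot play this role inside the expectation.
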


\begin{proof}
  Write
\[
|w|_1=\sum_{\alpha\in\mathsf A_i}|w_\alpha|,
\qquad
|w|_2=
\left(
\sum_{\alpha\in\mathsf A_i}|w_\alpha|^2
\right)^{1/2},
\qquad
|w|_\infty=\max_{\alpha\in\mathsf A_i}|w_\alpha|.
\]
We divide the proof into three steps.\\
\medskip
\noindent
\textit{Step 1. Concentration before  switching.}
Conditioned on the underlying graph and the fixed boundary data,
the samples
$\vec S_\alpha=(b_\alpha,c_\alpha)$ are independent.
The graph
$\mathcal G_{[\alpha]}^{\langle\bT\rangle}$
and its local comparison are functions of $\vec S_\alpha$ alone. 
Consequently, the random variables
$\{Z_\alpha\}_{\alpha\in\mathsf A_i}$
are conditionally independent.

We claim $|Z_\alpha|\lesssim(\log N)\epsilon$.
The claim is immediate when \(I_\alpha=0\). Otherwise, we repeat the procedure from Section
\ref{subsec:stability-switching-filled-exterior}. Define the localized
comparison Green's function
\[
P^{\langle\bT\rangle}
:=
G\left(
\Ext\left(
\cB_r\left(
\{a_\alpha,b_\alpha,c_\alpha\},
\mathcal G^{\langle\bT\rangle}
\right),
Q
\right),z
\right).
\]
For simplicity, we use the same notation
\(P^{\langle\bT\rangle}\) as in \eqref{def.P<T>}, since in both cases
it denotes a localized comparison Green's function associated with
\(\mathcal G^{\langle\bT\rangle}\). Here, however, in order to preserve
the required independence property, we take only the $\alpha$-cell
\(\{a_\alpha,b_\alpha,c_\alpha\}\) as the core of the localization,
rather than the full collection \(\bW_S\) introduced in
\eqref{def.bW_S}. The dependence on $\alpha$ is understood throughout.

Set
$
D^{\langle\bT\rangle}
:=
G^{\langle\bT\rangle}-P^{\langle\bT\rangle},
$
and
$
W_\alpha
:=
\bigl(I+P^{\langle\bT\rangle}\Xi_\alpha\bigr)^{-1}
=
I-P_{[\alpha]}^{\langle\bT\rangle}\Xi_\alpha,
$
where \(\Xi_\alpha\) denotes the finite-rank inverse-resolvent
perturbation associated with the single $\alpha$-switching.
Since Proposition \ref{p:tree-extension-bound} and Lemma \ref{p:subgraph} imply $|P_{[\alpha]}^{\langle\bT\rangle}|\lesssim 1$ entrywise, together with the fixed support of \(\Xi_\alpha\), we have
\(\|W_\alpha\|_{\infty}\lesssim1\).
Since \(\cG\in\Omega_o^+(z)\), we also have
$
\|W_\alpha D^{\langle\bT\rangle}\Xi_\alpha\|_{\infty}
\lesssim(\log N)\varepsilon=o(1).
$
Thus the resolvent identity yields
\begin{equation}\label{eq.GTa}
G_{[\alpha]}^{\langle\bT\rangle}
-
P_{[\alpha]}^{\langle\bT\rangle}
=
\bigl(I+W_\alpha D\Xi_\alpha\bigr)^{-1}
W_\alpha D W_\alpha^{\mathsf T},
\end{equation}
where the first factor is bounded by a convergent Neumann series.
Taking the entrywise maximum and using
\(\|W_\alpha^{\mathsf T}\|_{1}
=\|W_\alpha\|_{\infty}\lesssim1\),
we obtain
$
\bigl|
G_{[\alpha]}^{\langle\bT\rangle}
-
P_{[\alpha]}^{\langle\bT\rangle}
\bigr|
\lesssim
|D^{\langle\bT\rangle}|
\lesssim(\log N)\varepsilon
$ entrywise,
which implies \(|Z_\alpha|\lesssim(\log N)\varepsilon\).

Applying Hoeffding's inequality to the real and imaginary parts,
we obtain, for every $t>0$,
\begin{align}
\label{e:azuma-lift}
\mathbb P_{\mathcal G}
\left(
\left|
\sum_{\alpha\in\mathsf A_i}
w_\alpha(Z_\alpha-\E_\cG Z_\alpha)
\right|
\ge
t(\log N)\varepsilon
|w|_2
\right)
\le
4e^{-ct^2}.
\end{align}
Taking $t=\log N$, we conclude that, outside an event of probability
$\OO(e^{-c(\log N)^2})$,
\begin{equation}
\label{e:single-switching-concentration}
\left|
\sum_{\alpha\in\mathsf A_i}
w_\alpha(Z_\alpha-\E_\cG Z_\alpha)
\right|
\lesssim
(\log N)^2\varepsilon
|w|_2.
\end{equation}
We leave the conditional means $\E_\cG Z_\alpha$ unchanged.

\medskip
\noindent
\textit{Step 2. Comparison with the fully switched graph.}

Fix $\mathbf S\in F_2(\mathcal G)$ and
$\alpha\in\mathsf U$.
Then $\widetilde a_\alpha=c_\alpha$.
Recall that
$\mathcal G_{[\alpha]}^{\langle\bT\rangle}$
is obtained by performing only the $\alpha$-th switching, whereas
$\widetilde{\mathcal G}^{\langle\bT\rangle}$
contains all switchings in $\mathsf W_S$.
Hence
$\mathcal G_{[\alpha]}^{\langle\bT\rangle}$
is obtained from
$\widetilde{\mathcal G}^{\langle\bT\rangle}$
by undoing the switchings
$\beta\in\mathsf W_S\setminus\{\alpha\}$.

By exactly the same Woodbury argument as in the proof of
Proposition~\ref{prop:stability-switching-filled-exterior},
applied in the reverse direction, and using the uniform
boundedness of the corresponding switching kernel, we have
\begin{equation}
\label{e:single-full-diagonal-comparison}
\left|
\widetilde G^{\langle\bT\rangle}_{c_\alpha c_\alpha}
-
G_{[\alpha],c_\alpha c_\alpha}^{\langle\bT\rangle}
\right|
\lesssim
\sum_{\beta\in\mathsf W_S\setminus\{\alpha\}}
\sum_{x\in\{a_\beta,b_\beta,c_\beta\}}
\left|
\widetilde G^{\langle\bT\rangle}_{c_\alpha x}
\right|^2.
\end{equation}
Here the coordinates $l_\beta^\star$ have been eliminated by the
exact covering-tree relation
$
\widetilde G^{\langle\bT\rangle}_{c_\alpha l_\beta^\star}
=
-m_{a_\beta l_\beta}
\widetilde G^{\langle\bT\rangle}_{c_\alpha c_\beta}.
$

Since $\alpha\in\mathsf U$, the separation conditions for distinct
switching cells and
Proposition~\ref{prop:stability-switching-filled-exterior} imply
\[
\left|
\widetilde G^{\langle\bT\rangle}_{c_\alpha x}
\right|
\lesssim
\varphi+(\log N)^2\varepsilon^2,
\qquad
x\in\{a_\beta,b_\beta,c_\beta\},
\quad
\beta\ne\alpha.
\]
This yields
\begin{equation}
\label{e:single-full-diagonal-comparison-final}
\left|
\widetilde G^{\langle\bT\rangle}_{c_\alpha c_\alpha}
-
G_{[\alpha],c_\alpha c_\alpha}^{\langle\bT\rangle}
\right|
\lesssim
\mu
\bigl(
\varphi+(\log N)^2\varepsilon^2
\bigr)^2\lesssim 4^{\ell}
\bigl(
\varphi+(\log N)^2\varepsilon^2
\bigr)^2.
\end{equation}
Moreover, the switchings with $\beta\ne\alpha$ do not change the
radius-$r$ weighted neighborhood of $c_\alpha$. Hence the two
localized comparison Green functions coincide, and therefore
\begin{equation}
\label{e:single-full-Z-comparison}
|\widetilde Z_\alpha-Z_\alpha|
\lesssim
\mu
\bigl(
\varphi+(\log N)^2\varepsilon^2
\bigr)^2,
\qquad
\alpha\in\mathsf U.
\end{equation}
For $\alpha\notin\mathsf U$, we simply use the local approximation
to obtain
$
|\widetilde Z_\alpha-Z_\alpha|
\lesssim(\log N)\varepsilon.$
On $F_2(\mathcal G)$, we have
$|\qq{1,\mu}\setminus\mathsf U|\lesssim\log N$.
Therefore the results in Step 2 imply
\begin{align}
\label{e:single-full-weighted-comparison}
\left|
\sum_{\alpha\in\mathsf A_i}
w_\alpha(\widetilde Z_\alpha-Z_\alpha)
\right|
\lesssim{}&
\mu\bigl(\varphi+(\log N\varepsilon)^2\bigr)^2|w|_1
+
(\log N)^2\varepsilon|w|_\infty.
\end{align}

The term involving $\varphi^2$ is bounded by
$4^\ell\varphi^2$, since
$\mu\lesssim4^\ell$ and $|w|_1\lesssim1$.
For the remaining term, Cauchy--Schwarz and
$\mu(\log N\varepsilon)^2\ll 1$ give
$
\mu(\log N\varepsilon)^4|w|_1
\lesssim
\log N \varepsilon|w|_2.
$
Using also $|w|_\infty\le|w|_2$, we conclude that
\begin{equation}\label{eq.swithcing-difference}
\left|
\sum_{\alpha\in\mathsf A_i}
w_\alpha(\widetilde Z_\alpha-Z_\alpha)
\right|
\lesssim
4^\ell\varphi^2
+
(\log N)^2\varepsilon|w|_2.
\end{equation}

\medskip
\noindent
\textit{Step 3. Estimating the conditional mean.}
It remains to estimate the mean $\sum_{\alpha\in\mathsf A_i}w_\alpha\E_\cG Z_\alpha$. 
Define the
deterministic set of good boundary indices by
$
\mathsf A_i^{\rm good}
:=
\left\{
\alpha\in\mathsf A_i:
\cB_{\fR/4}(a_\alpha,\cG^{(\bT)})
\text{ is a tree}
\right\}.
$
We first establish the following conditional-mean estimate for each
good boundary index
\begin{claim}\label{lem:single-switching-concentration-mean}
  Under the same setting as Proposition~\ref{prop:lift-concentration}, for every $\alpha\in\mathsf A_i^{\rm good}$, we have
\begin{align}
\label{e:single-switching-mean-bound}
|\E_\cG Z_\alpha|
\lesssim
(\log N)|R(Q,z)|_\infty
+
(\log N\varepsilon)^2.
\end{align}
\end{claim}

On the other hand, for the deterministic exceptional boundary indices, the uniform
bound $|Z_\alpha|\lesssim(\log N)\varepsilon$ gives
\[
\left|
\sum_{\alpha\in\mathsf A_i\setminus\mathsf A_i^{\rm good}}
w_\alpha\E_\cG Z_\alpha
\right|
\lesssim
(\log N)\varepsilon
\sum_{\alpha\in\mathsf A_i\setminus\mathsf A_i^{\rm good}}
|w_\alpha|.
\]
Since $\cG\in\Omega_o^+(z)\subset\bar\Omega^+$, Definition \ref{def:barOmega1} gives
$
\operatorname{excess}\bigl(\cB_{\fR/2}(o,\cG)\bigr)
\le \fd.
$
Applying Proposition \ref{prop:boundary-tree-neighborhood}
with its radius parameter set to $R=\fR/2$, we obtain
$
|\mathsf A_i\setminus\mathsf A_i^{\rm good}|
\le 3\fd\lesssim 1.
$ 
Hence
\[
(\log N)\varepsilon
\sum_{\alpha\in\mathsf A_i\setminus\mathsf A_i^{\rm good}}
|w_\alpha|
\lesssim
(\log N)\varepsilon|w|_\infty.
\]
This application is the reason for imposing the
radius-$\fR/2$ excess bound in the definition of $\bar\Omega^+$.
Combining this with Claim~\ref{lem:single-switching-concentration-mean}
and $|w|_1\lesssim1$, we obtain
\[
\left|
\sum_{\alpha\in\mathsf A_i}w_\alpha\E_\cG Z_\alpha
\right|
\lesssim
(\log N)|R(Q,z)|_\infty
+
(\log N\varepsilon)^2
+
(\log N)\varepsilon|w|_\infty.
\]
Combining this estimate with
\eqref{e:single-switching-concentration} and \eqref{eq.swithcing-difference}
proves \eqref{e:lift-concentration}.

Finally, define $F^i_3(\mathcal G)$ as the intersection of
$F_2(\mathcal G)$ with the event in
\eqref{e:single-switching-concentration}.
The probability estimate for $F_2(\mathcal G)$ and
\eqref{e:azuma-lift} imply
$
\mathbb P_{\mathcal G}\bigl(F^i_3(\mathcal G)\bigr)
=
1-\OO(N^{-\fd}).
$
Consequently, $
\mathbb P_{\mathcal G}\bigl(F_3(\mathcal G)\bigr)
=
1-\OO(N^{-\fd}).
$
\end{proof}

\begin{proof}[Proof of Claim \ref{lem:single-switching-concentration-mean}]
Fix a deterministic index
$\alpha\in\mathsf A_i^{\rm good}$, and let
$e=(j,i)$ be the type of $(b_\alpha,c_\alpha)$.
Thus
$\pi(a_\alpha)=\pi(c_\alpha)=i$ and
$\pi(l_\alpha)=\pi(b_\alpha)=j$.
All expectations and probabilities below are conditional on
$\cG$ and the fixed boundary data.
Write
$
\vec E_e^{\rm out}
:=
\left\{
(x,y)\in\vec E_\cG:
x,y\notin\bT, (\pi(x),\pi(y))=e
\right\}
$ and $N_i^{\rm out}:=|\vec E^{\rm out}_e|$.
The sampled edge $(b_\alpha,c_\alpha)$ is uniform
on $\vec E_e^{\,\rm out}$, and
$
N_i^{\rm out}=N-\OO(|\bT|)\sim N.
$
Recall that $\bA:=\{a_\beta:1\leq\beta\leq\mu\}$ and introduce
the single-sample event
\[
\mathcal E_\alpha
:=
\left\{
I_\alpha=1,\quad
\dist_{\cG^{(\bT)}}
\bigl(\{b_\alpha,c_\alpha\},\bA\bigr)>\fR/4
\right\}.
\]

We first estimate the probability of its complement for this
fixed index.
Since
$\alpha\in A_i^{\mathrm{good}}$, the condition
$I_\alpha=1$ holds whenever $c_\alpha$ has a
radius-$\fR$ tree neighborhood in $\mathcal G$ and
$
\operatorname{dist}_{\mathcal G^{(\bT)}}
(a_\alpha,\{b_\alpha,c_\alpha\})>\fR/2+1.
$
Indeed, the induced neighborhoods
$B_{\fR/4}(a_\alpha,\mathcal G^{(\bT)})$ and
$B_{\fR/4}(\{b_\alpha,c_\alpha\},\mathcal G^{(\bT)})$
are then trees. Moreover, the distance condition
ensures that these two neighborhoods are disjoint
and that there is no edge between them. Adding
the edge $\{a_\alpha,b_\alpha\}$ therefore joins
the two trees into a tree. In particular, the
sufficient condition holds if
$
\operatorname{dist}_{\mathcal G^{(\bT)}}
(a_\alpha,\{b_\alpha,c_\alpha\})\ge\fR/2+2.
$
The geometric assumptions and the degree bound therefore give
\[
\P_\cG(I_\alpha=0)
\lesssim
\frac{
N^\fc+(d_{\rm max}-1)^{\fR/2+2}
}{N_i^{\rm out}},\qquad \P_\cG\left(
\dist_{\cG^{(\bT)}}
\bigl(\{b_\alpha,c_\alpha\},\bA\bigr)
\leq\fR/4
\right)
\lesssim
\frac{
\mu(d_{\rm max}-1)^{\fR/4}
}{N_i^{\rm out}}.
\]
Consequently, using
$\fR=(\fc/4)\log N$ and
$\mu\lesssim4^\ell=(\log N)^C$, we obtain
\begin{equation}
\label{e:single-switching-good-sample-probability}
\P_\cG(\mathcal E_\alpha^c)
\lesssim N^{-1+\fc}.
\end{equation}
On $\mathcal E_\alpha$, the canonical comparison neighborhood
at $c_\alpha$ contains no other fixed boundary vertex.
Thus, after the $\alpha$-switching, its only exact filled branch
is the $m_{\bar e}$-branch attached at $c_\alpha$.

We first estimate
\(P_{[\alpha],c_\alpha c_\alpha}^{\langle\bT\rangle}\) on $\mathcal E_\alpha$.
Let
\(\widehat P_{[\alpha],c_\alpha c_\alpha}^{\langle\bT\rangle}\)
denote the root Green function of the same canonical comparison
tree, except that the exact \(m_{\bar e}\)-branch attached to
\(c_\alpha\) is temporarily replaced by a \(Q_{\bar e}\)-branch.
Then all exterior branches carry the weight \(Q\), and
Lemma~\ref{p:fixpoint} gives
\begin{align}
\left|
\widehat P_{[\alpha],c_\alpha c_\alpha}^{\langle\bT\rangle}
-
\frac{Q_{e}}{1-Q_eQ_{\bar e}}
\right|
\lesssim\log N
\big|
R(Q,z)
\big|_\infty.
\label{e:single-switching-all-Q-fixed-point}
\end{align}
Replacing the \(Q_{\bar e}\)-branch at \(c_\alpha\) by the exact
\(m_{\bar e}\)-branch is a one-dimensional filling operation. The Schur complement formula implies
\begin{equation}
P_{[\alpha],c_\alpha c_\alpha}^{\langle\bT\rangle}
=
\frac{
\widehat P_{[\alpha],c_\alpha c_\alpha}^{\langle\bT\rangle}
}{
1+(Q_{\bar e}-m_{\bar e})
\widehat P_{[\alpha],c_\alpha c_\alpha}^{\langle\bT\rangle}
}.
\end{equation}
On the other hand, it follows that
\[
\frac{
Q_{e}/(1-Q_eQ_{\bar e})
}{
1+
(Q_{\bar e}-m_{\bar e})
Q_{e}/(1-Q_eQ_{\bar e})
}
=
\frac{Q_{e}}{1-m_{\bar e}Q_{e}}.
\]
It follows from \(\cG\in\Omega_o^+(z)\) that \(|Q-m|_\infty=o(1)\). Hence, \eqref{eq.1-memvarm} gives
$
|1-m_{\bar e}Q_{e}|
\sim
|1-m_em_{\bar e}|
\sim1.
$
Thus the corresponding filling denominator is uniformly stable. Therefore, \eqref{e:single-switching-all-Q-fixed-point} implies
\begin{align}
\left|
P_{[\alpha],c_\alpha c_\alpha}^{\langle\bT\rangle}
-
\frac{Q_{e}}{1-m_{\bar e}Q_{e}}
\right|
\lesssim
\log N
\big|
R(Q,z)
\big|_\infty.
\label{e:single-switching-filled-fixed-point}
\end{align}

We now use \eqref{eq.GTa} from Step~1.
Since
$\|W_\alpha D^{\langle\bT\rangle}\Xi_\alpha\|_\infty
\lesssim(\log N)\varepsilon=o(1)$
on the single-switching support, expanding its first factor
to first order yields
\[
Z_\alpha
=
(W_\alpha)_{c_\alpha}
D^{\langle\bT\rangle}
(W_\alpha)_{c_\alpha}^{\mathsf T}
+
\OO\left((\log N\varepsilon)^2\right).
\]
We next identify the \(c_\alpha\)-row of the transfer
factor. After the \(\alpha\)-switching, the \(c_\alpha\)-component
of the comparison graph is separated from
\(a_\alpha\) and \(b_\alpha\), and hence
$
P_{[\alpha],c_\alpha a_\alpha}^{\langle\bT\rangle}
=
P_{[\alpha],c_\alpha b_\alpha}^{\langle\bT\rangle}
=
0.
$
Using the same transfer-factor calculation as in
\eqref{eq:transfer-entry-rewritten}, together with the fact that the
universal-cover branch attached at $c_\alpha$ contributes the exact
self-energy $m_{\bar e}$, we obtain
\begin{equation}
\label{e:single-switching-transfer-row}
(W_{\alpha})_{c_\alpha}
=
\left(
1+
m_{\bar e}
P_{[\alpha],c_\alpha c_\alpha}^{\langle\bT\rangle}
\right)e_{c_\alpha}^{\mathsf T}
+
P_{[\alpha],c_\alpha c_\alpha}^{\langle\bT\rangle}
e_{b_\alpha}^{\mathsf T}.
\end{equation}
By the definition of the transfer factor in
Lemma~\ref{lem:woodbury-error-expansion}, we have the exact
intertwining identity
$
W_{\alpha}P^{\langle\bT\rangle}
=
P_{[\alpha]}^{\langle\bT\rangle}.
$
Combining this and \eqref{e:single-switching-transfer-row}, it follows from 
$
P_{[\alpha],c_\alpha b_\alpha}^{\langle\bT\rangle}=0
$
that
\begin{align}
(W_{\alpha})_{c_\alpha}
P^{\langle\bT\rangle}
(W_{\alpha})_{c_\alpha}^{\mathsf T}
=
e_{c_\alpha}^{\mathsf T}
P_{[\alpha]}^{\langle\bT\rangle}
(W_{\alpha})_{c_\alpha}^{\mathsf T}
=
P_{[\alpha],c_\alpha c_\alpha}^{\langle\bT\rangle}
\left(
1+
m_{\bar e}
P_{[\alpha],c_\alpha c_\alpha}^{\langle\bT\rangle}
\right).
\label{e:single-switching-reference-identity}
\end{align}
Substituting
\eqref{e:single-switching-filled-fixed-point}
before taking the expectation, we obtain
\begin{align}
Z_\alpha
=
\frac{
Q_e^2G_{b_\alpha b_\alpha}^{\langle\bT\rangle}
+
2Q_eG_{b_\alpha c_\alpha}^{\langle\bT\rangle}
+
G_{c_\alpha c_\alpha}^{\langle\bT\rangle}
-
Q_e
}{
(1-m_{\bar e}Q_e)^2
}
+
\OO\left(
(\log N)|R(Q,z)|_\infty
+
(\log N\varepsilon)^2
\right).
\label{e:single-switching-scalar-expansion}
\end{align}

By the same Woodbury argument as in the proof of
Lemma~\ref{lem:direct-switching-Q},
we have
$
\left|
G^{\langle\bT\rangle}_{xx}-G_{xx}
\right|
\lesssim
4^\ell
\sum_{y\in\bA\cup\bL}|G_{xy}|^2,
$
where $\bA=\{a_\alpha\}_{\alpha=1}^\mu$ and $\bL=\{l_\alpha\}_{\alpha=1}^\mu$.
Then summing over the vertices of type $i$ and applying the Ward identity \eqref{e:Ward}, we obtain
\begin{equation*}
\label{e:filled-Qver-comparison}
\frac1N
\sum_{\substack{x\notin\bT\\ \pi(x)=i}}
\left|
G^{\langle\bT\rangle}_{xx}-G_{xx}
\right|
\lesssim(\log N)^2\varphi^2.
\end{equation*}
Moreover,
$
N-N_i^{\rm out}
\leq |\bT|\lesssim 4^{\ell}\lesssim(\log N)^{\fa}
$.
Since the diagonal Green functions are uniformly bounded on
$\Omega_o^+(z)$, the vertices of type $i$ contained in $\bT$, as well as
the change of normalization from $N_i^{\rm out}$ to $N$, contribute only
$\OO((\log N)^{\fa}/N)$. Therefore,
\begin{equation*}
\label{e:average-Gcc-Qver}
\left|
\mathbb E_{\mathcal G}
G^{\langle\bT\rangle}_{c_\alpha c_\alpha}
-
Q_i
\right|
\lesssim
(\log N)^2\varphi^2
+
\frac{(\log N)^{\fa}}{N}\lesssim(\log N)^2\varphi^2.
\end{equation*}
Using the same argument, it follows that
\begin{equation}
\label{e:single-switching-sampled-block}
\E_\cG
\begin{pmatrix}
G_{c_\alpha c_\alpha}^{\langle\bT\rangle}
&
G_{c_\alpha b_\alpha}^{\langle\bT\rangle}\vspace{1mm}
\\
G_{b_\alpha c_\alpha}^{\langle\bT\rangle}
&
G_{b_\alpha b_\alpha}^{\langle\bT\rangle}
\end{pmatrix}
=
\begin{pmatrix}
Q_i & p_e\\
p_e & Q_j
\end{pmatrix}
+
\OO\left(
(\log N)^2\varphi^2
\right).
\end{equation}

The coefficients in
\eqref{e:single-switching-scalar-expansion}
are deterministic under $\E_\cG$.
Using the sampled-block estimate
\eqref{e:single-switching-sampled-block},
and removing the indicator of $\mathcal E_\alpha$ at a cost
$\OO(N^{-1+\fc})$, we obtain
\begin{align}
\E_\cG Z_\alpha
=
\frac{
Q_e^2Q_j+2Q_ep_e+Q_i-Q_e
}{
(1-m_{\bar e}Q_e)^2
}
+
\OO\left(
(\log N)|R(Q,z)|_\infty
+
(\log N\varepsilon)^2
+
(\log N)^2\varphi^2
+
N^{-1+\fc}
\right).
\label{e:single-switching-mean-computation}
\end{align}
The removal of the indicator is justified by the uniform
Green-function bounds and the bound on $Z_\alpha$ from Step~1 in the proof of Proposition \ref{prop:lift-concentration}.

Finally, the identities
$
p_e=-Q_jQ_e=-Q_iQ_{\bar e}$
and
$
R_{\bar e}(Q,z)
=
-Q_{\bar e}^{-1}+Q_j^{-1}+Q_e
$
give
\[
Q_e^2Q_j+2Q_ep_e+Q_i-Q_e
=
p_eR_{\bar e}(Q,z).
\]
Since $|p_e|\lesssim1$ and
$|1-m_{\bar e}Q_e|\sim 1$, it follows that
\[
|\E_\cG Z_\alpha|
\lesssim
(\log N)|R(Q,z)|_\infty
+
(\log N\varepsilon)^2
+
(\log N)^2\varphi^2
+
N^{-1+\fc}\lesssim (\log N)|R(Q,z)|_\infty+(\log N \varepsilon)^2.
\]
\end{proof}

\subsection{Proof of Proposition \ref{p:Omega-}}
Define $\widetilde P^{\langle\bT\rangle}
:=
G\bigl(
\operatorname{Ext}
(
\widetilde{\mathcal G}^{\langle\bT\rangle}_o,Q(z)
),
z
\bigr)$ with $
\widetilde{\mathcal G}^{\langle\bT\rangle}_o
:=
\mathcal B_{r-1}
\bigl([\{o,i\}\setminus\bT]\cup\widetilde{\bA},
\widetilde{\mathcal G}^{\langle\bT\rangle}
\bigr).
$

\textit{Proof of \eqref{eq.Omega-minus-RQ}}: This follows directly from Lemma \ref{lem:direct-switching-Q}, together with the same argument as in Section \ref{sec.5.7}.

\medskip

\textit{Proof of \eqref{e:Omega-o-minus-Q} with \(i\in\bT\)}:
It is sufficient to bound the leading term in the
Woodbury expansion in Lemma \ref{lem:compressed-woodbury-gluing}
\begin{align}
\label{eq:improved-leading-term-decomposition}
\left(
W\widetilde D^{\langle\bT\rangle}W^{\mathsf T}
\right)_{oi}
=
\sum_{\alpha=1}^{\mu}
W_{o\widetilde a_\alpha}
W_{i\widetilde a_\alpha}
\widetilde D^{\langle\bT\rangle}_{
\widetilde a_\alpha\widetilde a_\alpha}
+
\sum_{\substack{\alpha,\beta=1\\\alpha\neq\beta}}^{\mu}
W_{o\widetilde a_\alpha}
\widetilde D^{\langle\bT\rangle}_{
\widetilde a_\alpha\widetilde a_\beta}
W_{i\widetilde a_\beta}.
\end{align}
We first estimate the diagonal term. For every \(\alpha\), we can write
\begin{align}
\label{eq:split-D-diagonal-local}
\widetilde D^{\langle\bT\rangle}_{
\widetilde a_\alpha\widetilde a_\alpha}
=
\widetilde G^{\langle\bT\rangle}_{
\widetilde a_\alpha\widetilde a_\alpha}
&-
G_{\widetilde a_\alpha\widetilde a_\alpha}
\left(
\operatorname{Ext}\left(
\mathcal B_r(
\{a_\alpha, b_\alpha, c_\alpha\},
\widetilde{\mathcal G}^{\langle\bT\rangle}
),
Q
\right),z
\right)
+
G_{\widetilde a_\alpha\widetilde a_\alpha}
\left(
\operatorname{Ext}\left(
\mathcal B_r(
\{a_\alpha, b_\alpha, c_\alpha\},
\widetilde{\mathcal G}^{\langle\bT\rangle}
),
Q
\right),z
\right)\nonumber\\&-G_{\widetilde a_\alpha\widetilde a_\alpha}
\left(
\operatorname{Ext}\left(
\mathcal B_r(
\widetilde a_\alpha,
\widetilde{\mathcal G}^{\langle\bT\rangle}
),
Q
\right),z
\right)
+
G_{\widetilde a_\alpha\widetilde a_\alpha}
\left(
\operatorname{Ext}\left(
\mathcal B_r(
\widetilde a_\alpha,
\widetilde{\mathcal G}^{\langle\bT\rangle}
),
Q
\right),z
\right)
-
\widetilde P^{\langle\bT\rangle}_{
\widetilde a_\alpha\widetilde a_\alpha}.
\end{align}
The localized graph contains at most one cycle and
has diameter \(\OO(\log N)\). Hence Lemma
\ref{p:localization} and the triangle inequality give
\begin{align}
\label{eq:localization-boundary-diagonal}
|\widetilde D^{\langle\bT\rangle}_{
\widetilde a_\alpha\widetilde a_\alpha}
|\lesssim
\Big|\widetilde G^{\langle\bT\rangle}_{
\widetilde a_\alpha\widetilde a_\alpha}
-
G_{\widetilde a_\alpha\widetilde a_\alpha}
\big(
\operatorname{Ext}\big(
\mathcal B_r(
\{a_\alpha, b_\alpha, c_\alpha\},
\widetilde{\mathcal G}^{\langle\bT\rangle}
),
Q
\big),z
\big)\Big|+
\log N
\big|
R(Q,z)
\big|_\infty
+
\beta^{2r}.
\end{align}

For each type \(j\in \bH\), we apply Proposition
\ref{prop:lift-concentration} with
$
w_\alpha
=
W_{o\widetilde a_\alpha}
W_{i\widetilde a_\alpha}
$
for $\alpha\in\mathsf A_j.$
These weights are deterministic conditionally on the randomness
prior to the local sampling. Indeed, by
Proposition~\ref{prop:elliptic-reverse}(ii), for
$i\in\bT$ the comparison core is contained in
$\cB_{\fR/8}(o,\cG)$, and the type-preserving isomorphism associated
with the switching fixes $\bT$ and maps each original boundary
vertex to its switched counterpart. Hence the relevant entries of
$P$ and $W$ are identified with entries determined before the local
sampling, and are therefore fixed under the conditional randomness.
Since the weights involved in the following argument form a family of at most $|\bH||\bT|=(\log N)^{\OO(1)}$ deterministic vectors fixed before the local sampling, applying Proposition \ref{prop:lift-concentration} with $\fd$ replaced by $\fd+1$ and taking a union bound yields simultaneous concentration for all these weights with failure probability $\OO(|\bH||\bT|N^{-\fd-1})=\OO(N^{-\fd})$.
Proposition \ref{prop:elliptic-neighborhood-estimates} and
Lemma \ref{p:subgraph} give
\begin{equation}
\label{eq:W-root-i-l1}
\sum_{\alpha=1}^{\mu}
\left|
W_{o\widetilde a_\alpha}
W_{i\widetilde a_\alpha}
\right|
\lesssim
\frac{1+\ell_i}{2^{\ell_i}}
\lesssim 1.
\end{equation}
Moreover, Definition \ref{def:elliptic-neighborhood} and Proposition \ref{prop:elliptic-neighborhood-estimates} also imply
$
\max_{\alpha}
\left|
W_{o\widetilde a_\alpha}
\right|
\lesssim
2^{-\ell},
$
and
$
\sum_{\alpha=1}^{\mu}
\left|
W_{i\widetilde a_\alpha}
\right|^2
\lesssim 1.
$
Consequently, we can obtain
\begin{align}
\label{eq:W-root-i-l2}
\sum_{\alpha=1}^{\mu}
\left|
W_{o\widetilde a_\alpha}
W_{i\widetilde a_\alpha}
\right|^2
&\leq
\max_{\alpha}
\left|
W_{o\widetilde a_\alpha}
\right|^2
\sum_{\alpha=1}^{\mu}
\left|
W_{i\widetilde a_\alpha}
\right|^2
\lesssim
4^{-\ell}.
\end{align}

Thus, using Proposition \ref{prop:lift-concentration} and summing the concentration estimate over the finitely many
vertex types \(j\in\bH\), we obtain
\begin{align}
\label{eq:concentration-diagonal-W}
\left|
\sum_{\alpha=1}^{\mu}
W_{o\widetilde a_\alpha}
W_{i\widetilde a_\alpha}
\left[
\widetilde G^{\langle\bT\rangle}_{
\widetilde a_\alpha\widetilde a_\alpha}
-
G_{\widetilde a_\alpha\widetilde a_\alpha}
\left(
\operatorname{Ext}\left(
\mathcal B_r(
\{a_\alpha,b_\alpha,c_\alpha\},
\widetilde{\mathcal G}^{\langle\bT\rangle}
),
Q
\right),z
\right)
\right]
\right|
\lesssim
\frac{(\log N)^2\varepsilon}{2^\ell}
+\log N
\big|
R(Q,z)
\big|_\infty,
\end{align}
where the error terms $4^\ell\varphi^2$ and $(\log N\epsilon)^2$ are absorbed into $\OO(2^{-\ell}(\log N)^2\varepsilon)$.
Combining \eqref{eq:split-D-diagonal-local},
\eqref{eq:localization-boundary-diagonal},
\eqref{eq:W-root-i-l1} and
\eqref{eq:concentration-diagonal-W}, we conclude that
\begin{align}
\label{eq:improved-diagonal-leading}
\left|
\sum_{\alpha=1}^{\mu}
W_{o\widetilde a_\alpha}
W_{i\widetilde a_\alpha}
\widetilde D^{\langle\bT\rangle}_{
\widetilde a_\alpha\widetilde a_\alpha}
\right|
\lesssim
\frac{(\log N)^3\varepsilon}{2^\ell}
+\log N
\big|
R(Q,z)
\big|_\infty.
\end{align}
The off-diagonal contribution is treated in exactly the same way as in proof of Proposition \ref{prop:woodbury-stability-glued-T}. Indeed, Lemma
\ref{claim:boundary sum} and the elliptic decay estimate give
\begin{align}
\label{eq:improved-offdiagonal-leading}
&
\left|
\sum_{\substack{\alpha,\beta=1\\\alpha\neq\beta}}^{\mu}
W_{o\widetilde a_\alpha}
\widetilde D^{\langle\bT\rangle}_{
\widetilde a_\alpha\widetilde a_\beta}
W_{i\widetilde a_\beta}
\right|
\lesssim
\max_{\alpha,\beta}
\left|
W_{o\widetilde a_\alpha}
W_{i\widetilde a_\beta}
\right|
\sum_{\alpha\neq\beta}
\left|
\widetilde D^{\langle\bT\rangle}_{
\widetilde a_\alpha\widetilde a_\beta}
\right|
\lesssim
\frac{(\log N)^3\varepsilon}
     {2^{2\ell-\ell_i}}\lesssim\frac{(\log N)^3\varepsilon}{2^\ell}.
\end{align}
Combining \eqref{eq:improved-diagonal-leading} and 
\eqref{eq:improved-offdiagonal-leading} proves
\begin{align*}
\left|
\widetilde G_{oi}
-
G_{oi}\left(
\operatorname{Ext}
\bigl(
\mathcal B_r(\{o,i\},\widetilde{\mathcal G}), Q(z)
\bigr),z
\right)
\right|
\lesssim
\frac{(\log N)^3\varepsilon(z)}{2^\ell}
+
\log N
\big|
R(Q,z)
\big|_\infty. 
\end{align*}
It remains to change $Q$ to $\widetilde Q$. Since Lemma \ref{lem:direct-switching-Q} yields
$
|\widetilde Q-Q|_\infty\lesssim(\log N)^2\varphi^2,
$
\eqref{e:Omega-o-minus-Q}  follows directly from
Remark \ref{r:subgraph} together with
$
(\log N)^2\varphi^2\lesssim\varepsilon^2\lesssim2^{-\ell}(\log N)^3\epsilon(z).
$

\medskip
\textit{Proof of \eqref{e:Omega-o-minus-Q} with \(i\notin \bT\)}:
No concentration estimate is needed in this step. 
Indeed, set the first index in \eqref{eq:glued-stability-T-out} equal to $o$ and the second equal to the
exterior vertex $i$. 
Then, applying once again the localization principle like \eqref{eq:localization-boundary-diagonal}, followed by the same replacement of the boundary weight \(Q\) by \(\widetilde Q\), we directly obtain
\eqref{e:Omega-o-minus-Q}.

\medskip

\textit{Proof of \eqref{e:Omega-o-minus-root}}: 
The root $o$ has the radius-$\fR$ tree neighborhood required in \eqref{e:Omega-o-minus-root}.
We revisit the proof of Proposition \ref{prop:woodbury-stability-glued-T} and adopt the notation used there.
Separating the diagonal boundary terms gives
\begin{align}
\label{e:three-term-leading-expansion}
\widetilde G_{xy}-\widetilde P_{xy}
={}&
\sum_{\alpha=1}^{\mu}
W_{x\widetilde a_\alpha}
W_{y\widetilde a_\alpha}
\widetilde D^{\langle\bT\rangle}_{
\widetilde a_\alpha\widetilde a_\alpha}
+
\OO\left(
\frac{(\log N)^3\varepsilon}{4^\ell}
\right),\qquad\mbox{where}~x,y\in\{o,v\}.
\end{align}
The key point is that the leading boundary contribution in
\eqref{e:three-term-leading-expansion} satisfies an exact cancellation on the comparison tree, which follows from the following elementary identity.

\begin{claim}[The three-term identity on a tree]
\label{lem:three-term-tree}
Let $P=G(\Ext(\cG',Q),z)$ where $\cG'$ is a finite tree, and let $u\sim v$ be an edge of this tree. Then
\begin{equation}
\label{e:three-term-tree}
P_{uv}^2-P_{uv}-P_{uu}P_{vv}=0.
\end{equation}
For every vertex $x$ of the same tree, we also have
\begin{equation}
\label{e:three-term-tree-derivative}
(2P_{uv}-1)P_{ux}P_{vx}
-
P_{vv}P_{ux}^2
-
P_{uu}P_{vx}^2
=
0.
\end{equation}
\end{claim}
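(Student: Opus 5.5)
The plan is to prove \eqref{e:three-term-tree} by cutting the edge $\{u,v\}$ and applying the Schur complement formula \eqref{e:Schur}. We then obtain \eqref{e:three-term-tree-derivative} from \eqref{e:three-term-tree} by differentiating with respect to a diagonal potential at $x$. Throughout, the only input used is that $P$ is the inverse of $A-z-\mathcal D$, where $A$ is the adjacency matrix of a finite tree $\cG'$ and $\mathcal D$ is an arbitrary diagonal matrix (here the boundary weights coming from $Q$). We assume, as in the regime where the claim is applied, that this inverse and the Schur denominators below exist.

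\emph{Step 1: the identity \eqref{e:three-term-tree}.} Removing the edge $\{u,v\}$ splits $\cG'$ into two subtrees, $\cG'_u\ni u$ and $\cG'_v\ni v$, each carrying the same diagonal weights. Let $g_u$ and $g_v$ be the diagonal Green function entries at $u$ and $v$ of these two pieces, and set $g:=g_ug_v$. The Schur complement formula applied to the $2\times2$ block indexed by $\{u,v\}$ gives
\[
P_{uu}=\frac{g_u}{1-g},\qquad P_{vv}=\frac{g_v}{1-g},\qquad P_{uv}=-\frac{g}{1-g}.
\]
The sign of $P_{uv}$ is consistent with the relation $G_{xy}=-m_{xy}G_{xx}$ used in Proposition~\ref{greentree}. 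A direct computation then gives
\[
P_{uv}^2-P_{uv}=\frac{g^2+g(1-g)}{(1-g)^2}=\frac{g}{(1-g)^2}=P_{uu}P_{vv},
\]
which is \eqref{e:three-term-tree}.

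\emph{Step 2: the identity \eqref{e:three-term-tree-derivative}.} Step 1 holds for every diagonal potential, so we may replace $\mathcal D$ by $\mathcal D+t\,e_{xx}$. The tree is unchanged, so \eqref{e:three-term-tree} holds identically in $t$ near $0$. By the resolvent identity, $\partial_t P_{ab}\big|_{t=0}=P_{ax}P_{xb}$. Differentiating \eqref{e:three-term-tree} at $t=0$ and using the symmetry of $P$ gives
\[
(2P_{uv}-1)P_{ux}P_{vx}-P_{ux}^2P_{vv}-P_{uu}P_{vx}^2=0,
\]
which is exactly \eqref{e:three-term-tree-derivative}.

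As a cross-check, one can avoid differentiation. If $x\in\cG'_u$, then $P_{vx}=-g_vP_{ux}$, and substituting the formulas from Step 1 makes the bracket $g_v(1+g)/(1-g)-g_v/(1-g)-g\,g_v/(1-g)$ vanish.

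There is no real obstacle here. The only point requiring care is the sign convention for the off-diagonal Schur entry, which must match $A-z$ with $+1$ adjacency entries. We also need that the Schur denominators $1-g_ug_v$ are nonzero, which holds in the perturbative regime $|Q-m|_\infty=o(1)$ by \eqref{eq.1-memvarm}.
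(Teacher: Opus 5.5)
Your proof is correct. Step 1 is the paper's argument. The paper also Schur-complements onto $\{u,v\}$, obtains the effective inverse Green matrix $\begin{pmatrix}a&1\\1&b\end{pmatrix}$ (so $a=1/g_u$, $b=1/g_v$ in your notation), and reads off \eqref{e:three-term-tree}.

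For \eqref{e:three-term-tree-derivative} your route is genuinely different. The paper Schur-complements onto $\{u,v,x\}$. It uses the tree structure to see that, in the effective matrix, $x$ is coupled only to the endpoint of the edge on its own side, and then inverts the resulting $3\times3$ matrix in each of the two cases. Your cross-check via $P_{vx}=-g_vP_{ux}$ is essentially that computation.

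Your main argument instead exhibits \eqref{e:three-term-tree-derivative} as the first variation of \eqref{e:three-term-tree} under $\mathcal D\mapsto\mathcal D+t\,e_{xx}$, using $\partial_tP=Pe_{xx}P$ and the complex symmetry of $P$. This has three benefits:
\begin{itemize}
\item It removes the case split and the $3\times3$ inversion, and it covers $x\in\{u,v\}$ without separate comment.
\item Its only cost is needing \eqref{e:three-term-tree} for all small $t$, which is harmless because invertibility is an open condition.
\item It explains why this identity is exactly what the proof of \eqref{e:Omega-o-minus-root} needs. There the leading error is $\sum_\alpha W_{x\widetilde a_\alpha}W_{y\widetilde a_\alpha}\widetilde D^{\langle\bT\rangle}_{\widetilde a_\alpha\widetilde a_\alpha}$, with $W_{x\widetilde a_\alpha}$ proportional to $\widetilde P_{xl_\alpha}$ for $x\in\bT$. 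To first order this is a diagonal perturbation at the vertices $l_\alpha$. The linear part of \eqref{eq.three-term-G} is the derivative of the three-term expression along such perturbations, and your argument shows that derivative is zero.
\end{itemize}
The paper's computation, by contrast, is purely pointwise and makes the factorization across the edge explicit.

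A small remark: you do not need \eqref{eq.1-memvarm} to get $1-g_ug_v\neq0$. The determinant of $A-z-\mathcal D$ equals the product of the two subtree determinants times $1-g_ug_v$, so this is automatic once $P$, $g_u$ and $g_v$ exist.
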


For \(x\in\bT\), we have
\(\delta_{x\widetilde a_\alpha}=0\), since
\(\widetilde a_\alpha\notin\bT\).
Hence the identity contribution in the gluing matrix \(W\) vanishes
on these entries. Using the formula for \(W\) together with the tree
factorization across \(\{l_\alpha,\widetilde a_\alpha\}\), we obtain
$ W_{x\widetilde a_\alpha} = \gamma_\alpha\,\widetilde P_{xl_\alpha}$ for every $x\in\bT$ and some scalar \(\gamma_\alpha\) independent of \(x\).
Consequently, \eqref{e:three-term-tree-derivative} gives,
for every $\alpha$,
\begin{align}
\label{e:three-term-boundary-cancellation}
0={}&
(2\widetilde P_{ov}-1)
W_{o\widetilde a_\alpha}
W_{v\widetilde a_\alpha}
-
\widetilde P_{vv}W_{o\widetilde a_\alpha}^2
-
\widetilde P_{oo}W_{v\widetilde a_\alpha}^2.
\end{align}

Write
$
\widetilde D_{xy}
:=
\widetilde G_{xy}-\widetilde P_{xy}.
$
Expanding the three-term expression around $\widetilde P$, and using
\eqref{e:three-term-tree} for the zeroth-order term, we obtain
\begin{align}\label{eq.three-term-G}
\widetilde G_{ov}^{\,2}
-\widetilde G_{ov}
-\widetilde G_{oo}\widetilde G_{vv}
=
(2\widetilde P_{ov}-1)\widetilde D_{ov}
-
\widetilde P_{vv}\widetilde D_{oo}
-
\widetilde P_{oo}\widetilde D_{vv}
+
\widetilde D_{ov}^{\,2}
-
\widetilde D_{oo}\widetilde D_{vv}.
\end{align}
Substituting \eqref{e:three-term-leading-expansion} into the linear
part, we obtain
\[
\begin{aligned}
&
(2\widetilde P_{ov}-1)\widetilde D_{ov}
-
\widetilde P_{vv}\widetilde D_{oo}
-
\widetilde P_{oo}\widetilde D_{vv}
\\
={}&
\sum_{\alpha}
\Big[
(2\widetilde P_{ov}-1)
W_{o\widetilde a_\alpha}
W_{v\widetilde a_\alpha}
-
\widetilde P_{vv}
W_{o\widetilde a_\alpha}^{2}
-
\widetilde P_{oo}
W_{v\widetilde a_\alpha}^{2}
\Big]
\widetilde D^{\langle\bT\rangle}_{
\widetilde a_\alpha\widetilde a_\alpha}
+
\OO\left(
\frac{(\log N)^3\varepsilon}{4^\ell}
\right).
\end{aligned}
\]
For every $\alpha$, the coefficient in square brackets vanishes by
\eqref{e:three-term-boundary-cancellation}. Hence the entire
diagonal boundary contribution cancels, and therefore
\begin{equation}\label{eq.three-term-P}
(2\widetilde P_{ov}-1)\widetilde D_{ov}
-
\widetilde P_{vv}\widetilde D_{oo}
-
\widetilde P_{oo}\widetilde D_{vv}
=
\OO\left(
\frac{(\log N)^3\varepsilon}{4^\ell}
\right).
\end{equation}

Since we have $
\max_{x,y\in\{o,v\}}
|\widetilde D_{xy}|
\lesssim
(\log N)\varepsilon$ 
in the proof of Proposition \ref{prop:woodbury-stability-glued-T},
the 
quadratic contribution
$
\widetilde D_{ov}^{\,2}
-
\widetilde D_{oo}\widetilde D_{vv}
$ satisfies
$
|
\widetilde D_{ov}^{\,2}
-
\widetilde D_{oo}\widetilde D_{vv}
|
\lesssim
(\log N)^2\varepsilon^2\lesssim 4^{-\ell}(\log N)^3\epsilon.
$
Combining this with \eqref{eq.three-term-G} and \eqref{eq.three-term-P} proves \eqref{e:Omega-o-minus-root}.
We conclude the proof.

\begin{proof}[Proof of Claim \ref{lem:three-term-tree}]
Both identities are direct consequences of the Schur complement
formula and the tree structure. Indeed, applying the Schur complement
to the block indexed by $\{u,v\}$, the effective inverse Green matrix
has the form
\[
\begin{pmatrix}
a&1\\
1&b
\end{pmatrix},
\]
which immediately gives \eqref{e:three-term-tree}. Likewise, applying
the Schur complement to the block indexed by $\{u,v,x\}$, the tree
structure implies that, according to which side of the edge
$\{u,v\}$ contains $x$, the effective inverse Green matrix has the
form
\[
\begin{pmatrix}
a&1&c\\
1&b&0\\
c&0&d
\end{pmatrix}
\qquad\text{or}\qquad
\begin{pmatrix}
a&1&0\\
1&b&c\\
0&c&d
\end{pmatrix}.
\]
A direct computation of the corresponding Green block yields
\eqref{e:three-term-tree-derivative}.
\end{proof}

\appendix

\section{Properties of the Green's functions}
\label{app:Green}

Throughout this paper, we repeatedly use some (well-known) identities for Green's functions,
which we collect in this appendix.

\subsection{Resolvent identity}

The following well-known identity is referred to the resolvent identity:
for two invertible matrices $A$ and $B$ of the same size, we have
\begin{equation} \label{e:resolv}
  A^{-1} - B^{-1} = A^{-1}(B-A)B^{-1}=B^{-1}(B-A)A^{-1}.
\end{equation}

\subsection{Schur complement formula}

Given an $N\times N$ matrix $M$ and an index set $\T \subset \bG$, recall that we denote by
$M|_\T$ the $\T \times \T$-matrix obtained by restricting $M$ to $\T$,
and that by $M^{(\T)} = M|_{\bT^c}$, where $\bT^c=\bG\setminus \T$, we denote the matrix obtained by removing
the rows and columns corresponding to indices in $\T$.

Thus, for any $\T \subset \bG$,
any real symmetric matrix $H$ can be written (up to rearrangement of indices) in the block form
\begin{equation*}
  H = \begin{bmatrix} A& B'\\ B &D  \end{bmatrix},
\end{equation*}
with $A=H|_{\T}$ and $D=H^{(\T)}$.
The Schur complement formula asserts that, for any $z\in \bC^+$,
\begin{equation} \label{e:Schur}
 G=(H-z)^{-1}= \begin{bmatrix}
   (A-z-B'\GT B)^{-1} & -(A-z-B'\GT B)^{-1}B'\GT \\
   -\GT B(A-z-B'\GT B)^{-1} & \GT+\GT B(A-z-B'\GT B)^{-1}B'\GT 
 \end{bmatrix},
\end{equation}
where $\GT=(D-z)^{-1}$.
Throughout the paper, we often use the following special cases of \eqref{e:Schur}:
\begin{align} \begin{split}\label{e:Schur1}
  G|_{\T} = (A-z-B'\GT B)^{-1},\qquad
  G|_{\T\T^c}=-G|_{\T}B'\GT,
  \end{split}
\end{align}
as well as the special case
\begin{equation} \label{e:Schurixj}
G_{ij}^{(k)} = G_{ij}-\frac{G_{ik}G_{kj}}{G_{kk}}.
\end{equation}

\subsection{Ward identity}

For any real symmetric $N\times N$ matrix $A$, its Green's function $G(z)=(A-z)^{-1}$ satisfies
the Ward identity
\begin{equation} \label{e:Ward}
  \sum_{j=1}^{N} |G_{ij}(z)|^2= \frac{\im G_{ii}(z)}{\eta},
\end{equation}
where $\eta=\Im [z]$. This identity provides a bound for the sum $\sum_{j=1}^{N} |G_{ij}(z)|^2$
in terms of the diagonal entries of the Green's function.

\subsection{Covering map}
\begin{lemma}[Lemma B.1 in \cite{Bauer2019Local}]\label{l:covering}
Given a covering $\pi:\tcG \to \cG$ of graphs,
denote the Green's function of $\tcG$ by $\tG$ and that of $\cG$ by $G$. Then
for all vertices $i,j$ in $\cG$, the Green's functions obey
\begin{align}\label{defg-bis}
G_{ij}=\sum_{y:\pi(y)=j}\tilde{G}_{xy},
\end{align}
where $x$ is any vertex of $\tcG$ with $\pi(x)=i$, provided the right hand side of \eqref{defg-bis} converges.
\end{lemma}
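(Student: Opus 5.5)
The plan is to prove \eqref{defg-bis} first for $|z|$ large by a walk-lifting argument, and then extend it to the remaining $z$ by analytic continuation.

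\textbf{Step 1: large $|z|$.} Let $d$ be the maximal degree of $\cG$; it is also the maximal degree of $\tcG$, since a covering is a local isomorphism. For $|z|>d$, both resolvents admit absolutely convergent Neumann series,
\[
G=-\sum_{k\ge0}\frac{A^k}{z^{k+1}},\qquad \tG=-\sum_{k\ge0}\frac{\widetilde A^k}{z^{k+1}},
\]
where $A$ and $\widetilde A$ are the adjacency matrices.

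\textbf{Step 2: counting lifted walks.} The key combinatorial fact is
\[
(A^k)_{ij}=\sum_{y:\pi(y)=j}(\widetilde A^k)_{xy}\qquad\text{for all } k\ge0.
\]
Indeed, $\pi$ maps the neighbors of each vertex of $\tcG$ bijectively onto the neighbors of its image. Hence every walk $i=i_0,i_1,\ldots,i_k=j$ in $\cG$ has a unique lift $x=x_0,x_1,\ldots,x_k$ in $\tcG$ starting at $x$, built step by step. Conversely, every walk from $x$ to some $y\in\pi^{-1}(j)$ projects to a walk from $i$ to $j$. So walks of length $k$ from $i$ to $j$ are in bijection with walks of length $k$ from $x$ into the fiber over $j$.

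Summing over $k$ with weights $-z^{-k-1}$ gives \eqref{defg-bis} for $|z|>d$. The double sum over $k$ and $y$ may be interchanged because
\[
\sum_k\sum_y|(\widetilde A^k)_{xy}|\,|z|^{-k-1}\le\sum_k d^k|z|^{-k-1}<\infty .
\]

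\textbf{Step 3: analytic continuation.} The left-hand side $G_{ij}(z)$ is analytic on $\bC_+$. The proviso ``provided the right-hand side converges'' will be understood as locally uniform convergence of $\sum_{y:\pi(y)=j}\tG_{xy}(z)$ on a connected open set containing the region $\{|z|>d\}\cap\bC_+$; this is how the lemma is used for tree-like coverings, where the sum is dominated by path weights decaying like $\beta^{\dist}$. Under this convergence, the right-hand side is an analytic function, being a locally uniform limit of finite sums of analytic functions. The two sides agree on an open set, so the identity theorem gives equality on the whole connected domain.

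\textbf{Main obstacle.} The only genuinely delicate point is Step 3, namely justifying that the right-hand side is analytic and continues the large-$|z|$ identity. A direct approach looks tempting: verify that $h(x')=\sum_{y\in\pi^{-1}(j)}\tG_{x'y}$ and $g\circ\pi$ with $g=Ge_j$ both solve $(\widetilde A-z)u=\mathbf 1_{\pi^{-1}(j)}$. However, this only shows that their difference lies in the kernel of $\widetilde A-z$ acting on bounded functions. That kernel need not be trivial for non-real $z$, since the $\ell^\infty$ spectrum of a tree is strictly larger than its $\ell^2$ spectrum. This is why I would rely on the analytic-continuation route, with the convergence hypothesis taken in the locally uniform sense.
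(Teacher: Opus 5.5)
The paper gives no proof of this lemma; it is quoted from \cite{Bauer2019Local}, so there is no argument to compare against line by line. Your Steps 1--2 are correct, and Step 3 is a valid argument, but be clear about what it proves. It gives the identity on a connected open set meeting $\{|z|>d\}$ on which the fiber sum converges absolutely and locally uniformly. That is stronger than the pointwise hypothesis ``provided the right-hand side converges'' in the statement. You should therefore state it as part of the lemma rather than present it as a reading of ``converges''.

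This version is enough for every use in the paper. There the covering is $\cX\to\TE(\cG)$ with $\cG$ unicyclic, and the fiber is indexed by the number of extra traversals of the cycle. The terms decay geometrically in that number uniformly on $\bC_+(\mathcal I_\delta)$, which consists of vertical half-strips extending to $|z|>d$.

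The obstruction you describe at the end comes from solving on the cover. It disappears if you close the equation on the base instead, and this gives the pointwise statement directly. Fix $x\in\pi^{-1}(i)$ and set $F_{ik}:=\sum_{y\in\pi^{-1}(k)}\tG_{xy}$. Every $y$ with $\pi(y)\sim j$ has exactly one neighbour in $\pi^{-1}(j)$. Hence $\bigsqcup_{k\sim j}\pi^{-1}(k)=\bigsqcup_{w\in\pi^{-1}(j)}N(w)$, where $N(w)$ is the neighbour set of $w$. Regrouping the absolutely convergent sums gives
\[
\sum_{k}F_{ik}(A-z)_{kj}=\sum_{w\in\pi^{-1}(j)}\bigl(\tG(\widetilde A-z)\bigr)_{xw}=\sum_{w\in\pi^{-1}(j)}\delta_{xw}=\delta_{ij}.
\]
Thus $F_{i\cdot}(A-z)=e_i^{\mathsf T}$ on the base. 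When $\cG$ is finite, this forces $F_{i\cdot}=e_i^{\mathsf T}G$ at each single $z$. This needs no analytic continuation and raises no uniqueness question on the cover, and independence of the choice of $x$ comes as a by-product.

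The trade-off concerns an infinite base such as $\TE(\cG)$, which is precisely the case used in the paper. There the direct route needs an extra argument that $F_{i\cdot}\in\ell^2$, to exclude non-$\ell^2$ solutions of $v(A-z)=0$. Your walk-counting and continuation argument handles infinite bases with no additional work, at the price of the locally uniform convergence hypothesis.
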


\section{Proofs of lemmas in section \ref{sec.tree-like}}\label{a:ext}
\begin{proof}[Proof of Lemma \ref{lem:NB-Dyson-regularity}]
It remains only to prove that, for every $i\in\bH$ and $e,f\in\vec E_\cH$,
\begin{equation*}
|m_{i}(z)|\sim \frac{1}{1+|z|}\qquad\text{and}\qquad
\Im m_i(z)\sim \Im m_e(z),
\qquad z\in\bC_+(\mathcal I_\delta),
\end{equation*}
since all the remaining assertions follow directly from \cite[Theorems 2.5-2.7 and the proof of Theorem 2.8]{huang2026nonsymmetricvectordysonequations}. 
Therefore, throughout the proof below, we shall freely use the estimates
$
|m_e(z)|\sim (1+|z|)^{-1}
$
and
$
\Im m_e(z)\sim \Im m_f(z),
$
for all \(e,f\in\vec E_\cH\).

For $i\in\bH$, set
$
h_i(z):=z+\sum_{j\sim i}m_{ij}(z).
$
Then it follows that $m_i(z)=-(h_i(z))^{-1}$.
Fix a neighbor $x\sim i$. The Dyson equation for the oriented edge
$(x,i)$ gives
\[
-\frac{1}{m_{xi}}
=
z+\sum_{\substack{j\sim i\\j\neq x}}m_{ij}.
\]
Adding the missing term \(m_{ix}\), we obtain
\begin{equation}\label{e:hi-identity}
h_i
=
-\frac{1}{m_{xi}}+m_{ix},
\qquad
1-m_{xi}m_{ix}=-m_{xi}h_i.
\end{equation}

Fix an edge \(\{u,v\}\). We first claim that for every $u\sim v$,
\begin{equation}\label{eq.1-memvarm}
  |1-m_{uv}(z)m_{vu}(z)|\sim 1,\qquad z\in\bC_+(\mathcal I_\delta).
\end{equation}
The upper bound follows directly from $|m_e(z)|\sim (1+|z|)^{-1}$. For the
lower bound, it is enough to consider \(|z|\leq K\), since for
sufficiently large \(K\), 
$
|m_{uv}(z)m_{vu}(z)|\lesssim (1+|z|)^{-2}\leq 1/2.
$
On \(|z|\leq K\), $|m_e(z)|\sim (1+|z|)^{-1}$ implies
$
|m_e(z)|\sim 1
$
for every oriented edge \(e\).

It follows from \eqref{e:hi-identity} and $|m_e(z)|\sim 1$ when $|z|\le K$ that
$
|h_u|\sim|h_um_{vu}|=|1-m_{uv}m_{vu}|.
$
Moreover, if \(y\sim x\), then we have
$
|1-m_{yx}m_{xy}|=|m_{yx}h_x|\sim |h_x|,
$
and consequently
$
|h_y|
=
|1-m_{xy}m_{yx}|/|m_{xy}|
\sim |h_x|.
$
Since \(\cH\) is connected, iterating these estimates along paths gives
\begin{equation}\label{e:smallness-propagation}
|1-m_{xy}m_{yx}|\sim |h_x|\sim|h_u| \sim|1-m_{uv}m_{vu}|
\end{equation}
for every edge \(\{x,y\}\in E_{\cH}\).

Choose a fixed simple cycle
$
\mathscr{C}=(e_0\rightsquigarrow e_1\rightsquigarrow \ldots\rightsquigarrow e_r=e_0)
$
in \(\cH\), and define
$
q_+
:=
\prod_{k=1}^{r}m_{e_k}
$
and
$
q_-
:=
\prod_{k=1}^{r}m_{\bar{e}_k}.
$
Since the cycle has fixed length and $|m_e|$ is bounded, it follows from \eqref{e:smallness-propagation} that
\begin{equation}\label{eq.1-q+q-}
|1-q_+q_-|=\left|1-\prod_{k=1}^{r}m_{e_k}m_{\bar{e}_k}\right|\lesssim\sum_{k=1}^r|1-m_{e_k}m_{\bar{e}_k}|\lesssim |1-m_{uv}m_{vu}|.
\end{equation}
On the other hand, using the cycle-weight estimate proved independently in the argument for Proposition~\ref{pro.path-weight-decay}, which relies only on the bounds for $|m_e|$, repeating the cycle in either orientation yields
$
|q_+|\leq \beta^r
$
and
$
|q_-|\leq \beta^r.
$
Therefore, combining \eqref{eq.1-q+q-} and $\beta\in(0,1)$ yields that
\[
1-C|1-m_{uv}m_{vu}|
\leq |q_+q_-|
\leq \beta^{2r}<1,
\]
where $C$ is a finite positive constant depending only on the model parameters.
Hence
$
|1-m_{uv}m_{vu}|\gtrsim (1-\beta^{2r}),
$
which proves the claim.

Combining \eqref{e:hi-identity} with the definition of \(m_{i}\), we
have
$
m_{i}(z)
=
(m_{xi}(z))/
(1-m_{xi}(z)m_{ix}(z))$.
Thus \eqref{eq.1-memvarm} and $|m_e|\sim(1+|z|)^{-1}$ imply
$
|m_i(z)|
\sim |m_{xi}(z)|
\sim (1+|z|)^{-1}.
$

Taking imaginary parts in the definitions of $m_i$ and $m_{xi}$ gives
\[
\frac{\Im m_i}{|m_i|^2}
=
\eta+\sum_{j\sim i}\Im m_{ij},
\qquad
\frac{\Im m_{xi}}{|m_{xi}|^2}
=
\eta+\sum_{j\sim i,j\neq x}\Im m_{ij}.
\]
Since the minimum degree of $\cH$ is at least three,
$\Im m_e\sim\Im m_f$ for all oriented edges, and
$|m_i|\sim|m_{xi}|\sim(1+|z|)^{-1}$, the two right-hand sides are comparable. Hence
$
\Im m_i(z)\sim \Im m_{xi}(z).
$
Finally, using again the assumed comparability of the
imaginary parts, we conclude
$
\Im m_i(z)\sim\Im m_e(z)
$
for all
$
e\in\vec E_{\cH}.
$
\end{proof}

\begin{proof}[Proof of Proposition \ref{p:tree-extension-bound}]
Set
$
\rho_0(z):=\max_{i\in\bH}\Im m_i(z).
$
In the proof below, we will use the estimates established in Lemma \ref{lem:NB-Dyson-regularity} and Proposition \ref{pro.path-weight-decay}:
\begin{equation}\label{eq.result-used}
  |m_i(z)|\sim|m_e(z)|
\sim \frac{1}{1+|z|},
\quad
\Im m_i(z)\sim\Im m_e(z)\sim\rho_0(z),\quad
|\cM(\gamma;z)|\lesssim \beta^{|\gamma|},\quad z\in\bC_+(\mathcal I_\delta).
\end{equation}
By expanding the imaginary part of a product and using $|\Im (z_1z_2)|\le |z_1||\Im z_2|+|z_2||\Im z_1|$, it follows from \eqref{eq.result-used} that for any
path $\gamma$ of length $\ell$,
\begin{equation}\label{e:Im-path-weight}
\left|
\Im\bigl(m_i(z)\cM(\gamma;z)\bigr)
\right|
\lesssim
(\ell+1)\beta^\ell \rho_0(z).
\end{equation}

There are only two cases.

\medskip
\noindent
\emph{Case 1: $\cG$ is a tree.} Then
$\TE(\cG)$ is the universal cover $\cX$. By
Proposition~\ref{greentree},
\[
G_{ij}(\TE(\cG),z)
=
m_i(z)\cM_{\cX}(i,j;z).
\]
The required bounds follow immediately from
Proposition~\ref{pro.path-weight-decay},
\eqref{e:Im-path-weight} and
$|m_i(z)|\sim(1+|z|)^{-1}$.

\medskip
\noindent
\emph{Case 2: $\cG$ contains a cycle.}
Then $\TE(\cG)$ contains a unique cycle, which we denote by
$\mathscr C$.
Let
$
p:\cX\longrightarrow \TE(\cG)
$
be the universal cover map and choose $x\in\bX$ such that
$p(x)=i$. By Lemma \ref{l:covering},
\begin{equation}\label{e:covering-expansion-unicyclic}
G_{ij}\bigl(\TE(\cG),z\bigr)
=
\sum_{y:\,p(y)=j}G_{xy}(\cX,z)
=
m_i(z)
\sum_{y:\,p(y)=j}
\cM_{\cX}(x,y;z).
\end{equation}

The geodesic from $x$ to a point $y\in p^{-1}(j)$ projects onto a
non-backtracking walk from $i$ to $j$ in $\TE(\cG)$.
Since $\TE(\cG)$ has only one cycle, all such walks can be divided
into a uniformly bounded number of families, each of which is obtained
from a reduced path from $i$ to $j$ by making additional complete
traversals of $\mathscr C$ in one of its two orientations.

More precisely, for each such family there is a reduced path
$\gamma_0:i\leadsto j$ and a cycle weight $q_{\rm cyc}(z)$ such that the
corresponding path weights have the form
$
\cM(\gamma_0;z),
\cM(\gamma_0;z)q_{\rm cyc},
\cM(\gamma_0;z)q_{\rm cyc}^2,\ldots,
$
up to modifying $\gamma_0$ by one of finitely many initial reduced
paths. Proposition \ref{pro.path-weight-decay} implies
$
\sum_{n\geq0}|q_{\rm cyc}|^n
\leq
(1-\beta^|\mathscr C|)^{-1}
\leq
(1-\beta)^{-1}.
$
Since the number of geometric families is uniformly bounded, it
follows from \eqref{e:covering-expansion-unicyclic} and Proposition \ref{pro.path-weight-decay} that
\begin{equation}\label{e:unicyclic-path-max}
\left|
G_{ij}\bigl(\TE(\cG),z\bigr)
\right|
\lesssim
|m_i(z)|
\max_{\gamma:i\leadsto j}
|\cM(\gamma;z)|
\lesssim
\max_{\gamma:i\leadsto j}
|\cM(\gamma;z)|\lesssim \beta^{\dist_{\cG}(i,j)}.
\end{equation}
For the imaginary part, by \eqref{e:Im-path-weight}, each path
contribution satisfies
\[
\left|
\Im\left(
m_i(z)\cM_{\cX}(x,y;z)
\right)
\right|
\lesssim
\bigl(|\gamma_y|+1\bigr)
\beta^{|\gamma_y|}|\im m^{\rm ver}|_\infty,
\]
where $\gamma_y$ denotes the projection of the geodesic from $x$ to
$y$.
Summing over the number of traversals of the unique cycle and using
$
\sum_{\ell\geq0}(\ell+1)\beta^\ell<\infty,
$
we obtain
$
\left|
\Im G_{ij}\bigl(\TE(\cG),z\bigr)
\right|
\lesssim
|\im m^{\rm ver}|_\infty.
$
\end{proof}

\begin{proof}[Proof of Lemma \ref{p:subgraph}]
The proof of the lemma closely follows the proof of \cite[Proposition 2.12]{Huang2024Spectrum}. We therefore only highlight the main differences.
We suppress the dependence on $z$ and set
$
P
:=
G\bigl(\Ext(\cG,m),z\bigr)
=
G\bigl(\TE(\cG),z\bigr).
$
For $x\in\bG$, define
\[
q_x
:=
\sum_{e\in\mathscr M_\cG(x)}
(q_e-m_e)\qquad\mbox{and}\qquad Q:=\diag(q_x)_{x\in\bG}.
\]
By Definition \ref{def:ext} and the resolvent identity \eqref{e:resolv}, we can obtain
\begin{align}
\label{e:random-lift-neumann}
G\bigl(\Ext(\cG,q),z\bigr)
=
P+\sum_{k\ge1}(PQ)^kP.
\end{align}
We first estimate the leading order:
\begin{equation}\label{eq.PQPleading}
  |(PQP)_{ij}|=\big|\sum_{x\in\bG}P_{ix}q_xP_{xj}\big|\le |q-m|_\infty\sum_{x\in\bG}|\mathscr M_\cG(x)||P_{ix}||P_{xj}|.
\end{equation}
When $\cG$ is a tree, let $[i,j]$ be the unique path from
$i$ to $j$.  For each $x\in\bG$, let $a$ be the point where the path
from $x$ first meets $[i,j]$.  By multiplicativity of the path
weights and Proposition \ref{greentree}, we have
\[
P_{ix}P_{xj}=m_i\cM(i,x)m_{x}\cM(x,j)
=
m_im_{x}\cM(i,j)
\cM(a,x)\cM(x,a).
\]
Then for every fixed $a\in[i,j]$, the same argument as in the proof of \eqref{eq:elliptic-boundary-pair} shows the sum over the component attached to
$a$ satisfies
\[\sum_{\substack{x\in\bG\\
x\text{ projects to }a}}
|\mathscr M_\cG(x)|
\left|
P_{ix}P_{xj}
\right|
\lesssim
|\cM(i,j)|
\sum_{\substack{x\in\bG\\
x\text{ projects to }a}}
|\mathscr M_\cG(x)|
\left|
\cM(a,x)\cM(x,a)
\right|
\lesssim 1.
\]
Therefore, putting the above relation into \eqref{eq.PQPleading} yields
\begin{equation*}
  |(PQP)_{ij}|\le |q-m|_\infty\sum_{a\in[i,j]}\sum_{\substack{x\in\bG\\
x\text{ projects to }a}}|\mathscr M_\cG(x)||P_{ix}||P_{xj}|\lesssim (1+\dist(i,j))|q-m|_\infty|\cM(i,j)|.
\end{equation*}

If $\cG$ contains one cycle, let $\mathscr C$ be its unique cycle and
$L:=|\mathscr C|$. As in the proof of
Proposition \ref{p:tree-extension-bound}, after lifting to the
universal cover, the relevant paths can be divided into a uniformly
bounded number of families, each obtained from a simple path by
making $n\ge0$ additional traversals of $\mathscr C$ in one of its two
orientations. By Proposition \ref{pro.path-weight-decay}, each
additional traversal contributes a factor at most $\beta^L$.
Hence we have 
\[
\sum_{n\ge0}\beta^{nL}\lesssim1,
\qquad
\sum_{n\ge0}(1+nL)\beta^{nL}\lesssim1,
\]
uniformly in $L$.
Applying the preceding tree estimate to each such family
and summing over the winding number therefore gives
\[
\sum_{x\in\bG}
|\mathscr M_\cG(x)||P_{ix}||P_{xj}|
\lesssim
(1+\dist_\cG(i,j))
\max_{\gamma:i\leadsto j}|\cM(\gamma)|.
\]
Here the possible longer simple path causes no additional loss:
if its length is comparable with $\dist_\cG(i,j)$ this is immediate,
while otherwise its extra length is absorbed by the exponential
decay of Proposition \ref{pro.path-weight-decay}.
Thus \eqref{eq.PQPleading} yields the same bound as in the tree case.

The higher order terms in \eqref{e:random-lift-neumann} are estimated exactly as in the proof of \cite[Proposition 2.12]{Huang2024Spectrum}.
We conclude the proof.
\end{proof}

\begin{proof}[Proof of Lemma \ref{p:fixpoint}]
Write
$
P:=G(\Ext(\cG,q),z)
$
and
$
A_P:=P^{-1}.
$
For each $x\neq o$, let $p(x)$ be its parent and let $e_x$
be the directed type of the edge from $p(x)$ to $x$.
Set $e_o:=e$.
Define a vector $v=(v_x)_{x\in\bG}$ recursively by
\[
v_o:=\frac{q_e}{1-q_eq_{\bar e}},
\qquad
v_x:=-q_{e_x}v_{p(x)},
\quad x\neq o.
\]
We claim that
\begin{equation}
\label{eq:fixpoint-exact-defect}
(A_Pv)_x=\mathbf1_{\{x=o\}}+R_{e_x}v_x,
\qquad x\in\bG.
\end{equation}
Indeed, if $x\neq o$, the definition of the extension and
the recursion for $v$ give
\[
\begin{aligned}
(A_Pv)_x
=-zv_x+v_{p(x)}+\sum_{f:\,e_x\rightsquigarrow f}(-q_f)v_{x}=
v_x\Big(
-\frac1{q_{e_x}}
-z
-\sum_{f:\,e_x\rightsquigarrow f}q_f
\Big)
=
R_{e_x}v_x.
\end{aligned}
\]
At the root, since $h(e)=\pi(o)$, we have
\[
\frac1{v_o}
=
\frac1{q_e}-q_{\bar e}
=
-z-\sum_{f:\,t(f)=\pi(o)}q_f-R_e.
\]
Consequently,
\[
(A_Pv)_o
=
-\big(
z+\sum_{f:\,t(f)=\pi(o)}q_f
\big)v_o
=
1+R_ev_o.
\]
This proves \eqref{eq:fixpoint-exact-defect}.
Multiplying \eqref{eq:fixpoint-exact-defect} by $P$ and
taking the component at $o$, we obtain the exact identity
\begin{equation}
\label{eq:fixpoint-residual-representation}
P_{oo}-v_o
=
-\sum_{x\in\bG}P_{ox}R_{e_x}v_x.
\end{equation}
It remains to estimate the coefficients in this sum.
The uniform Dyson bounds imply $|v_o|\lesssim1$.
Since $\max_f|q_f/m_f-1|\lesssim|q-m|_\infty$, we have $\Pi_{\text{$f$ in path}}|q_f/m_f|\le\exp(C\dist(o,x)|q-m|)\lesssim 1$.
Therefore we have
$
|v_x|\lesssim|\cM(o,x)|.
$
On the other hand, Lemma~\ref{p:subgraph} gives
$
|P_{ox}|\lesssim|\cM(o,x)|.
$
Using the PF path-mass estimate, by the same argument as in the proof of \eqref{eq:elliptic-boundary-L2}, we have
$
\sum_{x\in\bG}|\cM(o,x)|^2
\lesssim1+\diam(\cG).
$
We conclude from \eqref{eq:fixpoint-residual-representation}
that
\[
|P_{oo}-v_o|
\le
|R|_\infty
\sum_{x\in\bG}|P_{ox}||v_x|
\lesssim
(1+\diam(\cG))|R|_\infty.
\]
This concludes the proof.
\end{proof}

\begin{proof}[Proof of Lemma \ref{p:localization}]
If $i$ and $j$ belong to different connected components
of $\cK$, then $\dist_{\cG}(i,j)\ge2r$ and
$
G_{ij}(\Ext(\cK,q),z)=0.
$
The claim follows immediately from Lemma~\ref{p:subgraph}
and Proposition~\ref{pro.path-weight-decay}.
Otherwise, we replace $\cK$ by its connected component
containing $i$ and $j$.

We first reduce to the case in which $\cK$ is induced and
every component of $\bG\setminus\bK$ is a tree attached
to $\cK$ by exactly one edge.
Since $\cG$ contains at most one cycle, this can be achieved
by deleting at most two edges whose endpoints are at distance
at least $r$ from both $i$ and $j$.
For each deletion, the newly missing edge types are filled
with their corresponding weights $q$.
The resulting change of the inverse Green function has
bounded entries and is supported on the two endpoints.
The resolvent identity, Lemma~\ref{p:subgraph}, and path
decay therefore bound the change of the $ij$ entry by
$\OO(\beta^{2r})$.
All resulting components have diameter
$\OO(1+\diam(\cG))$.
Thus this reduction costs only $\OO(\beta^{2r})$.
We relabel the resulting graph as $\cG$.

Enumerate the boundary edges of $\cK$ by
$(l_\alpha,a_\alpha)$, where $l_\alpha\in\bK$ and
$a_\alpha\notin\bK$, and let $e_\alpha$ denote the directed type
from $l_\alpha$ to $a_\alpha$.
Let $\mathcal C_\alpha$ be the exterior tree containing
$a_\alpha$.
By the reduction above, the trees $\mathcal C_\alpha$
are pairwise distinct.
Set
$
g_\alpha
:=
G_{a_\alpha a_\alpha}
\bigl(\Ext(\mathcal C_\alpha,q),z\bigr).
$
In this fully filled extension, the missing edge from
$a_\alpha$ toward $l_\alpha$ is represented by the boundary
weight $q_{\bar e_\alpha}$.
Let $h_\alpha$ denote the root Green function of the
partially filled extension obtained from
$\Ext(\mathcal C_\alpha,q)$ by removing only this filling,
while retaining all other $q$-fillings.
These remaining fillings are exactly those inherited from
$\Ext(\cG,q)$, so $h_\alpha$ is precisely the exterior
Green function entering the Schur complement of $\cK$.
Removing the filling toward $l_\alpha$ adds
$q_{\bar e_\alpha}$ to the $a_\alpha a_\alpha$ entry of
the effective inverse Green matrix.
Hence the rank-one resolvent identity gives
$
h_\alpha^{-1}
=
g_\alpha^{-1}+q_{\bar e_\alpha}
$
and 
$
h_\alpha
=
g_\alpha/(1+q_{\bar e_\alpha}g_\alpha).
$
Lemma~\ref{p:fixpoint} implies
$
\left|
g_\alpha
-
q_{e_\alpha}/(1-q_{e_\alpha}q_{\bar e_\alpha})
\right|
\lesssim
(1+\diam(\cG))|R|_\infty.
$
Consequently, we have
$
1+q_{\bar e_\alpha}g_\alpha
=(1-q_{e_\alpha}q_{\bar e_\alpha})^{-1}
+
\OO\bigl((1+\diam(\cG))|R|_\infty\bigr).
$
Since $|R|_\infty\lesssim|q-m|_\infty$, the smallness
assumption and the uniform Dyson bounds imply that
$|1-q_{e_\alpha}q_{\bar e_\alpha}|\sim 1$ and
$1+q_{\bar e_\alpha}g_\alpha$ is bounded away from zero.
Using the exact identity
\[
h_\alpha-q_{e_\alpha}
=
\frac{1-q_{e_\alpha}q_{\bar e_\alpha}}
     {1+q_{\bar e_\alpha}g_\alpha}
\left(
g_\alpha
-
\frac{q_{e_\alpha}}
     {1-q_{e_\alpha}q_{\bar e_\alpha}}
\right),
\]
we obtain
\begin{equation}
\label{eq:localization-cavity-residual}
|h_\alpha-q_{e_\alpha}|
\lesssim
(1+\diam(\cG))|R|_\infty.
\end{equation}

Now write
$
P:=G(\Ext(\cK,q),z)
$
and
$
P':=G(\Ext(\cG,q),z)\big|_{\bK}.
$
Hence the resolvent identity \eqref{e:resolv} yields
\[
P'_{ij}-P_{ij}
=
\sum_\alpha
P_{il_\alpha}
(h_\alpha-q_{e_\alpha})
P'_{l_\alpha j}.
\]
Using the PF path-mass estimate, by the same argument as in
the proof of \eqref{eq:elliptic-boundary-pair}, we obtain
$
\sum_\alpha
|P_{il_\alpha}||P'_{l_\alpha j}|
\lesssim1.
$
Combining this with \eqref{eq:localization-cavity-residual}
gives
$
|P'_{ij}-P_{ij}|
\lesssim
(1+\diam(\cG))|R|_\infty.
$
Adding the $\OO(\beta^{2r})$ error from the initial reduction
completes the proof.
\end{proof}

\section{Effective Woodbury Identity}\label{app:woodbury}

\begin{proof}[Proof of Lemma \ref{lem:compressed-woodbury-filled-switching}]
Let \(H^{\langle \bT\rangle}\) and \(\widetilde H^{\langle\bT\rangle}\) denote the 
adjacency matrices of the exterior component before and after switching. Then
\[
\widetilde H^{\langle\bT\rangle}
-
H^{\langle\bT\rangle}
=
-\sum_{\alpha\in\mathsf W_S}
\xi_\alpha,\qquad\mbox{where}~
\xi_\alpha
:=
\Delta_{a_\alpha l_\alpha^\star}
+
\Delta_{b_\alpha c_\alpha}
-
\Delta_{a_\alpha b_\alpha}
-
\Delta_{c_\alpha l_\alpha^\star}
\]
Define
$
U
:=
\bigl(
e_{a_\alpha}-e_{c_\alpha}
\bigr)_{\alpha\in\mathsf W_S}
$
and
$
V
:=
\bigl(
e_{l_\alpha^\star}-e_{b_\alpha}
\bigr)_{\alpha\in\mathsf W_S}.
$
The same Woodbury identity as in
\eqref{eq:woodbury-error-master} gives, for every $i,j\in\bG\backslash\bT$,
\begin{equation}
\label{eq:woodbury-master-filled-switching}
\widetilde{G}^{\langle\bT\rangle}_{ij}-\widetilde{P}^{\langle\bT\rangle}_{ij}
=W_i
\sum_{k=0}^{\infty}
(D^{\langle\bT\rangle}F)^kD^{\langle\bT\rangle}
W_j^{\mathsf T},
\end{equation}
where 
\begin{equation}
\label{eq:F-factorization-filled-switching}W:=(I+P^{\langle\bT\rangle}F)|_{\bG\backslash\bT,\cdot},
\quad
F
=
\begin{pmatrix}
V&U
\end{pmatrix}F_\partial
\begin{pmatrix}
V&U
\end{pmatrix}^{\mathsf T},\quad F_\partial:=
\begin{pmatrix}
U^{\mathsf T}\widetilde P^{\langle\bT\rangle}U
&
I+
U^{\mathsf T}\widetilde P^{\langle\bT\rangle}V
\\[1mm]
I+
V^{\mathsf T}\widetilde P^{\langle\bT\rangle}U
&
V^{\mathsf T}\widetilde P^{\langle\bT\rangle}V
\end{pmatrix}.
\end{equation}

We first determine the effective action of \(U\) and \(V\). Since they always appear together with \(D^{\langle\bT\rangle}\), it suffices to analyze how they interact with \(D^{\langle\bT\rangle}\) in the relevant matrix products.
It follows from \eqref{eq:D-cavity-filled-switching} that
$
D^{\langle\bT\rangle}e_{l_\alpha^\star}
=
-
m_{a_\alpha l_\alpha}
D^{\langle\bT\rangle}e_{a_\alpha}.
$
It is therefore natural to introduce the effective restriction of
$V$ to the original vertices,
\begin{equation}
\label{eq:V-original-effective}
D^{\langle\bT\rangle}V
=
D^{\langle\bT\rangle}V^{\rm eff},
\quad
V^{\mathsf T}D^{\langle\bT\rangle}
=
(V^{\rm eff})^{\mathsf T}D^{\langle\bT\rangle},
\qquad
\mbox{where~}
V^{\rm eff}
:=
\bigl(
-e_{b_\alpha}
-
m_{a_\alpha l_\alpha}e_{a_\alpha}
\bigr)_{\alpha\in\mathsf W_S}.
\end{equation}
Since $U$ is already supported on the original vertices, we consequently have
$
D^{\langle\bT\rangle}
\begin{pmatrix}
V&U
\end{pmatrix}
=
D^{\langle\bT\rangle}
\begin{pmatrix}
V^{\rm eff}&U
\end{pmatrix}.
$

We next check the effective action of $F_\partial$. For the following calculation, write
\[
\bA=(a_\alpha)_{\alpha\in\mathsf W_S},
\quad
\bB=(b_\alpha)_{\alpha\in\mathsf W_S},
\quad
\bC=(c_\alpha)_{\alpha\in\mathsf W_S},
\quad
\bL^\star=(l_\alpha^\star)_{\alpha\in\mathsf W_S},
\quad
M_+
:=
\operatorname{diag}
\bigl(
m_{a_\alpha l_\alpha}:
\alpha\in\mathsf W_S
\bigr).
\]
After switching, each \(l_\alpha^\star\) is attached to \(c_\alpha\).
Hence, by the same Schur-complement calculation as above,
\begin{equation}
\label{eq:switched-branch-blocks}
\widetilde P^{\langle\bT\rangle}_{\bL^\star\bX}
=
-M_+
\widetilde P^{\langle\bT\rangle}_{\bC\bX},
\qquad
\widetilde P^{\langle\bT\rangle}_{\bX\bL^\star}
=
-\widetilde P^{\langle\bT\rangle}_{\bX\bC}M_+,
\qquad
\widetilde P^{\langle\bT\rangle}_{\bL^\star\bL^\star}
=
M_+
+
M_+
\widetilde P^{\langle\bT\rangle}_{\bC\bC}
M_+,
\end{equation}
whenever \(\bX\) lies outside the corresponding universal-cover
branches.
Using \eqref{eq:switched-branch-blocks}, the four blocks in
\eqref{eq:F-factorization-filled-switching} are given by
\begin{align*}
U^{\mathsf T}
\widetilde P^{\langle\bT\rangle}
U
=&
\widetilde P^{\langle\bT\rangle}_{\bA\bA}
-
\widetilde P^{\langle\bT\rangle}_{\bA\bC}
-
\widetilde P^{\langle\bT\rangle}_{\bC\bA}
+
\widetilde P^{\langle\bT\rangle}_{\bC\bC},
\\
I+
U^{\mathsf T}
\widetilde P^{\langle\bT\rangle}
V
=&
I
-
\widetilde P^{\langle\bT\rangle}_{\bA\bB}
+
\widetilde P^{\langle\bT\rangle}_{\bC\bB}
-
\widetilde P^{\langle\bT\rangle}_{\bA\bC}M_+
+
\widetilde P^{\langle\bT\rangle}_{\bC\bC}M_+,
\\
I+
V^{\mathsf T}
\widetilde P^{\langle\bT\rangle}
U
=&
I
-
\widetilde P^{\langle\bT\rangle}_{\bB\bA}
+
\widetilde P^{\langle\bT\rangle}_{\bB\bC}
-
M_+\widetilde P^{\langle\bT\rangle}_{\bC\bA}
+
M_+\widetilde P^{\langle\bT\rangle}_{\bC\bC},
\\
V^{\mathsf T}
\widetilde P^{\langle\bT\rangle}
V
=&
M_+
+
\widetilde P^{\langle\bT\rangle}_{\bB\bB}
+
\widetilde P^{\langle\bT\rangle}_{\bB\bC}M_+
+
M_+\widetilde P^{\langle\bT\rangle}_{\bC\bB}
+
M_+\widetilde P^{\langle\bT\rangle}_{\bC\bC}M_+.
\end{align*}

Next, we exploit the geometric properties of the local sampling to simplify the preceding identities.
Proposition \ref{prop:elliptic-reverse} implies, for any distinct
\(\alpha,\beta\in \mathsf W_S\),
\[
\operatorname{dist}_{\widetilde{\mathcal G}^{\langle\bT\rangle}}
\bigl(
\{a_\alpha,b_\alpha,c_\alpha\},
\{a_\beta,b_\beta,c_\beta\}
\bigr)
\geq \frac{\mathfrak R}{4}.
\]
Since the localization radius is much smaller than \(\mathfrak R\),
the corresponding localized neighborhoods belong to different
connected components. Hence, for any
\(x,y\in\{a,b,c\}\) and \(\alpha\neq\beta\), we have
$
\widetilde P^{\langle\bT\rangle}_{x_\alpha y_\beta}=0.
$

On the other hand, for each admissible index \(\alpha\in\mathsf W_S\)
it follows from \(I_\alpha=1\) that
$
\mathcal B_{\mathfrak R/4}
\bigl(
\{a_\alpha,b_\alpha,c_\alpha\},
\mathcal G^{\langle\bT\rangle}
\bigr)\cup\bigl\{\{a_\alpha,b_\alpha\}\bigr\}
$
is a tree. Notice that \(\{b_\alpha,c_\alpha\}\) is an edge of
\(\mathcal B_{\mathfrak R/4}
\bigl(
\{a_\alpha,b_\alpha,c_\alpha\},
\mathcal G^{\langle\bT\rangle}
\bigr)\). Since the switching removes the edge $\{b_\alpha, c_\alpha\}$, by the tree property,
it separates the
component containing \(c_\alpha\) from the component containing
\(\{a_\alpha,b_\alpha\}\). In particular, because the localization radius
is much smaller than \(\mathfrak R/4\), we obtain that
\begin{equation}\label{eq.block-vanish}
\widetilde P^{\langle\bT\rangle}_{a_\alpha c_\alpha}
=
\widetilde P^{\langle\bT\rangle}_{c_\alpha a_\alpha}
=
\widetilde P^{\langle\bT\rangle}_{b_\alpha c_\alpha}
=
\widetilde P^{\langle\bT\rangle}_{c_\alpha b_\alpha}
=0 .
\end{equation}
Combining the two observations above, we obtain the exact block
vanishing
\[
\widetilde P^{\langle\bT\rangle}_{\bA\bC}
=
\widetilde P^{\langle\bT\rangle}_{\bC\bA}
=
\widetilde P^{\langle\bT\rangle}_{\bB\bC}
=
\widetilde P^{\langle\bT\rangle}_{\bC\bB}
=0.
\]
More precisely, we have
\[
\widetilde P^{\langle\bT\rangle}_{\bA\bA}
=
\operatorname{diag}_{\alpha\in\mathsf W_S}
\bigl(
\widetilde P^{\langle\bT\rangle}_{a_\alpha a_\alpha}
\bigr),
\qquad
\widetilde P^{\langle\bT\rangle}_{\bB\bB}
=
\operatorname{diag}_{\alpha\in\mathsf W_S}
\bigl(
\widetilde P^{\langle\bT\rangle}_{b_\alpha b_\alpha}
\bigr),
\qquad
\widetilde P^{\langle\bT\rangle}_{\bC\bC}
=
\operatorname{diag}_{\alpha\in\mathsf W_S}
\bigl(
\widetilde P^{\langle\bT\rangle}_{c_\alpha c_\alpha}
\bigr),
\]
\[
\widetilde P^{\langle\bT\rangle}_{\bB\bA}=\widetilde P^{\langle\bT\rangle}_{\bA\bB}
=
\operatorname{diag}_{\alpha\in\mathsf W_S}
\bigl(
\widetilde P^{\langle\bT\rangle}_{a_\alpha b_\alpha}
\bigr).
\]
Consequently, it follows that
\begin{equation}
\label{eq:F-boundary-matrix-filled-switching}
F_\partial
=
\begin{pmatrix}
\widetilde P^{\langle\bT\rangle}_{\bA\bA}
+
\widetilde P^{\langle\bT\rangle}_{\bC\bC}
&
I
-
\widetilde P^{\langle\bT\rangle}_{\bA\bB}
+
\widetilde P^{\langle\bT\rangle}_{\bC\bC}M_+
\\[2mm]
I
-
\widetilde P^{\langle\bT\rangle}_{\bB\bA}
+
M_+\widetilde P^{\langle\bT\rangle}_{\bC\bC}
&
M_+
+
\widetilde P^{\langle\bT\rangle}_{\bB\bB}
+
M_+\widetilde P^{\langle\bT\rangle}_{\bC\bC}M_+
\end{pmatrix}.
\end{equation}
Moreover, \(F_\partial\) admits a further block-diagonal decomposition
with respect to the switching index. After regrouping the
coordinates of \(F_\partial\) from
$
(V_{\alpha_1},\dots,V_{\alpha_{|\mathsf W_S|}},U_{\alpha_1},\dots,U_{\alpha_{|\mathsf W_S|}})
$
into
$
(V_{\alpha_1},U_{\alpha_1};\dots;V_{\alpha_{|\mathsf W_S|}},U_{\alpha_{|\mathsf W_S|}}),
$
the matrix \(F_\partial\) can be written as a direct sum of
\(2\times2\) diagonal blocks:
\[
F_\partial
=
\bigoplus_{\alpha\in\mathsf W_S}
F_\partial^{(\alpha)},
\quad
F_\partial^{(\alpha)}
=
\begin{pmatrix}
\widetilde P^{\langle\bT\rangle}_{a_\alpha a_\alpha}
+
\widetilde P^{\langle\bT\rangle}_{c_\alpha c_\alpha}
&
1-
\widetilde P^{\langle\bT\rangle}_{a_\alpha b_\alpha}
+
\widetilde P^{\langle\bT\rangle}_{c_\alpha c_\alpha}
\,m_{a_\alpha l_\alpha}
\\[2mm]
1-
\widetilde P^{\langle\bT\rangle}_{b_\alpha a_\alpha}
+
m_{a_\alpha l_\alpha}\,
\widetilde P^{\langle\bT\rangle}_{c_\alpha c_\alpha}
&
m_{a_\alpha l_\alpha}
+
\widetilde P^{\langle\bT\rangle}_{b_\alpha b_\alpha}
+
m_{a_\alpha l_\alpha}\,
\widetilde P^{\langle\bT\rangle}_{c_\alpha c_\alpha}\,
m_{a_\alpha l_\alpha}
\end{pmatrix}.
\]
In other words, \(F_\partial\) acts independently on each switching
cell, and each such action is described by a \(2\times2\) matrix.

Next we identify the transfer factors \(W\) and \(W^{\mathsf T}\).
It follows from
$
\widetilde P^{\langle\bT\rangle}
=
P^{\langle\bT\rangle}
+
P^{\langle\bT\rangle}\Xi
\widetilde P^{\langle\bT\rangle}
$
that
$
I+P^{\langle\bT\rangle}F
=
I+\widetilde P^{\langle\bT\rangle}\Xi.
$
Hence, for \(w\in\bG\setminus\bT\),
\begin{multline}
W_{wu}
=
\delta_{wu}
+
\sum_{\alpha\in\mathsf W_S}\Big[
\bigl(
\widetilde P^{\langle\bT\rangle}_{wa_\alpha}
-
\widetilde P^{\langle\bT\rangle}_{wc_\alpha}
\bigr)
\mathbf 1_{\{u=l_\alpha^\star\}}
-
\bigl(
\widetilde P^{\langle\bT\rangle}_{wb_\alpha}
+
m_{a_\alpha l_\alpha}
\widetilde P^{\langle\bT\rangle}_{wc_\alpha}
\bigr)
\mathbf 1_{\{u=a_\alpha\}}\\
+
\bigl(
\widetilde P^{\langle\bT\rangle}_{wc_\alpha}
-
\widetilde P^{\langle\bT\rangle}_{wa_\alpha}
\bigr)
\mathbf 1_{\{u=b_\alpha\}}
+
\bigl(
\widetilde P^{\langle\bT\rangle}_{wb_\alpha}
+
m_{a_\alpha l_\alpha}
\widetilde P^{\langle\bT\rangle}_{wc_\alpha}
\bigr)
\mathbf 1_{\{u=c_\alpha\}}\Big],
\label{eq:W-exact-filled-switching}
\end{multline}
where we used
$
\widetilde P^{\langle\bT\rangle}_{w l_\alpha^\star}
=
-m_{a_\alpha l_\alpha}
\widetilde P^{\langle\bT\rangle}_{w c_\alpha}
$
in the above equality. 
In the Woodbury expansion, \(W\) only appears next to
\(D^{\langle\bT\rangle}\). Since, before switching,
\(l_\alpha^\star\) is attached to \(a_\alpha\), the Schur-complement
formula gives
\begin{equation}
\label{eq:D-cavity-filled-switching}
D^{\langle\bT\rangle}_{l_\alpha^\star\,\cdot}
=
-m_{a_\alpha l_\alpha}
D^{\langle\bT\rangle}_{a_\alpha\,\cdot},
\qquad
D^{\langle\bT\rangle}_{\cdot\,l_\alpha^\star}
=
-m_{a_\alpha l_\alpha}
D^{\langle\bT\rangle}_{\cdot\,a_\alpha}.
\end{equation}
Therefore the \(l_\alpha^\star\)-coordinate in
\eqref{eq:W-exact-filled-switching} can be absorbed into the
\(a_\alpha\)-coordinate.
Indeed, in the product \(WD^{\langle\bT\rangle}\), the contributions
from the \(l_\alpha^\star\)- and \(a_\alpha\)-coordinates are
\begin{align*}
&\bigl(
\widetilde P^{\langle\bT\rangle}_{wa_\alpha}
-
\widetilde P^{\langle\bT\rangle}_{wc_\alpha}
\bigr)
D^{\langle\bT\rangle}_{l_\alpha^\star\,\cdot}
-
\bigl(
\widetilde P^{\langle\bT\rangle}_{wb_\alpha}
+
m_{a_\alpha l_\alpha}
\widetilde P^{\langle\bT\rangle}_{wc_\alpha}
\bigr)
D^{\langle\bT\rangle}_{a_\alpha\,\cdot}
\\
={}&
-m_{a_\alpha l_\alpha}
\bigl(
\widetilde P^{\langle\bT\rangle}_{wa_\alpha}
-
\widetilde P^{\langle\bT\rangle}_{wc_\alpha}
\bigr)
D^{\langle\bT\rangle}_{a_\alpha\,\cdot}
-
\bigl(
\widetilde P^{\langle\bT\rangle}_{wb_\alpha}
+
m_{a_\alpha l_\alpha}
\widetilde P^{\langle\bT\rangle}_{wc_\alpha}
\bigr)
D^{\langle\bT\rangle}_{a_\alpha\,\cdot}
=
-
\bigl(
\widetilde P^{\langle\bT\rangle}_{wb_\alpha}
+
m_{a_\alpha l_\alpha}
\widetilde P^{\langle\bT\rangle}_{wa_\alpha}
\bigr)
D^{\langle\bT\rangle}_{a_\alpha\,\cdot},
\end{align*}
where the two terms involving
\(m_{a_\alpha l_\alpha}
\widetilde P^{\langle\bT\rangle}_{wc_\alpha}\)
cancel. The \(b_\alpha\)- and \(c_\alpha\)-coordinates are unchanged.
Thus the action of \(W\) on \(D^{\langle\bT\rangle}\) is exactly the
same as that of the matrix obtained by removing the
\(l_\alpha^\star\)-coordinates and replacing the \(a_\alpha\)-coefficient
accordingly.
More precisely,
\begin{equation}\label{eq:W-effective-action-filled-switching}
W D^{\langle\bT\rangle}
=
W^{\mathrm{eff}}D^{\langle\bT\rangle},
\qquad
D^{\langle\bT\rangle}W^{\mathsf T}
=
D^{\langle\bT\rangle}(W^{\mathrm{eff}})^{\mathsf T},
\end{equation}
where
\begin{align*}
W^{\mathrm{eff}}_{wu}
=
\delta_{wu}
-
\sum_{\alpha\in\mathsf W_S}\Big[
\bigl(
\widetilde P^{\langle\bT\rangle}_{wb_\alpha}
+
m_{a_\alpha l_\alpha}
\widetilde P^{\langle\bT\rangle}_{wa_\alpha}
\bigr)
\mathbf 1_{\{u=a_\alpha\}}
-
\bigl(
\widetilde P^{\langle\bT\rangle}_{wc_\alpha}
-
\widetilde P^{\langle\bT\rangle}_{wa_\alpha}
\bigr)
\mathbf 1_{\{u=b_\alpha\}}
-
\bigl(
\widetilde P^{\langle\bT\rangle}_{wb_\alpha}
+
m_{a_\alpha l_\alpha}
\widetilde P^{\langle\bT\rangle}_{wc_\alpha}
\bigr)
\mathbf 1_{\{u=c_\alpha\}}\Big].
\end{align*}
Consequently, we have
$
W D^{\langle\bT\rangle}W^{\mathsf T}
=
W^{\mathrm{eff}}
D^{\langle\bT\rangle}
(W^{\mathrm{eff}})^{\mathsf T},
$
and the same replacement applies to every term in
\eqref{eq:woodbury-master-filled-switching}.
Hence, in the remainder of the proof, we write \(W\) again for the
effective transfer matrix \(W^{\mathrm{eff}}\).

Finally, since the row supports of \(U\) and \(V^{\rm eff}\), as well as the column support of \(W\), are contained in \(\bG\), all matrix products above depend only on the \(\bG\times\bG\) block of \(D^{\langle\bT\rangle}\). After applying the same cellwise regrouping as above, \(\begin{pmatrix}V^{\rm eff}&U\end{pmatrix}\) is precisely the matrix \(J=(v_\alpha^{\rm eff},u_\alpha)_{\alpha\in\mathsf W_S}\) in the statement. Hence, by \eqref{eq:F-factorization-filled-switching} and \eqref{eq:V-original-effective}, for every \(k\geq0\),
\[
W_i
\bigl(
D^{\langle\bT\rangle}F
\bigr)^k
D^{\langle\bT\rangle}
W_j^{\mathsf T}
=
W_i
\left(
D^{\langle\bT\rangle}\big|_{\bG}
J F_\partial J^{\mathsf T}
\right)^k
D^{\langle\bT\rangle}\big|_{\bG}
W_j^{\mathsf T}.
\]
Substituting this into \eqref{eq:woodbury-master-filled-switching}, we obtain the desired identity.
\end{proof}  

\begin{proof}[Proof of Lemma \ref{lem:compressed-woodbury-gluing}]
Let \(\widetilde H^{\rm aux}\) and \(\widetilde H\) denote the enlarged
adjacency matrices before and after gluing, respectively. Then
\[
\widetilde H-\widetilde H^{\rm aux}=-\sum_{\alpha=1}^\mu\xi_\alpha,\qquad\mbox{where}~\xi_\alpha
:=
\Delta_{l_\alpha a_\alpha^\star}
+
\Delta_{\widetilde a_\alpha l_\alpha^\star}
-
\Delta_{l_\alpha\widetilde a_\alpha}
-
\Delta_{l_\alpha^\star a_\alpha^\star}.
\]
Lemma \ref{lem:F-boundary-factorization} and Lemma \ref{lem:woodbury-error-expansion} again imply
\begin{equation}
\label{eq:woodbury-error-master-gluing}
\widetilde G-\widetilde P
=
\bigl(
I+\widetilde P^{\rm aux}\widetilde F
\bigr)
\sum_{k=0}^{\infty}
\bigl(
\widetilde D^{\rm aux}\widetilde F
\bigr)^k
\widetilde D^{\rm aux}
\bigl(
I+\widetilde F\widetilde P^{\rm aux}
\bigr)
\end{equation}
where 
\begin{equation}
\label{eq:F-factorization-gluing}
\widetilde F
=
\begin{pmatrix}
V&U
\end{pmatrix}
\begin{pmatrix}
U^{\mathsf T}\widetilde P U
&
I+U^{\mathsf T}\widetilde P V
\\[1mm]
I+V^{\mathsf T}\widetilde P U
&
V^{\mathsf T}\widetilde P V
\end{pmatrix}
\begin{pmatrix}
V&U
\end{pmatrix}^{\mathsf T},
\qquad 
U
:=
\bigl(
e_{l_\alpha}-e_{l_\alpha^\star}
\bigr)_{\alpha=1}^{\mu},~
V
:=
\bigl(
e_{a_\alpha^\star}-e_{\widetilde a_\alpha}
\bigr)_{\alpha=1}^{\mu}.
\end{equation}

We first compress the middle Woodbury kernel. On the enlarged space,
the Schur formula \eqref{e:Schur1} implies
\begin{equation}
\label{eq:cavity-error-gluing}
\widetilde D^{\rm aux}_{\cdot\,\bL^\star}
=
-\widetilde D^{\rm aux}_{\cdot\,\widetilde{\bA}}M_+,
\qquad
\widetilde D^{\rm aux}_{\bL^\star\,\cdot}
=
-M_+
\widetilde D^{\rm aux}_{\widetilde{\bA}\,\cdot}.
\end{equation}
Together with the fact that \(\widetilde D^{\rm aux}=\widetilde D^{\langle\bT\rangle}\oplus0\) vanishes on the
remaining auxiliary component, this gives
$
\widetilde D^{\rm aux}V
=
-\widetilde D^{\rm aux}J
$
and
$
\widetilde D^{\rm aux}U
=
\widetilde D^{\rm aux}JM_+,
$
where \(J\) is understood as the corresponding boundary embedding
into the enlarged space. Hence, it follows that
\begin{equation*}
\label{eq:error-factor-compression-gluing}
\widetilde D^{\rm aux}
\begin{pmatrix}
V&U
\end{pmatrix}
=
\widetilde D^{\rm aux}J
\begin{pmatrix}
-I&M_+
\end{pmatrix},\qquad
\begin{pmatrix}
V&U
\end{pmatrix}^{\mathsf T}
\widetilde D^{\rm aux}
=
\begin{pmatrix}
-I\\ M_+
\end{pmatrix}
J^{\mathsf T}\widetilde D^{\rm aux}.
\end{equation*}
Consequently, we have
$
\widetilde D^{\rm aux}
\widetilde F
\widetilde D^{\rm aux}
=
\widetilde D^{\rm aux}
J\widetilde F_\partial J^{\mathsf T}
\widetilde D^{\rm aux},
$
where initially
\begin{equation}
\label{eq:F-partial-preliminary-gluing}
\widetilde F_\partial
:=
\begin{pmatrix}
-I&M_+
\end{pmatrix}
\begin{pmatrix}
U^{\mathsf T}\widetilde P U
&
I+U^{\mathsf T}\widetilde P V
\\[1mm]
I+V^{\mathsf T}\widetilde P U
&
V^{\mathsf T}\widetilde P V
\end{pmatrix}
\begin{pmatrix}
-I\\ M_+
\end{pmatrix}.
\end{equation}

We next identify \(\widetilde F_\partial\). 
After gluing, \(\widetilde{\bA}\) and \(\bL\) belong to the original
component, whereas \(\bA^\star\) and \(\bL^\star\) belong to the
restored universal-cover copies. Hence every Green-function block
joining these two components vanishes. Therefore,
\begin{align*}
U^{\mathsf T}\widetilde P U
=
\widetilde P_{\bL\bL}
+
\widetilde P_{\bL^\star\bL^\star},\qquad
I+U^{\mathsf T}\widetilde P V
=
I
-
\widetilde P_{\bL\widetilde{\bA}}
-
\widetilde P_{\bL^\star\bA^\star},
\\
V^{\mathsf T}\widetilde P V
=
\widetilde P_{\widetilde{\bA}\widetilde{\bA}}
+
\widetilde P_{\bA^\star\bA^\star},
\qquad
I+V^{\mathsf T}\widetilde P U
=
I
-
\widetilde P_{\widetilde{\bA}\bL}
-
\widetilde P_{\bA^\star\bL^\star}.
\end{align*}
Substituting these identities into
\eqref{eq:F-partial-preliminary-gluing}, we obtain
\begin{align*}
\widetilde F_\partial
=
\widetilde P_{\bL\bL}
+
\widetilde P_{\bL\widetilde{\bA}}M_+
+
M_+\widetilde P_{\widetilde{\bA}\bL}
+
M_+\widetilde P_{\widetilde{\bA}\widetilde{\bA}}M_+
+
\widetilde P_{\bL^\star\bL^\star}
+
\widetilde P_{\bL^\star\bA^\star}M_+
+
M_+\widetilde P_{\bA^\star\bL^\star}
+
M_+\widetilde P_{\bA^\star\bA^\star}M_+
-2M_+.
\label{eq:F-partial-expanded-gluing}
\end{align*}
On every restored universal-cover copy, the Schur-complement
identities give
\begin{equation}
\label{eq:restored-star-identities-gluing}
\widetilde P_{\bL^\star\bL^\star}
=
M_+
+
M_+\widetilde P_{\bA^\star\bA^\star}M_+,
\qquad
\widetilde P_{\bL^\star\bA^\star}
=
-M_+\widetilde P_{\bA^\star\bA^\star},
\qquad
\widetilde P_{\bA^\star\bL^\star}
=
-\widetilde P_{\bA^\star\bA^\star}M_+.
\end{equation}
Thus all terms involving the restored copies reduce to \(-M_+\).
Hence,
\[
\widetilde F_\partial
=
M_+
\widetilde P_{\widetilde{\bA}\widetilde{\bA}}
M_+
+
M_+
\widetilde P_{\widetilde{\bA}\bL}
+
\widetilde P_{\bL\widetilde{\bA}}
M_+
+
\widetilde P_{\bL\bL}
-
M_+,
\]
or equivalently,
\[
\widetilde F_\partial
=
\begin{pmatrix}
M_+&I
\end{pmatrix}
\begin{pmatrix}
\widetilde P_{\widetilde{\bA}\widetilde{\bA}}
&
\widetilde P_{\widetilde{\bA}\bL}
\\[1mm]
\widetilde P_{\bL\widetilde{\bA}}
&
\widetilde P_{\bL\bL}
\end{pmatrix}
\begin{pmatrix}
M_+\\ I
\end{pmatrix}
-M_+.
\]

It remains to identify the transfer factors. The deterministic
resolvent identity gives
$
I+\widetilde P^{\rm aux}\widetilde F
=
I+\widetilde P\Xi.
$
Using \eqref{eq:cavity-error-gluing} and restricting the resulting
action to \(\bG\), we obtain
\[
W_{xu}
=
\delta_{xu}
-
\sum_{\alpha=1}^{\mu}
\left(
\widetilde P_{xl_\alpha}
+
m_{\widetilde a_\alpha l_\alpha}
\widetilde P_{x\widetilde a_\alpha}
\right)
\mathbf 1_{\{u=\widetilde a_\alpha\}},
\qquad x,u\in\bG.
\]
Thus \(W\) is a \(\bG\times\bG\) matrix. By complex symmetry, the
corresponding right transfer factor is \(W^{\mathsf T}\).

Finally, since
$
\widetilde D^{\rm aux}
=
\widetilde D^{\langle\bT\rangle}\oplus0,
$
every occurrence of \(\widetilde D^{\rm aux}\) in the Woodbury series
which survives after restriction to \(\bG\) is exactly
\(\widetilde D^{\langle\bT\rangle}|_\bG\). Hence, for every
\(k\geq1\),
\[
\left(
\widetilde D^{\rm aux}\widetilde F
\right)^k
\widetilde D^{\rm aux}
\Big|_{\bG\times\bG}
=
\left(
\widetilde D^{\langle\bT\rangle}|_\bG
J\widetilde F_\partial J^{\mathsf T}
\right)^k
\widetilde D^{\langle\bT\rangle}|_\bG.
\]
Substituting this identity and the effective transfer factors into
\eqref{eq:woodbury-error-master-gluing}, and restricting to
\(i,j\in\bG\), concludes the proof.
\end{proof}

\bibliography{Ref-Lift}
\bibliographystyle{abbrv}

\end{document}